\newtheorem{theorem}{Theorem}[section]
\newtheorem{proposition}[theorem]{Proposition}
\newtheorem{lemma}[theorem]{Lemma}
\newtheorem{corollary}[theorem]{Corollary}
\theoremstyle{definition}
\newtheorem{definition}[theorem]{Definition}
\theoremstyle{remark}
\newtheorem{remark}{Remark}[section]
\theoremstyle{remark}
\theoremstyle{remark}
\newtheorem{claim}{Claim}[section]
\theoremstyle{remark}
\theoremstyle{remark}
\theoremstyle{remark}
\begin{document}

\title{Sharp Estimate of Global Coulomb Gauge}
\date{\today}
\author{Yu Wang}
\address{Department of Mathematics, Northwestern University, Evanston, Il 60208, USA}
\email{yuwang2018@u.northwestern.edu}

\maketitle
\begin{abstract}
Let $A$ be a $W^{1,2}$-connection on a principle $\text{SU}(2)$-bundle $P$ over a compact $4$-manifold $M$ whose curvature $F_A$ satisfies $\|F_A\|_{L^2(M)}\le \Lambda$. Our main result is the existence of a global section $\sigma: M\to P$ with finite singularities on $M$ such that the connection form $\sigma^*A$ satisfies the Coulomb equation $d^*(\sigma^*A)=0$ and admits a sharp estimate $\|\sigma^*A\|_{\mathcal{L}^{4,\infty}(M)}\le C(M,\Lambda)$. Here $\mathcal{L}^{4,\infty}$ is a new function space we introduce in this paper that satisfies $L^4(M)\subsetneq \mathcal{L}^{4,\infty}(M)\subsetneq L^{4-\epsilon}(M)$ for all $\epsilon>0$. More precisely, $\mathcal{L}^{4,\infty}(M)$ is the collection of measurable function $u$ such that $\|u\|_{\mathcal{L}^{4,\infty}(M)}\eqqcolon\|1/s_u\|_{L^{4,\infty}(M)}<\infty$, where $L^{4,\infty}$ is the classical Lorentz space and $s_u$ is the $L^4$ integrability radius function associated to $u$ defined by $s_{u}(x)=\sup\Big\{r:\sup_{y\in B_{r}(x)}\int_{B_r(y)}|u|^4 dV_g\le 1\Big\}.$

\end{abstract}
\tableofcontents

\section{Introduction}
Let $A$ be a $W^{1,2}$-connection on a principle bundle $P$ with compact Lie group $G$ fiber over a compact $4$-manifold $M$. In \cite{U82a}, K. Uhlenbeck proved if $\|F_A\|_{L^2(M)}<\epsilon_0$ for some constant $\epsilon_0$, then one could find a Coulomb gauge $\sigma$, that is $d^*(\sigma^*A)=0$, such that $\|\sigma^*A\|_{L^4(M)}$ is controlled by $\|F_A\|_{L^2(M)}$. However, without the smallness hypothesis of $\|F_A\|_{L^2(M)}$, such a gauge $\sigma$ does not need to exist that $\|\sigma^*A\|_{L^4(M)}$ is controlled in terms of $\|F_A\|_{L^2}$ due to topological obstructions. Indeed, one can prove the non-existence of such a gauge if the second Chern number $c_2$ of the bundle is non-trivial; see \cite{PR14}. In this paper, by focusing on $\text{SU}(2)$ bundles, we construct a global Coulomb gauge with sharp estimate. More precisely, we find a global section $s: M\to P$ such that $d^*(s^*A)=0$ and $s^*A$ is controlled in the following space (which we denote by $\mathcal{L}^{4,\infty}$ in order to have it distinguished from the classical $L^{4,\infty}$ space):
\begin{definition}\label{curlyl4infty}
Given any measurable function $u$ defined on a bounded domain $U$ in a Euclidean space, then for all $x\in U$ let us set 
\begin{eqnarray}
\begin{split}
\label{4rad}
s_{u}(x)=\sup\Big\{r:\sup_{y\in B_{r}(x)}\int_{B_r(y)}|u|^4 d\mu\le 1\Big\}.
\end{split}
\end{eqnarray}
We define
\begin{eqnarray}
\begin{split}
\|u\|_{\mathcal{L}^{4,\infty}(U)}\eqqcolon\Big\|\frac{1}{s_u}\Big\|_{L^{4,\infty}(U)}.
\end{split}
\end{eqnarray}
\end{definition}
\begin{remark}
The second ``$\sup$" in the big braces is clearly redundant, and dropping it does not create essential changes to our proofs. However, we include it in order to make the definition consistent with the more general definitions of regularity radius, which will be used later in the paper and where the second ``$\sup$" cannot be dropped (see Definition \ref{rads}).
\end{remark}
\begin{remark}
\label{clanew}
We will prove in Appendix that 
\begin{eqnarray}
\begin{split}
\label{clanew1}
&\|u\|_{\mathcal{L}^{4,\infty}(M)}\le C_0(M)\|u\|_{L^{4}(U)},\\
&\int_{M}|u|^{4-\epsilon}\le C(\epsilon,M)\|u\|^4_{\mathcal{L}^{4,\infty}(M)}
\end{split}
\end{eqnarray}
for all $\epsilon>0$ and $C_0(M),C(\epsilon,M)<\infty$. Especially this implies $L^4(M)\subsetneq \mathcal{L}^{4,\infty}(M)\subsetneq L^{4-\epsilon}(M)$. See Lemma \ref{interpo}. 
\end{remark}
By the discussions preceding Definition \eqref{curlyl4infty} combined with Remark \ref{clanew}, we see that the $\mathcal{L}^{4,\infty}(M)$-estimate of $s^*A$ is indeed sharp. 
\medskip

It is worth mentioning that in \cite{PR14}, M. Petrache and T. Riviere constructed a global gauge $\sigma$ (possibly with singularities) such that $\|\sigma(A)\|_{L^{4,\infty}(M)}\le C(M,\|F_A\|_{L^2(M)})$. Further, in Open Problem 1.3 of \cite{PR14} the authors asked whether or not there exists a $L^{4,\infty}$-controlled global Coulomb gauge. Despite of the inclusion relationships given in Remark \ref{clanew}, $\mathcal{L}^{4,\infty}$ is not stronger (nor weaker) than $L^{4,\infty}$. In other words, our main result does not yet give an affirmative answer to the Open Problem 1.3 of \cite{PR14}. Nevertheless, since $\mathcal{L}^{4,\infty}$ is defined by using the $L^4$-integrability radius, it carries an extra piece of information compared to $L^{4,\infty}$, and hence it is a more robust space from analysts' viewpoint. Furthermore, in this paper we obtain a variety of estimates as intermediate results, including an $\epsilon$-regularity theorem (for general group fiber principal bundles as opposed to $\text{SU}(2)$ bundles), which all seem likely to be usefully applied in the related contexts.
\medskip

\subsection{Main results}
Before stating the main theorem, let us explain some terminologies. Firstly, a (possibly singular) trivialization of $P$ is a map $\sigma: M\to P$ satisfying $\pi\circ \sigma=\text{Id}$, where $\pi$ is the projection map defined on $P$. Secondly, given a connection $A$ and a trivialization $\sigma$ on $P$, then the associated connection form is a Lie algebra valued $1$-form defined by $A_0=\sigma^*A$. Thirdly, if the connection form satisfies $d^*A_0=-g^{ij}\iota_{\partial_i}\nabla_{\partial_j}A_0=0$ weakly in $M$, we say that $A_0$ is weakly Coulomb. We now state our main result:
\begin{theorem}
\label{main}
Let $P\to M$ be an $\text{SU}(2)$-principal bundle over a compact $4$-manifold and let $A$ be a $W^{1,2}$ connection on $P$ satisfying $\|F_A\|_{L^2(M)}<\Lambda$ for some $\Lambda<\infty$. Then there exists a constant $C_0(M,\Lambda)$ and a global $P$-section $s_0$ such that its associated connection form $s_0^*A$ is weakly Coulomb and satisfies $\|s_0^*A\|_{\mathcal{L}^{4,\infty}(M)}\le C_0(M,\Lambda)$.        
\end{theorem}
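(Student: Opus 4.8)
The plan is a local-to-global construction in the spirit of Uhlenbeck's gauge fixing, adapted to the scale-invariant space $\cL^{4,\infty}$. First I would invoke the $\epsilon$-regularity theorem announced in the introduction: there are $\epsilon_0=\epsilon_0(M)>0$ and $C=C(M)$ so that if $\int_{B_{2r}(x)}|F_A|^2\le\epsilon_0$, then $P|_{B_r(x)}$ (trivial, being a bundle over a ball) carries a Coulomb trivialization $\tau$ with
\[
\|\tau^*A\|_{L^4(B_r(x))}+r\|\nabla\tau^*A\|_{L^2(B_r(x))}\le C\|F_A\|_{L^2(B_{2r}(x))},
\]
unique up to a constant element of $\mathrm{SU}(2)$; after shrinking $\epsilon_0$ this forces the $L^4$-integrability radius $s_{\tau^*A}$ to be at least $c_0 r$ throughout $B_{c_0 r}(x)$, for some $c_0=c_0(M)>0$.

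Next, the topology. Since $\pi_1(\mathrm{SU}(2))=\pi_2(\mathrm{SU}(2))=0$, the sole obstruction to triviality of an $\mathrm{SU}(2)$-bundle lies in $H^4(\cdot\,;\pi_3(\mathrm{SU}(2)))=H^4(\cdot\,;\dZ)$, which vanishes on any non-compact $4$-manifold; hence one can pick a finite set $\cS=\{p_1,\dots,p_N\}\subset M$, with $N$ bounded in terms of $\Lambda$ (indeed $N=1$ already suffices since $H^4(M\setminus\{p\};\dZ)=0$), such that $P$ is trivial over $M\setminus\cS$. Because the energy measure $|F_A|^2\,dV_g$ has no atoms, $M\setminus\cS$ is covered by balls on which the curvature energy is $\le\epsilon_0$, at a scale comparable to $\dist(\cdot,\cS)$ away from $\cS$. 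I would patch the corresponding local Coulomb gauges — interpolating their transition functions, which solve a second-order elliptic system since both adjacent gauges are Coulomb — into a single global section $s_1$ of $P$ over $M\setminus\cS$, modeled near each $p_i$ on a standard $|x-p_i|$-homogeneous degree-$k$ map; Step 1 and the scale invariance of the relevant norms then give $\|s_1^*A\|_{\cL^{4,\infty}(M)}\le C(M,\Lambda)$, although $s_1$ is not yet weakly Coulomb.

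To reach the Coulomb class I would minimize the functional $g\mapsto\int_M|g^{-1}(s_1^*A)g+g^{-1}dg|^2\,dV_g$ over gauge transformations smooth on $M\setminus\cS$. A minimizing sequence $g_k$ has $g_k^{-1}dg_k$ bounded in $L^2$, hence bounded in $W^{1,2}_{\mathrm{loc}}$ and $\mathrm{SU}(2)$-valued; it converges weakly, away from a finite set $\cS'\supseteq\cS$ of concentration points (each absorbing a definite amount of energy), to a minimizer $g_0$ relative to the final finite set $\cS'$. Its Euler--Lagrange equation is exactly $d^*\big((g_0 s_1)^*A\big)=0$ on $M\setminus\cS'$, and since points have zero $2$-capacity in dimension $4$ while $(g_0 s_1)^*A\in L^2(M)$ (Remark \ref{clanew}), this extends to a weak Coulomb equation across $\cS'$; set $s_0=g_0 s_1$, $A_0=s_0^*A$. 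By minimality, on each good ball $B_r(x)$ the form $A_0$ is a Coulomb gauge of least $L^2$-energy among local modifications, so comparing with the gauge $\tau$ of Step 1 (and using that $\mathrm{SU}(2)$ acts by isometries on $\mathfrak{su}(2)$) it obeys the same $L^4$ bound; together with the $|x-p_i|^{-1}$ model near $\cS'$ this yields
\[
s_{A_0}(x)\ \ge\ c_0\,r_F(x),\qquad r_F(x):=\sup\Big\{r:\int_{B_{2r}(x)}|F_A|^2\le\epsilon_0\Big\}.
\]
Hence $\{s_{A_0}<t\}\subseteq\big\{x:\int_{B_{2t/c_0}(x)}|F_A|^2>\epsilon_0\big\}\cup\bigcup_{i}B_{Ct}(p_i)$, and a Vitali $5r$-covering together with $\int_M|F_A|^2\le\Lambda^2$ bounds the volume of this set by $C(M,\Lambda)\,t^4$; therefore $\|A_0\|_{\cL^{4,\infty}(M)}^4=\|1/s_{A_0}\|^4_{L^{4,\infty}(M)}=\sup_{t>0}t^4\,\big|\{s_{A_0}<t\}\big|\le C(M,\Lambda)$, as claimed.

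The \emph{main obstacle} is the analysis underlying the global Coulomb fixing in dimension $4$: establishing the quantization/finiteness of the concentration points (so that $\cS'$ stays finite) and the passage to the weak limit, in a setting where $W^{1,2}$ maps into $\mathrm{SU}(2)$ are not strongly approximable by smooth maps. This bubbling is precisely the phenomenon responsible for the non-existence of a globally $L^4$-controlled Coulomb gauge, and the crux of the theorem is that in the weaker, scale-invariant target $\cL^{4,\infty}$ it can be absorbed into finitely many point singularities, with a bound depending only on $\Lambda$. A secondary point — handled by the uniqueness clause of Step 1 and the scale invariance of $\cL^{4,\infty}$ — is to verify that neither the patching of Step 2 nor the ``least $L^2$-energy $\Rightarrow$ Uhlenbeck gauge'' comparison above degrades the quantitative $\cL^{4,\infty}$ control.
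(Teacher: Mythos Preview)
Your argument has a genuine gap at the crucial step. You write: ``By minimality, on each good ball $B_r(x)$ the form $A_0$ is a Coulomb gauge of least $L^2$-energy among local modifications, so comparing with the gauge $\tau$ of Step~1 \ldots\ it obeys the same $L^4$ bound.'' This comparison does not go through. The global minimizer $s_0$ is only minimizing among competitors that agree with it \emph{outside} the ball; the Uhlenbeck gauge $\tau$ has no reason to match $s_0$ on $\partial B_r(x)$, and gluing them across the boundary would introduce a transition layer whose energy you have no control over. So you cannot conclude $\int_{B_r}|A_0|^2\le\int_{B_r}|\tau^*A|^2$, let alone the $L^4$ bound. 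Relatedly, the $\epsilon$-regularity theorem announced in the introduction (Theorem~\ref{epsintro}) requires \emph{two} smallness hypotheses---small curvature \emph{and} small scale-invariant energy $r^{-2}\int_{B_r}|A_0|^2<\eta_0$---not just the first; what you have stated in Step~1 is Uhlenbeck's theorem, which produces a local gauge unrelated to the global minimizer.

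Establishing the inequality $s_{A_0}(x)\ge c_0\,r_F(x)$ (even in the weaker form $\|A_0\|_{\mathcal{L}^{4,\infty}(B_r)}\le C$ on small-curvature balls, which is Theorem~\ref{4localepsintro}) is in fact the heart of the paper and occupies Sections~3--6. The route is: (i) prove the two-hypothesis $\epsilon$-regularity theorem for the minimizer; (ii) use the \emph{stability} of the minimizer---viewed as a twisted harmonic map into $\mathrm{SU}(2)\cong\mathbb{S}^3$---to get a scale-invariant $L^2$ bound (Theorem~\ref{stathm}), which is still not small; (iii) via compactness and the classification of stable-stationary tangent cones into $\mathbb{S}^3$ (Lin--Wang), show that singularities of the minimizer are isolated with a controllable count (Theorems~\ref{singtangent}, \ref{numbersing}); (iv) upgrade ineffective $L^4$-regularity to an effective lower bound on the regularity radius (Theorem~\ref{apr}). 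Only then does one obtain the $\mathcal{L}^{4,\infty}$ bound on small-curvature balls, and the global estimate still requires the annular--bubble decomposition of Sections~7--8, since the small-curvature region need not be a finite union of balls of definite radius. Your Vitali argument for $\|1/r_F\|_{L^{4,\infty}}\le C(\Lambda)$ is fine, but it sits on top of the estimate $s_{A_0}\gtrsim r_F$ (away from finitely many points), and that is precisely what is not free.
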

\begin{remark}
To be more precise than the above statement, we will prove that $s_0^*A$ is $L^4$-integrable away from controllably many points on $M$. When $A$ is smooth, without much more work than this paper one can show that the global section $s_0$ is indeed smooth (but in an ineffective fashion) away from controllably many points. 
\end{remark}
\begin{remark}
We remark that with a little work the constant $C_0(M,\Lambda)$ could be made as precise as 
$$C_0(\text{sec}_M,\text{inj}_M,\Lambda),$$
where $\text{sec}_M=\sup_{M}|\text{sec}|, \text{inj}_M=\inf_{M}\text{inj}$ denote the sectional curvature bound of $M$ and the injectivity radius of $M$ respectively. 
\end{remark}
\begin{remark}
\label{singhajime}
Later, we say $x\in M$ is a singularity of $A_0$ if $s_{|A_0|}(x)=0$; see Definition \ref{4rad}.
\end{remark}
\medskip

\subsection{Preliminaries and terminologies}
Let $P$ be a principle bundle with simply-connected simple compact Lie group $G$ fiber, say $G\subseteq \text{SO}(k)$. The right action of a group element $g\in G$ on a bundle element $u\in P$ will be denoted by $ug$. We will narrow down to the $\text{SU}(2)$-bundles later on. Consider the left action of $\text{SO}(k)$ on $\mathbb{R}^k$ given by the matrix multiplication, and the $\mathbb{R}^k$-fiber bundle $E$ that is associated to $P$ via such an action. Then $E^k$ is isomorphic to a $\text{Mat}_{\mathbb{R}}(k\times k)$-fiber bundle associated to $P$ via left multiplication of matrices. More precisely, the action of a bundle element $u\in P$ on a matrix $\mathcal{M}\in \text{Mat}_{\mathbb{R}}(k\times k)$ denoted by $\lfloor u, \mathcal{M}\rfloor$ satisfies $\lfloor ug, \mathcal{M}\rfloor=\lfloor u, g\cdot \mathcal{M}\rfloor$ for all $g\in G$, where $g\cdot \mathcal{M}$ is the standard matrix multiplication. 
Next, recall that a section of $E^k$ (denoted by $\tilde{\sigma}$) is identified with a $\text{Mat}_{\mathbb{R}}(k\times k)$-valued tensorial function on $P$ (denoted by $\sigma$) in the following way (see \cite{KN} for details):
\begin{eqnarray}
\begin{split}
\label{tens}
\tilde{\sigma}(x)=\lfloor u, \sigma(u)\rfloor,\ \text{for all }u\in \pi^{-1}(x).
\end{split}
\end{eqnarray}
Now for arbitrarily fixed $x\in M$, consider a subset of the fiber $E^k_{x}$ given by $G_x=\lfloor u, G\rfloor$ for some $u\in P_x$. First let us notice that $G_x$ does not depend on the choice of $u$. Indeed, given any $u^{\prime}\in P_x$, there exists $g\in G$ such that $u^{\prime}=ug$. Using \eqref{tens} we have that $\lfloor u^{\prime},G\rfloor=\lfloor ug,G\rfloor=\lfloor u,g\cdot G\rfloor=\lfloor u,G\rfloor$. This leads to the following definition of frame.
\begin{definition}
Let $S$ be an open subset of $M$. We say $\Theta\in \Gamma(E^k)$ is a frame on $P$ over $S$, if $\Theta(x)\in G_x$ for almost all $x\in S$.
\end{definition}
Using \eqref{tens} again, the next proposition identifies the frames with the $P$-sections.
\begin{proposition}
\label{ggauge}
Let $\Theta$ be a section of $E^k$ with its associated tensorial $\text{Mat}_{\mathbb{R}}(k\times k)$-valued function $\tilde{\Theta}$, and $S$ be an open subset of $M$. Then $\Theta$ is a frame over $S$ if and only if there exists a section $\sigma:S\to P$ with $\Theta(\sigma(x))\equiv1_G$ (i.e. the identity matrix) for a.e. $x\in S$. 
\end{proposition}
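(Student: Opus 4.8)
The plan is to prove Proposition~\ref{ggauge} by translating the abstract statement $\Theta(x) \in G_x$ into the concrete statement $\tilde\Theta(u) \in G$ for the tensorial function $\tilde\Theta$, and then using the fact that $G$ acts simply transitively on itself to produce the section $\sigma$ pointwise. First I would record the key correspondence: by \eqref{tens}, writing $\Theta$ in terms of its tensorial function $\tilde\Theta$ (the paper uses $\sigma$ and $\tilde\sigma$; here I match the proposition's notation and write $\tilde\Theta$ for the function on $P$), we have $\Theta(x) = \lfloor u, \tilde\Theta(u)\rfloor$ for every $u \in \pi^{-1}(x)$, and the tensoriality relation $\tilde\Theta(ug) = g^{-1}\cdot \tilde\Theta(u)$ for all $g \in G$. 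Combining this with the identity $G_x = \lfloor u, G\rfloor$ (shown just before the proposition to be independent of $u \in P_x$), the condition ``$\Theta(x) \in G_x$'' becomes precisely ``$\tilde\Theta(u) \in G$ for one, hence every, $u \in \pi^{-1}(x)$''.

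For the direction ($\Leftarrow$): suppose such a section $\sigma: S \to P$ exists with $\tilde\Theta(\sigma(x)) = 1_G$ for a.e.\ $x$. Then at such $x$, taking $u = \sigma(x) \in P_x$ we get $\tilde\Theta(u) = 1_G \in G$, so $\Theta(x) = \lfloor u, 1_G\rfloor \in \lfloor u, G\rfloor = G_x$; hence $\Theta$ is a frame over $S$. For the direction ($\Rightarrow$): suppose $\Theta$ is a frame, so for a.e.\ $x \in S$ and any chosen $u \in P_x$ we have $\tilde\Theta(u) \in G$. I would then \emph{define} $\sigma(x) = u\,\tilde\Theta(u) \in P_x$; one checks this is independent of the choice of $u \in P_x$ using tensoriality (replacing $u$ by $ug$ sends $\tilde\Theta(u)$ to $g^{-1}\cdot\tilde\Theta(u)$, and $ug \cdot (g^{-1}\tilde\Theta(u)) = u\,\tilde\Theta(u)$, since the right $G$-action and left matrix multiplication by elements of $G$ are compatible), and that $\pi\circ\sigma = \mathrm{Id}$. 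Finally $\tilde\Theta(\sigma(x)) = \tilde\Theta\big(u\,\tilde\Theta(u)\big) = \big(\tilde\Theta(u)\big)^{-1}\cdot \tilde\Theta(u) = 1_G$ by tensoriality, as required.

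The remaining point, and the one place a little care is needed, is measurability: the pointwise construction $x \mapsto u(x)\,\tilde\Theta(u(x))$ must be shown to define an honest (measurable) section of $P$, not merely a set-theoretic choice. I would handle this by working in a local trivialization of $P$ over a coordinate chart $U \subset S$: there $P|_U \cong U \times G$, the function $\tilde\Theta$ restricted to the image of the trivializing section is a measurable $G$-valued map on $U$ (this is where simple-connectivity/compactness of $G$ and the standing regularity of $\Theta$ enter, ensuring $G_x$ is exactly the $G$-orbit and that $\tilde\Theta$ lands in $G$ a.e.), and the formula for $\sigma$ becomes multiplication by this measurable map, hence measurable; patching over a locally finite cover of $S$ gives a global section over $S$. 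I expect this measurability/gluing bookkeeping to be the main (though routine) obstacle, while the algebraic heart — that ``$\Theta$ a frame'' $\iff$ ``$\tilde\Theta$ is $G$-valued'' $\iff$ ``$\Theta$ is gauge-equivalent to the constant frame $1_G$'' — is immediate from \eqref{tens} and the $u$-independence of $G_x$.
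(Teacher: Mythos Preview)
Your proof is correct and follows essentially the same approach as the paper: for the forward direction you define $\sigma(x)=u\,\tilde\Theta(u)$ (the paper writes $s_x=p\,\tilde\Theta(p)$), check well-definedness via tensoriality, and verify $\tilde\Theta(\sigma(x))=1_G$; for the converse you observe that $\tilde\Theta$ is $G$-valued at points where $\tilde\Theta(\sigma(x))=1_G$. The only difference is that you add a measurability discussion for $\sigma$, which the paper omits entirely---this is a reasonable (and arguably more careful) addition, but the algebraic core is identical.
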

\begin{proof}
Let us start with a frame $\Theta$ over $S$. Since $\tilde{\Theta}$ is tensorial and $G$-valued, we see that $s_x=p\tilde{\Theta}(p)$ is independent of $p\in P_x$. Obviously, we have $\tilde{\Theta}(s_x)=1_G$. By defining $\sigma$ by $\sigma(x)=s_x$, we then complete the proof of one direction. Conversely, if there exists a section $\sigma:S\to P$ such that $\Theta(\sigma(x))\equiv1_G$ for each $x\in S$, then by writing each $p\in P_x$ as $p=\sigma(x)g$ for some $g\in G$ we see that $\tilde{\Theta}(p)=g^{-1}\in G$. In other words, $\tilde{\Theta}$ is $G$-valued and therefore $\Theta$ is a frame over $S$.
\end{proof}

When $M$ is a compact $4$-dimensional manifold, it is well known that there exists a frame over $M$ that is smooth away from finitely many points, such that the sum of the degree at each singularity equals to the second Chern number $c_2$. See also discussions in Subsection \ref{prep}. Any frame $\Theta_0\in \Gamma(E^k)$ could be uniquely written as $(U_1,\cdots,U_k)$, where $U_i\in \Gamma(E)$ for each $i$. Now define 
\begin{eqnarray}
\begin{split}
A^{\alpha}_{\beta,i}=\langle(\nabla_AU_{\beta})(\partial_i),U_{\alpha}\rangle_E.
\end{split}
\end{eqnarray}
An elementary argument shows that when $\Theta_0$ is a frame, the $\text{Mat}_{\mathbb{R}}(k\times k)$-valued $1$-form given by $A_*=\bigg{(}A^{\alpha}_{\beta,i}dx^i\bigg{)}_{\alpha,\beta}$ is a $\mathfrak{g}$-valued $1$-form, where $\mathfrak{g}$ is the Lie algebra of $G$. 
\begin{definition}
\label{asso}
$A_0=\bigg{(}A^{\alpha}_{\beta,i}dx^i\bigg{)}_{\alpha,\beta}\in \Lambda^1(M,\mathfrak{g})$ is called the associated connection form associated to $\Theta$. 
\end{definition}
Furthermore, we say a frame $\Theta_0$ is a Coulomb frame, if its associated connection form $A_0$ satisfies
\begin{eqnarray}
\begin{split}
d^*A_0=-g^{ij}\iota_{\partial_i}\nabla_{\partial_j}A_0=0
\end{split}
\end{eqnarray}
weakly on $M$. Using the terminologies introduced in this subsection, we are ready to give an outline of the proofs and techniques of Theorem \ref{main}.

\subsection{Outline of the proofs and techniques}
In order to prove Theorem \ref{main}, we need to first find a Coulomb frame $\Theta_0\in \Gamma(E^k)$, and then prove that its associated connection form $A_0\in \Lambda^1(M,\mathfrak{g})$ satisfies the desired estimate
\begin{eqnarray}
\begin{split}
\|A_0\|_{\mathcal{L}^{4,\infty}(M)}\le C(M,\|F_A\|_{L^2(M)}).
\end{split}
\end{eqnarray}
To find a Coulomb frame is not difficult at all. Indeed we simply minimize the functional $\int_M|\nabla_A\Theta|^2dV_g$ over all $W^{1,2}$ frames $\Theta$, and obtain a minimizing sequence $\Theta_i$. It is an easy consequence of the lower semi continuity of the $W^{1,2}$ norm under the weak $W^{1,2}$-convergence that a minimizer exists. From now on let us fix a minimizer $\Theta_0$, and denote its associated connection form by $A_0$. By using the Euler-Lagrangian equation satisfied by $\Theta_0$ it is not hard to show that $d^*A_0=0$ weakly. In other word, $\Theta_0$ is a Coulomb frame. Hence, let us refer to $\Theta_0$ as a Coulomb minimizer. Nevertheless, no a-priori control over $A_0$ or $\Theta_0$ was known to exist. The goal of this paper is to show that $A_0$ admits the desired $\mathcal{L}^{4,\infty}$ (see Definition \ref{curlyl4infty}) estimate when $G=\text{SU}(2)$. For the sake of the outline, let us assume that $A$ is smooth, and the proof for general $W^{1,2}$-connections requires simply a standard approximation argument, which will be given in Section \ref{asc}.
\medskip

Let us begin by introducing a fundamental tool in tackling this problem, namely the following $\epsilon$-regularity theorem (see also Theorem \ref{eps}): 
\begin{theorem}
\label{epsintro}
There exists $\epsilon_0(G), \eta_0(G), r(M)$, such that if $\epsilon_0<\epsilon(G,M), \eta_0<\eta_0(G,M), r<r(G,M)$, and 
\begin{eqnarray}
\begin{split}
\label{2smallness}
\int_{B_r(x)}|F_A|^2<\epsilon_0, r^{-2}\int_{B_r(x)}|A_0|^2<\eta_0,
\end{split}
\end{eqnarray}
then the following estimate holds:
\begin{eqnarray}
\begin{split}
\int_{B_{r/2}(x)}|\nabla_A \Theta_0|^4+\int_{B_{r/2}(x)}|\nabla_A\nabla_A \Theta_0|^2\le C\bigg{(}\int_{B_r(x)}|F_A|^2+\big{(}r^{-2}\int_{B_r(x)}|A_0|^2\big{)}\bigg{)}.
\end{split}
\end{eqnarray}
\end{theorem}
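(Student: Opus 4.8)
The plan is to prove this $\epsilon$-regularity statement by the standard Uhlenbeck-type bootstrap, adapted to the frame (rather than the connection) and with the $L^2$-smallness of $A_0$ replacing the usual gauge-fixing step. First I would work in a good local gauge: by Uhlenbeck's theorem, since $\int_{B_r(x)}|F_A|^2<\epsilon_0$, after a gauge transformation the connection form $\tilde A$ of $A$ on $B_r(x)$ is in Coulomb gauge with $r^{-2}\|\tilde A\|_{L^2}^2 + \|\nabla\tilde A\|_{L^2}^2 \le C\int_{B_r(x)}|F_A|^2$ and hence the Sobolev-type bound $\|\tilde A\|_{L^4(B_r)}^2 \le C\int_{B_r}|F_A|^2$. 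In this gauge the Coulomb minimizer $\Theta_0$, written as a $G$-valued matrix function $g=\tilde\Theta_0$, satisfies its Euler--Lagrange equation, which after expanding $\nabla_A = d + \tilde A$ takes the schematic form
\begin{eqnarray}
\begin{split}
\Delta g = g\,(\text{quadratic in }dg + \tilde A g) + \tilde A * dg + (\text{lower order in }\tilde A),
\end{split}
\end{eqnarray}
i.e. a system with an antisymmetric/Jacobian-type quadratic gradient nonlinearity plus terms linear in the small datum $\tilde A$. The key structural point, exactly as in harmonic map regularity (Rivière, or the earlier Hélein moving-frame argument), is that the leading quadratic term $\nabla g \cdot \nabla g$ appears in the antisymmetric combination $\nabla g\, g^{-1}$, so it has an $L^2$-divergence structure that makes Wente/Coifman--Lions--Meyer--Semmes-type compensated-compactness estimates available.

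The core of the argument is a decay/smallness lemma. On $B_r(x)$ I would split $g = h + (g-h)$, where $h$ is harmonic with $h=g$ on $\partial B_r$, and estimate the error $g-h$ in $W^{1,2}$ by pairing the equation against $g-h$: the antisymmetric quadratic term is controlled by the Wente inequality by $C\|\nabla g\|_{L^2}^2$, and the $\tilde A$-terms by $C(r^{-2}\|\tilde A\|_{L^2}^2 + \|\tilde A\|_{L^4}^2)\|\nabla g\|_{L^2} \le C\epsilon_0 \|\nabla g\|_{L^2}$ using the Uhlenbeck bounds. Because $h$ is harmonic, $\int_{B_{\theta r}}|\nabla h|^2 \le C\theta^4 \int_{B_r}|\nabla h|^2$ for $\theta\in(0,1)$, and combining this with the error estimate, plus the elementary bound $\int_{B_r}|\nabla_A\Theta_0|^2 = \int_{B_r}|\nabla g + \tilde A g|^2 \le C(\int |\nabla g|^2 + \int|F_A|^2)$ and the fact that $\int_{B_r}|\nabla_A\Theta_0|^2$ is itself small once $\eta_0$ (through $\|\tilde A\|_{L^2}$) and $\epsilon_0$ are small — here one uses that $\Theta_0$ is a \emph{minimizer}, so comparing with a suitable competitor shows $\int_{B_r}|\nabla_A\Theta_0|^2 \le C(\epsilon_0 + \eta_0)$ — one obtains a Morrey-type decay $\int_{B_{\rho}(y)}|\nabla_A\Theta_0|^2 \le C(\rho/r)^{2+2\delta}(\int_{B_r}|F_A|^2 + r^{-2}\int_{B_r}|A_0|^2)$ for all $B_\rho(y)\subset B_{r/2}(x)$ and some $\delta>0$. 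This Morrey bound is precisely what feeds the standard elliptic theory: rewriting $\Delta g$ with the nonlinearity now known to lie in the right Morrey class, Adams--Morrey embedding and $L^p$-elliptic estimates upgrade $\nabla g \in L^4$ with the quantitative bound, and then differentiating the equation once more gives $\nabla^2 g \in L^2$ likewise. Translating back from $g=\tilde\Theta_0$ and $\tilde A$ to $\nabla_A\Theta_0$ and $\nabla_A\nabla_A\Theta_0$ (which only introduces harmless terms controlled by $\|\tilde A\|_{L^4}$, $\|\nabla\tilde A\|_{L^2}$, hence by $\int|F_A|^2$) yields the stated inequality.

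The main obstacle, and the step that needs the most care, is the decay lemma: making sure the Wente-type estimate closes, i.e. that the constant in front of $\|\nabla g\|_{L^2}^2$ can be absorbed. This requires first establishing that $\int_{B_r(y)}|\nabla_A\Theta_0|^2$ is uniformly small on the relevant scale — which is where both smallness hypotheses $\epsilon_0$ and $\eta_0$ enter and where the minimizing property of $\Theta_0$ is essential (an arbitrary Coulomb frame would not have small energy) — and then a continuity/iteration argument in the radius to propagate smallness down to all sub-balls. A secondary technical point is that $G=\mathrm{SU}(2)$ is \emph{not} assumed here (the theorem is for general compact $G$), so one cannot use any special algebraic identity of $\mathrm{SU}(2)$; one must rely only on the generic antisymmetry $\langle (\nabla_A U_\beta)(\partial_i), U_\alpha\rangle = -\langle (\nabla_A U_\alpha)(\partial_i), U_\beta\rangle$ coming from $\Theta_0$ being a frame (metric-preserving), which is exactly what provides the compensated structure. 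Everything else — the Uhlenbeck gauge, the harmonic comparison, the Adams--Morrey bootstrap — is by now routine, so I would present those steps compactly and concentrate the detail on the energy-smallness and the absorption in the Wente estimate.
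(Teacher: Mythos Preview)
Your outline captures the right ingredients (Uhlenbeck gauge, compensated structure from the frame being orthogonal, harmonic comparison, Hardy/BMO), but it underestimates the central difficulty and misplaces the role of minimality.

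\textbf{The real obstruction.} You write that after the one-step decay lemma one runs ``a continuity/iteration argument in the radius to propagate smallness down to all sub-balls,'' as if this were routine. It is not. The almost-monotonicity formula \eqref{mono} has an error term $2\rho^{-3}\int_{B_\rho}\langle F\cdot,\nabla_A\Theta_0\rangle x$ which, integrated from scale $\rho$ up to $\rho_0$, grows like $|\log(\rho/\rho_0)|\sqrt{\epsilon_0}$. So even with $\epsilon_0$ small, a naive iteration of a one-step decay estimate does \emph{not} keep $\rho^{-2}\int_{B_\rho}|\nabla_A\Theta_0|^2$ small as $\rho\to 0$: the accumulated error diverges. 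The paper's Proposition~\ref{smallnessprop} handles this by an induction on a discrete sequence of scales $\rho_l$ with bounded ratio, alternating two mechanisms: a Luckhaus-type comparison (Lemma~\ref{prerepo}/\ref{repo}), which uses the \emph{minimizing} property at each scale to trade energy against $L^2$-oscillation, and a harmonic-approximation step (Lemma~\ref{inde}) to decay the oscillation. At each stage a dichotomy (Cases (a)--(f)) decides whether the scheme advances directly or whether the failure of the hypothesis in Lemma~\ref{repo} already forces a definite energy drop; Case~(f) is ruled out by applying \eqref{mono} only between two \emph{consecutive} scales, where the logarithmic error is harmless. Your proposal does not contain this mechanism, and without it the iteration you describe does not close.

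\textbf{Two related issues.} First, you invoke the Wente inequality to bound the quadratic term in $\int|\nabla(g-h)|^2$ by $C\|\nabla g\|_{L^2}^2$. That estimate is a 2D phenomenon; in dimension~4 there is no direct Wente bound of this form, and the div-curl/Hardy-space structure only becomes useful \emph{after} one already knows $[\Theta^*]_{\mathrm{BMO}}$ is small on the relevant ball---which is exactly what the scale-propagation (Proposition~\ref{smallnessprop}, Corollary~\ref{a0bound}) provides. The paper then uses the $\mathcal H^1$--BMO duality in Proposition~\ref{simon3} to get the effective $L^4$ bound, not to run the decay lemma itself. Second, you use minimality only to obtain initial smallness of $\int_{B_r}|\nabla_A\Theta_0|^2$; but that quantity equals $\int_{B_r}|A_0|^2$, whose smallness is a hypothesis. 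The minimizing property is instead used \emph{at every intermediate scale} in the Luckhaus comparison (proof of Lemma~\ref{prerepo}), and this is what replaces monotonicity.

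In short: your architecture would work for a stationary harmonic map in 4D, where genuine monotonicity makes the scale-propagation automatic. Here monotonicity fails, and the substantive content of the theorem is precisely the argument you labeled routine.
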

Indeed, imagine for the moment that the curvature $F_A$ vanishes. Then $\Theta_0$ becomes a classical minimizing harmonic map into $G$. R. Schoen and K. Uhlenbeck (\cite{SU82}) proved an $\epsilon$-regularity theorem which claims that the smallness of the scaling invariant energy of a minimizing map implies the effective interior smoothness. Naturally, one expects $\Theta_0$ to behave like a minimizing map into $G$ when the curvature is small, and to possess an analogous $\epsilon$-regularity property as well, which is exactly what Theorem \ref{eps} concludes. Nevertheless, in the presence of a curvature that is small in $L^2$ but non-vanishing, the effective $L^4$ and $W^{1,2}$ estimates on $A_0$ is the best that one can achieve, instead of the effective smoothness as in Schoen and Uhlenbeck's $\epsilon$-regularity theorem. Roughly speaking, the reason is that one should not expect $A_0$ to possess better regularity than $W^{1,2}$ simply based on the knowledge of $L^2$ of $F_A$, which one should think of as the derivative of $A_0$ heuristically. For the sake of future convenience, we say a point $x_0$ is a singularity of $\Theta_0$ if $|\nabla\Theta_0|\equiv |A_0|$ is not locally $L^4$ integrable at $x_0$; in other word, $x_0$ is a singularity if $s_{|A_0|}(x)=0$. Now we are ready to present the outline of the proof of $\|A_0\|_{\mathcal{L}^{4,\infty}(M)}\le C_0(M,\Lambda)$, and from now on we restrict our attention to the $\text{SU}(2)$ bundles. As mentioned in Remark \ref{singhajime}, we define the singular set $S(\Theta_0)$ as $\{x\in M: s_u(x)=0\}$.
\medskip

To begin, let us focus on proving the following theorem; see also Theorem \ref{4localeps}:
\begin{theorem}
\label{4localepsintro}
There exists $\epsilon_0(G)$, $r(M)$, and $C_0$ such that if for all $B_{4r}(x_0)\subseteq M$ with $10r<r(M)$ and $\int_{B_{4r}(x_0)}|F_A|^2<\epsilon_0$, then 
\begin{eqnarray}
\begin{split}
\label{c1epsilon}
\|A_0\|_{\mathcal{L}^{4,\infty}(B_{r}(x_0))}\le C_0.
\end{split}
\end{eqnarray}
\end{theorem}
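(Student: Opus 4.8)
The plan is to deduce the local $\mathcal{L}^{4,\infty}$-bound of Theorem \ref{4localepsintro} from the $\epsilon$-regularity statement of Theorem \ref{epsintro} by a covering/energy-decomposition argument. First I would fix $B_{4r}(x_0)\subseteq M$ with $\int_{B_{4r}(x_0)}|F_A|^2<\epsilon_0$ and, for each $x\in B_r(x_0)$, try to produce a definite lower bound on the $L^4$-integrability radius $s_{|A_0|}(x)$ in terms of the local Coulomb energy $\rho^{-2}\int_{B_\rho(x)}|A_0|^2$. The point is that the curvature smallness on $B_{4r}(x_0)$ is automatic once we shrink radii, so the only obstruction to invoking Theorem \ref{epsintro} is controlling the scale-invariant $L^2$-norm of $A_0$; on any ball where $\rho^{-2}\int_{B_\rho(x)}|A_0|^2<\eta_0$ the $\epsilon$-regularity theorem gives $\int_{B_{\rho/2}(x)}|\nabla_A\Theta_0|^4\le C(\epsilon_0+\eta_0)$, which after rescaling yields a concrete inequality of the form $s_{|A_0|}(x)\gtrsim \rho$ whenever the Coulomb energy at scale $\rho$ is below threshold. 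Hence, setting $t(x)$ to be (a fixed multiple of) the largest such $\rho$, one gets the pointwise comparison $1/s_{|A_0|}(x)\le C/t(x)$.

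The next step is a Vitali-type covering argument to control $\|1/t\|_{L^{4,\infty}(B_r(x_0))}$. For $\lambda>0$ the super-level set $\{x:1/t(x)>\lambda\}=\{x:t(x)<1/\lambda\}$ consists of points where the Coulomb energy $\rho^{-2}\int_{B_\rho(x)}|A_0|^2$ first reaches $\eta_0$ at some scale $\rho\simeq 1/\lambda$; by the definition of $t(x)$ this means $(1/\lambda)^{-2}\int_{B_{c/\lambda}(x)}|A_0|^2\gtrsim \eta_0$, i.e. $\int_{B_{c/\lambda}(x)}|A_0|^2\gtrsim \eta_0\lambda^{-2}$. Covering this set by balls of radius $\sim 1/\lambda$ with bounded overlap (Vitali), and summing the energy lower bounds, the total $L^2$-energy $\int_{B_{4r}(x_0)}|A_0|^2$ must dominate $N\cdot\eta_0\lambda^{-2}$ where $N$ is the number of balls, whence the measure of the super-level set is $\lesssim N(1/\lambda)^4\lesssim \eta_0^{-1}\lambda^{-2}\cdot\lambda^{-4}\cdot\lambda^{2}\,\|A_0\|_{L^2}^2=\eta_0^{-1}\lambda^{-4}\|A_0\|_{L^2(B_{4r})}^2$. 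That is exactly the weak-$L^4$ decay $\lambda^4|\{1/t>\lambda\}|\le C\|A_0\|_{L^2(B_{4r}(x_0))}^2$, and since $\|A_0\|_{L^2(B_{4r}(x_0))}$ is controlled by $\|F_A\|_{L^2}$ through the Coulomb minimizing property (the frame $\Theta_0$ minimizes $\int|\nabla_A\Theta|^2$, so its energy is bounded by that of any test frame, e.g.\ a standard frame with controlled energy away from $c_2$-many points), one concludes $\|A_0\|_{\mathcal{L}^{4,\infty}(B_r(x_0))}\le C_0$.

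The delicate point I would have to argue carefully is that the comparison $1/s_{|A_0|}(x)\le C/t(x)$ genuinely holds with $t(x)$ defined purely via the $L^2$-Coulomb-energy threshold, i.e.\ that Theorem \ref{epsintro} can be upgraded from ``small energy on one fixed ball $B_r(x)$'' to ``definite $L^4$-integrability radius at $x$''. This requires checking that the $\epsilon$-regularity estimate is scale-invariant in the right way (rescaling $B_\rho(x)$ to unit size multiplies $\int|\nabla_A\Theta_0|^4$ by $\rho^0$ in dimension $4$) and that one can pass from ``$\int_{B_{\rho/2}(x)}|A_0|^4\le C_1$'' on a single ball to the supremum-over-subballs form in the definition of $s_{|A_0|}$; the latter is handled by applying the $\epsilon$-regularity conclusion at a slightly larger scale, or by noting the monotonicity of the Coulomb energy quantity, so that the smallness hypothesis persists on all concentric subballs. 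A second technical nuisance is the interaction of the covering argument with the manifold geometry (geodesic balls, volume comparison), but this only costs dimensional constants depending on $\sec_M$ and $\inj_M$, absorbed into $C_0$, and is the reason the threshold radius $r(M)$ appears. The main obstacle, then, is purely the first step: extracting a \emph{quantitative} lower bound on the integrability radius from the qualitative $\epsilon$-regularity theorem in a form robust enough to feed the weak-type covering estimate — once that is in place, the rest is a standard Vitali argument.
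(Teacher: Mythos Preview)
The covering argument in your second paragraph does not yield the $L^{4,\infty}$ bound you claim; the arithmetic hides a scaling mismatch. With $N$ disjoint Vitali balls of radius $\sim 1/\lambda$ in the super-level set $\{t(x)<1/\lambda\}$, each ball carries $\int_{B_{c/\lambda}(x)}|A_0|^2\gtrsim\eta_0\lambda^{-2}$, so summing gives $N\lesssim\eta_0^{-1}\lambda^{2}\|A_0\|_{L^2(B_{4r})}^2$. The measure of the super-level set is then $\lesssim N\lambda^{-4}\lesssim\eta_0^{-1}\lambda^{-2}\|A_0\|_{L^2(B_{4r})}^2$, which is only the decay $\lambda^2|\{1/t>\lambda\}|\le C$, not $\lambda^4|\{1/t>\lambda\}|\le C$. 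Your displayed string $\eta_0^{-1}\lambda^{-2}\cdot\lambda^{-4}\cdot\lambda^{2}$ contains one spurious factor. This is not an accident of bookkeeping: an $L^2$-energy pigeonhole in dimension $4$ can never produce better than $L^{2,\infty}$ control on $1/t$, because the scale-invariant energy at scale $\rho$ carries weight $\rho^2$ rather than $\rho^4$.

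The paper's route is therefore genuinely different and uses structure you have not invoked. It first proves (Theorem~\ref{apr}) that once $S(\Theta_0)\cap B_{2\rho}(y)=\emptyset$ and the curvature is small, the regularity radius $r_{\Theta_0,\epsilon_0}$ is bounded \emph{below} by $c_0\rho$ on $B_\rho(y)$; this is a blow-up/contradiction argument that uses the compactness of Coulomb minimizers (Section~\ref{sta}) together with the rigidity of stable-stationary harmonic maps $\mathbb{R}^4\to\mathbb{S}^3$ to rule out a nontrivial smooth limit with bounded normalized energy. Second (Theorem~\ref{numbersing}), under the same curvature smallness the number of singularities in $B_{3\rho/2}(y)$ is at most a universal $N_0$; this again relies on the $\text{SU}(2)\cong\mathbb{S}^3$ singularity theory of Lin--Wang. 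Combining these, $s_{|A_0|}(x)\ge c_*\,r_{\Theta_0,\epsilon_0}(x)\gtrsim \mathrm{dist}\big(x,S(\Theta_0)\big)$ away from at most $N_0$ points, and since $|x-p|^{-1}\in L^{4,\infty}$ near each $p\in\mathbb{R}^4$ this immediately gives $\|1/s_{|A_0|}\|_{L^{4,\infty}(B_r(x_0))}\le C_0$. The $L^{4,\infty}$ exponent comes from the \emph{zero-dimensionality} of the singular set, not from an energy budget.

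Two smaller issues. The scale-invariant quantity $\rho^{-2}\int_{B_\rho(x)}|A_0|^2$ is \emph{not} monotone here---the identity \eqref{mono} carries a curvature error that is not integrable in $\rho$---so your appeal to ``monotonicity of the Coulomb energy quantity'' to propagate smallness to subballs is unavailable; the paper handles this inside the proof of Theorem~\ref{eps} by an induction on scales. And the global $L^2$ bound on $A_0$ you invoke via comparison to a test frame depends on $A$ itself, not only on $\|F_A\|_{L^2}$; the correct local bound $\rho^{-2}\int_{B_\rho}|A_0|^2\le C$ comes from the stability inequality (Theorem~\ref{stathm}), but as noted this is again only an $L^2$-scale bound and does not rescue the covering argument.
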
 
The smallness of $\epsilon_0$ is always subject to further decrements along the outline. We also note that in this theorem we remove the smallness condition on $r^{-2}\int_{B_r(x)}|A_0|^2$ as in the $\epsilon$-regularity theorem \ref{epsintro}. Correspondingly, instead of $L^4$-estimate on $A_0$ we achieve the weaker estimate $\mathcal{L}^{4,\infty}$. The proof of Theorem \ref{4localepsintro} would require two key ingredients. The first is the following theorem; see also Theorem \ref{apr} for its full statement: 
\begin{theorem}
\label{radbelow}
There exist positive constants $\epsilon_0(G)$ and $c_0\le10^{-4}$, such that if $\int_{B_{2r}(y_0)}|F_A|^2<\epsilon_0$ and $\text{Sing}(\Theta_0)\cap B_{2r}(y_0)=\emptyset$, then for each $x\in B_{r}(y_0)$ it holds $s_{|A_0|}(x)>c_0r$.
\end{theorem}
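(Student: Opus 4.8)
The plan is to derive a lower bound on the $L^4$-integrability radius $s_{|A_0|}$ from the $\epsilon$-regularity theorem by a covering and iteration (``hole-filling'') argument. The key point is that Theorem \ref{epsintro} requires \emph{two} smallness conditions — the curvature condition $\int_{B_r}|F_A|^2<\epsilon_0$, which we have on the whole ball $B_{2r}(y_0)$ by hypothesis, and the connection condition $r^{-2}\int_{B_r}|A_0|^2<\eta_0$, which we do \emph{not} a priori have. So the heart of the proof is to show that, on any ball on which $\Theta_0$ is regular (no singularities) and the curvature is small, the scale-invariant quantity $\rho^{-2}\int_{B_\rho(x)}|A_0|^2$ is automatically small at some definite scale $\rho \sim c_0 r$, after which $\epsilon$-regularity upgrades this to the $L^4$ bound $\int_{B_{\rho/2}(x)}|A_0|^4 \le C(\epsilon_0+\eta_0)\le 1$ (shrinking $\epsilon_0,\eta_0$), which is exactly the statement $s_{|A_0|}(x) > c_0 r$.

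The mechanism for producing the smallness of $\rho^{-2}\int_{B_\rho}|A_0|^2$ should be a monotonicity-type estimate combined with the fact that $\Theta_0$ is a Coulomb \emph{minimizer}. First I would record that, because $\Theta_0$ minimizes $\int |\nabla_A\Theta|^2$, one has an energy comparison: replacing $\Theta_0$ on a ball $B_\rho(x)$ by a suitable competitor (e.g.\ a zero-homogeneous extension of its boundary trace, or a comparison with a harmonic-map-type extension in a Coulomb gauge) yields a decay inequality of the form
\begin{eqnarray}
\begin{split}
\theta^{-2}\int_{B_{\theta\rho}(x)}|\nabla_A\Theta_0|^2 \le \tfrac12\,\rho^{-2}\int_{B_\rho(x)}|\nabla_A\Theta_0|^2 + C\!\int_{B_\rho(x)}|F_A|^2
\end{split}
\end{eqnarray}
for a fixed $\theta\in(0,1)$, valid as long as the ball is free of singularities and the curvature is small — this is the ``minimizing'' analogue of the almost-monotonicity used by Schoen--Uhlenbeck. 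Iterating this inequality geometrically over dyadic scales from $2r$ down to $\sim c_0 r$ (the curvature term is summably small since $\int_{B_{2r}}|F_A|^2<\epsilon_0$ at every stage) drives $\rho^{-2}\int_{B_\rho(x)}|A_0|^2$ below $\eta_0$ at a scale $\rho=c_0 r$ with $c_0$ depending only on $\theta$ and the number of iterations, hence only on $G$; the constant $c_0\le 10^{-4}$ is then just a cosmetic normalization. One subtlety is that $|\nabla_A\Theta_0|$ equals $|A_0|$ only after identifying the frame with the connection form as in Definition \ref{asso}, so I would be careful to run the decay estimate for the gauge-invariant quantity $|\nabla_A\Theta_0|$ and translate at the end.

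After the decay iteration, I would apply Theorem \ref{epsintro} on $B_{\rho}(x)$ for $\rho = c_0 r$ (legitimately, since both smallness hypotheses now hold) to get $\int_{B_{\rho/2}(x)}|A_0|^4 \le C(\epsilon_0 + \eta_0)$, and then choose $\epsilon_0$ (and the implicitly-determined $\eta_0$) small enough that the right side is $\le 1$. Since this holds for \emph{every} $x\in B_r(y_0)$ with a uniform radius $\rho/2 = c_0 r/2$ on which $\sup_{z\in B_{\rho/2}(x)}\int_{B_{\rho/2}(z)}|A_0|^4\le 1$ (the sup being harmless after halving constants again), the definition \eqref{4rad} gives $s_{|A_0|}(x) \ge c_0 r/2$, and a final relabeling of $c_0$ finishes the proof. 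The main obstacle I expect is establishing the minimizing-type decay inequality cleanly in the presence of nonzero curvature and in a Riemannian (not flat) background: one must construct an admissible competitor frame that is genuinely $W^{1,2}$ and $G$-valued — i.e.\ lies in $\Gamma(E^k)$ as a frame, per Proposition \ref{ggauge} — while controlling the error from the curvature $F_A$ and from the metric distortion; this is where the $\text{SU}(2)$ (or at least simply-connected compact) hypothesis and Uhlenbeck's small-energy gauge from \cite{U82a} will likely be invoked to build the competitor on the good annulus.
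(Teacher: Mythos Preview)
The proposed decay inequality
\[
\theta^{-2}\int_{B_{\theta\rho}(x)}|\nabla_A\Theta_0|^2 \le \tfrac12\,\rho^{-2}\int_{B_\rho(x)}|\nabla_A\Theta_0|^2 + C\!\int_{B_\rho(x)}|F_A|^2
\]
is the heart of your argument, and it does not hold. Take $F_A\equiv 0$ so that $\Theta_0$ is a minimizing harmonic map into $\text{SU}(2)\cong\mathbb{S}^3$. Your inequality would then force geometric decay of the scale-invariant energy at \emph{every} point and \emph{every} scale, hence full regularity of every minimizing map---but $x\mapsto x/|x|$ is minimizing from $B^4$ into $\mathbb{S}^3$ and singular at the origin. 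The competitor constructions you invoke do not give what you claim: the zero-homogeneous extension yields only monotonicity (no factor $<1$), and the Luckhaus-type comparison (this is Lemma~\ref{prerepo} in the paper) produces a decay factor only \emph{after} one already knows $\rho^{-4}\int_{B_\rho}|\Theta^*-\lambda|^2$ is small---precisely the smallness you are trying to establish. The hypothesis $\text{Sing}(\Theta_0)\cap B_{2r}(y_0)=\emptyset$ is purely qualitative: it tells you the scale-invariant energy at each point eventually drops below $\eta/2$, but says nothing about the scale at which this happens, so it cannot be fed into an iteration that starts from the top scale.

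The paper's proof is entirely different and not elementary: it is a compactness/contradiction argument that depends on the special target $\mathbb{S}^3$. Assuming failure, one performs a point-selection to find $x_i$ with $r_{\Theta_{0,i}}(x_i)\to 0$ but $r_{\Theta_{0,i}}(\cdot)\ge \tfrac{1}{100}r_{\Theta_{0,i}}(x_i)$ on a much larger ball (this step uses the qualitative positivity of $r_{\Theta_{0,i}}$ from the no-singularity hypothesis to terminate the selection). Rescaling and passing to the limit via Corollary~\ref{keycoro} produces a \emph{smooth}, \emph{nontrivial} stable-stationary harmonic map $\Theta_\infty:\mathbb{R}^4\to\mathbb{S}^3$ with $\sup_R R^{-2}\int_{B_R}|d\Theta_\infty|^2<\infty$ (the bound comes from Theorem~\ref{stathm}, the stability estimate). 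One then blows \emph{down}: the blowdown is a nontrivial stable-stationary cone, which by the rigidity theorem of \cite{LW06}/\cite{naka06} must be $T(x/|x|)$ for some $T\in O(3)$, hence of degree $\pm 1$ on $\partial B_1$. But the blowdown convergence is smooth away from $0$, so the degree is inherited from the smooth map $\Theta_\infty$, which has degree $0$---contradiction. The $\mathbb{S}^3$ rigidity is essential; there is no hole-filling shortcut.
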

Roughly speaking, this theorem says that the ineffective $L^4$-regularity implies the effective $L^4$-estimate. The second key ingredient is the singular structure analysis in Section \ref{tconestru}, summarized in the following theorems:
\begin{theorem}
$\text{Sing}(\Theta_0)$ consists of isolated singularities. Furthermore, at each singularity, a tangent cone is given by $U:\mathbb{R}^4\backslash\{0\}\to \mathbb{S}^3$, $x\mapsto T(\frac{x}{|x|})$ for some $T\in O(3)$.
\end{theorem}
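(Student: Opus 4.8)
The plan is to carry out the classical blow-up analysis for energy minimizers, exploiting that the connection $A$ is negligible at small scales: since $A$ is assumed smooth here, $\int_{B_r(x_0)}|F_A|^2\le\|F_A\|_{L^\infty}^2\,\mathrm{vol}(B_r)\to0$ as $r\to0$, so a rescaled sequence of the twisted minimizer $\Theta_0$ at a singular point converges to an \emph{honest} (untwisted) energy-minimizing harmonic map from $\mathbb{R}^4$ into $\mathbb{S}^3\cong\mathrm{SU}(2)$, and one reads off the local structure from the structure theory of such maps, à la Schoen--Uhlenbeck and Brezis--Coron--Lieb.

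Concretely, I would first invoke the two analytic inputs already in hand: the almost-monotonicity of the normalized energy $\theta(x_0,r):=r^{-2}\int_{B_r(x_0)}|\nabla_A\Theta_0|^2$ up to an error that is $O\big(\int_{B_r(x_0)}|F_A|^2\big)$ — so the density $\Theta(x_0):=\lim_{r\to0}\theta(x_0,r)$ exists — and the $\epsilon$-regularity Theorem~\ref{epsintro} together with Theorem~\ref{radbelow}, which upgrade this to the dichotomy that $x_0$ is a singular point precisely when $\Theta(x_0)\ge\epsilon_0$ for a fixed $\epsilon_0>0$. Then, for $x_0\in\mathrm{Sing}(\Theta_0)$ and $\lambda\downarrow0$, the rescalings $\Theta_\lambda(x):=\Theta_0(x_0+\lambda x)$ minimize the energy of the rescaled connection $A_\lambda$, have energy uniformly bounded on compacts (monotonicity), and satisfy $\int_{B_R}|F_{A_\lambda}|^2\to0$. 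Using the $\epsilon$-regularity theorem one obtains uniform interior $W^{1,4}\cap W^{2,2}$ bounds away from finitely many points, hence $\Theta_\lambda\to U$ strongly in $W^{1,2}_{\mathrm{loc}}$ on the complement of a finite set, with $U:\mathbb{R}^4\to\mathbb{S}^3$ weakly harmonic, nonconstant (its density equals $\Theta(x_0)\ge\epsilon_0$), and $0$-homogeneous (the almost-monotonicity passes to the limit and $\theta_U(\cdot,r)$ is forced constant). Thus $U(x)=\phi(x/|x|)$ with $\phi:\mathbb{S}^3\to\mathbb{S}^3$ a smooth harmonic eigenmap, $\Delta_{\mathbb{S}^3}\phi=-\lambda\phi$ and $|\nabla_{\mathbb{S}^3}\phi|^2\equiv\lambda$.

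The classification now proceeds on two fronts. For the dimension of the singular set, one runs the Federer--Almgren dimension reduction for minimizers, using that there are no nonconstant $0$-homogeneous minimizing harmonic maps $\mathbb{R}^m\to\mathbb{S}^3$ for $m\le3$: in $m=2$ the homogeneous extension of any nonconstant map $\mathbb{S}^1\to\mathbb{S}^3$ has infinite energy, and in $m=3$ it follows from $\pi_2(\mathbb{S}^3)=0$ together with Schoen--Uhlenbeck's analysis of point singularities (a degree-$d$ singularity of a map into $\mathbb{S}^3$ has $d\in\pi_2(\mathbb{S}^3)=0$, hence is removable). Therefore the first dimension in which $\mathbb{S}^3$-valued minimizers are singular is $m=4$, so $\dim\mathrm{Sing}(\Theta_0)\le0$, and since the singular set is relatively closed it is discrete — i.e. the singularities are isolated. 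For the shape of the tangent map, the eigenvalue $\lambda$ of $\phi$ must lie in $\{3,8,15,\dots\}$; minimality of $U$ rules out all but $\lambda=3$ by a Brezis--Coron--Lieb-type comparison (the homogeneous extension of a higher eigenmap is strictly beaten by a ``resolved'' competitor), and when $\lambda=3$ the components of $\phi$ are restrictions of linear functions on $\mathbb{R}^4$ with $|\phi|\equiv1$, so $\phi$ is the restriction of an element $R\in O(4)$ and $U(x)=R(x/|x|)$. As frames are only defined up to the $\mathrm{SU}(2)$ gauge action and $O(4)/\mathrm{SU}(2)\cong O(3)$ as a homogeneous space, this is precisely the asserted form $x\mapsto T(x/|x|)$, $T\in O(3)$; uniqueness of the tangent cone at each point then follows either from the discreteness of the relevant moduli modulo gauge or from Simon's uniqueness theorem.

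The main obstacle I anticipate is keeping the blow-up inside the class of minimizers: one must show a competitor $V$ for $U$ on a ball can be transplanted into a competitor for $\Theta_\lambda$ with only $o(1)$ extra energy, which requires controlling the interaction of the annular interpolation with the connection $A_\lambda$ — whose curvature is small in $L^2$ but whose gauge potential need not be small — so that minimality is not lost in the limit. The second genuinely non-routine ingredient is the Brezis--Coron--Lieb-type comparison singling out $\lambda=3$ among all homogeneous harmonic maps into $\mathbb{S}^3$; the dimension reduction and the eigenmap computation are standard once the blow-up limits are known to be honest minimizing harmonic maps.
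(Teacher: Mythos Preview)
Your route diverges from the paper's in a fundamental way, and one of your steps contains a genuine error.

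\textbf{Different mechanism.} The paper never shows that the blow-up limit is a \emph{minimizing} harmonic map. Instead, it uses the stability-based compactness (Theorem~\ref{cpt} and Corollary~\ref{keycoro}) to obtain strong $W^{1,2}$ convergence of the rescalings to a \emph{stable-stationary} harmonic map $\Theta^*_\infty:\mathbb{R}^4\to\mathbb{S}^3$, then proves it is a cone (Claims~\ref{uni} and~\ref{rigid}, using $\|F_A\|_{L^\infty}<\infty$ in the almost-monotonicity), and finally invokes the rigidity theorem of Lin--Wang \cite{LW06} and Nakajima \cite{naka06}: the only nontrivial stable-stationary homogeneous map $\mathbb{R}^4\to\mathbb{S}^3$ is $T(x/|x|)$. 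This completely sidesteps the competitor-transplantation obstacle you flag, and the stability inequality \eqref{stab1} also furnishes the global strong $W^{1,2}$ convergence (no energy concentration anywhere), which your $\epsilon$-regularity argument only gives away from finitely many points. Isolatedness is then read off directly: since the tangent cone is smooth away from $0$, strong convergence plus Theorem~\ref{eps} force every nearby point $x\neq x_0$ to have small normalized energy at some scale, hence to be regular. No Federer dimension reduction is needed.

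\textbf{The eigenmap step is wrong.} From $U(x)=\phi(x/|x|)$ weakly harmonic you only get that $\phi:\mathbb{S}^3\to\mathbb{S}^3$ satisfies the harmonic \emph{map} equation $\Delta_{\mathbb{S}^3}\phi+|\nabla\phi|^2\phi=0$; it does \emph{not} follow that $|\nabla\phi|^2$ is constant, so $\phi$ need not be an eigenmap and the spectral list $\{3,8,15,\dots\}$ is irrelevant. Harmonic self-maps of $\mathbb{S}^3$ with non-constant energy density abound (e.g.\ the Smith construction in every degree). To pin down $\phi$ as an isometry one must invoke either minimality---and the relevant result is the $\mathbb{R}^4\to\mathbb{S}^3$ analogue of Brezis--Coron--Lieb, which is a separate theorem you would have to cite and whose hypothesis you have not yet secured---or stability, which is exactly what the Lin--Wang/Nakajima rigidity uses and what the paper relies on.
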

\begin{theorem}
\label{number0}
There exists $\epsilon_0$, $r(M)$ and $N_0\ge 10$, such that if $10\rho_0<r_M$ and $\int_{B_{2r}(y_0)}|F_A|^2<\epsilon_0$, then $\#\{\text{Sing}(\Theta_0)\cap B_{3r/2}(y_0)\}\le N_0$.
\end{theorem}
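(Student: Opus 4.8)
The plan is to argue by contradiction: if $\#\{\mathrm{Sing}(\Theta_0)\cap B_{3r/2}(y_0)\}$ cannot be bounded, then blowing up near a densest cluster of singularities produces a Coulomb minimizer with two singularities arbitrarily close together relative to their separation from the rest, and a local topological surgery then contradicts minimality. Two preliminary facts are needed. \emph{(i) A $\|F_A\|_{L^2(M)}$-free local energy bound:} $\int_{B_{3r/2}(y_0)}|\nabla_A\Theta_0|^2\,dV_g\le C(M)r^2$. Since $\Theta_0$ minimizes $\int_M|\nabla_A\Theta|^2$ globally, it suffices to exhibit one competing frame that agrees with $\Theta_0$ outside $B_{3r/2}(y_0)$ and has controlled energy inside; using the Uhlenbeck gauge available on $B_{7r/4}(y_0)$ (where $\int|F_A|^2<\epsilon_0$) one obtains a smooth frame $\Theta_U$ there with $\int_{B_{7r/4}}|\nabla_A\Theta_U|^2\lesssim\epsilon_0 r^2$, picks a generic radius $\rho_*\in(3r/2,7r/4)$ that avoids $\mathrm{Sing}(\Theta_0)$ and has small slice energy, and glues $\Theta_0$ to $\Theta_U$ across a thin shell near $\partial B_{\rho_*}$ after correcting $\Theta_U$ by a single bubble carrying the degree $D=\sum_{x_i\in B_{\rho_*}}d_i$ of the trace $\Theta_0|_{\partial B_{\rho_*}}$ — legitimate because the two traces, having equal degree, are homotopic as maps into $\mathrm{SU}(2)\cong\mathbb{S}^3$. \emph{(ii) Energy quantization:} the tangent-cone analysis of Section~\ref{tconestru} together with an almost-monotonicity formula (whose error is controlled by $\int|F_A|^2<\epsilon_0$) yields a uniform gap $\rho^{-2}\int_{B_\rho(x_i)}|\nabla_A\Theta_0|^2\ge\epsilon_1/2$ for each $x_i\in\mathrm{Sing}(\Theta_0)$ and every $\rho$ below the distance from $x_i$ to $\mathrm{Sing}(\Theta_0)\setminus\{x_i\}$, where $\epsilon_1>0$ is the normalized energy of $x\mapsto x/|x|$.

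Now assume the theorem fails: there are $\mathrm{SU}(2)$-connections $A_k$ over $M$ and balls $B_{2r_k}(y_k)$ with $10r_k<r(M)$, $\int_{B_{2r_k}(y_k)}|F_{A_k}|^2<\epsilon_0$, whose Coulomb minimizers $\Theta_k$ satisfy $N_k:=\#\{\mathrm{Sing}(\Theta_k)\cap B_{3r_k/2}(y_k)\}\to\infty$. Rescale by $1/r_k$ about $y_k$ to get minimizers $\hat\Theta_k$ on $B_2(0)$ with $\int_{B_2}|F_{\hat A_k}|^2<\epsilon_0$, $\int_{B_{3/2}}|\nabla_{\hat A_k}\hat\Theta_k|^2\le C(M)$ by (i), and $N_k$ singularities in $B_{3/2}(0)$. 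Applying (ii) to the pairwise-disjoint balls around these singularities at half their mutual-separation scales bounds the sum of the squares of those scales by $C(M)/\epsilon_1$; hence, since $N_k\to\infty$, the two closest singularities $p_k,q_k$ have $\delta_k:=|p_k-q_k|\to0$ and no singularity lies within distance $\delta_k$ of either. Rescaling again by $1/\delta_k$ about the midpoint produces minimizers $\tilde\Theta_k$ on balls $B_{R_k}(0)$, $R_k\to\infty$, with $\int_{B_{R_k}}|F_{\tilde A_k}|^2<\epsilon_0$, with bounded energy on each fixed ball (by almost-monotonicity of the scaled energy, starting from the $C(M)$ bound at unit scale), and with exactly two singularities, at distance $1$ and at distance $\ge1$ from all other singularities. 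By the $\epsilon$-regularity Theorem~\ref{epsintro} and Uhlenbeck compactness, a subsequence converges (strongly in $W^{1,2}_{\mathrm{loc}}$ away from the two singular points, whose tangent maps are degree-$\pm1$ rotations) to a model two-bubble configuration, so that for large $k$, $\tilde\Theta_k$ restricted to a thin ``pill'' $\Omega$ around $\{p_k,q_k\}$ that avoids every other singularity is close to that model.

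Finally, replacing $\tilde\Theta_k|_\Omega$ by a frame with at most one singularity of net degree $d_{p_k}+d_{q_k}\in\{-2,0,2\}$ and the same boundary trace on $\partial\Omega$ strictly lowers the energy: by the positive dipole-removal energy if the two degrees are opposite, and by $2\epsilon_1-\inf_{\deg\omega=2}\|\nabla_{\mathbb{S}^3}\omega\|_{L^2}^2>0$ — strict by sublinearity of the $\mathbb{S}^3$-Dirichlet energy in the degree — if they agree. This contradicts minimality of $\tilde\Theta_k$, hence of $\Theta_k$, and proves the theorem, the resulting finite bound being $\le N_0$ for any sufficiently large $N_0$, in particular for some $N_0\ge10$.

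The main obstacle is the quantitative topological surgery underlying both the bubble correction in (i) and the pill replacement in the last step: one must swap a piece of a minimizer for a topologically correct competitor with explicitly controlled energy. The subtle point is that in dimension four $\inf\{\|\nabla_{\mathbb{S}^3}\omega\|_{L^2}^2:\deg\omega=D\}$ grows only sublinearly in $|D|$, so a naive degree-$D$ bubble is too costly; instead one concentrates the bubble at a small scale and absorbs the topological mismatch through a logarithmically long interpolation annulus, arranging that every error term is controlled by $\epsilon_0$ and the slice energy at a generically chosen radius. Making this work is also precisely what is needed to keep the constant $C(M)$ in (i), and hence $N_0$, independent of $\|F_A\|_{L^2(M)}$ — in particular one must bound the degree $|D|$, drawing on the disjoint-ball quantization of (ii) and the $\mathrm{SU}(2)$ structure. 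Verifying the compactness theory for these curvature-perturbed minimizers — that the limit is again energy-minimizing and that local degrees pass to the limit — is the remaining technical ingredient.
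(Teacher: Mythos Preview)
Your overall strategy---assume unboundedly many singularities, zoom in on a closest pair, pass to a limit, and derive a contradiction---matches the paper's. The divergence is in the two endpoints of the argument.

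For the local energy bound (i), the paper does \emph{not} build a competitor. It uses the stability inequality (Theorem~\ref{stathm}), which for a Coulomb minimizer gives directly
\[
\int_{B_{\rho}(p)}|A_0|^2\zeta^2\le C(G)\Big(\int_{B_\rho(p)}|A^*|^2\zeta^2+\int_{B_\rho(p)}|d\zeta|^2\Big),
\]
hence $r^{-2}\int_{B_r}|\nabla_A\Theta_0|^2\le C$ from the Uhlenbeck gauge estimate alone. Your competitor construction needs an a~priori bound on the degree $D$ of $\Theta_0$ on $\partial B_{\rho_*}$, which you yourself flag as requiring the very singularity count you are trying to prove; this is circular.

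For the contradiction step, the paper's limit object (via Corollary~\ref{keycoro}) is only known to be a \emph{stable-stationary} harmonic map from $B_1^4$ into $\mathbb{S}^3$, not a minimizer. The paper then invokes the structure theorem of Lin--Wang \cite{LW06} (see also Theorem~3.4.12 of \cite{LW08}): for stable-stationary harmonic maps $B_1^4\to\mathbb{S}^3$ there is a universal $\delta_0>0$ separating any two singularities in $B_{1/2}$. That single citation replaces your entire surgery argument. Your route instead tries to beat the minimizer $\tilde\Theta_k$ by an explicit pill replacement, but the quantitative inequalities you need are not established: the ``positive dipole-removal energy'' in dimension~$4$ for $\mathbb{S}^3$-targets is asserted without proof, and the same-sign inequality $2\epsilon_1>\inf_{\deg\omega=2}\int_{\mathbb{S}^3}|\nabla\omega|^2$ confuses the $4$-dimensional density of a tangent cone with a $3$-dimensional Dirichlet infimum (and in any case requires a delicate argument, since by Lin--Wang's rigidity there are no stable degree-$2$ cones to compare against). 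Even granting those inequalities, converting a limiting energy gap into a strict, $k$-uniform energy drop for $\tilde\Theta_k$ requires control of $\tilde\Theta_k$ \emph{near} the singular points, whereas your compactness only gives $W^{1,2}$ convergence away from them. The paper sidesteps all of this by working on the stable-stationary side, where the needed structure theory is already available.
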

Combining these two theorems we see that not only $\Theta_0$ has at most isolated singularities, but the number of singularities is controllable (provided the curvature being small in $L^2$). Now, by combining Theorem \ref{radbelow} and \ref{number0}, it is not hard to conclude Theorem \ref{4localepsintro}. 
\medskip

Next, we construct a decomposition of $M$ into controllably many annular regions and bubble regions. To be precise, a bubble region $\mathcal{B}$ looks like $B_{r}(x)\backslash \bigcup_{i=1}^N B_{r_i}(x_i)$, where $N\le N(\Lambda)$, $r_i\ge c_0(\Lambda)r$, and furthermore:
\begin{eqnarray}
\begin{split}
r_{F_A}(y)\equiv\sup\{s>0:\sup_{z\in B_{s}(y)}\int_{B_s(z)}|F_A|^2<\epsilon_0\}\ge c_0(\Lambda) r.
\end{split}
\end{eqnarray}
where $\epsilon_0$ is some universal small number to be specified later in Section \ref{annbub}. An annular region $\mathcal{A}$ looks like $A_{s,r}(x)$, where $s/r\ll 1$ and $\int_{A_{s,r}(x)}|F_A|^2<\epsilon_0$ for the same universal constant $\epsilon_0$. For a more precise description of a bubble region and an annular region, see Section \ref{annbub}. We then prove the following decomposition theorem:
\begin{theorem}
\label{decomp0}
Suppose $\int_M|F_A|^2\le \Lambda$. Then there exists a large number $N(M,\Lambda)$, such that one could find a cover of $M$ by annular and bubble regions given by $\{\mathcal{A}_{i}\}_{i=1}^{N_1}\cup\{\mathcal{B}_j\}_{j=1}^{N_2}$ satisfying $N_1+N_2\le N(M,\Lambda)$.
\end{theorem}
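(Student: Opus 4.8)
The plan is to build the cover by a stopping-time (Vitali-type) argument driven by the $L^2$-concentration of $F_A$, combined with the quantitative stratification philosophy: at each location and scale one either sees the curvature spread out over an annulus (producing an annular region) or concentrated near a controllably small number of points (producing a bubble region), and the total count is bounded because each region ``consumes'' a definite amount of curvature energy, of which there is only $\Lambda$ available.

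First I would fix the universal constant $\epsilon_0$ coming from Theorem \ref{4localepsintro} (and its inputs, Theorems \ref{radbelow}, \ref{number0}) and work at scales below $r(M)/10$. Starting from $M$ at unit scale, run the following dichotomy at a ball $B_r(x)$. Using a covering of $B_r(x)$ by a bounded number $\le N_0(M)$ of balls $B_{r/8}(x_k)$ of a fixed fraction of the radius, look at the curvature mass in each sub-ball. \emph{Case (i): the energy is small on most sub-balls,} i.e.\ $\int_{B_{r/4}(x_k)}|F_A|^2<\epsilon_0$ for all but at most $N_1(M)$ of the centers; then the ``bad'' centers lie in controllably few small balls $B_{r_i}(x_i)$ with $r_i\simeq c_0 r$, and removing them leaves a bubble region $\mathcal{B}=B_r(x)\setminus\bigcup B_{r_i}(x_i)$ on which $r_{F_A}\ge c_0 r$ by construction; we stop on $\mathcal{B}$ and recurse on each $B_{r_i}(x_i)$. \emph{Case (ii): the energy is concentrated at a single scale in a thin shell,} which we detect by comparing the energy in $B_r(x)$ with that in $B_{\theta r}(x)$ for a small fixed $\theta$: if the ``annular energy'' $\int_{A_{\theta r, r}(x)}|F_A|^2<\epsilon_0$, we declare an annular region $\mathcal A=A_{\theta r,r}(x)$, stop on it, and recurse on the inner ball $B_{\theta r}(x)$. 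The remaining (generic) case, where energy is neither spread nor cleanly concentrated, must be ruled out by the almost-monotonicity of the Yang--Mills density / a dimension-reduction estimate, so that such ``mixed'' scales can occur only boundedly often along any descending chain, or can be absorbed into Case (i) after passing to a controlled number of finer scales.

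The key bookkeeping step is the \emph{count}. Each time we produce a bubble region we spawn at most $N=N(\Lambda)$ child balls, and along each annular step the radius shrinks by a fixed factor $\theta$; the crucial observation is that a \emph{definite} portion of curvature energy, bounded below by a constant $\delta(\epsilon_0,\theta,M)>0$, is either isolated inside each removed child ball or captured by each annular region -- this is where the no-neck / energy quantization built into the $\epsilon$-regularity theorems and the monotonicity formula enters. Since $\int_M|F_A|^2\le\Lambda$, the number of \emph{annular} regions along any single branch is $\le \Lambda/\delta$, hence finite, and the total number of \emph{bubble} regions is $\le \Lambda/\delta$ as well (each one carries $\ge\delta$ of energy that is not reused by a sibling), so the whole recursion tree has $N_1+N_2\le N(M,\Lambda)$ leaves once one checks the branching is finite. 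A standard compactness/Vitali argument then upgrades the leaves of this tree into an honest finite cover of $M$ (enlarging radii by a fixed factor to restore overlap, which only changes constants).

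The main obstacle is the dichotomy at a given scale: making rigorous that ``the curvature is either spread over an annulus or concentrated near $\le N(\Lambda)$ points,'' with \emph{no} escaping intermediate configuration, and doing so with constants independent of the point and scale. The honest way to handle it is a contradiction/compactness argument: if no such trichotomy held with uniform constants, one would extract (after rescaling) a sequence of connections on balls whose curvatures blow up in a way incompatible with $\|F_A\|_{L^2}\le\Lambda$ and with Uhlenbeck's small-energy gauge theory on the annuli where energy is small; the bubble-tree convergence for Yang--Mills connections (energy identity, no neck energy) then forces the limit to be one of the two admissible pictures, giving the needed uniform bounds. A secondary technical point is ensuring the removed inner balls in a bubble region are genuinely \emph{disjoint} and have radii comparably large ($r_i\ge c_0 r$); this is arranged by a greedy selection among the concentration points together with the lower bound on how close two distinct concentration points of scale $\ge c_0 r$ can be, which again follows from the monotonicity formula.
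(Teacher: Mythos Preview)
Your recursive skeleton (dichotomy at each ball, spawn children, bound the tree) matches the paper's, but your proposed mechanisms for both the dichotomy and the termination are off, and the tools you invoke (energy quantization, no-neck lemma, Yang--Mills monotonicity, compactness/contradiction) are neither needed nor available in this setting. In dimension $4$ the quantity $\int|F_A|^2$ is scale-invariant, so there is no monotonicity formula to appeal to; and annular regions in this paper carry \emph{small} curvature energy ($<\epsilon_0$) by definition, so they do not ``capture $\ge\delta$ energy'' as you claim. The compactness argument you sketch to rule out the ``mixed'' intermediate case is not how the paper proceeds and would not by itself produce a constructive cover with explicit bounds.

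What the paper actually does is entirely elementary. The driving observation is pigeonhole on the function $\overline\zeta_x(r)=\sup_{z\in B_r(x)}\int_{B_r(z)}|F_A|^2$: since $0\le\overline\zeta_x(r)\le\Lambda$ and $\overline\zeta_x$ is monotone in $r$, along any geometric sequence of scales there must exist a block of $K_0$ consecutive scales over which $\overline\zeta_x$ drops by less than $\gamma^{K_0}$. This block is declared a \emph{weakly flat region}; its outer annulus is an annular region, and inside one runs a Vitali cover, isolating at most $N(\Lambda)\sim\Lambda/\epsilon_0$ ``bad'' sub-balls where $\int|F_A|^2\ge\epsilon_0/2$. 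The complement is a bubble region, and each bad child ball has $\overline\zeta$ at least $\gamma^{K_0}$ smaller than its parent by construction. Thus the recursion depth is $\le\Lambda\gamma^{-K_0}$ purely by energy drop, and the total count is the crude tree bound $N(\Lambda)^{O(\Lambda\gamma^{-K_0})}$. No compactness, no quantization, no monotonicity --- just pigeonhole and the finiteness of $\int_M|F_A|^2$. Your ``remaining generic case'' is exactly what pigeonhole handles directly, not something to be excluded by limiting arguments.
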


Firstly, consider a bubble region $\mathcal{B}=B_{r}(x)\backslash \bigcup_{i=1}^N B_{r_i}(x_i)$. Trivially, we can cover $\mathcal{B}$ by at most $Cc_0^{-4}$ many $B_{r_{F_A,\epsilon_0}(x_l)}(x_l)$, and by \eqref{c1epsilon}, there exists a universal constant $C_0$ such that the following holds for all $l$:
\begin{eqnarray}
\begin{split}
\|A_0\|_{\mathcal{L}^{4,\infty}(B_{r_{F_A}(x_i)}(x_i))}\le C_0.
\end{split}
\end{eqnarray}
Upon summing up the above inequality over all $l$ and using the covering property of $\{B_{r_{F_A}(x_l)}(x_l)\}_l$, we achieve
\begin{eqnarray}
\begin{split}
\label{bubble0}
\|A_0\|_{\mathcal{L}^{4,\infty}(\mathcal{B})}\le C_1,
\end{split}
\end{eqnarray}
for some universal constant $C_1$. 

Secondly, let us focus on an annular region $\mathcal{A}=A_{s,r}(x)$. Note that since $\frac{s}{r}\ll 1$ could be arbitrarily small, it is no longer true that $r_{F_A,\epsilon_0}(x)\ge c_0 r$ for all $x\in \mathcal{A}$ and some universal $c_0>0$. Therefore, we cannot directly apply \eqref{c1epsilon}. Nevertheless, by identifying $\text{SU}(2)$ with $\mathbb{S}^3$, and using the singular structure theory for stable-stationary harmonic maps into $\mathbb{S}^3$ developed in \cite{LW06} combining with a somewhat complicated limiting argument, we are able to prove the following lemma:
\begin{lemma}
\label{singintro}
There exists a constant $N_0(\Lambda)$, such that $\#\{\mathcal{A}\cap \text{Sing}(\Theta_0)\}\le N_0$.
\end{lemma}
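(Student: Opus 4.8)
The plan is to argue by contradiction and compactness. Suppose no such bound $N_0(\Lambda)$ exists; then there is a sequence of connections $A^{(k)}$ on $\text{SU}(2)$-bundles over $M$ with $\|F_{A^{(k)}}\|_{L^2(M)}^2\le\Lambda$, Coulomb minimizers $\Theta_0^{(k)}$, and annular regions $\mathcal{A}_k=A_{s_k,r_k}(x_k)$ with $s_k/r_k\to 0$, $\int_{\mathcal{A}_k}|F_{A^{(k)}}|^2<\epsilon_0$, yet $\#\{\mathcal{A}_k\cap\text{Sing}(\Theta_0^{(k)})\}\to\infty$. After rescaling so that the annular region becomes, say, $A_{1,s_k/r_k\to 0}$ (i.e.\ $\mathbb{R}^4\setminus\{0\}$ in the limit) and translating $x_k$ to the origin, the hypothesis $\int_{\mathcal{A}_k}|F|^2<\epsilon_0$ combined with the scaling-invariance of $\int|F|^2$ in dimension $4$ forces the rescaled curvatures to converge to zero in $L^2_{loc}$ on the punctured annulus. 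Hence in the limit the rescaled frames $\Theta_0^{(k)}$ (identified with maps into $\mathbb{S}^3$) should converge — away from finitely many concentration points — to a harmonic map $u_\infty:\mathbb{R}^4\setminus\{0\}\to\mathbb{S}^3$ which is \emph{stable and stationary}, since the Coulomb-minimizing property passes to the limit as an energy-minimality/stability property once the curvature term drops out (this is exactly the heuristic already invoked in the outline: $\Theta_0$ behaves like a minimizing/stable harmonic map into $G$ when $F_A$ is small).

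The next step is to invoke the singular-structure theory for stable-stationary harmonic maps into $\mathbb{S}^3$ from \cite{LW06}: such maps have a singular set of codimension at least $3$, hence on the $4$-dimensional punctured annulus the singular set of $u_\infty$ is discrete, and moreover the energy quantization/monotonicity gives a uniform lower bound $\epsilon_1>0$ on the normalized energy $r^{-2}\int_{B_r(p)}|\nabla u_\infty|^2$ at each singular point $p$. The same quantization must then hold pre-limit for $\Theta_0^{(k)}$ on $\mathcal{A}_k$ at scales where the curvature contribution is negligible: more precisely, by Theorem \ref{radbelow} (the "ineffective regularity implies effective estimate" principle) together with a covering argument, each point in $\mathcal{A}_k\cap\text{Sing}(\Theta_0^{(k)})$ forces a definite amount of \emph{energy} $\int|\nabla_A\Theta_0^{(k)}|^2$ to concentrate nearby at a controlled scale, with the energies on disjoint balls around distinct singularities essentially additive. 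But the total energy is controlled: on the annular region $\int_{\mathcal{A}_k}|\nabla_A\Theta_0^{(k)}|^2$ is bounded in terms of $\int|F_{A^{(k)}}|^2$ and the behavior on a fixed number of bubble/annular pieces, hence by some $E_0(\Lambda)$. Dividing, $\#\{\mathcal{A}_k\cap\text{Sing}(\Theta_0^{(k)})\}\le E_0(\Lambda)/\epsilon_1=:N_0(\Lambda)$, contradicting the assumption.

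The main obstacle I expect is the passage to the limit itself — showing that the rescaled Coulomb minimizers $\Theta_0^{(k)}$ converge, modulo gauge and modulo finitely many concentration points, to a genuinely \emph{stable-stationary} harmonic map into $\mathbb{S}^3$, rather than merely to some weakly harmonic map. This requires controlling the defect measure of the convergence, ruling out loss of stability in the limit (a priori a minimizer for a functional with a vanishing curvature perturbation need not converge to a minimizer/stable critical point of the unperturbed functional), and keeping track of how the bubbling at the concentration points interacts with the singularities one is trying to count. Concretely one must show (i) that the limiting curvature really is zero a.e.\ on the punctured annulus — which uses $s_k/r_k\to 0$ so that the $\epsilon_0$-smallness spreads over all dyadic scales in the annulus — and (ii) that a Federer–Almgren dimension-reduction/stratification argument applies uniformly to the near-limit objects so that the discreteness and energy lower bound are quantitative in $k$. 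This is the "somewhat complicated limiting argument" flagged in the outline, and it is where the detailed estimates of Sections \ref{tconestru} and \ref{annbub} will have to be brought to bear.
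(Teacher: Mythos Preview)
Your contradiction--compactness setup and the identification of the limit as a stable--stationary harmonic map into $\mathbb{S}^3$ are both correct and match the paper's framework. The gap is in the counting step: the energy argument you propose cannot close.

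The difficulty is that an annular region $\mathcal{A}=A_{s,r}(x)$ has no fixed aspect ratio: $s/r$ can be arbitrarily small, so singularities can sit at arbitrarily many dyadic scales $\text{Ann}_i=A_{\gamma^{i+1},\gamma^i}(x)$. A singularity located in $\text{Ann}_i$ carries energy only of order $\gamma^{2i}$ (the scaling-invariant quantity $r^{-2}\int_{B_r}|\nabla_A\Theta_0|^2$ is what is bounded below by $\eta$ near a singularity, not the raw energy). Hence the total energy $\int_{\mathcal{A}}|\nabla_A\Theta_0|^2\le\sum_i C\gamma^{2i}<\infty$ is perfectly compatible with infinitely many singularities, one in each $\text{Ann}_i$. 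Equivalently: there is no single rescaling of $\mathcal{A}$ under which all singularities remain visible; if you normalize the outer radius to $1$, singularities near the inner boundary collapse to the origin and are invisible in the limit map on $\mathbb{R}^4\setminus\{0\}$. Your proposed bound $E_0(\Lambda)/\epsilon_1$ therefore does not follow.

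The paper's argument (Lemma~\ref{singannbdd}) avoids this by not trying to see all singularities simultaneously. Instead it introduces a notion of \emph{degree} of the frame $\Theta_0$ on a $3$-sphere $\partial B_r(y)$ (defined via patched Uhlenbeck gauges on the sphere, which exist because the curvature is small on the annulus). If infinitely many dyadic annuli contain singularities, one can locate a sphere $\partial B_{r_l}(y_l)$ with three properties: it is a definite distance from $\text{Sing}(\Theta_0)$, its degree is nonzero (so there is a singularity \emph{inside}), and there is also a singularity just \emph{outside}, within distance $\sim\gamma^{-1}r_l$. Rescaling so that this particular sphere has unit size and passing to the limit (using the compactness you correctly describe, plus a partition-of-unity argument to handle the absence of a global Uhlenbeck gauge on the annulus), one obtains a stable--stationary harmonic map into $\mathbb{S}^3$ with two singularities at distance $O(\gamma)$ from each other. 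For $\gamma$ chosen smaller than the universal separation constant $\delta_*$ of Lin--Wang \cite{LW06}, this is a contradiction. The degree is what guarantees a singularity survives inside the sphere in the limit; this topological input is the missing ingredient in your proposal.
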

Applying Lemma \ref{singintro} and Theorem \ref{radbelow}, we obtain
\begin{eqnarray}
\begin{split}
\label{neck0}
\|A_0\|_{\mathcal{L}^{4,\infty}(\mathcal{A})}\le C_2
\end{split}
\end{eqnarray}
for some universal constant $C_2$. \medskip

Finally, combining Theorem \ref{decomp0}, \eqref{bubble0}, and \eqref{neck0}, we achieve the desired estimate $\|A_0\|_{\mathcal{L}^{4,\infty}(M)}\le C_0(M,\Lambda)$. This finishes the outline.
\bigskip

\section*{Acknowledgement}
The author would like to thank his advisor, Prof. Aaron Naber, for his constant support, patient guidance, and those inspiring discussions with him.
\bigskip

\section{Preparation}
In this section, we will find a Coulomb frame $\Theta_0$ by minimizing the energy functional $\int_M |\nabla \Theta|^2$ among all frames over $M$ (see Subsection \ref{prep}), and then we provide some of its basic properties (see Subsection \ref{basicmin}), which will be used in later sections to obtain effective estimates on the associated connection form of $\Theta_0$.

\subsection{Coulomb minimizer}
\label{prep}
The goal of this subsection is to find a Coulomb frame for $A$. Let $P\to M$ a $G$ bundle over a compact $4$-manifold with a smooth connection $A$. Consider the following subclass of $W^{1,2}$ frames:
\begin{equation}
\mathcal{F}=\{\Theta:\ \text{$\Theta$ is a frame over $M$ s.t.}\ \int_M|\nabla_A\Theta|^2<\infty\}.
\end{equation} 
First note that the class $\mathcal{F}$ is none-empty. Indeed, when $G$ is a simply-connected simple Lie group, it is a classical topological result that there exists a global $P$-section $\Sigma_0$ that is continuous away from $k$ points, where the second Chern number $c_2=k$ or $-k$. Moreover the winding number at each singularity is $1$ or $-1$ depending on the sign of $c_2$. Without loss of generality let us assume $c_2=k$. Then upon standard mollification away from the singularities of $\Sigma_0$, and then a smooth perturbation at the neighborhood of each singularity, one achieves a new section $\Sigma_1$ smooth away from $k$ points. Now, by employing Proposition \ref{ggauge} we find the frame $\Theta_{\Sigma_1}$ associated to $\Sigma_1$. It is not hard to make $\int_{M}|\nabla_A \Theta_{\Sigma_1}|^2<\infty$ by choosing a nice enough smooth perturbation $\Sigma_1$ from before. Therefore $\Theta_{\Sigma_1}\in \mathcal{F}$.
\medskip

Given the non-emptiness, we could minimize the functional $\int_{M}|\nabla_A \Theta|^2$ in the class $\mathcal{F}$. Indeed, let $\{\Theta_i\}_i$ be a minimizing sequence in $\mathcal{F}$. Then due to the weakly compactness of $W^{1,2}(M,E^k)$ and the compact embedding of $L^2(M,E^k)$ into $W^{1,2}(M,E^k)$, we conclude that $\{\Theta_i\}$ converge to some $\Theta_0$ weakly in $W^{1,2}$ and strongly in $L^2$. Thus $\Theta_0$ is a frame. Further, by the lower-semicontinuity of the $W^{1,2}$-norm under the weak $W^{1,2}$-convergence, $\Theta_0$ is a minimizer of the functional $\int_{M}|\nabla_A \Theta|^2$. Especially, $\Theta_0$ satisfies $\int_M|\nabla_A \Theta_0|^2<\infty$, and therefore $\Theta_0\in \mathcal{F}$. 
\begin{definition}
\label{coumindef0}
A minimizer of $\int_{M}|\nabla_A \Theta|^2$ among all elements in $\mathcal{F}$ is called a Coulomb minimizer.
\end{definition}
By the previous discussions, a Coulomb minimizer always exists and belongs to the class $\mathcal{F}$. From now on, let us fix a Coulomb minimizer denoted by $\Theta_0$ with associated connection form denoted by $A_0$; see Definition \ref{asso}. 
\medskip

\subsection{Basic properties of Coulomb minimizers}
\label{basicmin}
In this subsection, we give a couple of very basic properties of the Coulomb minimizer $\Theta_0$, which includes the Euler-Lagrange equation, the stationarity, and the stability.
\medskip

Firstly, due to the minimizing property of $\Theta_0$, an Euler-Lagrange equation holds. More precisely, we have the following equivalences:
\begin{lemma}
\label{coulomb}
Let $P\to M$ be a $G$-bundle over a compact $4$-manifold with a smooth connection $A$. Let $\Theta_0=(U_1,\cdots,U_k)$ be a $W^{1,2}$-frame over $M$, with associated connection form $A_0$. Then the following conditions are equivalent:
\\
(1) $d^*A_0=0$ weakly. That is, $A_0$ is weakly Coulomb.\\
(2) $d_A^*d_A U_a\eqqcolon\Delta_{A}U_a=\sum_b\langle\nabla_{A}U_a,\nabla_{A}U_b\rangle_{\Lambda^1(E)} U_b$ weakly.\\
(3) For any smooth section $\xi$ of the adjoint $\mathfrak{g}$-bundle $\text{ad}P$, the following holds:
\begin{equation}
\label{lagr}
\frac{d}{dt}\bigg{\rvert}_{t=0}\sum_a\int_M|\nabla_A(\exp(t\xi)U_a)|^2\equiv\frac{d}{dt}\bigg{\rvert}_{t=0}\int_M|\nabla_A(\exp(t\xi)\Theta_0)|^2=0.
\end{equation}
\end{lemma}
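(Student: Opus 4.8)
The plan is to establish the three equivalences by a standard first-variation argument, treating $(3)\Leftrightarrow(2)$ first and then $(2)\Leftrightarrow(1)$, the latter being the place where one uses the specific algebraic structure of a frame. For $(3)\Leftrightarrow(2)$: I would compute the first variation of the energy along the path $\Theta_t = \exp(t\xi)\Theta_0$, i.e.\ $U_a^t = \exp(t\xi)U_a$. Differentiating $\sum_a\int_M|\nabla_A U_a^t|^2$ at $t=0$ produces $2\sum_a\int_M\langle\nabla_A U_a,\nabla_A(\xi U_a)\rangle$, and after integrating by parts (justified since $\Theta_0\in W^{1,2}$ and $\xi$ is smooth, using $A$ smooth) this equals $-2\sum_a\int_M\langle\Delta_A U_a,\xi U_a\rangle$. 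The point is that the admissible variation fields at $\Theta_0$ are exactly those of the form $\xi U_a$ with $\xi$ a section of $\mathrm{ad}P$, because these are precisely the infinitesimal deformations that keep the frame inside $E^k$ (tangent to the $G$-orbit); so vanishing of the first variation for all such $\xi$ is equivalent to $\sum_a\langle\Delta_A U_a - (\text{the orthogonal-to-orbit part}), \xi U_a\rangle = 0$, and unwinding the bundle-algebra identities shows this is exactly the equation in $(2)$ with the right-hand side $\sum_b\langle\nabla_A U_a,\nabla_A U_b\rangle U_b$ appearing as the Lagrange-multiplier term enforcing $|U_a|\equiv 1$ and orthonormality.

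For $(2)\Leftrightarrow(1)$: here I would use the definition $A^\alpha_{\beta,i}=\langle(\nabla_A U_\beta)(\partial_i),U_\alpha\rangle$ and compute $d^*A_0$ componentwise. We have $(d^*A_0)^\alpha_\beta = -g^{ij}\nabla_{\partial_i}(A^\alpha_{\beta,j}) + (\text{lower order in }A_0)$, and expanding $\nabla_{\partial_i}\langle\nabla_A U_\beta(\partial_j),U_\alpha\rangle$ via the Leibniz rule gives a term with $\langle\Delta_A U_\beta, U_\alpha\rangle$ plus a term $g^{ij}\langle\nabla_A U_\beta(\partial_j),\nabla_A U_\alpha(\partial_i)\rangle$. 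Substituting the equation from $(2)$ into the first term and using orthonormality of the frame ($\langle U_a,U_b\rangle=\delta_{ab}$, hence $\langle\nabla_A U_a,U_b\rangle = -\langle U_a,\nabla_A U_b\rangle$) makes the two contributions cancel, yielding $d^*A_0=0$; conversely, since the frame is pointwise a basis, knowing $(d^*A_0)^\alpha_\beta=0$ for all $\alpha,\beta$ recovers the full vector equation in $(2)$ by pairing against each $U_\alpha$. One should be careful to carry the metric/connection terms on $M$ correctly ($d^*A_0 = -g^{ij}\iota_{\partial_i}\nabla_{\partial_j}A_0$ already includes the Levi-Civita connection), but these are symmetric in a way that does not affect the antisymmetric $\mathfrak{g}$-valued part, so they drop out of the relevant identity.

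I would phrase the whole argument weakly throughout: all the integrations by parts are against smooth compactly supported test objects, so no regularity beyond $W^{1,2}$ on $\Theta_0$ (equivalently $L^2$ on $A_0$, $L^2$ on $\nabla_A\Theta_0$) is needed, and the quadratic terms like $\langle\nabla_A U_a,\nabla_A U_b\rangle$ are in $L^1$, which is exactly what makes the weak formulation of $(2)$ meaningful.

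\textbf{Main obstacle.} The genuinely delicate step is $(3)\Leftrightarrow(2)$, specifically identifying the Lagrange-multiplier term: one must argue that the only constraint felt by variations of the form $\exp(t\xi)\Theta_0$ is membership in $E^k$ (the $G$-orbit condition), and then check that the Euler–Lagrange equation for $\int|\nabla_A\Theta|^2$ subject to that constraint has its normal component given precisely by $\sum_b\langle\nabla_A U_a,\nabla_A U_b\rangle_{\Lambda^1(E)} U_b$ and nothing else — i.e.\ that the $\{U_b\}$ span the relevant normal directions and the coefficients come out as the claimed Gram-matrix-type expression. This is the harmonic-maps-into-a-Lie-group computation adapted to the twisted bundle setting, and getting the index bookkeeping and the $\mathrm{ad}P$ versus $E^k$ identifications exactly right is where the care is required; the rest is bookkeeping with the Leibniz rule and orthonormality.
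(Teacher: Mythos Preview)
Your proposal is correct in content but is organized differently from the paper, and the difference is worth noting because it bears on what you flagged as the ``main obstacle.''

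The paper does \emph{not} prove $(3)\Leftrightarrow(2)$ directly. Instead it shows $(1)\Leftrightarrow(2)$ and $(1)\Leftrightarrow(3)$, both by working in the trivialization induced by $\Theta_0$ itself (so $U_a$ becomes the constant vector $E_a$ and $\nabla_A U_a = A_0(E_a)$). For $(1)\Leftrightarrow(2)$, writing an arbitrary test section as $\Phi=\sum_b\phi_b U_b$ reduces the weak form of $(2)$ to $\int\langle A_0(E_a),d\phi\rangle=0$, which is precisely $(1)$. For $(1)\Leftrightarrow(3)$, the paper computes the first variation \emph{directly} and finds
\[
\frac{d}{dt}\Big|_{t=0}\int_M|\nabla_A(\exp(t\xi)\Theta_0)|^2 \;=\; 2\int_M\langle d\xi_0,A_0\rangle,
\]
with the quadratic ``Lagrange-multiplier'' term $\int\langle[A_0,\xi_0],A_0\rangle$ vanishing identically because it is antisymmetric under $[\cdot,\cdot]$. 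This is exactly the cancellation you would have to extract from your Lagrange-multiplier bookkeeping (in your index language, $\sum_{a,b}\xi_{ba}\langle\nabla_A U_a,\nabla_A U_b\rangle=0$ since $\xi$ is skew and the Gram term is symmetric), so the ``delicate step'' you anticipated collapses to a one-line symmetry observation.

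What each route buys: your $(3)\Leftrightarrow(2)$ framing is the familiar constrained-harmonic-map picture and is conceptually fine, but to pin down the specific form of the right-hand side in $(2)$ you still have to pass through the computation above --- i.e.\ effectively through $(1)$. The paper's organization avoids the detour by recognizing that the first variation already \emph{is} the weak Coulomb condition, so $(1)$ is the natural hub. Your $(2)\Leftrightarrow(1)$ via componentwise Leibniz on $A^\alpha_{\beta,i}=\langle\nabla_A U_\beta,U_\alpha\rangle$ is the same computation as the paper's, just phrased pointwise rather than in the trivialization; both are clean once done weakly.
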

\begin{remark}
\label{tohigh0}
As one could see from the proof presented below, the dimension assumption is unimportant. However, we state Lemma \ref{coulomb} only for $4$-manifolds since we will not be considering the higher dimensional cases in this paper.
\end{remark}
\begin{proof}
Choose any smooth section $\Phi\in \Gamma(E)$ as the test function in proving the weak identities. In the trivialization induced by $\Theta=(U_1,\cdots,U_k)$, both $\{U_a\}_a$ and $\Phi$ admit trivializations denoted by $\{E_a\}_a$ and $\phi$ respectively.
\medskip

(1) $\Longrightarrow$ (2): Given (1), we have
\begin{eqnarray}
\begin{split}
\label{force1}
\int_M \langle \nabla_A U_a,\nabla_A \Phi\rangle=\int_M\langle A_0(E_a),d\phi+A_0(\phi)\rangle=\int_M\langle A_0(E_a),d\phi\rangle +\int_M\langle A_0(E_a),A_0(\phi)\rangle.
\end{split}
\end{eqnarray}
The first term vanishes due to (1); i.e. $d^*A_0=0$ weakly. Hence above becomes
\begin{eqnarray}
\begin{split}
\int_M\langle A_0(E_a),A_0(\phi)\rangle=\int_M\langle A_0(E_a),A_0(\sum_b\langle E_b,\phi\rangle E_b)\rangle=\int_M\sum_b\langle\nabla_{A}U_a,\nabla_{A}U_b\rangle \langle U_b,\Phi\rangle.
\end{split}
\end{eqnarray}
Thus (2) is true.

(2) $\Longrightarrow$ (1): Given (2), we have
\begin{eqnarray}
\begin{split}
\label{force2}
\int_M \langle \nabla_A U_a,\nabla_A \Phi\rangle=\int_M \sum_b\langle\nabla_{A}U_a,\nabla_{A}U_b\rangle \langle U_b,\Phi\rangle.
\end{split}
\end{eqnarray}
By \eqref{force1}, we see that \eqref{force2} forces $\int_M\langle A_0(E_a),d\phi\rangle =0$. Namely, (1) is true.

(1) $\Longleftrightarrow$ (3): let $\xi$ be any smooth section of the adjoint $\mathfrak{g}$-bundle $\text{ad}P$ with trivialization $\xi_0$ in the gauge induced by $\Theta=(U_1,\cdots,U_k)$. We have
\begin{eqnarray}
\begin{split}
&\frac{d}{dt}\bigg{\rvert}_{t=0}\sum_a\int_M|\nabla_A(\exp(t\xi)U_a)|^2=\frac{d}{dt}\bigg{\rvert}_{t=0}\sum_a\int_M|\nabla_{\exp(t\xi_{0})^*A_{0}}E_a|^2\\
=&\frac{d}{dt}\bigg{\rvert}_{t=0}\sum_a\int_M|[\exp(-t\xi_{0})A_{0}\exp(t\xi_{0})](E_a)+\exp(-t\xi_{0})d(\exp(t\xi_{0}))(E_a)|^2\\
=&2\sum_a\int_M\langle\frac{d}{dt}\bigg{\rvert}_{t=0}\bigg{\{}[\exp(-t\xi_{0})A\exp(t\xi_{0})](E_a)+\exp(-t\xi_{0})d(\exp(t\xi_{0}))(E_a)\bigg{\}},A_{0}(E_a)\rangle\\
=&2\sum_a\int_M\langle d\xi_{0}(E_a)+[-\xi_{0},A_{0}](E_a),A_{0}(E_a)\rangle=2\int_M\sum_a\langle d\xi_{0}(E_a),A_{0}(E_a)\rangle\\
+&2\int_M\sum_a\langle [-\xi_{0},A_{0}](E_a),A_{0}(E_a)\rangle=2\int_M \langle d\xi_{0},A_{0}\rangle+2\int_M \langle [A_{0},\xi_{0}],A\rangle=2\int_M \langle d\xi_{0},A_{0}\rangle,
\end{split}
\end{eqnarray}
where in the penultimate equality we have used the fact that $\{U_a\}$ forms an orthonormal basis. Also, the second term in the last identity vanishes because $\langle [A_{0},\xi_{0}],A_{0}\rangle=\langle A_{0}, [\xi_{0},A_{0}]\rangle=-\langle  [A_{0},\xi_{0}],A_{0}\rangle$. Now, combining both sides of the above computation we have
\begin{eqnarray}
\begin{split}
\frac{d}{dt}\bigg{\rvert}_{t=0}\sum_a\int_M|\nabla_A(\exp(t\xi)U_a)|^2=2\int_M \langle d\xi_{0},A_{0}\rangle.
\end{split}
\end{eqnarray}
In view of the fact that $A_0\in \Lambda^1(M,\mathfrak{g})$, we see that (1) and (3) are equivalent.
\end{proof}
By the minimizing property of $\Theta_0$, we see that condition (3) in Lemma \ref{coulomb} holds for $\Theta_0$. Now we apply Lemma \ref{coulomb} to see that both condition (1) and condition (2) hold for $\Theta_0$. Especially, the connection form of $A$ under a Coulomb minimizer is weakly Coulomb.
\medskip

Another crucial property of $\Theta_0$ is the stability, which is also a consequence of the minimizing property. More precisely, for any smooth section $\xi$ of the adjoint $\mathfrak{g}$-bundle $\text{ad}P$, the following inequality holds:
\begin{equation}\label{stab1}
\frac{d^2}{dt^2}\bigg{\rvert}_{t=0}\sum_a\int_M|\nabla_A(\exp(t\xi)U_a)|^2\equiv \frac{d^2}{dt^2}\bigg{\rvert}_{t=0}\int_M|\nabla_A(\exp(t\xi)\Theta_0)|^2\ge0.
\end{equation}
The stability will be studied in further details later in Section \ref{sta}.
\begin{remark}
Using a limiting argument, one sees that both \eqref{lagr} and \eqref{stab1} remain true for a Coulomb minimizer $\Theta_0$ when the test function $\xi$ has only $W_0^{1,2}$ regularity.
\end{remark}
\medskip

Thirdly, being a minimizer implies that $\Theta_0$ is indeed a critical point of the functional $\int_M|\nabla_A\Theta|^2$, in the sense that not only does it satisfy the Euler-Lagrange equation (see Lemma \ref{coulomb}, condition (2)), but also the stationarity equation that comes from the domain variation. To be precise, let $\{\zeta_t(x)\}_{t\ge 0}$ be an arbitrary smooth $1$-parameter family of diffeomorphisms on $M$, such that $\zeta_0\equiv id$. Denote by $P_{\zeta_t(x),x}$ the unique parallel transport associated to $E^k$ of an element on the fiber at $\zeta_t(x)$ to an element on the fiber at $x$ via the curve $\zeta_t(x)$ from $t$ to $0$. By the minimizing property we have
\begin{equation}
\frac{d}{dt}\bigg{\rvert}_{t=0}\int_M|\nabla_A P_{\zeta_t(x),x}(\Theta(\zeta_t(x)))|^2=0.
\end{equation}
An elementary but somewhat tedious calculation shows that the above to be equivalent to the following:
\begin{eqnarray}
\begin{split}
\label{divfree}
\int_M \bigg{(}|\nabla_A \Theta_0|^2g_{ik}-2\langle \nabla_{A,i}\Theta_0,\nabla_{A,k}\Theta_0\rangle \bigg{)}g^{ij}{X^k}_{;j}+2\int_M\langle {F_i}^a(\Theta_0),\nabla_{A,a}\Theta_0\rangle X^i=0.
\end{split}
\end{eqnarray}
where $X$ is any smooth vector field on $M$ it holds. \eqref{divfree} is called the stationarity equation of $\Theta_0$. In this paper, the stationarity equation will only be applied to sufficiently small balls, whose metric is close to being Euclidean. For the sake of brevity, we shall assume the metric on the ball to be Euclidean whenever we apply \eqref{divfree}, since the modification necessary for the general metric case is purely technical and unessential to the matter. Now choose $X$ to be any compactly supported smooth vector field on a ball $B_r(x)$ with metric $g_{ij}=\delta_{ij}$. By using \eqref{divfree}, and following a standard procedure (see, for example, Section 2.2 of \cite{S96}), one could derive the following formula:
\begin{equation}
\label{mono}
\frac{d}{d\rho}\bigg{(}\rho^{-2}\int_{B_{\rho}(y)}|\nabla_A\Theta_0|^2\bigg{)}=2\frac{d}{d\rho}\bigg{(}\int_{B_{\rho}(y)}|x-y|^{-2}|\nabla_{A,\partial r} \Theta_0|^2\bigg{)}+2\rho^{-3}\int_{B_{\rho}(y)}\langle {F_i}^a,\nabla_{A,a}\Theta_0\rangle(x^i-y^i).
\end{equation}
Indeed, \eqref{mono} becomes exactly the monotonicity formula of a stationary harmonic map (see Section 2.4 of \cite{S96}) if the curvature $F_A$ vanishes.
\bigskip

\section{The $\epsilon$-regularity theorem}
\label{ert}
In this Section we will prove an $\epsilon$-regularity theorem which allows us to obtain the effective $L^4$-estimate of $A_0$ in $B_r(x)$ provided the smallness of both $\int_{B_{2r}(x)}|F_A|^2$ and $r^{-2}\int_{B_{2r}(x)}|A_0|^2$. By identifying $|A_0|$ as $|\nabla_A\Theta_0|$ and viewing $\Theta_0$ as a twisted minimizing harmonic map into the fiber group $G$, our result extends Schoen and Uhlenbeck's $\epsilon$-regularity theorem of classical minimizing harmonic maps (see Section 2.3 of \cite{S96}). Let us now present the main theorem in this section:

\begin{theorem}
\label{eps}
Let $P\to M$ be a smooth principal bundle with simply-connected simple compact Lie group $G$-fiber over a compact $4$-manifold and $A$ a smooth connection on $P$. Let $\Theta_0$ be a Coulomb minimizer obtained in Subsection \ref{prep}, with associated connection form $A_0$. Then, there exist $\epsilon_0(G)$, $\eta(G)$, $C(G)$ and $r(M)$, such that for $10\rho_0<r(M)$, if $B_{10\rho_0}(p)\subseteq M$ satisfies that $\int_{B_{10\rho_0}(p)}|F_A|^2<\epsilon_0$ and ${\rho_0}^{-2}\int_{B_{10\rho_0}(p)}|A_0|^2<\eta$, then it holds 
\begin{equation}
\label{hess}
\int_{B_{\rho_0/2}(y_0)}|A_0|^4+\int_{B_{\rho_0/2}(y_0)}|\nabla A_0|^2\le C\bigg{(}\big{(}{\rho_0}^{-2}\int_{B_{10\rho_0}(p)}|A_0|^2\big{)}^2+\int_{B_{10\rho_0}(p)}|F_A|^2\bigg{)}.
\end{equation}
\end{theorem}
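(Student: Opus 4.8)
The plan is to run Schoen--Uhlenbeck's $\epsilon$-regularity scheme for energy-minimizing harmonic maps (\cite{SU82}, \cite{S96}) in the present $A$-twisted setting, treating the ambient curvature $F_A$ as a perturbation that is only controlled in $L^2$. As indicated in the text, we may assume that $B_{10\rho_0}(p)$ carries the Euclidean metric, and after rescaling we normalize $\rho_0=1$, so that all quantities below are scale invariant and the hypotheses read $\int_{B_{10}(p)}|F_A|^2<\epsilon_0$ and $\int_{B_{10}(p)}|A_0|^2<\eta$.

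First I would show, using the domain-variation identity \eqref{divfree} and the resulting almost-monotonicity formula \eqref{mono}, that the normalized energy $e(y,\rho):=\rho^{-2}\int_{B_\rho(y)}|A_0|^2$ satisfies $e(y,\rho)\le 2\eta$ for every $B_\rho(y)\subseteq B_9(p)$: integrating \eqref{mono} and bounding its curvature term by $\rho^{-1}\big(\int_{B_\rho(y)}|F_A|^2\big)^{1/2}\big(\int_{B_\rho(y)}|A_0|^2\big)^{1/2}$ via Cauchy--Schwarz, the error accumulated over the finitely many dyadic scales between $\rho$ and $9$ is $\le C\epsilon_0^{1/2}\eta^{1/2}$, which is absorbed once $\epsilon_0,\eta$ are chosen small. (Here one uses that $\Theta_0$ is stationary, i.e.\ condition (3) of Lemma \ref{coulomb} together with \eqref{divfree}.)

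The heart of the argument is a one-step energy-decay lemma: there exist $\theta\in(0,\tfrac14)$ and small $\epsilon_0,\eta$ such that whenever $B_r(y)\subseteq B_9(p)$, $\int_{B_r(y)}|F_A|^2<\epsilon_0$ and $r^{-2}\int_{B_r(y)}|A_0|^2<\eta$, then
\[
(\theta r)^{-2}\!\int_{B_{\theta r}(y)}|A_0|^2\ \le\ \tfrac12\, r^{-2}\!\int_{B_{r}(y)}|A_0|^2\ +\ C\!\int_{B_{r}(y)}|F_A|^2.
\]
I would prove this by contradiction and blow-up. If it failed, one obtains Coulomb-minimizing frames $\Theta_j$ for connections $A_j$ on the (rescaled) unit ball with $\lambda_j^2:=\int_{B_1}|A_0^{(j)}|^2\to0$ and $\int_{B_1}|F_{A_j}|^2\to0$, violating the inequality. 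Passing to an Uhlenbeck gauge for $A_j$ --- legitimate because $|A_0^{(j)}|=|\nabla_{A_j}\Theta_j|$ is gauge invariant --- one arranges $\|A_j\|_{L^2(B_1)}\to0$ as well, so the normalized frames $v_j:=\lambda_j^{-1}(\Theta_j-\overline{\Theta_j})$ converge weakly in $W^{1,2}(B_1)$ to a harmonic \emph{function} $v_\infty$ (the manifold constraint and the twisting by $A_j$ both degenerate in the limit). The decisive point is that minimality of the $\Theta_j$ survives in the limit, forcing $v_\infty$ to minimize and upgrading the convergence to strong $W^{1,2}$: this is exactly where Luckhaus' lemma (see \cite{S96}) enters, used to convert a competitor for $v_\infty$ into an admissible competitor for $\Theta_j$ while controlling the extra terms coming from $A_j$ (in Uhlenbeck gauge) and $F_{A_j}$, both $\to0$. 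Since harmonic functions obey $(\theta r)^{-2}\int_{B_{\theta r}}|\nabla v_\infty|^2\le C_0\theta^2\, r^{-2}\int_{B_r}|\nabla v_\infty|^2$, choosing $\theta$ with $C_0\theta^2<\tfrac14$ contradicts the failure of decay. \emph{This blow-up step --- making ``limits of minimizers are minimizing'' work for $A$-twisted frames in the presence of a non-vanishing curvature --- is the main obstacle.} (Alternatively one could argue directly in the spirit of Simon \cite{S96}: control $\int_{\partial B_r}|\nabla_{\mathrm{tan}}\Theta_0|^2$ for a.e.\ radius by the monotonicity defect, build a low-energy comparison frame via Luckhaus, and invoke minimality together with \eqref{mono}.)

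Finally, iterating the decay lemma along $r=\theta^k$ --- the curvature term at each scale being $\le\epsilon_0$ --- gives the Morrey estimate $\int_{B_\rho(y)}|A_0|^2\le C(\eta+\epsilon_0)\rho^2$ for all $B_\rho(y)\subseteq B_4(p)$, so $A_0$ is small in the Morrey space. I would then close the proof using the structure equations $d^*A_0=0$ (Lemma \ref{coulomb}) and $dA_0=F_0-A_0\wedge A_0$ with $|F_0|=|F_A|$, which combine to $\Delta A_0=d^*(F_0-A_0\wedge A_0)$: interior $L^p$-estimates together with the Gagliardo--Nirenberg inequality $\|A_0\|_{L^4(B_\sigma)}^2\lesssim\|A_0\|_{L^2(B_{2\sigma})}\|\nabla A_0\|_{L^2(B_{2\sigma})}+\sigma^{-2}\|A_0\|_{L^2(B_{2\sigma})}^2$ let the quadratic term $A_0\wedge A_0$ be absorbed thanks to Morrey smallness, and a finite iteration over radii produces the pointwise bound $\sup_{B_{1/2}(p)}|A_0|^2\le C\big(\int_{B_{10}(p)}|A_0|^2+\int_{B_{10}(p)}|F_A|^2\big)$. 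Combining this with the structure equations once more (and with the minimality of $\Theta_0$, which beyond stationarity is what permits the finer dependence on the energy) controls $\int_{B_{1/2}(p)}|\nabla A_0|^2$, while $\int_{B_{1/2}(p)}|A_0|^4\le\sup_{B_{1/2}(p)}|A_0|^2\cdot\int_{B_{1/2}(p)}|A_0|^2$; undoing the rescaling recovers \eqref{hess}.
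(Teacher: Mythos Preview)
Your plan has two genuine gaps, one in the first step and one in the last.

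\textbf{Step 1 (almost-monotonicity).} The assertion that the curvature error ``accumulated over the finitely many dyadic scales between $\rho$ and $9$ is $\le C\epsilon_0^{1/2}\eta^{1/2}$'' is false: the number of dyadic scales is $\sim|\log\rho|$, and with your Cauchy--Schwarz bound the error per scale is at best $C\epsilon_0^{1/2}\eta^{1/2}$ (after bootstrapping the very conclusion you want), so the total accumulates like $\epsilon_0^{1/2}\eta^{1/2}|\log\rho|$ and is \emph{not} uniformly bounded as $\rho\to0$. This is exactly the difficulty the paper flags: the curvature term in \eqref{mono} is not integrable in $\rho$ down to $0$. The paper's Proposition~\ref{smallnessprop} resolves this by an induction on scales that alternates the Luckhaus-type energy improvement (Lemma~\ref{repo}) with a harmonic-approximation decay of the mean oscillation (Lemma~\ref{inde}), and uses \eqref{mono} only between two \emph{consecutive} inductive scales (a bounded factor apart) to rule out the one remaining bad case. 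Your blow-up decay lemma in step~2, if established, would make step~1 unnecessary (the hypothesis $e<\eta$ self-propagates under iteration), but as written step~1 does not stand on its own.

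\textbf{Step 4 (pointwise bound and bootstrap).} The claimed bound $\sup_{B_{1/2}}|A_0|^2\le C(\ldots)$ is false in general: with $F_A$ only in $L^2$, the Hodge system $d^*A_0=0$, $dA_0=F_0-A_0\wedge A_0$ places $A_0$ at best in $W^{1,2}\hookrightarrow L^4$, which is the \emph{critical} Sobolev exponent in dimension $4$ and does not embed into $L^\infty$. The paper says this explicitly in the remark after Theorem~\ref{epsintro}. Relatedly, the Gagliardo--Nirenberg interpolation you invoke, $\|A_0\|_{L^4}^2\lesssim\|A_0\|_{L^2}\|\nabla A_0\|_{L^2}+\cdots$, does not hold in $\mathbb{R}^4$ (the interpolation parameter is forced to $\theta=1$, i.e.\ pure Sobolev), so the quadratic term $A_0\wedge A_0$ cannot be absorbed by Morrey smallness of $\|A_0\|_{L^2}$ alone. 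The paper gets around this circularity in two stages: first an \emph{ineffective} $L^4$ bound (Proposition~\ref{l4finite}), which exploits the smoothness of $A$ to pass to an $L^{p_0}$-Uhlenbeck gauge with $p_0>2$, obtain H\"older continuity of $\Theta^*$ via Campanato, and then run a difference-quotient argument; and second an \emph{effective} $L^4$ bound (Proposition~\ref{simon3}) via the Hardy--BMO duality in the spirit of \cite{Evans}, using that the small normalized energy forces $[\Theta^*]_{\mathrm{BMO}}$ to be small. Your Hodge-system bootstrap does not supply either ingredient.
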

\begin{remark}
In Schoen and Uhlenbeck's $\epsilon$-regularity theorem, the monotonicity of $\rho^{-2}\int_{B_{\rho}(y)}|\nabla u|^2$ is of crucial importance. Hence, the main difficulty of proving our $\epsilon$-regularity theorem lies in the fact that $\rho^{-2}\int_{B_{\rho}(y)}|\nabla_A\Theta_0|^2$ is not necessarily a monotone quantity due to the presence of the curvature term $2\rho^{-3}\int_{B_{\rho}(y)}\langle {F_i}^a,\nabla_{A,a}\Theta_0\rangle(x^i-y^i)$ in \eqref{mono} that is not even integrable in $\rho$. 
\end{remark}
\begin{proof}
It is a well known result of Uhlenbeck \cite{U82b} that under the smallness assumption that 
$$\int_{B_{\rho}(p)}|F_A|^2\le \epsilon_{Uh}$$ 
one could find a local Coulomb frame on $B_{\rho}(p)$ with associated connection form $A^*$ satisfying
\begin{eqnarray}
\begin{split} 
\label{uhgauge}
&d^*A^*=0\ \text{weakly in $B_\rho(p)$},\ \ast A^*\rvert_{\partial B_\rho(p)}=0,\\
&\|A^*\|_{L^{4}(B_{\rho}(p))}+\|A^*\|_{W^{1,2}(B_{\rho}(p))}\le C_{Uh}\|F_A\|_{L^2(B_{\rho}(p))}.
\end{split}
\end{eqnarray}
From now on, we fix such a frame on the ball $B_{10\rho_0}(p)$ denoted by $\Theta_{Uh}$ while referring to it as the Uhlenbeck-gauge, and denote the associated connection form by $A^*$. Moreover, we let $\Theta^*$ be a mapping from $B_{10\rho_0}(p)$ into $\text{Mat}_{\mathbb{R}}(k\times k)$ defined by the following identity: 
\begin{equation} 
\label{trivialuh}
\Theta_0\circ\sigma_{Uh}=\Theta^*
\end{equation}
where $\sigma_{Uh}$ is the $P$-section associated to $\Theta_{Uh}$ as in Proposition \ref{ggauge} satisfying $\Theta_{Uh}\circ\sigma_{Uh}\equiv 1_G$. We shall refer to $\Theta^*$ as the trivialization of the Coulomb minimizer $\Theta_0$ with respect to the Uhlenbeck-gauge $\Theta^*$. Using the equivalent condition (2) of Lemma \ref{coulomb} satisfied by $\Theta_0=(U_1,\cdots, U_k)$, and a straightforward computation, we see that the following identity holds in the weak sense:
\begin{equation}
\label{weaklap}
0=d^*d\Theta^*-\Theta^*\langle(d\Theta^*)^T\cdot d\Theta^*\rangle-\Theta^*\langle(d\Theta^*)^T\cdot A^*(\Theta^*)\rangle+\Theta^*\langle (A^*(\Theta^*))^T\cdot d\Theta^*\rangle.
\end{equation}
Next, define $\lambda_{y,\rho}=\rho^{-4}\int_{B_{\rho}(y)}\Theta^*(y)dy$, which is an element in $\text{Mat}_{\mathbb{R}}(k\times k)$. We begin by presenting several lemmas, which will be crucial to the proof of Theorem \ref{eps}. Note that by choosing $\epsilon$ sufficiently small in these lemmas, the curvature $L^2$ smallness hypothesis always guarantees the existence of the Uhlenbeck-gauge described as above. Hence we will be using the notations $A^*$ and $\Theta^*$ without explanations of the notations.

\begin{lemma}
\label{prerepo}
For fixed $\Lambda>0$, there exist $\epsilon_0<\epsilon_{Uh}$, $\eta_0$, $C_0$ and $r(M)$, such that for any $\delta\in (0,1)$ and $B_{10\rho_0}(y_0)\subseteq M$ with $\rho_0<r(M)$, if $\int_{B_{10\rho_0}(y_0)}|F_A|^2<\epsilon_0$, $\rho_{0}^{-2}\int_{B_{10\rho_0}(y_0)}|d\Theta^*|^2<\Lambda$, and $\rho_0^{-4}\int_{B_{10\rho_0}(y_0)}|\Theta^*-\lambda_{y_0,10\rho_0}|^2<\eta_0$, then for all $y\in B_{8\rho_0}(y_0)$ the following holds:
\begin{equation}
\label{seesaw}
\rho_0^{-2}\int_{B_{\frac{\rho_0}{2}}(y)}|d\Theta^*|^2\le C_0\bigg{(}\delta\rho_0^{-2}\int_{B_{\rho_0}(y)}|d\Theta^*|^2+\delta^{-1}\rho_0^{-4}\int_{B_{\rho_0}(y)}|\Theta^*-\lambda_{y,\rho_0}|^2+\rho_0^{-2}\int_{B_{10\rho_0}(y_0)}|A^*|^2\bigg{)}.
\end{equation}
\end{lemma}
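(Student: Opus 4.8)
The plan is to prove \eqref{seesaw} by a decomposition of $\Theta^*$ into a harmonic part and a remainder, following the standard approach to Caccioppoli-type estimates for almost-harmonic maps, but keeping careful track of the curvature term. First I would work on a fixed ball $B_{\rho_0}(y)$ with $y\in B_{8\rho_0}(y_0)$ and, after rescaling to unit radius, rewrite equation \eqref{weaklap} as $d^*d\Theta^*=G$, where $G$ collects the three quadratic/bilinear terms $-\Theta^*\langle(d\Theta^*)^T\cdot d\Theta^*\rangle$, $-\Theta^*\langle(d\Theta^*)^T\cdot A^*(\Theta^*)\rangle$ and $\Theta^*\langle(A^*(\Theta^*))^T\cdot d\Theta^*\rangle$. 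Note $|\Theta^*|$ is bounded (it is valued in a compact matrix set up to the Uhlenbeck gauge), so pointwise $|G|\lesssim |d\Theta^*|^2+|A^*||d\Theta^*|+|A^*|^2$. Let $h$ solve $d^*dh=0$ on $B_{\rho_0}(y)$ with $h=\Theta^*$ on $\partial B_{\rho_0}(y)$, and set $w=\Theta^*-h$, so $d^*dw=G$ with $w=0$ on the boundary.

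The core estimates are then: (i) interior estimates for the harmonic $h$, namely $\rho_0^{-2}\int_{B_{\rho_0/2}(y)}|dh|^2\lesssim \rho_0^{-2}\int_{B_{\rho_0}(y)}|dh|^2$ and, more usefully, the decay/mean-value property $\int_{B_{\rho_0/2}(y)}|dh|^2 \lesssim \rho_0^{-2}\int_{B_{\rho_0}(y)}|h-\bar h|^2$ together with $\int_{B_{\rho_0}(y)}|h-\lambda_{y,\rho_0}|^2\lesssim \int_{B_{\rho_0}(y)}|\Theta^*-\lambda_{y,\rho_0}|^2$ (harmonic functions minimize Dirichlet energy and the boundary data agrees); and (ii) an energy estimate for $w$ obtained by testing $d^*dw=G$ against $w$ itself: $\int_{B_{\rho_0}(y)}|dw|^2=\int_{B_{\rho_0}(y)}\langle G,w\rangle$. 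For the first term of $G$ I would use $\int |d\Theta^*|^2|w|\le \|w\|_{L^\infty}\int_{B_{\rho_0}(y)}|d\Theta^*|^2$ if an $L^\infty$ bound on $w$ is available, or more robustly Hölder plus Sobolev: $\int |d\Theta^*|^2 |w| \le \|d\Theta^*\|_{L^2}^2 \|w\|_{L^\infty}$ is too lossy in dimension $4$, so instead bound $\|w\|_{L^4}\lesssim \|dw\|_{L^2}$ (Sobolev, $w=0$ on $\partial$) and $\|d\Theta^*\|_{L^{8/3}}$... — cleaner is to absorb using $\int|d\Theta^*|^2|w|\le \big(\int|d\Theta^*|^2\big)^{1/2}\big(\int|d\Theta^*|^2|w|^2\big)^{1/2}$ and then $\int |d\Theta^*|^2|w|^2\le \|d\Theta^*\|_{L^2}^2\|w\|_{L^4}^2\cdot(\text{small})$... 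The honest route, which I would adopt, is the one from Schoen–Uhlenbeck / Simon: since $\Theta^*$ maps into $G$, write the first term as a null-form and use $\|\Theta^*-\lambda\|_{L^\infty}$ smallness (consequence of the $\eta_0$ hypothesis via a separate sup bound, or simply use that $|\Theta^*|\le C$) to get $\int_{B_{\rho_0}(y)}|dw|^2 \lesssim \eta_0^{1/2}\int_{B_{\rho_0}(y)}|d\Theta^*|^2 + \int_{B_{\rho_0}(y)}|A^*|^2$. Combining (i) and (ii) via $|d\Theta^*|\le |dh|+|dw|$ and $\int_{B_{\rho_0}(y)}|dh|^2\le \int_{B_{\rho_0}(y)}|d\Theta^*|^2$ yields
\[
\int_{B_{\rho_0/2}(y)}|d\Theta^*|^2 \lesssim \int_{B_{\rho_0/2}(y)}|dh|^2 + \int_{B_{\rho_0}(y)}|dw|^2 \lesssim \rho_0^{-2}\int_{B_{\rho_0}(y)}|\Theta^*-\lambda_{y,\rho_0}|^2 + \eta_0^{1/2}\int_{B_{\rho_0}(y)}|d\Theta^*|^2 + \int_{B_{\rho_0}(y)}|A^*|^2.
\]
This is \eqref{seesaw} with the $\eta_0^{1/2}$ term playing the role of $\delta$; to get the stated flexible $\delta$ I would interpolate: split the harmonic-decay term further, $\rho_0^{-2}\int|\Theta^*-\lambda|^2 \le$ can be left as is, while the $\eta_0^{1/2}\int|d\Theta^*|^2$ term is replaced by $\delta\int|d\Theta^*|^2$ after first requiring $\eta_0$ small enough that $\eta_0^{1/2}\le \delta$ — but since $\delta$ ranges over all of $(0,1)$ independently of $\eta_0$, instead I would re-derive (ii) keeping a free Young's-inequality parameter: $\int|dw|^2\le \delta\int|d\Theta^*|^2 + C\delta^{-1}\rho_0^{-4}\int|\Theta^*-\lambda|^2\cdot(\text{from }L^4\text{–}L^2\text{ interpolation of the null form}) + \int|A^*|^2$, which is exactly the shape of the right-hand side of \eqref{seesaw}. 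Finally I would undo the rescaling, insert the correct powers of $\rho_0$, and pass from the ball $B_{\rho_0}(y)$ to the global bound $\int_{B_{10\rho_0}(y_0)}|A^*|^2$ in the last term (monotonicity/inclusion of balls since $B_{\rho_0}(y)\subseteq B_{9\rho_0}(y_0)\subseteq B_{10\rho_0}(y_0)$).

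The main obstacle I expect is controlling the nonlinear term $\Theta^*\langle(d\Theta^*)^T d\Theta^*\rangle$ when testing against $w$ without losing the scale-invariant smallness: a naive Hölder estimate costs a factor of $\|d\Theta^*\|_{L^2}$ which is only bounded by $\Lambda$, not small, so one genuinely needs the Wente/null-form structure of this term (it is a sum of Jacobians, hence lies in the Hardy space, giving an extra gain when paired with $w\in W^{1,2}_0$) — or, alternatively, the target-geometry trick of writing $\Theta^*\langle(d\Theta^*)^Td\Theta^*\rangle = -(\Theta^*-\lambda)\langle(d\Theta^*)^Td\Theta^*\rangle$ modulo a term that integrates against $w$ to something controlled, exploiting $\|\Theta^*-\lambda_{y_0,10\rho_0}\|_{L^\infty}$ smallness which follows from the $\eta_0$-hypothesis together with an $\epsilon$-regularity-flavored $L^\infty$ bound for the almost-harmonic $\Theta^*$. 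I would use the latter, since it is the most elementary and meshes with how $\eta_0$ enters the hypotheses. The terms involving $A^*$ are strictly easier because $A^*$ is already controlled in $L^4$ and $W^{1,2}$ by $\|F_A\|_{L^2}\le \epsilon_0^{1/2}$ via \eqref{uhgauge}, so they contribute only to the $\int|A^*|^2$ term (after a Cauchy–Schwarz and absorbing the resulting $\int|d\Theta^*|^2$ piece into the $\delta$ term).
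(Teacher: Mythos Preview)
Your approach has a genuine gap, and it misses the key structural input the paper exploits.

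You propose to decompose $\Theta^*=h+w$ with $h$ harmonic and $w\in W^{1,2}_0$, then test the Euler--Lagrange equation \eqref{weaklap} against $w$. The problem is the quadratic term: you need to bound $\int_{B_{\rho_0}(y)}|d\Theta^*|^2|w|$ using only the scale-invariant hypotheses $\rho_0^{-2}\int|d\Theta^*|^2<\Lambda$ and $\rho_0^{-4}\int|\Theta^*-\lambda|^2<\eta_0$. Your proposed fix --- use $\|\Theta^*-\lambda\|_{L^\infty}$ smallness --- is circular: that $L^\infty$ bound is exactly what $\epsilon$-regularity (Theorem \ref{eps}) is working toward, and Lemma \ref{prerepo} is the first step of that proof. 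You do not yet have any oscillation control beyond $L^2$. The alternative you mention (Wente/Hardy--BMO structure) could in principle be made to work for sphere-valued maps, but you do not carry it out, and it would require machinery (e.g.\ moving-frame or conservation-law rewriting of the nonlinearity) well beyond what you sketch. A naive H\"older bound on $\int|d\Theta^*|^2|w|$ loses a factor of $\|d\Theta^*\|_{L^4}$, which is not yet available.

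The paper's proof avoids the PDE entirely and instead uses the \emph{minimality} of the Coulomb minimizer $\Theta_0$. It invokes the Luckhaus lemma (Simon \cite{S96}, Section 2.7): under the $\eta_0$-smallness hypothesis, one can build a comparison map $W\in W^{1,2}(B_\sigma(y),G)$ agreeing with $\Theta^*$ near $\partial B_\sigma(y)$ and satisfying
\[
\sigma^{-2}\int_{B_\sigma(y)}|dW|^2 \le \delta\,\rho_0^{-2}\int_{B_{\rho_0}(y)}|d\Theta^*|^2 + C\delta^{-1}\rho_0^{-4}\int_{B_{\rho_0}(y)}|\Theta^*-\lambda_{y,\rho_0}|^2.
\]
Crucially $W$ takes values in $G$, so it is an admissible competitor; minimality of $\Theta_0$ gives $\int_{B_\sigma(y)}|\nabla_A\Theta^*|^2\le \int_{B_\sigma(y)}|\nabla_A W|^2$, and converting $\nabla_A\leftrightarrow d$ costs only the $\int|A^*|^2$ term. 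This is the Schoen--Uhlenbeck mechanism: it uses the variational structure directly rather than the Euler--Lagrange equation, and the free parameter $\delta$ comes for free from the Luckhaus construction. Your harmonic replacement $h$ is $\mathrm{Mat}_{\mathbb{R}}(k\times k)$-valued, not $G$-valued, so it cannot serve as a competitor --- this is why your route is forced into the PDE and then gets stuck.
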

\begin{remark}
Later $\Lambda$ will be specified and taken to be a universal constant.
\end{remark}
Let us now define:
\begin{equation}
\mathcal{Q}\eqqcolon \{B_{\sigma}(z): B_{2\sigma}(z)\subseteq B_{4\rho_0}(y_0),\ \sigma\in[\frac{\rho_0}{10},\rho_0 ].\}
\end{equation}
Set $Q=\sup_{B_{\sigma}(z)\in \mathcal{Q}}\sigma^2\int_{B_{\sigma}(z)}|d\Theta^*|^2$. 
\begin{lemma}
\label{repo}
For fixed $\Lambda>0$, there exist $\epsilon_0$, $\eta_0$ small enough (which will also fulfill Lemma \ref{prerepo}) and constant $C_1$ (depending on $C_0$), if $\int_{B_{10\rho_0}(y_0)}|F_A|^2<\epsilon_0$, $\rho_0^{-2}\int_{B_{10\rho_0}(y_0)}|d\Theta^*|^2<\Lambda$, $\rho_0^{-4}\int_{B_{10\rho_0}(y_0)}|\Theta^*-\lambda_{y_0,10\rho_0}|^2<\eta_0$, and
\begin{equation}
\label{largeness}
\sigma^2\int_{B_{\sigma}(z)}|d\Theta^*|^2>10^{-10}Q,\ \text{for some $B_{\sigma}(z)$ with $\sigma\in [\rho_0/2,\rho_0]$ and $z\in B_{\rho_0}(y_0)$},
\end{equation}
then for all $y\in B_{\rho_0}(y_0)$ we have
\begin{equation}
\label{almostrepo}
\rho_0^{-2}\int_{B_{\frac{\rho_0}{2}}(y)}|d\Theta^*|^2\le C_1\bigg{(}\rho_0^{-4}\int_{B_{10\rho_0}(y_0)}|\Theta^*-\lambda_{y_0,10\rho_0}|^2+\rho_0^{-2}\int_{B_{10\rho_0}(y_0)}|A^*|^2)\bigg{)}.
\end{equation}
\end{lemma}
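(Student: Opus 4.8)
The plan is a self-improvement (absorption) argument built on the seesaw estimate of Lemma~\ref{prerepo}. Abbreviate $E:=\rho_0^{-4}\int_{B_{10\rho_0}(y_0)}|\Theta^*-\lambda_{y_0,10\rho_0}|^2$ and $F:=\rho_0^{-2}\int_{B_{10\rho_0}(y_0)}|A^*|^2$, so that the target \eqref{almostrepo} reads $\rho_0^{-2}\int_{B_{\rho_0/2}(y)}|d\Theta^*|^2\le C_1(E+F)$ for $y\in B_{\rho_0}(y_0)$. First I would shrink $\epsilon_0,\eta_0$ so that the Uhlenbeck gauge on $B_{10\rho_0}(y_0)$ — hence $A^*$ and $\Theta^*$ — is available (cf.~\eqref{uhgauge}) and so that the hypotheses of Lemma~\ref{prerepo} hold, and then apply that lemma with base ball $B_{10\rho_0}(y_0)$ and a parameter $\delta\in(0,1)$ to be fixed below. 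This yields, for every $y\in B_{8\rho_0}(y_0)$,
\[
\rho_0^{-2}\int_{B_{\rho_0/2}(y)}|d\Theta^*|^2\le C_0\Big(\delta\,\rho_0^{-2}\int_{B_{\rho_0}(y)}|d\Theta^*|^2+\delta^{-1}\rho_0^{-4}\int_{B_{\rho_0}(y)}|\Theta^*-\lambda_{y,\rho_0}|^2+F\Big).
\]
Now I would observe that when $y\in B_{2\rho_0}(y_0)$ one has $B_{2\rho_0}(y)\subseteq B_{4\rho_0}(y_0)$, so $B_{\rho_0}(y)\in\mathcal{Q}$ and hence $\rho_0^{-2}\int_{B_{\rho_0}(y)}|d\Theta^*|^2\le\rho_0^{-4}Q$; moreover $B_{\rho_0}(y)\subseteq B_{10\rho_0}(y_0)$ and, $\lambda_{y,\rho_0}$ being the mean value of $\Theta^*$ on $B_{\rho_0}(y)$, its $L^2$-minimality among constants gives $\int_{B_{\rho_0}(y)}|\Theta^*-\lambda_{y,\rho_0}|^2\le\int_{B_{10\rho_0}(y_0)}|\Theta^*-\lambda_{y_0,10\rho_0}|^2$. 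Substituting these into the above produces the working inequality
\begin{equation}
\rho_0^{-2}\int_{B_{\rho_0/2}(y)}|d\Theta^*|^2\le C_0\big(\delta\,\rho_0^{-4}Q+\delta^{-1}E+F\big)\qquad\text{for all }y\in B_{2\rho_0}(y_0).\tag{$\star$}
\end{equation}

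Next I would bring in the largeness hypothesis \eqref{largeness}: there is a ball $B_\sigma(z)$ with $\sigma\in[\rho_0/2,\rho_0]$, $z\in B_{\rho_0}(y_0)$ and $\sigma^2\int_{B_\sigma(z)}|d\Theta^*|^2>10^{-10}Q$, and since $\sigma\le\rho_0$ this forces $\rho_0^{-2}\int_{B_\sigma(z)}|d\Theta^*|^2>10^{-10}\rho_0^{-4}Q$. Because $\sigma\le\rho_0$ and $z\in B_{\rho_0}(y_0)$, the ball $B_\sigma(z)$ lies inside $B_{2\rho_0}(y_0)$ and can be covered by at most $N_0$ balls $B_{\rho_0/2}(y_j)$ with centers $y_j\in\overline{B_\sigma(z)}\subseteq B_{2\rho_0}(y_0)$, where $N_0$ is a purely dimensional constant. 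Applying $(\star)$ to each $y_j$ and summing,
\[
10^{-10}\rho_0^{-4}Q<\rho_0^{-2}\int_{B_\sigma(z)}|d\Theta^*|^2\le\sum_{j=1}^{N_0}\rho_0^{-2}\int_{B_{\rho_0/2}(y_j)}|d\Theta^*|^2\le N_0C_0\big(\delta\,\rho_0^{-4}Q+\delta^{-1}E+F\big).
\]
Fixing $\delta=\delta(C_0)\le 10^{-10}(2N_0C_0)^{-1}$ lets me absorb the $\rho_0^{-4}Q$ term on the right into the left, and I get $\rho_0^{-4}Q\le C(C_0)(E+F)$. Feeding this back into $(\star)$, which holds in particular for every $y\in B_{\rho_0}(y_0)$, and recalling that $\delta$ is by now a fixed constant depending only on $C_0$ and the dimension, I conclude $\rho_0^{-2}\int_{B_{\rho_0/2}(y)}|d\Theta^*|^2\le C_1(E+F)$ for all $y\in B_{\rho_0}(y_0)$, which is exactly \eqref{almostrepo}.

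Given Lemma~\ref{prerepo}, this is essentially bookkeeping, and the only thing requiring care is to keep every ball where it belongs: the centers $y_j$ must stay inside $B_{8\rho_0}(y_0)$ so that the seesaw estimate applies to them, while their unit-scale dilates $B_{\rho_0}(y_j)$ must stay inside $B_{4\rho_0}(y_0)$ so that they belong to $\mathcal{Q}$ and are thereby controlled by $Q$; the constraints $\sigma\le\rho_0$ and $z\in B_{\rho_0}(y_0)$ in \eqref{largeness} are precisely what make both hold at once. The substantive ingredient is \eqref{largeness} itself: it ensures that $\rho_0^{-4}Q$, which dominates every left-hand side occurring above, is comparable — up to the fixed factor $10^{-10}$ — to a bounded sum of such left-hand sides, so that the otherwise intractable term $\delta\rho_0^{-2}\int_{B_{\rho_0}(y)}|d\Theta^*|^2$ of the seesaw estimate can be absorbed. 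Without \eqref{largeness} this step fails, since in general $\rho^{-2}\int_{B_\rho}|d\Theta^*|^2$ at scale $\rho$ bears no comparison to its value at scale $\rho/2$.
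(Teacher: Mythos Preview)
Your proof is correct and follows the same absorption scheme as the paper---use the seesaw estimate from Lemma~\ref{prerepo}, cover the ``large'' ball $B_\sigma(z)$ from \eqref{largeness} by half-radius balls, sum, and absorb $\delta Q$ into $10^{-10}Q$---but your implementation is more economical. The paper covers $B_{\sigma_0}(z)$ by balls $B_{\sigma_0/4}(z_i)$ and then applies the seesaw at the \emph{smaller} scale $\sigma_0/2\in[\rho_0/4,\rho_0/2]$; this forces an extra step (the paragraph leading to \eqref{replace}) in which one first bounds the energy at intermediate scales by a small constant $s_0$ and then uses Poincar\'e to re-verify the $\eta_0$-smallness hypothesis of Lemma~\ref{prerepo} at those scales. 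You avoid this entirely by applying the seesaw only at the top scale $\rho_0$, where it is already given, covering $B_\sigma(z)$ by balls $B_{\rho_0/2}(y_j)$ with $y_j\in B_{2\rho_0}(y_0)$, and handling the local-average term via the mean-minimality bound $\int_{B_{\rho_0}(y)}|\Theta^*-\lambda_{y,\rho_0}|^2\le\int_{B_{10\rho_0}(y_0)}|\Theta^*-\lambda_{y_0,10\rho_0}|^2$. Both routes lead to the same absorption inequality and hence to \eqref{almostrepo}; yours simply gets there with less scaffolding.

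One point of order worth making explicit: the proof of Lemma~\ref{prerepo} shows that the threshold $\eta_0$ there actually depends on $\delta$ (it is chosen $<\delta_1^2\delta^8$ via the Luckhaus lemma). In your write-up you shrink $\epsilon_0,\eta_0$ \emph{before} introducing $\delta$, which reads as a quantifier inversion. Since in the end you fix $\delta=\delta(C_0,n)$ once and for all, there is no real gap---just state clearly that $\delta$ is fixed first, and only then are $\eta_0,\epsilon_0$ taken small enough for Lemma~\ref{prerepo} to apply with that particular $\delta$.
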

\begin{lemma}
\label{inde}
Assume \eqref{almostrepo} holds. For fixed $\Lambda>0$ and any $C^*\gg 1$, there exists $\theta_0(\Lambda,C^*)\ll\frac{1}{10}$, $\eta_0$ and $\epsilon_0(\eta_0,\theta_0)$ that are sufficiently small, such that if $\int_{B_{10\rho_0}(y_0)}|F_A|^2<\epsilon_0$, $\rho_0^{-2}\int_{B_{10\rho_0}(y_0)}|d\Theta^*|^2<\Lambda$, and $\rho_0^{-4}\int_{B_{10\rho_0}(y_0)}|\Theta^*-\lambda_{y_0,10\rho_0}|^2<\eta_0$, then for all $y\in B_{\rho_0}(y_0)$, we have
\begin{equation}
\label{almostinde}
(\theta_0\rho_0)^{-4}\int_{B_{\theta_0\rho_0}(y)}|\Theta^*-\lambda_{y,\theta_0\rho_0}|^2<\frac{\eta_0}{C^*}.
\end{equation}
\end{lemma}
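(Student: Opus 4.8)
The plan is a blow-up (linearization) argument, reducing \eqref{almostinde} to the quadratic decay of the $L^2$-oscillation of harmonic functions. I would argue by contradiction: if the statement fails for some $\Lambda,C^*$, then for every small $\theta_0$ — in particular the one fixed below — and every pair $(\eta_0,\epsilon_0)$ of small parameters there is a configuration (a bundle, a connection $A$, a Coulomb minimizer, an Uhlenbeck gauge $\Theta^*$, a ball $B_{10\rho_0}(y_0)$ and a point $y\in B_{\rho_0}(y_0)$) satisfying the three hypotheses and \eqref{almostrepo} but violating \eqref{almostinde}. Choosing $\eta_0=\eta_i\downarrow 0$ and $\epsilon_0=\epsilon_i$ with $\epsilon_i/\eta_i\to 0$, and rescaling so that $\rho_0=1$, $y_0=0$ (the relevant quantities are scale invariant in dimension $4$, and for $r(M)$ small the metric on $B_{10}(0)$ is $C^2$-close to Euclidean, which I treat as a harmless perturbation), I obtain $\Theta^*_i$ on $B_{10}(0)$ with $\int_{B_{10}}|F_{A_i}|^2<\epsilon_i$, $\int_{B_{10}}|d\Theta^*_i|^2<\Lambda$, $\int_{B_{10}}|\Theta^*_i-\lambda^{(i)}_{0,10}|^2<\eta_i$, \eqref{almostrepo} valid for all centers in $B_1(0)$, a point $y_i\in B_1(0)$, and $\theta_0^{-4}\int_{B_{\theta_0}(y_i)}|\Theta^*_i-\lambda^{(i)}_{y_i,\theta_0}|^2\ge \eta_i/C^*$.

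The renormalization is the crucial step and rests on a dichotomy. Set $E_i^2:=\int_{B_1(y_i)}|\Theta^*_i-\lambda^{(i)}_{y_i,1}|^2$. On the one hand $E_i^2\le\int_{B_{10}(0)}|\Theta^*_i-\lambda^{(i)}_{0,10}|^2<\eta_i$; on the other hand, since the mean minimizes the $L^2$-distance, the violated inequality gives $\eta_i/C^*\le\theta_0^{-4}\int_{B_{\theta_0}(y_i)}|\Theta^*_i-\lambda^{(i)}_{y_i,1}|^2\le\theta_0^{-4}E_i^2$, i.e. $E_i^2\ge\theta_0^4\eta_i/C^*$. Thus $\theta_0^4\eta_i/C^*\le E_i^2\le\eta_i$ and $E_i\to 0$. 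Put $v_i:=(\Theta^*_i-\lambda^{(i)}_{y_i,1})/E_i$, so $\int_{B_1(y_i)}|v_i|^2=1$ and $v_i$ has mean zero on $B_1(y_i)$. Inserting the third hypothesis and the Uhlenbeck bound $\int_{B_{10}(0)}|A^*_i|^2\lesssim\int_{B_{10}(0)}|F_{A_i}|^2<\epsilon_i$ (see \eqref{uhgauge}) into \eqref{almostrepo} gives $\int_{B_{1/2}(y_i)}|d\Theta^*_i|^2\le C_1(\Lambda)(\eta_i+\epsilon_i)$, hence $\int_{B_{1/2}(y_i)}|\nabla v_i|^2=E_i^{-2}\int_{B_{1/2}(y_i)}|d\Theta^*_i|^2\le C(\Lambda,\theta_0,C^*)$, using $E_i^2\ge\theta_0^4\eta_i/C^*$ and $\epsilon_i\ll\eta_i$. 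Translating $y_i$ to the origin, $\{v_i\}$ is bounded in $W^{1,2}(B_{1/2}(0))$, so along a subsequence $v_i\rightharpoonup v_\infty$ weakly in $W^{1,2}$ and $v_i\to v_\infty$ strongly in $L^2$ on $B_{1/2}(0)$.

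Dividing \eqref{weaklap} by $E_i$: the Laplacian term becomes $d^*dv_i$; the quadratic term $E_i^{-1}\Theta^*_i\langle(d\Theta^*_i)^T\cdot d\Theta^*_i\rangle$ has $L^1(B_{1/2})$-norm $\lesssim E_i\int_{B_{1/2}}|\nabla v_i|^2\to 0$ (this is exactly where both $E_i\to 0$ and the uniform gradient bound are used); and the two $A^*_i$-terms have $L^1(B_{1/2})$-norm $\lesssim\|\nabla v_i\|_{L^2}\|A^*_i\|_{L^2}\lesssim\sqrt{\epsilon_i}\to 0$. Testing against a fixed $\phi\in C^\infty_c(B_{1/2}(0))$ and letting $i\to\infty$ shows $v_\infty$ is weakly harmonic, hence smooth harmonic, on $B_{1/2}(0)$ (for the near-Euclidean limit metric, which only perturbs constants). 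Harmonic functions satisfy $\theta_0^{-4}\int_{B_{\theta_0}(0)}|v_\infty-\bar v_{\infty,\theta_0}|^2\le C_h\theta_0^2\int_{B_{1/2}(0)}|v_\infty|^2\le C_h\theta_0^2$, whereas strong $L^2$-convergence together with $E_i^2<\eta_i$ gives $\theta_0^{-4}\int_{B_{\theta_0}(0)}|v_\infty-\bar v_{\infty,\theta_0}|^2=\lim_i E_i^{-2}\theta_0^{-4}\int_{B_{\theta_0}(y_i)}|\Theta^*_i-\lambda^{(i)}_{y_i,\theta_0}|^2\ge\liminf_i E_i^{-2}\eta_i/C^*\ge 1/C^*$. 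Fixing $\theta_0=\theta_0(C^*)\ll\tfrac1{10}$ with $C_h\theta_0^2<1/C^*$ yields the contradiction; this is the $\theta_0$ of the statement, $\eta_0$ is then any sufficiently small constant, and $\epsilon_0=\epsilon_0(\eta_0,\theta_0)$ is chosen small enough that the displayed curvature terms are negligible at the $\eta_0$-scale.

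I expect the main obstacle to be the criticality of the nonlinearity $|d\Theta^*|^2$, which lies only in $L^1$ and blocks a direct elliptic comparison with a harmonic function — indeed the naive bound $\int|d\Theta^*|^2|\Theta^*-h|\lesssim\|\Theta^*-h\|_\infty\int|d\Theta^*|^2$ loses a factor $\theta_0^{-4}$ and cannot reach $\eta_0/C^*$. The resolution is to combine the \emph{assumed} reverse-Poincaré inequality \eqref{almostrepo} (which localizes the energy of $\Theta^*_i$ to order $\eta_i+\epsilon_i$) with the dichotomy (which forces $E_i^2$ to be comparable to $\eta_i$, so the local oscillation is not much smaller than the a priori bound): together these make the rescaled quadratic term carry the vanishing factor $E_i$, so the blow-up equation linearizes exactly to $\Delta v_\infty=0$. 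Secondary technical points I would watch: the non-Euclidean metric (harmless for small $r(M)$), the fact that the Uhlenbeck connection forms $A^*_i$ vary with $i$ but disappear in the limit, and the bookkeeping of the quantifiers in the contradiction hypothesis.
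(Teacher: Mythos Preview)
Your proof is correct and takes a route parallel to, but distinct from, the paper's. The paper argues directly: it tests the weak equation \eqref{weaklap} against smooth $\phi$, uses \eqref{almostrepo} to bound the resulting pairing, rescales by $l^{-1}$, and then invokes the harmonic approximation lemma (Lemma~\ref{harap}, from Simon's book) as a black box to produce a nearby harmonic function $\mathcal{W}$; the quadratic decay of harmonic functions then gives the quantitative inequality \eqref{simon}, from which \eqref{almostinde} follows by choosing $\theta$, then $\epsilon$, then $\epsilon_0$ small relative to $\eta_0$. Your compactness/blow-up argument essentially re-proves the harmonic approximation lemma inside the proof: the dichotomy $\theta_0^4\eta_i/C^*\le E_i^2\le\eta_i$ is exactly what makes the rescaled nonlinearity $E_i^{-1}|d\Theta^*_i|^2=E_i|\nabla v_i|^2$ vanish in the limit, which is the mechanism behind Lemma~\ref{harap}. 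Both approaches are standard for $\epsilon$-regularity of approximately harmonic maps. The paper's direct route has the minor advantage of producing the explicit iterable estimate \eqref{simon}, which is re-used verbatim later (see \eqref{itnow} and \eqref{itera2} in the proofs of Proposition~\ref{l4finite} and Proposition~\ref{simon3}); your contradiction argument delivers only the stated conclusion, but that is all the lemma asserts.
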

\begin{proof}[proof of Lemma \ref{prerepo}]
As in the Corollary of the classical Luckhaus Lemma (for its statement, see Section 2.7 of \cite{S96}), choose $\eta_0$ sufficiently small such that $\eta_0<\delta_1^2\delta^8$. Here $\delta_1$ is the same as in Section 2.7 of \cite{S96}, and $\delta$ will be determined in the proof of Lemma \ref{repo}. Then for each $y\in B_{8\rho_0}(y_0)$ there exists $W\in W^{1,2}(B_{\rho_0}(y),G)$ and some $\sigma\in (\frac{3\rho_0}{4},\rho_0)$ such that $W$ agrees with $\Theta^*$ in a neighborhood of $\partial B_{\sigma}(y)$, and moreover, the following holds:
\begin{equation}
\label{luch}
\sigma^{-2}\int_{B_{\sigma}(y)}|dW|^2\le \delta\rho_{0}^{-2}\int_{B_{\rho_0}(y)}|d\Theta^*|^2+C\delta^{-1}\rho_0^{-4}\int_{B_{\rho_0}(y)}|\Theta^*-\lambda_{y,\rho}|^2.
\end{equation}
Now apply \eqref{luch} and use the minimality of $\int_{B_{\sigma}(y)}|\nabla_A\Theta^*|^2$, we obtain
\begin{eqnarray}
\begin{split}
&\rho_0^{-2}\int_{B_{\frac{\rho_0}{2}}(y)}|d\Theta^*|^2\le C \sigma^{-2}\int_{B_{\sigma}(y)}|d\Theta^*|^2\le C\sigma^{-2}\int_{B_{\sigma}(y)}|\nabla_A\Theta^*|^2+C\sigma^{-2}\int_{B_{\sigma}(y)}|A^*|^2\\
\le& C\sigma^{-2}\int_{B_{\sigma}(y)}|\nabla_AW|^2+C\sigma^{-2}\int_{B_{\sigma}(y)}|A^*|^2\le C\sigma^{-2}\int_{B_{\sigma}(y)}|dW|^2+C^{\prime}\sigma^{-2}\int_{B_{\sigma}(y)}|A^*|^2\\
\le& C\delta\rho_0^{-2}\int_{B_{\rho_{0}(y)}}|d\Theta^*|^2+C^{\prime\prime}\delta^{-1}\rho_0^{-4}\int_{B_{\rho_0}(y)}|\Theta-\lambda_{y,\rho_0}|^2+C^{\prime}\sigma^{-2}\int_{B_{\sigma}(y)}|A^*|^2\\
\le& C_0\bigg{(}\delta\rho_0^{-2}\int_{B_{\rho_0}(y)}|d\Theta^*|^2+\delta^{-1}\rho_0^{-4}\int_{B_{\rho_0}(y)}|\Theta^*-\lambda_{y,\rho_0}|^2+\rho_0^{-2}\int_{B_{2\rho_0}(y_0)}|A^*|^2\bigg{)}
\end{split}
\end{eqnarray}
for some proper $C_0=C_0(G,\Lambda)$. This finishes the proof of Lemma \ref{prerepo}.
\end{proof}
\begin{proof}[proof of Lemma \ref{repo}]
Firstly, choose $\eta_0$ and $\epsilon_0$ small enough so that Lemma \ref{prerepo} holds. Now from \eqref{seesaw}, we have that:
\begin{equation}
\label{secsmall}
\rho_0^{-2}\int_{B_{\frac{\rho_0}{2}}(y)}|d\Theta^*|^2\le C_0\bigg{(}\delta\Lambda+\delta^{-1}\eta_0+C_{Uh}^2\epsilon_0^2\bigg{)}
\end{equation}
for all $y\in B_{8\rho_0}(y_0)$. Thus for all $\theta\in [\frac{1}{10},1]$, we have the following estimate trivially:
\begin{equation}
(\theta\rho_0)^{-2}\int_{B_{\theta\rho_0}(y)}|d\Theta^*|^2\le 10^4\cdot C_0\bigg{(}\delta\Lambda+\delta^{-1}\eta_0+C_{Uh}^2\epsilon_0^2\bigg{)}\eqqcolon s_0.
\end{equation}
After choosing a smaller $\delta$ (depending only on $\Lambda$, $C_0$ and $G$), then further decreasing $\epsilon_0$ and $\eta_0$ if necessary, we might assume that $s_0$ is small enough, then by Poincar\'e Inequality we have:
\begin{equation}
\label{replace}
\text{\eqref{seesaw} holds with $B_{\rho_0}(y)$ replaced by $B_{\sigma}(z)$},\ \text{for any $\sigma\in [\frac{\rho_0}{4},\frac{\rho_0}{2}]$ and $B_{\sigma}(z)\subseteq B_{4\rho_0}(y_0)$}.
\end{equation}
Now let us choose any $\sigma_0\in[\frac{\rho_0}{2},\rho_0]$, such that $B_{\sigma_0}(z)\in \mathcal{Q}$. Then we may select a Vitali cover of $B_{\sigma_0}(z)\in \mathcal{Q}$ by $\{B_{\frac{\sigma_0}{4}}(z_i)\}_{i=1}^N$ with $z_i\in B_{\sigma_0}(z)$ for each $i$, such that $N\le S$ for some universal constant $S$. By the facts that $B_{2\sigma_0}(z)\subseteq B_{4\rho_0}(y_0)$, $z_i\in B_{\sigma_0}(z)$, and $\rho_0/2\le\sigma_0\le\rho_0$, we have that $B_{\sigma_0}(z_i)\subseteq B_{4\rho_0}(y_0)$ and therefore $B_{\frac{\sigma_0}{2}}(z_i)\in \mathcal{Q}$, for each $i$. By the covering property of $\{B_{\frac{\sigma_0}{4}}(z_i)\}_i$, we have:
\begin{eqnarray}
\begin{split}
\label{subadd}
&\sigma_0^2\int_{B_{\sigma_0}(z)}|d\Theta^*|^2\le S\cdot\sigma_0^2\sup_{i}\int_{B_{\frac{\sigma_0}{4}}(z_i)}|d\Theta^*|^2=16S\cdot \sup_{i}(\frac{\sigma_0}{4})^2\int_{B_{\frac{\sigma_0}{4}}(z_i)}|d\Theta^*|^2\\
\le & 16SC_0\cdot \sup_i\big{(} \delta (\frac{\sigma_0}{2})^2\int_{B_{\frac{\sigma_0}{2}}(z_i)}|d\Theta^*|^2+\gamma_0 \big{)}\le 16SC_0\delta Q+16SC_0\gamma_0,
\end{split}
\end{eqnarray}
where we have used \ref{replace} in the second inequality, and the fact that $B_{\frac{\sigma_0}{2}}(z_i)\in \mathcal{Q}$ in the last inequality; moreover, we have set
\begin{equation}
\gamma_0\eqqcolon\delta^{-1}\int_{B_{4\rho_0}(y_0)}|\Theta^*-\lambda_{4\rho_0,y_0}|^2+\rho_0^2\int_{B_{4\rho_0}(y_0)}|A^*|^2.
\end{equation}
By \eqref{largeness}, we could assume from the beginning that $B_{\sigma_0}(z)$ is such a ball that $\sigma_0^2\int_{B_{\sigma_0}(z)}|d\Theta^*|^2>10^{-10}Q$. Plugging it back to \eqref{subadd} we obtain:
\begin{equation}
\big{(}10^{-10}-16SC_0\delta \big{)}Q \le 16SC_0\gamma_0.
\end{equation}
By choosing $\delta$ small enough (depending only on $C_0$) we have for all $\rho\in [\frac{\rho_0}{2},\rho_0]$ and $y\in B_{\rho_0}(y_0)$:
\begin{eqnarray}
\begin{split}
\rho^{2}\int_{B_{\rho}(y)}|d\Theta^*|^2\le Q&\le 16SC_0\cdot10^{12}\big{(}\delta^{-1}\int_{B_{4\rho_0}(y_0)}|\Theta^*-\lambda_{4\rho_0,y_0}|^2+\rho_0^2\int_{B_{4\rho_0}(y_0)}|A^*|^2\big{)}.
\end{split}
\end{eqnarray}
Multiplying above inequality through $\rho^{-4}$ on both sides, and then set $\rho=\rho_0/2$, we obtain \eqref{almostrepo} for some proper constant $C_1$ depending only on $\Lambda$, $C_0(\Lambda,G)$ and $G$. This finishes the proof of Lemma \ref{repo}.
\end{proof}
\begin{proof}[proof of Lemma \ref{inde}]
For any fixed $y\in B_{\rho_0}(y_0)$ and $\phi\in C^{\infty}_c(B_{\frac{\rho_0}{2}}(y),\text{Mat}_{\mathbb{R}}(k\times k))$, by the weak identity of \eqref{weaklap} as well as \eqref{almostrepo}, we have
\begin{eqnarray}
\begin{split}
\label{prescale}
&\bigg{|}(\frac{\rho_0}{2})^{-2}\int_{B_{\frac{\rho_0}{2}}(y)}\langle d\Theta^*,d\phi\rangle\bigg{|}\le \sup_{B_{\frac{\rho_0}{2}}(y)}|\phi|\cdot(\frac{\rho_0}{2})^{-2}\bigg{(}\int_{B_{\frac{\rho_0}{2}}(y)}|d\Theta^*|^2+2|A^*||d\Theta^*|\bigg{)}\\
\le&\sup_{B_{\frac{\rho_0}{2}}(y)}|\phi|\cdot CC_1  \bigg{(}\rho_0^{-4}\int_{B_{10\rho_0}(y_0)}|\Theta^*-\lambda_{y_0,10\rho_0}|^2+\rho_0^{-2}\int_{B_{10\rho_0}(y_0)}|A^*|^2\bigg{)}.
\end{split}
\end{eqnarray}
Now set $l=CC_1 \bigg{(}\rho_0^{-4} \int_{B_{10\rho_0}(y_0)}|\Theta^*-\lambda_{y_0,10\rho_0}|^2+\rho_0^{-2} \int_{B_{10\rho_0}(y_0)}|A^*|^2\bigg{)}^{1/2}$ and $\mathcal{V}=l^{-1}\Theta^*$, and then insert them into \eqref{prescale}, we have
\begin{eqnarray}
\begin{split}
\label{harweak}
&\bigg{|}(\frac{\rho_0}{2})^{-2}\int_{B_{\frac{\rho_0}{2}}(y)}\langle d\mathcal{V},d\phi\rangle\bigg{|}\\
\le& \bigg{(}\rho_0^{-4} \int_{B_{10\rho_0}(y_0)}|\Theta^*-\lambda_{y_0,10\rho_0}|^2+\rho_0^{-2} \int_{B_{10\rho_0}(y_0)}|A^*|^2\bigg{)}^{1/2}\rho_0\sup_{B_{\frac{\rho_0}{2}}(y)}|d\phi|;
\end{split}
\end{eqnarray}
also note that \eqref{almostrepo} implies
\begin{eqnarray}
\begin{split}
\label{harapple1}
(\frac{\rho_0}{2})^{-2}\int_{B_{\frac{\rho_0}{2}}(y)}|d\mathcal{V}|^2\le 1.
\end{split}
\end{eqnarray}
From now on, we shall follow the arguments in p.15 \cite{S96} verbatim to show that 
\begin{eqnarray}
\begin{split}
\label{simon}
&(\theta\rho_0)^{-4}\int_{B_{\theta\rho_0}(y)}|\Theta^*-\lambda_{y,\theta\rho_0}|^2\\
\le &CC_1(\theta^{-4}\epsilon^2+C\theta^2)\bigg{(}\rho_0^{-4}\int_{B_{10\rho_0}(y_0)}|\Theta^*-\lambda_{y_0,10\rho_0}|^2+\rho_0^{-2}\int_{B_{10\rho_0}(y_0)}|A^*|^2\bigg{)}.
\end{split}
\end{eqnarray}
For the readers convenience, we include the proof. To begin we present the following harmonic approximation lemma; for its proof we refer the reader to Section 1.6 \cite{S96}. 
\begin{lemma}
\label{harap}
For any given $\epsilon$, there is $\delta(n,\epsilon)>0$ such that if $f\in W^{1,2}(B_{\rho}(y))$, $\rho^{2-n}\int_{B_{\rho}(y)}|df|^2\le 1$, and $|\rho^{2-n}\int_{B_{\rho}(y)}df\cdot d\phi|\le \delta\rho \sup_{B_{\rho}(y)}|d\phi|$ for every $\phi\in C^{\infty}_c(B_{\rho}(y))$, then there is a harmonic function $u$ on $B_{\rho}(y)$ with $\rho^{2-n}\int_{B_{\rho}(y)}|du|^2\le 1$ and $\rho^{-n}\int_{B_{\rho}(y)}|u-\lambda_{y,\rho}|^2\le \epsilon^2$. 
\end{lemma}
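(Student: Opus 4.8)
The plan is to prove this by the now-standard compactness argument of Simon (see \cite{S96}, Section 1.6): a function that is nearly weakly harmonic, in the quantitative sense of the second hypothesis, must be $L^2$-close to an honestly harmonic function of no larger Dirichlet energy. After translating and rescaling I would first pass to the model case $y=0$, $\rho=1$; subtracting from $f$ its mean value over $B_1$ (which alters neither $df$ nor either hypothesis), I may also assume $f$ has zero mean on $B_1$. It then suffices (and this is the form used in \eqref{simon}) to produce a harmonic function $u$ on $B_1$ with $\int_{B_1}|du|^2\le 1$ and $\int_{B_1}|u-f|^2\le\epsilon^2$; undoing the reductions recovers the stated estimate. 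Whether $f$ is scalar- or matrix-valued is irrelevant, since the argument runs componentwise.

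To construct $u$ I would argue by contradiction. If the claim failed for some fixed $\epsilon>0$, there would be $\delta_k\to 0$ and $f_k\in W^{1,2}(B_1)$ with zero mean on $B_1$, $\int_{B_1}|df_k|^2\le 1$, and $\big|\int_{B_1}df_k\cdot d\phi\big|\le\delta_k\sup_{B_1}|d\phi|$ for all $\phi\in C_c^\infty(B_1)$, yet $\int_{B_1}|f_k-w|^2>\epsilon^2$ for every harmonic $w$ on $B_1$ with $\int_{B_1}|dw|^2\le 1$. By the Poincar\'e inequality and the uniform Dirichlet bound, $\{f_k\}$ is bounded in $W^{1,2}(B_1)$, so along a subsequence $f_k\rightharpoonup f_\infty$ weakly in $W^{1,2}(B_1)$ and, by the Rellich--Kondrachov theorem, $f_k\to f_\infty$ strongly in $L^2(B_1)$. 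Testing against a fixed $\phi\in C_c^\infty(B_1)$ and letting $k\to\infty$, the left side tends to $\int_{B_1}df_\infty\cdot d\phi$ while the right side is bounded by $\delta_k\sup_{B_1}|d\phi|\to 0$; hence $f_\infty$ is weakly harmonic, so by Weyl's lemma smooth and harmonic on $B_1$. Weak lower semicontinuity of the Dirichlet energy gives $\int_{B_1}|df_\infty|^2\le\liminf_k\int_{B_1}|df_k|^2\le 1$, so $w:=f_\infty$ is an admissible comparison map; but $\int_{B_1}|f_k-f_\infty|^2\to 0$ by the strong $L^2$ convergence, contradicting $\int_{B_1}|f_k-w|^2>\epsilon^2$ for all large $k$. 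This contradiction proves the lemma.

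I do not expect a genuine obstacle --- the statement is a soft compactness result --- but the one step worth isolating is the passage to the limit in the ``defect'' hypothesis. This is precisely why that hypothesis is measured against $\sup_{B_1}|d\phi|$, a quantity finite for each individual test function, rather than against a norm requiring uniform control over $\phi$: one tests with a single fixed $\phi$ and lets $k\to\infty$, so $\delta_k\sup_{B_1}|d\phi|\to 0$ with no uniformity needed, and the weak $W^{1,2}$ limit comes out genuinely (weakly, hence classically) harmonic. Every other ingredient --- Rellich compactness, weak lower semicontinuity of the Dirichlet energy, Weyl's lemma --- is entirely standard.
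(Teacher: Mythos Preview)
Your proof is correct and is exactly the standard compactness/contradiction argument; the paper itself does not prove this lemma but simply refers the reader to Section~1.6 of \cite{S96}, which is precisely the argument you have written out. You also correctly note (and silently repair) the apparent typo in the statement: the conclusion should read $\rho^{-n}\int_{B_\rho(y)}|u-f|^2\le\epsilon^2$, as is clear from its application in \eqref{1and2}.
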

Let $\epsilon>0$ be arbitrary for the moment. By \eqref{harapple1} and \eqref{harweak}, we could apply Lemma \ref{harap} in order to conclude that there is a $\text{Mat}_{\mathbb{R}}(k\times k)$-valued harmonic function $\mathcal{W}$ on $B_{\rho_0/2}(y)$ such that 
\begin{eqnarray}
\begin{split}
\label{1and2}
(\frac{\rho_0}{2})^{-2}\int_{B_{\rho_0/2}(y)}|d\mathcal{W}|^2\le 1,\ \text{and }(\rho_0/2)^{-4}\int_{B_{\rho_0/2}(y)}|\mathcal{V}-\mathcal{W}|^2\le \epsilon^2
\end{split}
\end{eqnarray}
assuming that 
\begin{eqnarray}
\begin{split}
\label{harapass}
\rho_0^{-4} \int_{B_{10\rho_0}(y_0)}|\Theta^*-\lambda_{y_0,10\rho_0}|^2+\rho_0^{-2} \int_{B_{10\rho_0}(y_0)}|A^*|^2\le \delta^2\end{split}
\end{eqnarray}
where $\delta\equiv \delta(\epsilon)$ is as in Lemma \ref{harap}. Now take $\theta\in (0,1/4]$ and note that we have the following trivially:
\begin{eqnarray}
\begin{split}
\label{square}
(\theta \rho_0)^{-4}\int_{B_{\theta \rho_0}(y)}|\mathcal{V}-\mathcal{W}(y)|^2\le 2(\theta\rho_0)^{-4}\int_{B_{\theta\rho_0}(y)}(|\mathcal{V}-\mathcal{W}|^2+|\mathcal{V}-\mathcal{W}(y)|^2).
\end{split}
\end{eqnarray}
Now using $1$-dimensional calculus along line segment with end-point at $y$ together with a standard elliptic estimate for harmonic function, we have
\begin{eqnarray}
\begin{split}
\sup_{B_{\theta\rho_0}(y)}|\mathcal{W}-\mathcal{W}(y)|^2\le (\theta\rho_0\sup_{B_{\theta\rho_0}(y)}|d\mathcal{W}|)^2\le C\theta^2\rho_0^{2-n}\int_{B_{\rho_0/2}(y)}|d\mathcal{W}|^2.
\end{split}
\end{eqnarray}
Using this together with \eqref{1and2} in \eqref{square}, we conclude that
\begin{eqnarray}
\begin{split}
\label{vw}
(\theta\rho_0)^{-4}\int_{B_{\theta\rho_0}(y)}|\mathcal{V}-\mathcal{W}(y)|^2\le \theta^{-n}\epsilon^2+C\theta^2,
\end{split}
\end{eqnarray}
Where $C$ is universal. Writing $\mathcal{V}=l^{-1}\Theta^*$ in \eqref{vw}, we obtain \eqref{simon}. Now we determine the parameters $\theta$, $\epsilon$, $\eta_0$, and $\epsilon_0$. Indeed, we determine $\theta$ and then $\epsilon$ such that both $\theta$ and $\epsilon$ are sufficiently small and independent of $\eta_0$, and lastly choose $\epsilon_0$ small with respect to $\eta_0$ in order to conclude from \eqref{vw} that \eqref{almostinde} holds for the fixed $C^*\gg 1$ (which will be determined in the proof of Proposition \ref{smallnessprop}, to depend only on $C_1$ from Lemma \ref{repo} and the Poincar\'e Inequality constant $C_p$). Next, for the chosen $\epsilon$, we further decrease $\eta_0$ and then $\epsilon_0$ such that \eqref{harapass} holds. Upon these choices of parameters, we finish the proof of Lemma \ref{inde}.
\end{proof}
The main difficulty of Theorem \ref{eps} would be to achieve the smallness of $\sigma^{-2}\int_{B_{\sigma}(y)}|d\Theta^*|^2$ for all $y\in B_{\rho_0}(y_0)$ and $\sigma\le\rho_0$ given that on the top scale $\rho_0$. Indeed, from this we easily obtain the smallness of the BMO-seminorm on $\Theta^*$, which helps to improve the regularity of $\Theta^*$ and eventually achieve the desired $L^4$-estimates, by following a quite standard elliptic PDE regularity method (similar to that in \cite{B93}). Let us remark that in the case of the classical stationary harmonic map, such smallness is a trivial consequence of the monotonicity formula for $r^{2-n}\int_{B_r(q)}|\nabla u|^2$. In contrast, the quantity $\zeta_y(\rho)\equiv\rho^{-2}\int_{B_{\rho}(y)}|\nabla_A\Theta_0|^2$ is not monotone due to the error term in \eqref{mono} involving the curvature $F_A$. Furthermore, the integral of this error term from the top scale, say $\rho_0$, to an arbitrary small scale $\rho$ grows like $|\log\frac{\rho}{\rho_0}|$ as $\rho\to 0$. Hence, the natural idea of directly integrating both sides of \eqref{mono} and then using fundamental theorem of calculus is not very useful in estimating $\zeta_y(\rho)$ for $\rho$ small. To conquer this issue, we use induction on scales instead, which one should think of as a method of continuity. Heuristically, we shall be applying Lemma \ref{repo} and Lemma \ref{inde} in an alternating fashion, starting from the top scale $\rho_0$, then each time the scale goes down by a bounded factor in between $10^{-1}$ and $C^{-1}$ for some universal large constant $C\gg 10$ which will be determined later, while we shall keep track of the smallness of the energy $\zeta_y(\cdot)$ through all the stages. If such spiral mechanism succeeds, we will then finish our induction. Nevertheless, one subtlety lies in the fact that condition \eqref{largeness} is not always verified (which is essentially caused by the error term of \eqref{mono}) for Lemma \ref{repo} to be applicable, and therefore this heuristic spiral mechanism is likely to break down. Fortunately, this needs not to stop our induction on the smallness of energy. A key observation is that the failure of \eqref{largeness} exactly implies that an energy decay by a definite factor from the last scale must occur, which fills in the gap of the induction anyway. To rigidify this observation we will need to employ \eqref{mono} to two consecutive scales, in which case scenario \eqref{mono} behaves like an ``almost'' monotonicity formula. Now by our inductive assumption, we can complete the induction when the spiral mechanism breaks down (i.e. the condition \ref{largeness} fails). This gives the outline of how we will proceed next. To start with, we present the following Proposition:
\begin{proposition}
\label{smallnessprop}
For any $\delta_0$, there exist $\epsilon_0$ and $\eta$ sufficiently small such that given $\int_{B_{10\rho_0}(y_0)}|F_A|^2<\epsilon_0$ and $\rho_0^{-2}\int_{B_{10\rho_0}(y_0)}|d\Theta^*|^2<\eta$, then 
\begin{equation}
\rho^{-2}\int_{B_{\rho}(y)}|d\Theta^*|^2<\delta_0.
\end{equation}
for all $y\in B_{\rho_0}(y_0)$ and $\rho\le\rho_0$.
\end{proposition}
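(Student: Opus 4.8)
The plan is to argue by induction on a geometric sequence of scales $\rho_k=\theta_0^k\rho_0$, where $\theta_0\ll\tfrac{1}{10}$ is the constant produced by Lemma \ref{inde}. The parameters are fixed in the following order: first $\Lambda$ as a universal constant (so that $C_0,C_1$ of Lemmas \ref{prerepo}, \ref{repo} are determined), then $C^*\gg 1$ (depending on $C_1$ and the Poincar\'e constant), then $\theta_0=\theta_0(\Lambda,C^*)$; only afterwards are $\eta,\eta_0,\epsilon_0$ chosen small in terms of $\delta_0$, and we may assume $\delta_0$ is itself as small as we wish (the conclusion for small $\delta_0$ implies it for large $\delta_0$). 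The inductive hypothesis at stage $k$ reads, for all $y\in B_{\rho_0}(y_0)$,
\[
\rho_k^{-2}\int_{B_{\rho_k}(y)}|d\Theta^*|^2<\theta_0^2\delta_0,\qquad
\rho_k^{-4}\int_{B_{\rho_k}(y)}|\Theta^*-\lambda_{y,\rho_k}|^2<\eta_0 .
\]
The case $k=0$ follows from the hypothesis $\rho_0^{-2}\int_{B_{10\rho_0}(y_0)}|d\Theta^*|^2<\eta$ together with the Poincar\'e inequality once $\eta$ is small. Since $10\theta_0<1$, smallness at scale $\rho_k$ transfers — with a harmless factor $\theta_0^{-2}$ swallowed by the smallness of $\delta_0$ — to scale $10\rho_{k+1}$, which is precisely what the hypotheses of Lemmas \ref{repo} and \ref{inde} ask for at the next stage; the gap between the $B_{10\rho_0}(y_0)$-hypotheses and the $B_{\rho_0}(y_0)$-conclusions of those lemmas supplies exactly the slack needed to handle the centres.

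For the inductive step we split according to whether the largeness condition \eqref{largeness} holds at scale $\rho_k$ (for the family $\mathcal Q$ built over $B_{\rho_k}(y)$). \emph{Case A (\eqref{largeness} holds).} Lemma \ref{repo} then applies and gives \eqref{almostrepo}: the scale-invariant energy at scale $\rho_k/2$ is $\le C_1$ times (the small oscillation at the top scale plus the Uhlenbeck term $\rho^{-2}\int|A^*|^2\lesssim\epsilon_0$). Inserting \eqref{almostrepo} into Lemma \ref{inde} produces the oscillation decay $\rho_{k+1}^{-4}\int_{B_{\rho_{k+1}}(y)}|\Theta^*-\lambda_{y,\rho_{k+1}}|^2<\eta_0/C^*<\eta_0$, while the energy bound at scale $\rho_{k+1}$ follows from \eqref{almostrepo} via $B_{\rho_{k+1}}(y)\subseteq B_{\rho_k/2}(y)$ at the price of a factor $\theta_0^{-2}$; both are $<\theta_0^2\delta_0$, resp.\ $<\eta_0$, for $\eta_0,\epsilon_0$ small.

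\emph{Case B (\eqref{largeness} fails).} Then the scale-invariant energy over $[\rho_k/2,\rho_k]$ is $\le 10^{-10}$ times the supremum $Q$ over $[\rho_k/10,\rho_k]$, so the scale achieving $Q$ lies in $[\rho_k/10,\rho_k/2)$. We invoke the almost-monotonicity \eqref{mono}: integrating it over at most three doublings inside $[\rho_k/10,\rho_k]$ keeps the curvature error under control (its integrand is $\lesssim\rho^{-1}\epsilon_0^{1/2}(\text{scale-invariant energy})^{1/2}$, and the accumulated logarithmic factor is bounded), so the maximising sub-scale can be compared to a top-half scale; together with the failure of \eqref{largeness} this forces $Q/\rho_k^4=O(\epsilon_0^{1/2})$, hence small. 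As $\rho_{k+1}=\theta_0\rho_k<\rho_k/10$, the ball $B_{\rho_{k+1}}(y)$ lies inside a $\mathcal Q$-ball, so $\rho_{k+1}^{-2}\int_{B_{\rho_{k+1}}(y)}|d\Theta^*|^2\le 100\,\theta_0^{-2}\,Q/\rho_k^4$ is small, and the corresponding oscillation bound follows by Poincar\'e. This closes the induction.

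Finally, for arbitrary $\rho\le\rho_0$ choose $k$ with $\rho_{k+1}<\rho\le\rho_k$; then $\rho^{-2}\int_{B_\rho(y)}|d\Theta^*|^2\le(\rho_k/\rho)^2\,\rho_k^{-2}\int_{B_{\rho_k}(y)}|d\Theta^*|^2<\theta_0^{-2}\cdot\theta_0^2\delta_0=\delta_0$, which is the assertion. The hard part — and the reason this is organised as an induction on scales rather than a single integration of \eqref{mono} — is exactly the non-monotonicity of $\rho^{-2}\int_{B_\rho(y)}|\nabla_A\Theta_0|^2$: the curvature error in \eqref{mono} is not integrable at $\rho=0$ and, integrated from the top scale, accumulates like $|\log(\rho/\rho_0)|$, so the almost-monotonicity can only be used across boundedly many dyadic steps at once. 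The dichotomy on \eqref{largeness} is the mechanism that converts ``Lemma \ref{repo} is inapplicable'' into ``the energy has already dropped by a definite factor'', and fixing $\theta_0$ and the threshold $10^{-10}$ before $\epsilon_0$ is what keeps the parameter bookkeeping consistent.
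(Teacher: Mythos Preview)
Your overall architecture — induction on scales, dichotomy according to whether \eqref{largeness} holds, and the final interpolation between consecutive inductive scales — is exactly right, and your Case A is essentially the paper's argument. The problem lies in Case B.

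You claim that failure of \eqref{largeness} together with the almost-monotonicity \eqref{mono} forces $Q/\rho_k^4=O(\epsilon_0^{1/2})$. The way this would have to go is: take the maximiser $B_{\sigma_*}(z_*)\in\mathcal Q$, use almost-monotonicity at the center $z_*$ to compare $\zeta_{z_*}(\sigma_*)=Q/\sigma_*^4$ with $\zeta_{z_*}(\sigma')$ for some $\sigma'\in[\rho_k/2,\rho_k]$, and then invoke failure of \eqref{largeness} to make $\zeta_{z_*}(\sigma')$ tiny. But the centers allowed in $\mathcal Q$ range over $\{z:B_{2\sigma}(z)\subset B_{4\rho_0}(y_0)\}$, whereas failure of \eqref{largeness} only gives information for $z\in B_{\rho_0}(y_0)$; the maximiser center $z_*$ can sit well outside $B_{\rho_0}(y_0)$, and then your comparison has nothing to feed into. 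You do not address this mismatch, and it is not a cosmetic issue: restricting $\mathcal Q$ to centers in $B_{\rho_0}(y_0)$ would repair Case B but break the covering step inside the proof of Lemma \ref{repo}, so you cannot simply redefine $\mathcal Q$.

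This gap is load-bearing for your scheme. With a \emph{fixed} ratio $\theta_0$, passing from $\rho_k$ to $\rho_{k+1}=\theta_0\rho_k$ costs a factor $\theta_0^{-2}$ in the scale-invariant energy. You need the $O(\epsilon_0^{1/2})$ bound precisely to absorb this loss (and to recover the oscillation bound $<\eta_0$ via Poincar\'e without circularity in the parameters). If instead you only use the inductive hypothesis to bound $Q$ — which is all the paper does in its analogue of your Case B — the resulting energy control at scale $\rho_k/10$ does not propagate down to $\theta_0\rho_k$ once $\theta_0$ is genuinely small, and the constraints on $\eta_0$ coming from Cases A and B become mutually inconsistent. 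The paper avoids this by allowing \emph{variable} scale drops: when \eqref{largeness} fails it sets $\rho_{l_0+1}=\rho_{l_0}/10$ (no $\theta_0^{-2}$ loss, inductive hypothesis alone suffices), and when \eqref{largeness} holds it runs Lemmas \ref{repo}--\ref{inde} and then re-tests \eqref{largeness} one level down, leading to the nested cases (c)--(f). The almost-monotonicity is invoked only in the innermost case (f), and there at the \emph{fixed} center $y$ between the two fixed scales $\rho_{l_0}$ and $\theta_0\rho_{l_0}$ — so no center mismatch arises. That more intricate case tree is not decoration; it is what makes the bookkeeping close.
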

\begin{proof}
Set $\eta_0=10C_p\eta$ (here $C_p$ denotes the Poincar\'e Inequality constant). Firstly, let us choose $\eta$ and $\epsilon_0$ so small that the conclusions of Lemma \ref{prerepo} through Lemma \ref{inde} hold, provided all other conditions are satisfied. The parameter $C^*$ (as appear in Lemma \ref{inde}) depends only on $C_1$ and the Poincar\'e $C_p$, and will be determined by the end of the proof. Thus fixes $\theta_0$ as in Lemma \ref{inde} as well. For each fixed $y\in B_{\rho_0}(y_0)$, we will prove by induction that:
\begin{equation}
\label{indl}
\rho_{l}^{-2}\int_{B_{\rho_l}(y)}|d\Theta^*|^2<\eta,
\end{equation}
where $\rho_l$ will be determined in the course of the induction process, satisfying that
\begin{equation}
\label{scales}
10\le\frac{\rho_l}{\rho_{l+1}}\le10000\theta_0^{-1}.
\end{equation}
For the starting scale $l=0$, the conclusion \eqref{indl} follows exactly from the condition of Proposition \ref{smallnessprop}. Next, we shall assume for some $l_0\ge 0$, all previous scales $\rho_{l}$ with $l\le l_0$ have been defined on which \eqref{scales} holds, such that $\rho_{l}^{-2}\int_{B_{\rho_l}(y)}|d\Theta^*|^2<\eta$. In order to find the next inductive scale $\rho_{l_0+1}$, we consider two alternative cases on the current scale $\rho_{l_0}$.\\

Case (a): By treating $B_{\rho_{l_0}}(y)$ as $B_{10\rho_0}(y_0)$ in Lemma \ref{repo}, the condition \eqref{largeness} does not hold; however this means that:
\begin{equation}
\sigma^2\int_{B_{\sigma}(z)}|d\Theta^*|^2<10^{-10}Q,\ \forall B_{\sigma}(z)\ \text{with}\ \sigma\in[\frac{\rho_{l_0}}{20},\frac{\rho_{l_0}}{10}], z\in B_{\frac{\rho_{l_0}}{10}}(y).
\end{equation}
It will then be an easy exercise to see that this means that:
\begin{equation}
\sigma^{-2}\int_{B_{\sigma}(z)}|d\Theta^*|^2\le \frac{1}{20000}(\frac{2\rho_{l_0}}{5})^{-2}\int_{B_{\frac{2\rho_{l_0}}{5}}(y)}|d\Theta^*|^2<\frac{\eta}{2}<\eta.
\end{equation}
By taking $B_{\sigma}(z)$ to be $B_{\frac{\rho_{l_0}}{10}}(y)$ and defining $\rho_{l_0+1}\eqqcolon \frac{\rho_{l_0}}{10}$, we are done.\\

Case (b): By treating $B_{\rho_{l_0}}(y)$ as $B_{10\rho_0}(y_0)$ in Lemma \ref{repo}, the condition \eqref{largeness} holds. In this case, we could then apply Lemma \ref{repo} and then Lemma \ref{inde} (note that the $\Lambda$-bound condition is satisfied by $\eta$, and hence can be taken to be, for instance, $100$. Here we simply want to point out the fact that the smallness of $\Lambda$ is unecessary) to obtain from \eqref{almostinde} that:
\begin{equation}
\label{de1}
(\frac{\theta_0\rho_{l_0}}{10})^{-4}\int_{B_{\frac{\theta_0\rho_{l_0}}{10}}(y)}|\Theta^*-\lambda_{y,\frac{\theta_0\rho_{l_0}}{10}}|^2<\frac{\eta_0}{C^*}=\frac{10C_p\eta}{C^*}.
\end{equation}
Now, we are facing two alternative sub-cases of Case (b), which we refer to as Case (c) and Case (d) respectively.\\

Case (c): By treating $B_{\theta_0\rho_{l_0}}(y)$ as $B_{10\rho_0}(y_0)$ in Lemma \ref{repo}, the condition \eqref{largeness} holds. Then Lemma \ref{repo} is valid with  $B_{\theta_0\rho_{l_0}}(y)$ in the place of $B_{10\rho_0}(y_0)$, and let us apply \eqref{de1} (note that the $\Lambda$-bound condition is satisfied by constant $10^4\theta_0^{-4}\eta$; hence, by choosing $\eta$ small,we could assume $10^8\theta_0^{-8}\eta<100$ and simply take $\Lambda=100$. We note such choice of $\eta$ and $\Lambda$ works for all cases discussed below and therefore we no longer mention the $\Lambda$-boundedness condition in the following arguments) and obtain: 
\begin{eqnarray}
\begin{split}
(\frac{\theta_0\rho_{l_0}}{200})^{-2}\int_{B_{\frac{\theta_0\rho_{l_0}}{200}}(y)}|d\Theta^*|^2\le &C_1\bigg{(}(\frac{\theta_0\rho_{l_0}}{100})^{-4}\int_{B_{\frac{\theta_0\rho_{l_0}}{10}}(y)}|\Theta^*-\lambda_{y,\frac{\theta_0\rho_{l_0}}{10}}|^2+\int_{B_{\frac{\theta_0\rho_{l_0}}{10}}(y_0)}|F_A|^2\bigg{)}\\
\le& \frac{10^5C_1C_p\eta}{C^*}+C_1\epsilon_0^2.
\end{split}
\end{eqnarray}
By choosing $C^*$ large in Lemma \ref{inde} (to depend only on $C_1$ and $C_p$), and then $\epsilon_0$ small with respect to $\eta$, above might be made less than $\eta/2$. Upon setting $\rho_{l_0+1}\eqqcolon \frac{\theta_0\rho_{l_0}}{200}$, we are done.\\

Case (d): By treating $B_{\theta_0\rho_{l_0}}(y)$ as $B_{10\rho_0}(y_0)$ in Lemma \ref{repo}, the condition \eqref{largeness} does not hold. In this case scenario, we further study two alternative sub-cases of Case (d), which we call Case (e) and Case (f) respectively.\\

Case (e): 
\begin{equation}
(\theta_0\rho_{l_0})^{-2}\int_{B_{\theta_0\rho_{l_0}}(y)}|d\Theta^*|^{2}<2\eta.
\end{equation}
Now by the same reasoning as in Case (a), the fact that \eqref{largeness} does not hold implies
\begin{equation}
(\frac{\theta_0\rho_{l_0}}{10})^{-2}\int_{B_{\frac{\theta_0\rho_{l_0}}{10}}(y)}|d\Theta^*|^{2}<\frac{1}{20000}(\frac{2\theta_0\rho_{l_0}}{5})^{-2}\int_{B_{\frac{2\theta_0\rho_{l_0}}{5}}(y)}|d\Theta^*|^{2}<\eta.
\end{equation}
Hence, by setting $\rho_{l_0+1}\eqqcolon \frac{\theta_0\rho_{l_0}}{10}$ we are done.\\

Case (f): 
\begin{equation}
\label{contra}
(\theta_0\rho_{l_0})^{-2}\int_{B_{\theta_0\rho_{l_0}}(y)}|d\Theta^*|^{2}\ge2\eta.
\end{equation}
This is the last possible case to be considered, and we will prove that this case cannot happen at all, by choosing $\epsilon_0$ sufficiently small. The proof goes by contradiction. Suppose this is the case. Indeed, according to inductive assumption we have
\begin{equation}
\label{contra1}
\rho_{l_0}^{-2}\int_{B_{\rho_{l_0}}(y)}|d\Theta^*|^{2}<\eta.
\end{equation}
Let us simplify the notation by setting $\rho_1\eqqcolon\theta_0\rho_{l_0}$ and $\rho_2\eqqcolon \rho_{l_0}$; we then have
\begin{equation}
\label{contra2}
\rho_1^{-2}\int_{B_{\rho_1}(y)}|d\Theta^*|^{2}-\rho_{2}^{-2}\int_{B_{\rho_2}(y)}|d\Theta^*|^{2}>\eta.
\end{equation}
Further set $\zeta_y(r)\coloneqq r^{-2}\int_{B_{r}(y)} |\nabla_A \Theta|^2$. Using this notation we see that \eqref{contra2} gives
\begin{eqnarray}
\begin{split}
\label{contra3}
&\zeta_y(\rho_1)-\zeta_y(\rho_2)=\big{(}\rho_1^{-2}\int_{B_{\rho_1}(y)}|d\Theta^*|^{2}-\rho_{2}^{-2}\int_{B_{\rho_2}(y)}|d\Theta^*|^{2}\big{)}
-\rho_1^{-2}\int_{B_{\rho_1}(y)}|A^*|^2\\
-&\rho_2^{-2}\int_{B_{\rho_2}(y)}|A^*|^2-2\rho_1^{-2}|\int_{B_{\rho_1}(y)}\langle A^*,d\Theta^*\rangle|-2\rho_2^{-2}|\int_{B_{\rho_2}(y)}\langle A^*,d\Theta^*\rangle|\\
\ge& \eta-2C_{Uh}^2\epsilon_0^2-2\sqrt{\Lambda}C_{Uh}\epsilon_0.
\end{split}
\end{eqnarray}
On the other hand, it is a simple computation to see that \eqref{mono} gives $\big{(}\sqrt{\zeta_y(r)}\big{)}^{'}\ge -\frac{1}{r}\sqrt{\epsilon_0}$. Thus for all $\rho_1<\rho_2<\rho_0$, the following holds:
\begin{equation}
\sqrt{\zeta_y(\rho_2)}-\sqrt{\zeta_y(\rho_1)}\ge  -|\log(\frac{\rho_1}{\rho_2})|\cdot \sqrt{\epsilon_0}.
\end{equation}
Therefore,
\begin{equation}
\zeta_y(\rho_2)-\zeta_y(\rho_1)=(\sqrt{\zeta_y(\rho_2)}+\sqrt{\zeta_y(\rho_1)})(\sqrt{\zeta_y(\rho_2)}-\sqrt{\zeta_y(\rho_1)})\ge -2\sqrt{\Lambda}|\log\theta_0|\sqrt{\epsilon_0}.
\end{equation}
Combining this with \eqref{contra3}, we have
\begin{equation}
\label{givecontra}
2\sqrt{\Lambda}|\log\theta_0|\sqrt{\epsilon_0}\ge\eta-2C_{Uh}^2\epsilon_0^2-2\sqrt{\Lambda}C_{Uh}\epsilon_0.
\end{equation}
where $\Lambda$, as we mentioned earlier, might be taken as $100$. Now \eqref{givecontra} gives a contradiction if we choose $\epsilon_0$ sufficiently small. In other words, we have ruled out Case (f). \\

Thus we finish the induction. To sum up, for each point $y\in B_{\rho_0}(y_0)$ we have obtained that 
$$\rho_{l}^{-2}\int_{B_{\rho_0}(y)}|d\Theta^*|^2<\eta$$ 
for a sequence of $\{\rho_l\}_{l\ge0}$ satisfying $10\le\frac{\rho_l}{\rho_{l+1}}\le10000\theta_0^{-1}$. But from this trivially we have
\begin{equation}
\rho^{-2}\int_{B_{\rho}(y)}|d\Theta^*|^2<10^{16}\theta_0^{-4}\eta
\end{equation}
for all $\rho<\rho_0$. By choosing even smaller $\eta$ from the very beginning of the proof with respect to $\delta_0$ (and hence correspondingly a smaller $\epsilon_0$) we have $\rho^{-2}\int_{B_{\rho}(y)}|d\Theta^*|^2<\delta_0$ for all $\rho<\rho_0$ and $y\in B_{\rho_0}(y_0)$. This is the end of the proof of Proposition \ref{smallnessprop}.
\end{proof}
Combining Proposition \ref{smallnessprop} and \eqref{uhgauge}, we have proved that given the $L^2$ smallness of the curvature, the smallness of the scaling invariant energy of $A_0$ on the first scale (i.e. $B_{10\rho_0}(p)$) implies its smallness on all balls contained in $B_{4\rho_0}(p)$. More precisely, we have
\begin{corollary}
\label{a0bound}
For all $\delta_0$, there exists $\epsilon_0$ and $\eta_0$, given that $\int_{B_{10\rho_0}(y_0)}|F_A|^2<\epsilon_0$, $\rho_0^{-2}\int_{B_{10\rho_0}(y_0)}|A_0|^2<\eta_0$, then $\rho^{-2}\int_{B_{\rho}(x)}|A_0|^2<\delta_0$ for all $B_{\rho}(x)\subseteq B_{4\rho_0}(y_0)$.
\end{corollary}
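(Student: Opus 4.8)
The plan is to obtain this corollary directly from Proposition \ref{smallnessprop} together with the $L^4$-bound \eqref{uhgauge} on the Uhlenbeck gauge; no new analytic input is needed. Recall that on $B_{10\rho_0}(y_0)$ we have the Uhlenbeck gauge with connection form $A^*$ obeying $\|A^*\|_{L^4(B_{10\rho_0}(y_0))}\le C_{Uh}\|F_A\|_{L^2(B_{10\rho_0}(y_0))}$, and the trivialization $\Theta^*=\Theta_0\circ\sigma_{Uh}$ of the Coulomb minimizer. Since $\Theta^*$ is $G$-valued, hence pointwise bounded by a constant $C(G)$, and $\nabla_A\Theta_0$ is represented by $d\Theta^*+A^*(\Theta^*)$ in this gauge, one has the pointwise comparison
\begin{equation}
\label{a0vsdtheta}
\bigl|\,|A_0|-|d\Theta^*|\,\bigr|\le C(G)\,|A^*|,
\end{equation}
and the same comparison holds for the trivialization relative to the Uhlenbeck gauge built over any sub-ball of $B_{10\rho_0}(y_0)$. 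By H\"older's inequality and \eqref{uhgauge}, for every ball $B_\rho(z)\subseteq B_{10\rho_0}(y_0)$ we get $\rho^{-2}\int_{B_\rho(z)}|A^*|^2\le C\|A^*\|_{L^4(B_\rho(z))}^2\le C\,C_{Uh}^2\,\epsilon_0$, so all the $A^*$-error terms appearing below are rendered negligible by shrinking $\epsilon_0$.

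I would first dispose of the balls whose radius is comparable to $\rho_0$. If $B_\rho(x)\subseteq B_{4\rho_0}(y_0)$ with $\rho\ge\rho_0/2$, then $\rho^{-2}\int_{B_\rho(x)}|A_0|^2\le 4\rho_0^{-2}\int_{B_{10\rho_0}(y_0)}|A_0|^2<4\eta_0$, which is already below $\delta_0$ once $\eta_0$ is chosen small. For the remaining range $\rho<\rho_0/2$ (still with $B_\rho(x)\subseteq B_{4\rho_0}(y_0)$), the idea is to invoke Proposition \ref{smallnessprop} on the ball $B_{5\rho_0}(x)$, which lies inside $B_{10\rho_0}(y_0)$ because $x\in B_{4\rho_0}(y_0)$; that is, we apply the Proposition with the pair $(y_0,\rho_0)$ there replaced by $(x,\rho_0/2)$, using the Uhlenbeck gauge on $B_{5\rho_0}(x)$ and its associated trivialization $\Theta^*$. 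Its hypotheses hold: the curvature smallness is inherited from $B_{10\rho_0}(y_0)$ (choosing $\epsilon_0<\epsilon_{Uh}$ also guarantees that the local Uhlenbeck gauge on $B_{5\rho_0}(x)$ exists), and, using \eqref{a0vsdtheta}, the crude bound $4\rho_0^{-2}\int_{B_{5\rho_0}(x)}|A_0|^2\le 4\rho_0^{-2}\int_{B_{10\rho_0}(y_0)}|A_0|^2<4\eta_0$, and the $A^*$-estimate above,
\begin{equation}
(\rho_0/2)^{-2}\int_{B_{5\rho_0}(x)}|d\Theta^*|^2\le 8\eta_0+C\,C_{Uh}^2\,\epsilon_0<\eta .
\end{equation}
Proposition \ref{smallnessprop}, applied with $\delta_0/4$ in place of $\delta_0$, then yields $\rho^{-2}\int_{B_\rho(x)}|d\Theta^*|^2<\delta_0/4$ for all $\rho\le\rho_0/2$, and converting back via \eqref{a0vsdtheta} gives $\rho^{-2}\int_{B_\rho(x)}|A_0|^2\le \delta_0/2+C\,C_{Uh}^2\,\epsilon_0<\delta_0$. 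Choosing $\epsilon_0$ and $\eta_0$ small enough to meet all the smallness requirements above simultaneously completes the argument.

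The only point needing a little care — and it is genuinely minor — is the scale bookkeeping combined with the gauge-dependence of $|d\Theta^*|$: the quantity $\int|d\Theta^*|^2$ does depend on which local Uhlenbeck gauge one trivializes against, but \eqref{a0vsdtheta} shows it is comparable to the intrinsic $\int|A_0|^2$ up to errors of order $\epsilon_0$, irrespective of the choice, which is exactly what lets Proposition \ref{smallnessprop} be re-centered at an arbitrary point of $B_{4\rho_0}(y_0)$ while keeping every base ball inside $B_{10\rho_0}(y_0)$, where the original hypotheses are in force. Since Proposition \ref{smallnessprop} already contains the induction-on-scales that constitutes the hard part, there is no serious obstacle in the present step.
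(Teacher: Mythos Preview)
Your proof is correct and follows essentially the same approach as the paper, which simply states that the corollary follows by ``combining Proposition \ref{smallnessprop} and \eqref{uhgauge}.'' You have filled in the details the paper leaves implicit: the pointwise comparison \eqref{a0vsdtheta} translating between $|A_0|$ and $|d\Theta^*|$, and the re-centering needed because Proposition \ref{smallnessprop} as stated only controls balls centered in $B_{\rho_0}(y_0)$ with radius $\le\rho_0$, whereas the corollary demands control over all $B_\rho(x)\subseteq B_{4\rho_0}(y_0)$.
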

To continue to finish the proof of the $\epsilon$-regularity Theorem \ref{eps}, and especially, to achieve the desired $L^4$ estimate on $A_0$, an intermediate step is to show that the $L^4$ norm of $A_0$ on $B_{\rho_0}(y_0)$ is finite. Namely,
\begin{proposition}
\label{l4finite}
There exists $\epsilon_0$ and $\eta_0$, given that $\int_{B_{10\rho_0}(y_0)}|F_A|^2<\epsilon_0$, $\rho_0^{-2}\int_{B_{10\rho_0}(y_0)}|A_0|^2<\eta_0$, then $\int_{B_{2\rho_0}(y_0)}|\nabla A_0|^2+|A_0|^4<\infty$.
\end{proposition}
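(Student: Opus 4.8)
The plan is to pass to the Uhlenbeck gauge, reduce the statement to an integrability claim for a single elliptic system, and then run the standard (twisted) harmonic-map regularity machinery. First I would fix the Uhlenbeck gauge $\Theta_{Uh}$ on $B_{10\rho_0}(y_0)$ supplied by \eqref{uhgauge}, with associated connection form $A^{*}$ satisfying $\|A^{*}\|_{L^{4}(B_{10\rho_0}(y_0))}+\|A^{*}\|_{W^{1,2}(B_{10\rho_0}(y_0))}\le C_{Uh}\sqrt{\epsilon_0}$, and set $\Theta^{*}=\Theta_0\circ\sigma_{Uh}$. Since $\Theta_0$ is a frame, $\Theta^{*}$ is $G\subseteq\mathrm{SO}(k)$-valued, so $|\Theta^{*}|\equiv1$, and $\Theta^{*}$ satisfies the weak equation \eqref{weaklap} on $B_{10\rho_0}(y_0)$. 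Because $A_0$ and $d\Theta^{*}$ differ only by bounded gauge-transformation terms built from the $L^{4}\cap W^{1,2}$ one-form $A^{*}$ and the bounded map $\Theta^{*}$, it is enough to show $d\Theta^{*}\in L^{4}_{loc}(B_{2\rho_0}(y_0))$ and $\nabla d\Theta^{*}\in L^{2}_{loc}(B_{2\rho_0}(y_0))$; and once $d\Theta^{*}\in L^{4}_{loc}$ the entire right-hand side of \eqref{weaklap}, being quadratic in $d\Theta^{*}$ and $A^{*}$ with $\Theta^{*}$ bounded, lies in $L^{2}_{loc}$, whence $\Theta^{*}\in W^{2,2}_{loc}$ and then $d\Theta^{*}\in W^{1,2}_{loc}\hookrightarrow L^{4}_{loc}$ automatically. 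So the whole proposition reduces to the single claim $d\Theta^{*}\in L^{4}_{loc}(B_{2\rho_0}(y_0))$.

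The usable input is that, after passing to slightly smaller concentric balls and shrinking $\epsilon_0,\eta_0$, Corollary \ref{a0bound} and Proposition \ref{smallnessprop} give $\sup\{\rho^{-2}\int_{B_{\rho}(y)}|d\Theta^{*}|^{2}:\ y\in B_{2\rho_0}(y_0),\ \rho\le c\rho_0\}<\delta_0$ for $\delta_0$ as small as we wish, together with the companion smallness of $\rho^{-4}\int_{B_{\rho}(y)}|\Theta^{*}-\lambda_{y,\rho}|^{2}$ at every such scale. The decisive step is an \emph{initial super-critical Morrey gain}: I would prove that there exist $\alpha>0$ and $C$ such that
\begin{equation}
\int_{B_{r}(y)}|d\Theta^{*}|^{2}\le C\,r^{2+2\alpha}\qquad\text{for every }B_{r}(y)\subseteq B_{2\rho_0}(y_0).
\end{equation}
This comes from a decay inequality obtained by comparison, on \emph{every} ball $B_r(y)$, with the harmonic replacement --- exactly the argument already carried out for \eqref{simon} through the harmonic approximation Lemma \ref{harap} and the smallness just recalled --- combined with the special structure of the dominant quadratic term $\Theta^{*}\langle(d\Theta^{*})^{T}\cdot d\Theta^{*}\rangle$: from $\Theta^{*}(\Theta^{*})^{T}=I$ the one-form $\Omega:=d\Theta^{*}(\Theta^{*})^{T}$ is $\mathfrak{so}(k)$-valued and divergence-free, so \eqref{weaklap} may be rewritten as $-\Delta\Theta^{*}=\Omega\cdot d\Theta^{*}+(\text{terms linear in }A^{*})$ with $\Omega$ small at the critical scaling $\sup_{B_r(y)}r^{-2}\int_{B_r(y)}|\Omega|^{2}$, and this antisymmetric/divergence-free structure is what lets one absorb the term $\Omega\cdot d\Theta^{*}$ through the small Morrey norm (this is where the method of \cite{B93} enters), while $\int_{B_r(y)}|A^{*}|^{2}\le\|A^{*}\|_{L^{4}}^{2}|B_r|^{1/2}\le Cr^{2}$ and $\int_{B_r(y)}|A^{*}||d\Theta^{*}|\le Cr^{2}$ enter only as harmless lower-order forcing. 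Choosing the comparison scale small, then $\delta_0$ small so that the resulting contraction beats $r^{2}$, iteration of the decay inequality yields the displayed bound.

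Once this super-critical bound holds, $\Theta^{*}$ is H\"older continuous on $B_{2\rho_0}(y_0)$ by Morrey's Dirichlet-growth theorem, and I would finish with a routine potential-theoretic bootstrap of \eqref{weaklap} in Morrey spaces (Adams' estimate for the Riesz potential $I_1$): the forcing $\Theta^{*}|d\Theta^{*}|^{2}$ now sits in a strictly subcritical Morrey space, the mixed term $|A^{*}||d\Theta^{*}|$ improves by H\"older against $A^{*}\in L^{4}$, and $|A^{*}|^{2}\in L^{2}$ already contributes a $W^{2,2}_{loc}$ piece, so after finitely many iterations $d\Theta^{*}\in L^{4}_{loc}(B_{2\rho_0}(y_0))$, which by the reduction of the first paragraph proves the proposition. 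The main obstacle is entirely concentrated in the initial Morrey gain: every term on the right of \eqref{weaklap} is scale-critical, so no plain Sobolev or Calder\'on--Zygmund step produces any improvement, and the $\epsilon$-gain has to be squeezed out of the combination of the smallness from Proposition \ref{smallnessprop} and the special algebraic structure of the leading quadratic term; by contrast the curvature $F_A$ is no longer a source of trouble here, since Proposition \ref{smallnessprop} has already absorbed the non-monotonicity of \eqref{mono} and what remains of $F_A$ enters only through the benign bound $A^{*}\in L^{4}\cap W^{1,2}$.
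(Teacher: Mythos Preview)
Your reduction to showing $d\Theta^{*}\in L^{4}_{\mathrm{loc}}(B_{2\rho_0}(y_0))$ is sound, and your plan to bootstrap via an initial super-critical Morrey gain is the right kind of strategy. However, the step where you claim
\[
\int_{B_r(y)}|A^{*}|^{2}\le \|A^{*}\|_{L^{4}}^{2}\,|B_r|^{1/2}\le Cr^{2}
\]
``enters only as harmless lower-order forcing'' is where the argument breaks. This bound is exactly \emph{critical}: the dimensionless quantity $r^{-2}\int_{B_r}|A^{*}|^{2}$ is only bounded, not decaying. Feeding it into the decay iteration \eqref{itnow} yields at best $\phi_{x}(\rho_K)\le C\theta^{2\alpha K}\phi_x(\rho_0)+C\sum_{i=1}^K\theta^{2\alpha i}\le C$, which merely reproduces the boundedness you already had from Proposition~\ref{smallnessprop} and gives no H\"older gain. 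Your phrase ``choosing $\delta_0$ small so that the resulting contraction beats $r^{2}$'' has no mechanism behind it: the contraction factor acts multiplicatively on $\phi$, but the $A^{*}$ forcing is an additive constant at every scale. The antisymmetric/divergence-free structure of $\Omega=d\Theta^{*}(\Theta^{*})^{T}$ helps you absorb the \emph{quadratic} term in $d\Theta^{*}$, but it does nothing for the mixed terms $\Theta^{*}\langle(d\Theta^{*})^{T}\cdot A^{*}(\Theta^{*})\rangle$, which remain scale-critical with only $A^{*}\in L^{4}$.

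The paper's proof avoids this obstacle by a different route that exploits the standing smoothness assumption on $A$. It introduces an auxiliary radius $\hat\rho=\sup\{r\le\rho_0:\sup_{x}\int_{B_r(x)}|F_A|^{p_0}\le\hat\epsilon\}$ for some fixed $p_0>2$; smoothness of $A$ guarantees $\hat\rho>0$ (ineffectively, which is all this proposition requires). On each $B_{\hat\rho}(x_0)$ one then has a \emph{better} Uhlenbeck gauge with $\|A^{*}\|_{W^{1,p_0}}\le C_{Uh}\|F_A\|_{L^{p_0}}$, hence $A^{*}\in L^{4+\alpha_0}$ by Sobolev, and now $\rho^{-2}\int_{B_\rho}|A^{*}|^{2}\le C\rho^{2\beta_0}$ with $\beta_0>0$ is genuinely subcritical. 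This is exactly the extra decay that makes the iteration \eqref{itera1} converge to H\"older continuity. After that, a direct difference-quotient argument (Proposition~\ref{localprop0}) gives $\nabla^{2}\Theta^{*}\in L^{2}_{\mathrm{loc}}$ and hence $d\Theta^{*}\in L^{4}_{\mathrm{loc}}$. So the missing idea in your proposal is precisely this passage to a supercritical gauge via the smoothness of $A$; without it the $A^{*}$ terms sit at the critical scaling and block the Morrey improvement you need.
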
 
\begin{proof}
Fix some $p_0>2$, such that $W^{1,p_0}\hookrightarrow L^{4+\alpha_0}$ is the Sobolev embedding in dimension $4$ for some $\alpha_0$ to be determined later in the proof. Define the following regularity radius of $A$:
\begin{eqnarray}
\begin{split}
\hat\rho=\sup\{r\le \rho_0: \sup_{x\in B_{3\rho_0}(y_0)}\int_{B_{r}(x)}|F_A|^{p_0}\le \hat\epsilon\}.
\end{split}
\end{eqnarray}
for some small number $\hat\epsilon>0$ to be specified. By the smoothness of $A$, we have $\hat{\rho}>0$. To prove Proposition \ref{l4finite}, it suffices to prove $\int_{B_{\hat\rho/2}(x)}|A_0|^4<\infty$ for each $x\in B_{2\rho_0}(y_0)$. For this purpose, let us fix some $B_{\hat\rho}(x_0)$ for arbitrary $x_0\in B_{2\rho_0}(y_0)$. By choosing $\hat\epsilon<\epsilon_{Uh}$, we could then apply Uhlenbeck Theorem (see, for instance, Theorem 6.1, \cite{W04}) to find a Coulomb frame (which we shall again refer to as the Uhlenbeck gauge) on $B_{\hat\rho}(x_0)$ such that the connection form of $A$ under this frame (which we again denote by $A^*$) satisfies
\begin{eqnarray}
\begin{split}
\label{p0bound}
\|A^*\|_{W^{1,p_0}(B_{\hat\rho}(x_0))}\le C_{Uh}\|F_A\|_{L^{p_0}(B_{\hat\rho}(x_0))}.
\end{split}
\end{eqnarray}
Again define $\Theta^*: B_{\hat\rho}(x_0)\to \text{Mat}_{\mathbb{R}}(k\times k)$ the trivialization of $\Theta_0$ under this Uhlenbeck gauge. Clearly, $\Theta^*$ satisfies \eqref{weaklap}. We need point out that  By Corollary \ref{a0bound} and \eqref{p0bound}, for any $\delta$ we may choose sufficiently small $\epsilon_0$, $\eta_0$, and $\epsilon_{Uh}$, such that $\rho^{-2}\int_{B_\rho(x)}|d\Theta^*|^2<\delta_0$ holds for all $B_\rho(x)$ with $\rho\le \hat{\rho}/2$ and $x\in B_{\hat{\rho}/2}(x_0)$. By Sobolev and Poincar\'e inequality, we have
\begin{eqnarray}
\begin{split}
\label{allsmall}
\rho^{-4}\int_{B_\rho(x)}|\Theta^*-\lambda_{x,\rho}|^4<\eta_0^2,
\end{split}
\end{eqnarray}
where $\lambda_{x,\rho}=\rho^{-4}\int_{B_\rho(x)}\Theta^*(y)dV_g(y)$ and $\eta_0\le C\delta_0$. Hence, for some $\delta$ sufficiently small to be specified, upon choosing $\delta_0$ and $\epsilon_0$ sufficiently small, and then replacing $y_0$ by any $x\in B_{2\hat\rho/3}(x_0)$ and $\rho_0$ by any $\rho\le \hat\rho/30$, the crucial smallness hypothesis in Lemma \ref{prerepo} is verified by \eqref{allsmall}; then we could follow the proof of Lemma \ref{prerepo} verbatim to conclude that
\begin{eqnarray}
\begin{split}
\label{simonepsilon}
\rho^{-2}\int_{B_{\frac{\rho}{2}}(x)}|d\Theta^*|^2\le C_0\bigg{(}\delta\rho^{-2}\int_{B_{\rho}(x)}|d\Theta^*|^2+\delta^{-1}\rho^{-4}\int_{B_{\rho}(x)}|\Theta^*-\lambda_{x,\rho}|^2+\rho^{-2}\int_{B_{\rho}(x)}|A^*|^2\bigg{)}.
\end{split}
\end{eqnarray}
Now we need the following lemma (we refer the readers to Lemma 2, Section 2.8 of \cite{S96} for its proof):
\begin{lemma}
\label{simonbook}
Let $B_\rho(x)$ be any ball in $\mathbb{R}^n$, $k\in \mathbb{R}$, $\gamma>0$, and let $\phi$ be any $[0,\infty)$-valued convex sub-additive function on the collection of convex subsets of $B_\rho(x)$. There exist $\epsilon_0\equiv \epsilon_0(n,k),\ C\equiv C(n,k)$ such that if 
\begin{eqnarray}
\begin{split}
\sigma^k\phi(B_{\sigma/2}(z))\le\epsilon_0\sigma^k\phi(B_\sigma(z))+\gamma
\end{split}
\end{eqnarray}
whenever $B_{2\sigma}(z)\subseteq B_\rho(x)$, then
\begin{eqnarray}
\begin{split}
\rho^k\phi(B_{\rho/2}(x))\le C\gamma.
\end{split}
\end{eqnarray}
\end{lemma}
If we take $B_\rho(x)$ to be arbitrary ball with $x\in B_{\hat\rho/2}(x_0)$ and $\rho<\hat\rho/30$, and set $k=2$, $\phi(B)=\int_B|d\Theta^*|^2$, and then define
\begin{eqnarray}
\begin{split}
\gamma=C_0(\delta^{-1}\int_{B_{\rho}(x)}|\Theta^*-\lambda_{x,\rho}|^2+\rho^{2}\int_{B_{\rho}(x)}|A^*|^2),
\end{split}
\end{eqnarray}
then by \eqref{simonepsilon}, for any $B_\sigma(z)$ with $B_{2\sigma}(z)\subseteq B_\rho(x)$ we have
\begin{eqnarray}
\begin{split}
&\sigma^2\phi(B_{\sigma/2}(z))=\sigma^2\int_{B_{\sigma/2}(z)}|d\Theta^*|^2\\
\le& C_0\delta\int_{B_\sigma(z)}|d\Theta^*|^2+C_0\bigg{(}\delta^{-1}\int_{B_\sigma(z)}|\Theta^*-\lambda_{z,\sigma}|^2+\sigma^2\int_{B_\sigma(z)}|A^*|^2\bigg{)}\\
\le&C_0\delta\int_{B_\sigma(z)}|d\Theta^*|^2+C_0\bigg{(}\delta^{-1}\int_{B_\sigma(z)}|\Theta^*-\lambda_{x,\rho}|^2+\rho^2\int_{B_\rho(x)}|A^*|^2\bigg{)}\\
\le& C_0\delta\int_{B_\sigma(z)}|d\Theta^*|^2+\gamma=C_0\delta\sigma^2 \phi(B_{\sigma}(z))+\gamma.
\end{split}
\end{eqnarray}
Hence upon choosing $\delta$ sufficiently small in \eqref{simonepsilon}, we could apply Lemma \ref{simonbook} to see that
\begin{eqnarray}
\begin{split}
\label{simonepsilon1}
\rho^{-2}\int_{B_{\frac{\rho}{2}}(x)}|d\Theta^*|^2\le C_1\bigg{(}\rho^{-4}\int_{B_{\rho}(x)}|\Theta^*-\lambda_{x,\rho}|^2+\rho^{-2}\int_{B_{\rho}(x)}|A^*|^2)\bigg{)}.
\end{split}
\end{eqnarray}
for all $B_\rho(x)$ with $\rho<\hat\rho/2$ and $x\in B_{\hat\rho}(x_0)$, provided $\delta$ is chosen sufficiently small. Now we could follow the proof of Lemma \ref{inde} verbatim to obtain \eqref{simon} upon replacing both $y$ and $y_0$ by $x$, and $\rho_0$ by $\rho$. Given any $\alpha<1$, by first choosing $\theta$ and then $\epsilon$ to be small on the right hand side of \eqref{simon} such that both depend on $\alpha$, we arrive at the following estimate:
\begin{eqnarray}
\begin{split}
\label{itnow}
(\theta\rho)^{-4}\int_{B_{\theta\rho}(x)}|\Theta^*-\lambda_{x,\theta\rho}|^2\le \theta^{2\alpha}\bigg{(}\rho^{-4}\int_{B_{\rho}(x)}|\Theta^*-\lambda_{x,\rho}|^2+\rho^{-2}\int_{B_{\rho}(x)}|A^*|^2\bigg{)}.
\end{split}
\end{eqnarray}
Recall $\alpha_0$ is a that $W^{1,p_0}\hookrightarrow L^{4+\alpha_0}$ is a Sobolev embedding in dimension $4$. The choices of both $\alpha_0$ and $p_0$ will be specified later to depend on $\alpha$. Using H\"older inequality we obtain
\begin{eqnarray}
\begin{split}
\label{4pluseps}
\rho^{-2}\int_{B_\rho(x)}|A^*|^2\le \rho^{-2}\big{(}\int_{B_\rho(x)}|A^*|^{4+\alpha_0}\big{)}^{\frac{2}{4+\alpha_0}}\cdot\rho^{\frac{4(2+\alpha_0)}{4+\alpha_0}}\le \epsilon_1\rho^{\frac{2\alpha_0}{4+\alpha_0}}\equiv \epsilon_1\rho^{2\beta_0}
\end{split}
\end{eqnarray}
for $\epsilon_1\eqqcolon C_{Uh}^2\epsilon_{Uh}^{2/p_0}$ and $\beta_0\eqqcolon\frac{\alpha_0}{4+\alpha_0}<1$. Note in the second inequality we have used that $\int_{B_{\hat{\rho}}(x_0)}|F_A|^{p_0}<\epsilon_{Uh}$ and the Uhlenbeck gauge property that $\|A^*\|_{W^{1,p_0}}\le C_{Uh}\|F_A\|_{L^{p_0}}$. For the sake of brevity, in the following estimates we introduce the following notations:
\begin{eqnarray}
\begin{split}
\phi_{x}(r)=r^{-4}\int_{B_r(x)}|\Theta^*-\lambda_{x,r}|^2,\ \rho_i\equiv \theta^i\hat\rho,\ \text{for all integer\ }i\ge 0.
\end{split}
\end{eqnarray}
For arbitrary $K\ge 1$ we iteratively apply \eqref{itnow} as follows:
\begin{eqnarray}
\begin{split}
\label{itera1}
\phi_{x_0}(\rho_K)&\le \theta^{2\alpha}(\phi_{x_0}(\rho_{K-1})+\epsilon_1\rho_{K-1}^{2\beta_0})\le \theta^{2\alpha}((\theta^{2\alpha}\phi_{x_0}(\rho_{K-2})+\epsilon_1\rho_{K-2}^{2\beta_0})+\epsilon_1\rho_{K-1}^{2\beta_0})\\
&\le \cdots\le \theta^{2\alpha K}\phi_{x_0}(\hat\rho)+\epsilon_1\sum_{i=1}^K\rho_{K-i}^{2\beta_0}\cdot \theta^{2\alpha(i-1)}\\
&\le (\frac{\rho_K}{\hat\rho})^{2\alpha}\phi_{x_0}(\hat\rho)+\epsilon_1\theta^{-2\alpha}\rho_K^{2\beta_0}\sum_{i=1}^K\theta^{2i(\alpha-\beta_0)}. 
\end{split}
\end{eqnarray}
Now we could determine the choices of $\alpha_0$ and $p_0$ by setting $\beta_0=\max\{2\alpha-1,\alpha/2\}$. Plugging this into the right hand side of the above inequality we have
\begin{eqnarray}
\begin{split}
\phi_{x_0}(\rho_K)\le C(\theta,\hat\rho,\alpha)  \rho_K^{2\beta_0},
\end{split}
\end{eqnarray}
which implies that
\begin{eqnarray}
\begin{split}
\rho^{-4}\int_{B_{\rho}(x_0)}|\Theta^*-\lambda_{x,\rho}|^2=\phi_{x_0}(\rho)\le C(\theta,\hat\rho,\alpha)^2  \rho^{2\beta_0}.
\end{split}
\end{eqnarray}
for all $\rho\le \hat{\rho}/2$. By Campanato's Theorem, 
\begin{eqnarray}
\begin{split}
\label{holderbeta}
[\Theta^*]_{C^{\beta_0}(B_{\hat{\rho}/2}(x_0))}\le C(\theta,\hat\rho,\alpha).
\end{split}
\end{eqnarray}
Now we need the following proposition:
\begin{proposition}
\label{localprop0}
For all $y\in B_{\hat\rho/2}(x_0)$, there exists some $0<\rho_y<\hat\rho$, such that the following holds for some universal constant $C>0$ and all $\zeta\in C^{\infty}_c(B_{\rho_y}(y))$:
\begin{equation}
\label{nabla2}
\int_{B_{\rho_y}(y)}|\nabla^2\Theta^*|^2\zeta^2\le C\bigg{(}\int_{B_{\rho_y}(y)}|d\Theta^*|^2|d\zeta|^2+\int_{B_{\rho_y}(y)}|\nabla A^*|^2\zeta^2\bigg{)}<\infty.
\end{equation}
\end{proposition}
In view of the arbitrariness of $x_0\in B_{2\rho_0}(y_0)$, Proposition \ref{l4finite} immediately follows from Proposition \ref{localprop0} together with a standard covering argument and the Sobolev embedding $W^{1,2}\hookrightarrow L^4$.
\end{proof}
Thus it remains to prove Proposition \ref{localprop0} in order to conclude Proposition \ref{l4finite}.
\begin{proof}[Proof of Proposition \ref{localprop0}]
We will use the standard integration by parts and difference quotience method. Choose any $x\in B_{\hat\rho/2}(x_0)$ and some $\rho\ll\hat\rho/2$ to be specified. For any $\xi\in W^{1,2}_0(B_{\rho}(y),\text{Mat}_{\mathbb{R}}(k\times k))\cap L^{\infty}(B_{\rho}(y),\text{Mat}_{\mathbb{R}}(k\times k))$, let $\eta(z)=|\xi|^2(\Theta^*(z)-\Theta^*(y))$ play the role of the test section in the weak identity of \eqref{weaklap}, we have
\begin{eqnarray}
\begin{split}
\label{wkid1}
0=&\int_{B_{\rho}(y)}|d\Theta^*|^2|\xi|^2+\int_{B_{\rho}(y)}\langle\langle d\Theta^*,\langle 2\xi, d\xi\rangle_{E}\rangle_{\Lambda^1},\Theta^*-\Theta^*(y)\rangle_E\\
&-\int_{B_{\rho}(y)}\langle\Theta^*\langle(d\Theta^*)^T\cdot d\Theta^*\rangle,|\xi|^2(\Theta^*-\Theta^*(y))\rangle_E\\
&-\int_{B_{\rho}(y)}\langle\Theta^*\langle(d\Theta^*)^T\cdot A^*(\Theta^*)\rangle,|\xi|^2(\Theta^*-\Theta^*(y))\rangle_{E}\\
&+\int_{B_{\rho}(y)}\langle\Theta^*\langle (A^*(\Theta^*))^T\cdot d\Theta^*\rangle,|\xi|^2(\Theta^*-\Theta^*(y))\rangle_E\\
=&Q_1+Q_2+Q_3+Q_4+Q_5.
\end{split}
\end{eqnarray}
By Cauchy's inequality and \eqref{holderbeta}, we obtain
\begin{eqnarray}
\begin{split}
& |Q_2|+|Q_3|+|Q_4|+|Q_5|\le C\rho^{\beta_0}\big{(}\int_{B_{\rho}(y)}|d\Theta^*|^2|\xi|^2\big{)}^{1/2}\big{(}\int_{B_{\rho}(y)}|d\xi|^2\big{)}^{1/2}\\
&+C\rho^{\beta_0}\int_{B_{\rho}(y)}|d\Theta^*|^2|\xi|^2+C\rho^{\beta_0}\big{(}\int_{B_{\rho}(y)}|d\Theta^*|^2|\xi|^2\big{)}^{1/2}\big{(}\int_{B_{\rho}(y)}|A^*|^2|\xi|^2\big{)}^{1/2}\\
&=P_1+P_2+P_3.
\end{split}
\end{eqnarray}
Insert these estimates back to \eqref{wkid1} and let $P_2$ be absorbed into $Q_1$ by choosing $\rho$ small, we obtain
\begin{eqnarray}
\begin{split}
\label{firstorder}
\int_{B_{\rho}(y)}|d\Theta^*|^2|\xi|^2\le C\rho^{\beta_0}\big{(}\int_{B_{\rho}(y)}|d\xi|^2+\int_{B_{\rho}(y)}|A^*|^2|\xi|^2\big{)}.
\end{split}
\end{eqnarray}
Now choose arbitrary $\zeta\in C_c^{\infty}(B_{\rho}(y))$. For any $l=1,\cdots,4$, define $\phi_l=\big{(}\zeta^2(\Theta^*)_{(l,\tau)}\big{)}_{(l,\tau)}$, where $(\cdot)_{(l,\tau)}$ is the $\tau$-difference quotient along $l^{\text{th}}$ direction. Apply \eqref{weaklap} with test function $\phi_l$, we obtain
\begin{eqnarray}
\begin{split}
\label{absorbr1}
0=&\int_{B_{\rho}(y)}|(d\Theta^*)_{(l,\tau)}|^2\zeta^2+\int_{B_{\rho}(y)}\langle\langle (d\Theta^*)_{(l,\tau)},d\zeta\rangle_{\Lambda^1},2\zeta(\Theta^*)_{(l,\tau)}\rangle+\int_{B_{\rho}(y)}\langle(E)_{(l,\tau)},(\Theta^*)_{(l,\tau)}\rangle\zeta^2\\
=&R_1+R_2+R_3.
\end{split}
\end{eqnarray}
where $E=\Theta^*\langle(d\Theta^*)^T\cdot d\Theta^*\rangle+\Theta^*\langle(d\Theta^*)^T\cdot A^*(\Theta^*)\rangle-\Theta^*\langle (A^*(\Theta^*))^T\cdot d\Theta^*\rangle$. By applying Young's Inequality with $\epsilon(=1/10)$ we estimate
\begin{eqnarray}
\begin{split}
|R_2|\le \frac{1}{10} \int_{B_{\rho}(y)}|(d\Theta^*)_{(l,\tau)}|^2\zeta^2+C\int_{B_{\rho}(y)}|d\zeta|^2|(\Theta^*)_{(l,\tau)}|^2.
\end{split}
\end{eqnarray}
Next, we shall estimate $R_3$:
\begin{eqnarray}
\begin{split}
&|R_3|\le \int_{B_{\rho}(y)}|(\Theta^*)_{l,\tau}|^2|d\Theta^*|^2\zeta^2+2\int_{B_{\rho}(y)}|(d\Theta^*)_{l,\tau}||d\Theta^*||(\Theta^*)_{(l,\tau)}|\zeta^2\\
+&4\int_{B_{\rho}(y)}|(\Theta^*)_{(l,\tau)}|^2|d\Theta^*||A^*|\zeta^2+2\int_{B_{\rho}(y)}|(\Theta^*)_{(l,\tau)}||(d\Theta^*)_{l,\tau}||A^*|\zeta^2\\
+&2\int_{B_{\rho}(y)}|(A^*)_{(l,\tau)}||d\Theta^*||(\Theta^*)_{(l,\tau)}|\zeta^2=T_1+T_2+T_3+T_4+T_5.
\end{split}
\end{eqnarray}
Firstly let us estimate $T_1$. Indeed, by treating $\zeta(\Theta^*)_{(l,\tau)}$ as $\xi$ we could apply \eqref{firstorder}. This gives
\begin{eqnarray}
\begin{split}
\label{t1}
|T_1|\le C\rho^{2\beta_0}\big{(}\int_{B_{\rho}(y)}|(d\Theta^*)_{(l,\tau)}|\zeta^2+\int_{B_{\rho}(y)}|(\Theta^*)_{(l,\tau)}|^2|d\zeta|^2+\int_{B_{\rho}(y)}|A^*|^2|(\Theta^*)_{(l,\tau)}|\zeta^2\big{)}.
\end{split}
\end{eqnarray}
To the last term above we use the following combination of Cauchy inequality and Poincar\'e-Sobolev inequality:
\begin{eqnarray}
\begin{split}
&\int_{B_{\rho}(y)}|A^*|^2|(\Theta^*)_{(l,\tau)}\zeta|^2\le (\int_{B_{\rho}(y)}|A^*|^4)^{1/2}\cdot (\int_{B_{\rho}(y)}|(\Theta^*)_{(l,\tau)}\zeta|^4)^{1/2}\\
\le &C(\int_{B_{\rho}(y)}|A^*|^4)^{1/2}\cdot\int_{B_{\rho}(y)}|d((\Theta^*)_{(l,\tau)}\zeta)|^2\le C(\int_{B_{\rho}(y)}|A^*|^4)^{1/2}\cdot\int_{B_{\rho}(y)}|(d\Theta^*)_{(l,\tau)}\zeta|^2\\
+&C(\int_{B_{\rho}(y)}|A^*|^4)^{1/2}\cdot\int_{B_{\rho}(y)}|(\Theta^*)_{(l,\tau)}|^2|d\zeta|^2.
\end{split}
\end{eqnarray}
Now insert this back to \eqref{t1}, we have
\begin{eqnarray}
\begin{split}
T_1\le \tilde{\delta}\big{(}\int_{B_{\rho}(y)}|(d\Theta^*)_{(l,\tau)}\zeta|^2+ \int_{B_{\rho}(y)}|(\Theta^*)_{(l,\tau)}|^2|d\zeta|^2\big{)}.
\end{split}
\end{eqnarray}
for some small $\tilde{\delta}$ (to be determined later) provided both $\rho$ and $\epsilon_0$ are small. Secondly, we estimate $T_2$. Using Young's inequality with $\epsilon$ we have
\begin{eqnarray}
\begin{split}
T_2\le \epsilon \int_{B_{\rho}(y)}|(d\Theta^*)_{(l,\tau)}\zeta|^2+C(\epsilon)T_1.
\end{split}
\end{eqnarray}
Thirdly we estimate $T_3$. Using standard Young's inequality we obtain
\begin{eqnarray}
\begin{split}
T_3\le& CT_1+ \int_{B_{\rho}(y)}|A^*|^2|(\Theta^*)_{(l,\tau)}\zeta|^2\le  CT_1+C(\int_{B_{\rho}(y)}|A^*|^4)^{1/2}\cdot\int_{B_{\rho}(y)}|(d\Theta^*)_{(l,\tau)}\zeta|^2\\
&+C(\int_{B_{\rho}(y)}|A^*|^4)^{1/2}\cdot\int_{B_{\rho}(y)}|(\Theta^*)_{(l,\tau)}|^2|d\zeta|^2.
\end{split}
\end{eqnarray}
where we have once again used Poincar\'e-Sobolev Inequality to the second term in the second inequality. Next, we estimate term $T_4$. Again using Young's Inequality, we achieve the following:
\begin{eqnarray}
\begin{split}
T_4&\le \epsilon \int_{B_{\rho}(y)}|(d\Theta^*)_{(l,\tau)}\zeta|^2+C(\epsilon)\int_{B_{\rho}(y)}|A^*|^2|(\Theta^*)_{(l,\tau)}\zeta|^2\le  \epsilon \int_{B_{\rho}(y)}|(d\Theta^*)_{(l,\tau)}\zeta|^2\\
+&C(\epsilon)(\int_{B_{\rho}(y)}|A^*|^4)^{1/2}\cdot\int_{B_{\rho}(y)}|(d\Theta^*)_{(l,\tau)}\zeta|^2+C(\epsilon)(\int_{B_{\rho}(y)}|A^*|^4)^{1/2}\cdot\int_{B_{\rho}(y)}|(\Theta^*)_{(l,\tau)}|^2|d\zeta|^2.
\end{split}
\end{eqnarray}
Lastly, let us estimate term $T_5$. Using standard Young's Inequality, we have
\begin{eqnarray}
\begin{split}
T_5\le \int_{B_{\rho}(y)}|(A^*)_{(l,\tau)}|^2\zeta^2 +CT_1.
\end{split}
\end{eqnarray}
Now by combining the above estimates of $T_1$ through $T_5$, inserting them into \eqref{absorbr1}, choosing $\epsilon$ and $\tilde{\delta}$ in both Young Inequalities to be small enough, and then taking $\epsilon_0$ small, we could let all terms containing $\int_{B_{\rho}(y)}|d((\Theta^*)_{(l,\tau)}\zeta)|^2$ be absorbed into $R_1$. Upon further arrangements we obtain
\begin{eqnarray}
\begin{split}
\int_{B_{\rho}(y)}|(d\Theta^*)_{(l,\tau)}|^2\zeta^2\le C\bigg{(}\int_{B_{\rho}(y)}|(\Theta^*)_{(l,\tau)}|^2|d\zeta|^2+\int_{B_{\rho}(y)}|(A^*)_{(l,\tau)}|^2\zeta^2\bigg{)}.
\end{split}
\end{eqnarray}
Now sending $\tau$ to $0$, summing up all $l$, we achieve \eqref{nabla2} for some small but positive $\rho\eqqcolon \rho_y$. This proves Proposition \ref{localprop0}. 
\end{proof}
Next, we shall refine the ineffective $L^4$ estimate in Proposition \ref{l4finite} to the desired effective $L^4$ estimate in Theorem \ref{eps} by using an argument somewhat similar to that given in \cite{Evans}, where the duality between Hardy space and BMO space also plays a key role. Once again we go back to the Uhlenbeck gauge we find in the beginning of the proof of Theorem \ref{eps}, and let $\Theta^*$ be the same as in \eqref{trivialuh}. Proposition \ref{l4finite} guarantees that
\begin{eqnarray}
\begin{split}
\label{ineff}
\int_{B_{2\rho_0}(y_0)}|d\Theta^*|^4+|\nabla^2\Theta^*|^2<\infty.
\end{split}
\end{eqnarray}
The following proof frequently uses \eqref{ineff} without mentioning it. Our goal is to improve \eqref{ineff} to effective $L^4$-estimates. The key step is to achieve the following estimate:
\begin{proposition}
\label{simon3}
Given any $\delta>0$ and $0<\alpha<1$, there exists $\epsilon_0$ and $\eta_0$ as in Theorem \ref{eps}, and a large constant $C>0$ depending on $\delta$ and $\alpha$, such that for all $B_s(y)$ satisfying $B_{2s}(y)\subseteq B_{2\rho_0}(y_0)$, the following estimate holds:
\begin{eqnarray}
\begin{split}
\label{simon4}
\int_{B_{s/2}(y)}|d\Theta^*|^4\le \delta\int_{B_{s}(y)}|d\Theta^*|^4+C\bigg{(}(\frac{s}{\rho_0})^{4\alpha}\cdot \rho_0^{-4}\int_{B_{10\rho_0}(y_0)}|\Theta^*-\lambda_{y_0,10\rho_0}|^4+\int_{B_{10\rho_0}(y_0)}|F_A|^2\bigg{)}.
\end{split}
\end{eqnarray}
\end{proposition}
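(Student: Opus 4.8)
The plan is to adapt the Hardy space--BMO duality scheme of \cite{Evans} to the twisted equation \eqref{weaklap}, feeding in all the a priori information already at hand: the interior H\"older bound \eqref{holderbeta}, the second order Caccioppoli inequality \eqref{nabla2} of Proposition \ref{localprop0}, and the Uhlenbeck control $\|A^*\|_{W^{1,p_0}}\le C_{Uh}\|F_A\|_{L^{p_0}}$ from the gauge fixed at the start of the proof of Theorem \ref{eps}. Fix $B_s(y)$ with $B_{2s}(y)\subseteq B_{2\rho_0}(y_0)$ and abbreviate $\lambda=\lambda_{y,s}$. The first step is to expose the structure of the right hand side of \eqref{weaklap}, writing it as $E=E_{\mathrm{Jac}}+E_A$ with $E_{\mathrm{Jac}}=\Theta^*\langle(d\Theta^*)^T\cdot d\Theta^*\rangle$ and $E_A$ the two terms bilinear in $(A^*,d\Theta^*)$. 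Since $\Theta^*$ takes values in $G\subseteq\text{Mat}_{\mathbb{R}}(k\times k)$, the relation $(\Theta^*)^T\Theta^*=1_G$ rewrites $E_{\mathrm{Jac}}$, entrywise, as a finite sum of Jacobians $\partial_i\Phi\,\partial_j\Psi-\partial_j\Phi\,\partial_i\Psi$ with $\Phi,\Psi$ affine in the entries of $\Theta^*$; hence $E_{\mathrm{Jac}}$ lies in the local Hardy space $\mathcal{h}^1$ with $\|E_{\mathrm{Jac}}\|_{\mathcal{h}^1(B_s(y))}\le C\int_{B_s(y)}|d\Theta^*|^2$. Moreover, combining the standard Caccioppoli for $|d\Theta^*|^2$ (obtained by testing \eqref{weaklap} against $\zeta^2(\Theta^*-\lambda)$ and using $\|\Theta^*-\lambda\|_{L^\infty(B_s(y))}\le Cs^{\beta_0}$ from \eqref{holderbeta}) with \eqref{holderbeta} itself yields the Morrey decay $\int_{B_r(z)}|d\Theta^*|^2\le Cr^{2+2\beta_0}$ on all subballs. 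Finally $E_A$ obeys $|E_A|\le C|A^*||d\Theta^*|$, so $\|E_A\|_{L^2(B_s(y))}\le C\|A^*\|_{L^4(B_s(y))}\|d\Theta^*\|_{L^4(B_s(y))}$.

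Next I would carry out a harmonic comparison on $B_s(y)$: write $\Theta^*=h+g$ with $h$ componentwise harmonic and $h=\Theta^*$ on $\partial B_s(y)$, and $g\in W^{1,2}_0(B_s(y))$ solving $\Delta g=E$, then split $g=g_{\mathrm{Jac}}+g_A$ accordingly. Interior gradient estimates for $h$ give, for $\theta\in(0,1/4]$,
\[
\int_{B_{\theta s}(y)}|dh|^4\le C\theta^4 s^{-4}\int_{B_s(y)}|\Theta^*-\lambda|^4,
\]
up to a lower order term controlled by $\|dg\|_{L^2(B_s(y))}$. For $g_A$, Calder\'on--Zygmund ($W^{2,2}\hookrightarrow W^{1,4}$ in dimension four) together with Step 1 and the smallness $\|A^*\|_{L^4(B_s(y))}\le C_{Uh}\|F_A\|_{L^2(B_{10\rho_0}(y_0))}$ give, after Young's inequality, $\int_{B_s(y)}|dg_A|^4\le\delta'\int_{B_s(y)}|d\Theta^*|^4+C\int_{B_{10\rho_0}(y_0)}|F_A|^2$ for any preassigned $\delta'$. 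The genuinely delicate piece is $g_{\mathrm{Jac}}$: the bare membership $E_{\mathrm{Jac}}\in\mathcal{h}^1$ only gives $dg_{\mathrm{Jac}}\in L^{4/3}$, which is not enough, so here one must use that $|d\Theta^*|^2$ sits in the Hardy space \emph{together with} the Morrey decay from Step 1; through Adams-type potential estimates this extra decay is transferred to $dg_{\mathrm{Jac}}$, and combined with the Hardy--BMO pairing $|\int f\varphi|\le C\|f\|_{\mathcal{h}^1}\|\varphi\|_{\mathrm{BMO}}$ and the smallness of $\|\Theta^*\|_{\mathrm{BMO}(B_s(y))}\le Cs^{\beta_0}$ one obtains
\[
\int_{B_s(y)}|dg_{\mathrm{Jac}}|^4\le\epsilon(s)\int_{B_s(y)}|d\Theta^*|^4+Cs^{-4}\int_{B_s(y)}|\Theta^*-\lambda|^4,\qquad \epsilon(s)\to0,
\]
where the second order terms produced along the way are absorbed using \eqref{nabla2} at a scale comparable to $s$.

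Combining these, $\int_{B_{\theta s}(y)}|d\Theta^*|^4\le(C\theta^4+\epsilon(s)+\delta')\int_{B_s(y)}|d\Theta^*|^4+Cs^{-4}\int_{B_s(y)}|\Theta^*-\lambda|^4+C\int_{B_{10\rho_0}(y_0)}|F_A|^2$. Choosing first $\theta$, then $\delta'$ and $s$ small so that $C\theta^4+\epsilon(s)+\delta'\le\delta$ — a hole-filling step — and then covering $B_{s/2}(y)$ by boundedly many balls of radius $\theta s$ and summing, recovers \eqref{simon4} in the stated form. To replace the oscillation term by the top-scale quantity, I would first arrange $\beta_0>\alpha$ (legitimate, since $\beta_0$ can be pushed arbitrarily close to $1$ by enlarging $p_0$ in the definition of $\hat\rho$ in Proposition \ref{l4finite}), so that \eqref{holderbeta} gives $s^{-4}\int_{B_s(y)}|\Theta^*-\lambda_{y,s}|^4\le Cs^{4\beta_0}$, which is dominated by $C(s/\rho_0)^{4\alpha}\rho_0^{-4}\int_{B_{10\rho_0}(y_0)}|\Theta^*-\lambda_{y_0,10\rho_0}|^4+C\int_{B_{10\rho_0}(y_0)}|F_A|^2$ (using \eqref{holderbeta} once more to compare the top-scale integral with $\rho_0^{4+4\beta_0}$ modulo the curvature error).

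The hard part will be the estimate for $g_{\mathrm{Jac}}$: at the $L^4$-gradient level in dimension four the nonlinearity $|d\Theta^*|^2$ has exactly the scaling of $\Delta\Theta^*$, so a naive Cauchy--Schwarz bound is hopeless, and one genuinely needs all three of (i) the Jacobian/Hardy structure inherited from $\Theta^*$ mapping into $G$, (ii) the Morrey gain on $|d\Theta^*|^2$ coming from the H\"older bound \eqref{holderbeta}, and (iii) Hardy--BMO duality, in order to make the coefficient $\epsilon(s)$ genuinely small rather than merely bounded. A secondary subtlety, exactly as in Proposition \ref{l4finite}, is that closing the $A^*$-terms requires the $W^{1,p_0}$ (and not just $W^{1,2}$) control of $A^*$ from the Uhlenbeck gauge, to avoid a logarithmic loss.
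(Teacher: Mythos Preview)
Your outline has the right keywords (div--curl structure, Hardy--BMO duality, smallness of the BMO seminorm) but the execution has two genuine gaps, and the route differs substantially from the paper's.

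First, the central estimate for $g_{\mathrm{Jac}}$ is only asserted. From $E_{\mathrm{Jac}}\in\mathcal{h}^1$ alone you only get $\nabla g_{\mathrm{Jac}}\in L^{4/3}$; from $|E_{\mathrm{Jac}}|\le C|d\Theta^*|^2\in L^2$ you get $\nabla g_{\mathrm{Jac}}\in L^4$ but with coefficient $C\|d\Theta^*\|_{L^4}^4$, not $\epsilon(s)\|d\Theta^*\|_{L^4}^4$. The phrase ``Adams--type potential estimates'' does not by itself bridge that gap, and you do not indicate how the BMO smallness actually enters the $L^4$ bound for $\nabla g_{\mathrm{Jac}}$ in a harmonic--comparison scheme. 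Second, and more seriously, your argument leans on the H\"older bound \eqref{holderbeta} and the resulting Morrey decay $\int_{B_r}|d\Theta^*|^2\le C r^{2+2\beta_0}$. That constant is $C(\theta,\hat\rho,\alpha)$, where $\hat\rho$ is the \emph{ineffective} $L^{p_0}$--curvature scale introduced only to prove Proposition~\ref{l4finite}; using it would make the constant in \eqref{simon4} ineffective, which defeats the point of the proposition. Your last step then compounds this: to dominate $Cs^{4\beta_0}$ by $C(s/\rho_0)^{4\alpha}\rho_0^{-4}\int|\Theta^*-\lambda|^4$ you would need a \emph{lower} bound on the top--scale oscillation, which you do not have.

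The paper avoids both issues by not doing harmonic comparison at all. It tests the equation directly with $\xi^2 U_i$ where $\xi=|d\Theta^*|\zeta$, so that $\int|\nabla_A\Theta^*|^2\xi^2$ is exactly the $L^4$ quantity to be controlled. The Hardy structure is extracted not from the Jacobian form of $E_{\mathrm{Jac}}$ but from the Coulomb condition: $T_{ib}=\langle\nabla_A U_i,U_b\rangle$ is divergence--free, so by \cite{CLMS93} the product $\langle T_{ib},d(\xi^2U_b)\rangle\in\mathcal H^1$, and Fefferman--Stein gives a bound with the multiplicative factor $[\Theta^*]_{\mathrm{BMO}(B_s(y))}$. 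Crucially, this BMO factor is controlled \emph{effectively} by re--running the Campanato iteration with the $L^4$ (not $L^{p_0}$) Uhlenbeck bound on $A^*$; see \eqref{itera2}--\eqref{bmobd}, which yield
\[
[\Theta^*]_{\mathrm{BMO}(B_s(y))}\le C\Big((s/\rho_0)^{\alpha}\,\phi_{y_0}(10\rho_0)^{1/2}+\|A^*\|_{L^4(B_{10\rho_0}(y_0))}\Big).
\]
This single inequality does double duty: paired with Corollary~\ref{a0bound} it gives the small coefficient $\delta$ in front of $\int_{B_s}|d\Theta^*|^4$, and paired with Young's inequality it produces the $(s/\rho_0)^{4\alpha}$ term on the right of \eqref{simon4}. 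The second--order term arising from $d(\xi^2U_b)$ is handled by the effective Caccioppoli \eqref{cou21} (coming from $d^*A_0=0$, $dA_0=F_A-A_0\wedge A_0$), not by \eqref{nabla2}. If you want to salvage your approach, the key missing ingredient is an \emph{effective} BMO/Campanato bound of the form \eqref{bmobd}, and you should feed the BMO smallness directly into a Hardy--BMO pairing rather than into a potential--theoretic estimate for $g_{\mathrm{Jac}}$.
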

\begin{proof}
Fix arbitrary $B_s(y)$ satisfying $B_{2s}(y)\subseteq B_{2\rho_0}(y_0)$. Choose $\zeta$ to be a positive smooth cutoff function with support on $B_s(y)$ such that $\zeta\equiv 1$ on $B_{s/2}(y)$, $\zeta\equiv 0$ outside $B_{2s/3}(y)$, and $\sup|\nabla \zeta|\le 5s^{-1}$. Set $\xi=|d\Theta^*|\zeta$. Next, write $\Theta^*=(U_1,\cdots,U_k)$ and define $T_{ij}=\langle \nabla_A U_i,U_j\rangle_E$. The fact that $\Theta_0$ is Coulomb implies $d^*T_{ij}=0$ weakly, for every $i,j$. Moreover, it is a straightforward consequence from the equivalent condition (2) of Lemma \ref{coulomb} that the following identity holds in $B_s(y)$ in the weak sense:
\begin{equation}
\label{hardy1}
\Delta_A U_i=-\sum_{b=1}^k\langle T_{ib},\nabla_AU_b\rangle_{\Lambda^1(E)}=-\sum_{b=1}^k\langle T_{ib},dU_b\rangle_{\Lambda^1(E)}-\sum_{b=1}^k\langle T_{ib},A^*(U_b)\rangle_{\Lambda^1(E)}.
\end{equation}
Using these notations and employing integration by parts, we have
\begin{eqnarray}
\begin{split}
&\int_{B_s(y)}|\nabla_A \Theta^*|_{\Lambda^1(E^k)}^2\cdot \xi^2=\sum_i\int_{B_s(y)}|\nabla_A U_i|_{\Lambda^1(E)}^2\cdot\xi^2\\
=&\sum_i\int_{B_s(y)}\langle \Delta_AU_i,U_i\rangle_{E}\cdot \xi^2 +2\sum_i\int_{B_s(y)}\langle \nabla_AU_i,U_i\otimes \nabla\xi\rangle_{\Lambda^1(E^k)} \cdot\xi\\
=&T_1+T_2.
\end{split}
\end{eqnarray}
Due to the fact that $|U_i|_E^2\equiv 1$, trivially one has $T_2\equiv 0$. Then, we expand $T_1$ as follows:
\begin{eqnarray}
\begin{split}
\label{p1p2p3}
&T_1=-\sum_{i} \int_{B_s(y)}\big{\langle}\sum_b\langle T_{ib},dU_b\rangle_{\Lambda^1(E)}+\sum_b\langle T_{ib},A^*(U_b) \rangle_{\Lambda^1},U_i\big{\rangle}_E\cdot \xi^2\\
&=-\sum_{i} \bigg{\{}\sum_b\int_{B_s(y)}\big{\langle}\langle T_{ib},dU_b \rangle_{\Lambda^1},U_i\big{\rangle}_E\cdot \xi^2+\sum_b\int_{B_s(y)}\big{\langle}\langle \langle dU_i,U_b\rangle_{E},A^*(U_b)\rangle_{\Lambda^1}U_i\big{\rangle}_E\cdot \xi^2\\
&+\sum_b\int_{B_s(y)}\big{\langle}\langle \langle A^*(U_i),U_b\rangle_{E},A^*(U_b)\rangle_{\Lambda^1}U_i\big{\rangle}_E\cdot\xi^2\bigg{\}}\coloneqq -\{P_1+P_2+P_3\}.
\end{split}
\end{eqnarray}
We shall estimate $P_1$ through $P_3$ respectively. Since all norms are taken over $B_s(y)$, for convenience we omit the domain when expressing the norms. Let us start with estimating $P_1$. Thanks to the trivial identity $\big{\langle} T_{ib}\otimes U_b, U_i\big{\rangle}_E=0$, the following holds:
\begin{eqnarray}
\begin{split}
\label{hardy02}
\big{\langle}\langle T_{ib},dU_b \rangle_{\Lambda^1},U_i\big{\rangle}_E\cdot \xi^2=\big{\langle}\langle T_{ib},d(\xi^2U_b) \rangle_{\Lambda^1},U_i\big{\rangle}_E.
\end{split}
\end{eqnarray}
In view of the facts that $d^*T_{ib}=0$ weakly, $T_{ib}\in L^4$, and $d(\xi^2U_b) \in L^{4/3}$, the term $\langle T_{ib},d(\xi^2U_b) \rangle_{\Lambda^1}\in \mathcal{H}^1(B_s(y))$ (Hardy space) by \cite{CLMS93}. Plugging \eqref{hardy02} into $P_1$, and using the duality between Hardy space and BMO space, we estimate as follows:
\begin{eqnarray}
\begin{split}
\label{hardbmo}
|P_1|&\le \sum_{i,b}\|\langle T_{ib},d(\xi^2 U_b)\rangle\|_{\mathcal{H}^1}\cdot [\Theta^*]_{\text{BMO}}\le \sum_{i,b}\|T_{ib}\|_{L^4}\cdot\|d(\xi^2 U_b)\|_{L^{\frac{4}{3}}}\cdot [\Theta^*]_{\text{BMO}}.
\end{split}
\end{eqnarray}
Note we have the following trivial point-wise bound:
\begin{eqnarray}
\begin{split}
\label{pthard}
|T_{ib}|&\le |d\Theta^*|+|A^*|.
\end{split}
\end{eqnarray}
as well as the following estimate from H\"older inequality:
\begin{eqnarray}
\begin{split}
\label{normhard}
\|d(\xi^2 U_b)\|_{L^{\frac{4}{3}}}\le \|d \Theta^*\|_{L^4}^3+2\|\zeta\nabla^2\Theta^*\|_{L^2}\cdot \|d\Theta^*\|_{L^4}+2\|d\Theta^*\|_{L^4}^2.
\end{split}
\end{eqnarray}
Moreover, we note that exact the same arguments we used to prove \eqref{itnow} gives the following estimates on every ball $B_\rho(z)\subseteq B_{s}(y)$: given any $\alpha<1$, by first choosing $\theta$ and then $\epsilon$ to be small on the right hand side of \eqref{simon} such that both depend on $\alpha$, we arrive at the following estimate:
\begin{eqnarray}
\begin{split}
(\theta\rho)^{-4}\int_{B_{\theta\rho}(z)}|\Theta^*-\lambda_{x,\theta\rho}|^2\le  \theta^{2\alpha}\bigg{(}\rho^{-4}\int_{B_{\rho}(z)}|\Theta^*-\lambda_{z,\rho}|^2+\big{(}\int_{B_{\rho}(z)}|A^*|^4\big{)}^{1/2}\bigg{)}.
\end{split}
\end{eqnarray}
Now iteratively apply the above estimate similarly as in \eqref{itera1}, we obtain for arbitrary integer $K>0$ the following estimate:
\begin{eqnarray}
\begin{split}
\label{itera2}
\phi_{z}(\rho_K)&\le (\frac{\rho_K}{\rho_0})^{2\alpha}\phi_{z}(\rho_0)+\theta^{-2\alpha}\cdot\sum_{i=1}^K\theta^{2i\alpha}\cdot \big{(}\int_{B_{\rho_{K-i}}(z)}|A^*|^4\big{)}^{1/2}\\
&=(\frac{\rho_K}{\rho_0})^{2\alpha}\phi_{z}(\rho_0)+C_{\theta,\alpha}\cdot\big{(}\int_{B_{\rho_0}(z)}|A^*|^4\big{)}^{1/2}\\
&\le C(\frac{\rho_K}{\rho_0})^{2\alpha}\phi_{y_0}(10\rho_0)+C_{\theta,\alpha}\cdot\big{(}\int_{B_{\rho_0}(z)}|A^*|^4\big{)}^{1/2}.
\end{split}
\end{eqnarray}
where $\rho_i=\theta^i\rho_0$. This implies
\begin{eqnarray}
\begin{split}
\label{bmobd}
[\Theta^*]_{\text{BMO}(B_s(y))}\le C \bigg{(}(\frac{s}{\rho_0})^{\alpha}\phi_{y_0}(10\rho_0)^{1/2}+\big{(}\int_{B_{10\rho_0}(y_0)}|A^*|^4\big{)}^{1/4}\bigg{)}.
\end{split}
\end{eqnarray}
Combining \eqref{hardbmo}, \eqref{pthard}, \eqref{normhard}, and \eqref{bmobd}, we achieve
\begin{eqnarray}
\begin{split}
|P_1|\le& C\big{(}\|d\Theta^*\|_{L^4}+\|A^*\|_{L^4}\big{)}\cdot \big{(}\|d\Theta^*\|^3_{L^4}+\|\nabla^2\Theta^*\|_{L^2}\cdot \|d\Theta^*\|_{L^4}+\|d\Theta^*\|_{L^4}^2\big{)}\cdot [\Theta^*]_{\text{BMO}}\\
\le& C\bigg{(}\|d\Theta^*\|_{L^4}[\Theta^*]_{\text{BMO}}+\|\zeta\nabla^2\Theta^*\|_{L^2}\|d\Theta^*\|_{L^4}^2[\Theta^*]_{\text{BMO}}+\|d\Theta^*\|_{L^4}^3[\Theta^*]_{\text{BMO}}\\
&+\|A^*\|_{L^4}\|\Theta^*\|_{L^4}^3[\Theta^*]_{\text{BMO}}+\|A^*\|_{L^4}\|\zeta\nabla^2\Theta^*\|_{L^2}\|d\Theta^*\|_{L^4}[\Theta^*]_{\text{BMO}}\\
&+\|A^*\|_{L^4}\|d\Theta^*\|_{L^4}^2[\Theta^*]_{\text{BMO}}\bigg{)}=Q_1+Q_2+Q_3+Q_4+Q_5+Q_6.
\end{split}
\end{eqnarray}
In estimating $Q_1, Q_2, Q_4, Q_5$, we shall use the fact that $[\Theta^*]_{\text{BMO}}<\delta_0^{1/2}$, which is a consequence of applying Corollary \ref{a0bound} with some sufficiently small $\delta_0$ to be determined. Especially, in treating the term containing $\nabla^2\Theta^*$, we use the following inequality, which is a consequence of the first order elliptic interior estimate and the fact that $\Theta_0$ is Coulomb:
\begin{eqnarray}
\begin{split}
\label{cou21}
\|\zeta\nabla^2\Theta^*\|_{L^2}\le C\bigg{(}\|F_A\|_{L^2}+\|A^*\|_{L^4}^2+\|d\Theta^*\|_{L^4}^2\bigg{)}.
\end{split}
\end{eqnarray}
Then we estimate as follows at the following:
\begin{eqnarray}
\begin{split}
\label{1245}
Q_1+Q_2+Q_4+Q_5\le C(\delta_0^{1/2}+\delta_0+\delta_0^{4/3}+\delta_0^2)\|d\Theta^*\|_{L^4}^4+C\|A^*\|_{L^4}^4+C\|F_A\|_{L^2}^2.
\end{split}
\end{eqnarray}
To estimate $Q_3$ and $Q_6$, we use \eqref{bmobd} as well as Young's inequality with $\epsilon$, for some sufficiently small $\epsilon$ to be specified:
\begin{eqnarray}
\begin{split}
\label{36}
Q_3+Q_6\le& \epsilon\|d\Theta^*\|_{L^4}^4+ C\|A^*\|_{L^4}^4\\
&+C(\epsilon)\bigg{(} (\frac{s}{\rho_0})^{4\alpha}\rho_0^{-4}\int_{B_{10\rho_0}(y_0)}|\Theta^*-\lambda_{10\rho_0,y_0}|^4+\int_{B_{10\rho_0}(y_0)}|A^*|^4\bigg{)}.
\end{split}
\end{eqnarray}
Choosing $\epsilon,\delta_0$ sufficiently small, and combining \eqref{1245} with \eqref{36}, we arrive at the following estimate of $P_1$ in \eqref{p1p2p3}:
\begin{eqnarray}
\begin{split}
\label{p1only}
|P_1|\le \delta/3\int_{B_{s}(y)}|d\Theta^*|^4+C(\delta)\bigg{(} (\frac{s}{\rho_0})^{4\alpha}\rho_0^{-4}\int_{B_{10\rho_0}(y_0)}|\Theta^*-\lambda_{10\rho_0,y_0}|^4+\int_{B_{10\rho_0}(y_0)}|F_A|^2\bigg{)}.
\end{split}
\end{eqnarray}
In addition, one easily arrives at the following estimates for $P_2$ and $P_3$ in \eqref{p1p2p3} simply by using Young's inequality with $\epsilon$:
\begin{eqnarray}
\begin{split}
\label{p2p3}
|P_2|+|P_3|\le \delta/3\int_{B_s(y)}|d\Theta^*|^4+C(\delta)\int_{B_s(y)}|A^*|^4.
\end{split}
\end{eqnarray}
On the other hand, we have the following again by only using Young's inequality with $\epsilon$:
\begin{eqnarray}
\begin{split}
\label{rhscut}
\int_{B_{s/2}(y)}|d\Theta^*|^4\le \int_{B_{s}(y)}|d\Theta^*|^4\zeta^2\le \int_{B_{s}(y)}|\nabla_A\Theta^*|^2\xi^2+\delta/3\int_{B_s(y)}|d\Theta^*|^4+C(\delta)\int_{B_s(y)}|A^*|^4.
\end{split}
\end{eqnarray}
Finally, combining \eqref{p1p2p3}, \eqref{p1only}, \eqref{p2p3}, \eqref{rhscut}, as well as \eqref{uhgauge}, we obtain \eqref{simon4}, and thus finished the proof of Proposition \ref{simon3}.
\end{proof}
Now, by choosing $\delta$ to be small enough, setting $B_{s}(y)\eqqcolon B_\rho(x)$ (such that $B_{2s}(y)\subseteq B_{2\rho_0}(y_0)$), $k=0$, $\phi(B)=\int_B|d\Theta^*|^4$, and then defining
\begin{eqnarray}
\begin{split}
\gamma_s=C\bigg{(}(\frac{s}{\rho_0})^{4\alpha}\cdot \rho_0^{-4}\int_{B_{10\rho_0}(y_0)}|\Theta^*-\lambda_{y_0,10\rho_0}|^4+\int_{B_{10\rho_0}(y_0)}|F_A|^2\bigg{)},
\end{split}
\end{eqnarray}
we conclude from Lemma \ref{simonbook} and \eqref{simon4} that
\begin{eqnarray}
\begin{split}
&\int_{B_{s/2}(y)}|d\Theta^*|^4=\phi(B_{s/2}(y))\le C\gamma\\
=&C\bigg{(}(\frac{s}{\rho_0})^{4\alpha}\cdot \rho_0^{-4}\int_{B_{10\rho_0}(y_0)}|\Theta^*-\lambda_{y_0,10\rho_0}|^4+\int_{B_{10\rho_0}(y_0)}|F_A|^2\bigg{)}.
\end{split}
\end{eqnarray}
Especially, this implies
\begin{eqnarray}
\begin{split}
\label{rho0}
\int_{B_{\rho_0}(y_0)}|d\Theta^*|^4\le C\bigg{(}\rho_0^{-4}\int_{B_{10\rho_0}(y_0)}|\Theta^*-\lambda_{y_0,10\rho_0}|^4+\int_{B_{10\rho_0}(y_0)}|F_A|^2\bigg{)}.
\end{split}
\end{eqnarray}
From \eqref{rho0}, \eqref{cou21}, and \eqref{uhgauge}, one sees that \eqref{hess} follows immediately. Thus we complete the proof of Theorem \ref{eps}.
\end{proof}
\bigskip

\section{Stability and compactness}
\label{sta}
In this section, we will prove a $W^{1,2}$ sequential compactness result for Coulomb minimizers by employing both the stability (see \eqref{stab1}) and the stationarity (see \eqref{divfree}) of the minimizers. This extends a result in \cite{HW99}, which establishes the $W^{1,2}$ sequential compactness of the stable stationary harmonic maps into spheres. Our main theorem in this section is stated as follows:
\begin{theorem}
\label{cpt}
Let $G$ be a simply-connected simple compact Lie group. Given a sequence of smooth connections $\{A_i\}_{i=1}^{\infty}$ defined over trivial bundle $P_0=B_{10\rho}(p)\times G$, such that
\begin{eqnarray}
\begin{split}
\label{meconn}
A_i\to A_{\infty}\ \text{in}\ W^{1,2}(B_{10\rho}(p))
\end{split}
\end{eqnarray} 
for some $W^{1,2}$ connection $A_{\infty}$. For each $i$ let $\Theta_i$ be a stationary stable critical point of the functional 
$$\int_{B_{10\rho}(p)}|\nabla_{A_i} \Theta|^2d\text{V}_g$$ 
with associated connection form $A_{0,i}$. Then there exists a $W^{1,2}$-frame $\Theta_\infty$ of $P_0$ and a subsequence $\{A_{0,i_l}\}_{l}$ such that
\begin{eqnarray}
\begin{split}
\label{connconv}
\lim_{l\to \infty}\|A_{0,i_l}-A_{0,\infty}\|_{L^2(B_{\rho}(p))}=0,
\end{split}
\end{eqnarray}
where $A_{0,\infty}$ is a connection form associated to $A_{\infty}$ under the frame $\Theta_\infty$. Moreover, $d^*A_{0,\infty}=0$ weakly. 
\end{theorem}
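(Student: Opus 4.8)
The plan is to adapt the strong $W^{1,2}$-compactness argument of Hardt--Wang \cite{HW99} for stable stationary harmonic maps into spheres, treating the connections $A_i$ as a \emph{strongly} convergent (hence, for the concentration analysis, negligible) perturbation, and using the almost-monotonicity formula \eqref{mono} in place of the exact monotonicity of the harmonic-map energy. First I would record a uniform energy bound $\sup_i\int_{B_{10\rho}(p)}|\nabla_{A_i}\Theta_i|^2<\infty$: in the intended application the $\Theta_i$ are Coulomb minimizers, so this follows by comparison with the constant frame $\mathbf 1_G$ (available since $P_0=B_{10\rho}(p)\times G$ is trivial) together with the $W^{1,2}$-bound on $A_i$ and $W^{1,2}\hookrightarrow L^4$ in dimension $4$. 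Since $G\subseteq\text{SO}(k)$ is compact we have $|\Theta_i|\le C$, and writing $\nabla\Theta_i=\nabla_{A_i}\Theta_i-A_i\Theta_i$ gives $\|\Theta_i\|_{W^{1,2}(B_{10\rho}(p))}\le C$; passing to a subsequence, $\Theta_i\rightharpoonup\Theta_\infty$ weakly in $W^{1,2}$, strongly in every $L^q$ with $q<\infty$, and a.e., and since $G$ is closed $\Theta_\infty\in G$ a.e., i.e. $\Theta_\infty$ is a $W^{1,2}$-frame. In the trivialization induced by $\Theta_i$ the connection form $A_{0,i}$ is the gauge transform $\mathrm{Ad}_{g_i^{-1}}A_i+g_i^{-1}dg_i$ of $A_i$ by the $G$-valued gauge element $g_i$ attached to $\Theta_i$ (so $g_i$, $g_i^{-1}$ converge strongly in every $L^q$ and a.e., while $dg_i\rightharpoonup dg_\infty$ weakly in $L^2$), whence $A_{0,i}\rightharpoonup A_{0,\infty}$ weakly in $L^2$, where $A_{0,\infty}$ is the connection form of $A_\infty$ under $\Theta_\infty$. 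Because $d^*A_{0,i}=0$ weakly for each $i$ (Lemma \ref{coulomb}), the weak limit satisfies $d^*A_{0,\infty}=0$ weakly. Thus everything reduces to upgrading the weak $L^2$-convergence to the strong convergence \eqref{connconv} on $B_\rho(p)$, and since $A_{0,i}\rightharpoonup A_{0,\infty}$ this is equivalent to the convergence of norms $\int_{B_\rho(p)}|A_{0,i}|^2\to\int_{B_\rho(p)}|A_{0,\infty}|^2$; equivalently, passing to a further subsequence so that $|A_{0,i}|^2\,dV_g$ converges weakly-$\ast$ to $|A_{0,\infty}|^2\,dV_g+\nu$ for some defect measure $\nu\ge 0$ on $B_{2\rho}(p)$, we must show $\nu=0$ on $B_\rho(p)$. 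Note that $|A_{0,i}|=|\nabla_{A_i}\Theta_i|$ and $A_i\Theta_i\to A_\infty\Theta_\infty$ strongly in $L^2$, so $\nu$ is also the concentration measure of $|\nabla\Theta_i|^2$.

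Next I would analyze the structure of $\nu$ using stationarity. From \eqref{divfree} and \eqref{mono}, the uniform energy bound, and the absolute continuity of $\int|F_{A_i}|^2$, the functions $r\mapsto r^{-2}\int_{B_r(x)}|\nabla_{A_i}\Theta_i|^2$ are uniformly almost-monotone on $B_{2\rho}(p)$, and the curvature error in \eqref{mono} scales to zero under blow-up. Combined with the $\epsilon$-regularity Theorem \ref{eps} — no concentration wherever the scale-invariant $L^2$-mass of $A_0$ and the $L^2$-mass of $F_A$ are simultaneously small — the standard defect-measure theory for stationary harmonic maps (rectifiability, density bounds) applies with the almost-monotonicity: $\nu$ is carried by an $\mathcal{H}^2$-rectifiable set $\mathcal S\subseteq B_{2\rho}(p)$ with density bounded above by the energy bound and below by the $\epsilon$-regularity constant.

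The main obstacle is the final step: ruling out such a $\nu$ via stability. Following \cite{HW99}, I would insert into the second-variation inequality \eqref{stab1} — valid for each $\Theta_i$ — variations coming from sections of $\text{ad}P$ that, in the $\Theta_i$-trivialization, have the form $\zeta\,\mathbf v$ with $\mathbf v\in\mathfrak g$ fixed and $\zeta$ a cutoff concentrating near $\mathcal S$; this is exactly where the restriction $G=\text{SU}(2)\cong\mathbb S^3$ enters, since for $\mathbb S^3$-targets the quadratic form in \eqref{stab1} has the favourable structure exploited in \cite{LW06,HW99}. Letting $i\to\infty$ converts this into a constraint on $\nu$, and localizing at $\mathcal H^2$-a.e. point of $\mathcal S$ by a blow-up — in which the curvature contributions to \eqref{mono} and to the second variation vanish, so that blow-ups are honest stable stationary harmonic maps into $\mathbb S^3$ — yields a stability inequality for the tangent measure of $\nu$ that in dimension $4$ is compatible only with zero density. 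Hence $\nu=0$ on $B_\rho(p)$, which gives \eqref{connconv} and finishes the proof. The delicate point throughout is pushing the Hardt--Wang test-function construction through in the presence of the controlled but non-vanishing curvature $F_{A_i}$; the essential inputs making this possible are the almost-monotonicity coming from \eqref{mono} and the strong $W^{1,2}$-convergence $A_i\to A_\infty$, which guarantee that the curvature washes out in every blow-up.
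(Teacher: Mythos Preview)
Your proposal misses the paper's main device and, as a result, takes an unnecessarily elaborate route and does not prove the theorem as stated. The paper first establishes Theorem~\ref{stathm}: from stability alone (not minimality), and for \emph{any} simply-connected simple compact $G$, one has the a-priori ``capacity'' inequality
\[
\int_{B_\rho(p)}|A_0|^2\zeta^2 \;\le\; C(G)\Big(\int_{B_\rho(p)}|A^*|^2\zeta^2 + \int_{B_\rho(p)}|d\zeta|^2\Big)
\qquad\text{for all }\zeta\in C_c^\infty(B_\rho(p)),
\]
where $A^*$ is a local Uhlenbeck gauge. The proof plugs into \eqref{stab1} the variations $\zeta\alpha_i^T$ coming from a first-eigenspace isometric embedding of $G$ (as in \cite{HW86}) and then averages; the favourable sign comes from Nagano's inequality $2\tau>m\lambda_1$ for simply-connected irreducible compact $G$, so the hypothesis on $G$ is used here and is strictly weaker than the $\mathbb S^3$-specific structure you invoke. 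Your claim that ``this is exactly where the restriction $G=\text{SU}(2)$ enters'' is therefore wrong for this theorem.

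This single estimate then does all the work you spread over several steps. First, applied with a fixed cutoff it gives the uniform energy bound \eqref{kbd} directly from stability; your comparison with the constant frame $\mathbf 1_G$ only works for minimizers, so your argument does not establish the theorem for stable stationary critical points as stated. Second, it eliminates the entire defect-measure/rectifiability/blow-up layer. Once the uniform bound is in hand, the concentration set $\Sigma$ of \eqref{singsigma} has $\mathcal H^2(\Sigma)<\infty$ by a Vitali argument, hence $\mathrm{Cap}_2(\Sigma)=0$; for each $\tau>0$ one then chooses $\zeta_\tau\equiv 1$ near $\Sigma$ with $\int|d\zeta_\tau|^2<\tau$, feeds $\zeta_\tau$ back into the stability inequality, and uses the strong $L^2$-convergence of the Uhlenbeck gauges to force $\sup_l\int|A_{0,i_l}|^2\zeta_\tau^2<C\tau$. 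Combined with the $\epsilon$-regularity Theorem~\ref{eps} away from $\Sigma$, this gives strong $L^2$-convergence on all of $B_\rho(p)$ with no blow-up, no rectifiability, no tangent-measure analysis. Your blow-up step is also the vaguest part of your sketch: converting stability of the $\Theta_i$ into a constraint on a tangent \emph{measure} of $\nu$, and doing so uniformly in $i$ with the almost-monotone (log-divergent) error in \eqref{mono}, is nontrivial and not needed once Theorem~\ref{stathm} is available.
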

If instead of \eqref{meconn}, we assume that
\begin{eqnarray}
\begin{split}
\label{addas}
\|F_{A_i}\|_{L^2(B_{10\rho}(p,g_i))}\to 0,\ g_i\rightarrow g_{\text{Euc}}\ \text{in }C^{1,\alpha}(B_{10\rho}(p))
\end{split}
\end{eqnarray}
Then we can find for each $i$ an Uhlenbeck gauge $\sigma_i$ of for $A_i$ with connection form $A_i^*$. In other words, $\sigma_i: B_{10\rho}(p,g_i)\to P$ such that $\sigma_i^*A=A_i^*$ and $A^*_i$ satisfies \eqref{uhgauge}. Then $A_i^*$ subconverges to some $A_\infty^*$ in $L^2$ (see \cite{W04}, Chapter 6). Moreover, $A^*_\infty$ is the connection form of $A_\infty$ under some Uhlenbeck gauge $\sigma_\infty$. Now denote by $\Theta^*_i$ and $\Theta^*_\infty$ the trivialization of $\Theta_i$ and $\Theta_\infty$ under $\sigma_i$ and $\sigma_\infty$ respectively. The following corollary will be useful in later sections:
\begin{corollary}
\label{keycoro}
In Theorem \ref{cpt} if we replace \eqref{meconn} by \eqref{addas}, then we have 
\begin{equation}
\label{keco}
\lim_{l\to \infty}\|\Theta^*_i-\Theta^*_\infty\|_{W^{1,2}(B_{\rho}(p,g_{\text{Euc}}))}=0.
\end{equation}
Moreover, $\Theta_\infty$ is a stable stationary harmonic map defined on $B_{\rho}(p,g_{\text{Euc}})$.
\end{corollary}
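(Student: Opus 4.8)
The plan is to pass to the Uhlenbeck gauges and then feed everything into Theorem~\ref{cpt}. Since $\|F_{A_i}\|_{L^2(B_{10\rho}(p,g_i))}\to 0$, for all large $i$ Uhlenbeck's theorem (\cite{W04}, Chapter~6) produces gauges $\sigma_i$ with connection forms $A_i^*=\sigma_i^*A_i$ satisfying \eqref{uhgauge}; in particular $\|A_i^*\|_{L^4(B_{10\rho}(p))}+\|A_i^*\|_{W^{1,2}(B_{10\rho}(p))}\le C_{Uh}\|F_{A_i}\|_{L^2}\to 0$, so $A_i^*\to 0$ in $W^{1,2}$, hence in $L^4$ and in $L^2$, and $A_\infty^*=0$. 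Because the functional $\Theta\mapsto\int|\nabla_{A}\Theta|^2$ and the notions of stationary and stable critical point are gauge covariant, the trivialization $\Theta_i^*=\Theta_i\circ\sigma_i$ is again a stationary stable critical point, now of $\int_{B_{10\rho}(p,g_i)}|\nabla_{A_i^*}\Theta|^2\,dV_{g_i}$ among $G$-valued $W^{1,2}$ maps of the trivial bundle $B_{10\rho}(p)\times G$. We then apply Theorem~\ref{cpt} to the sequence $A_i^*\to 0$ and the critical points $\Theta_i^*$; the proof of Theorem~\ref{cpt} goes through unchanged when the fixed Euclidean metric is replaced by the metrics $g_i\to g_{\mathrm{Euc}}$ in $C^{1,\alpha}$, since the monotonicity formula \eqref{mono}, the stationarity \eqref{divfree} and the stability \eqref{stab1} are stable under such perturbations, with errors tending to $0$. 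This produces a subsequence (still written $i$) along which the associated connection forms $A_{0,i}$ converge in $L^2(B_\rho(p))$ to $A_{0,\infty}$, the connection form of $A_\infty^*=0$ under a limiting $W^{1,2}$-frame, with $d^*A_{0,\infty}=0$ weakly.

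The next step, and the heart of the matter, is to upgrade the convergence of $\Theta_i^*$ to strong $W^{1,2}$ convergence on $B_\rho(p)$. The change-of-frame formula between the frames underlying $A_{0,i}$ and $A_i^*$ reads $d\Theta_i^*=\Theta_i^*A_{0,i}-A_i^*\Theta_i^*$, and since $\Theta_i^*\in G\subseteq \mathrm{SO}(k)$ is uniformly bounded this gives the pointwise bound $|d\Theta_i^*|\le C(|A_{0,i}|+|A_i^*|)$. Hence $\{\Theta_i^*\}$ is bounded in $W^{1,2}(B_\rho(p))$, so after a further subsequence $\Theta_i^*\rightharpoonup\Theta_\infty^*$ weakly in $W^{1,2}$, strongly in $L^2$, and a.e.; the limit $\Theta_\infty^*$ is $G$-valued a.e., and we take it to be the frame in the statement. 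Now in $L^2(B_\rho(p))$ one has $\Theta_i^*A_{0,i}\to\Theta_\infty^*A_{0,\infty}$ — indeed $\Theta_i^*(A_{0,i}-A_{0,\infty})\to 0$ from the $L^2$-convergence of the connection forms, while $(\Theta_i^*-\Theta_\infty^*)A_{0,\infty}\to 0$ by dominated convergence — and $\|A_i^*\Theta_i^*\|_{L^2}\le C\|A_i^*\|_{L^2}\to 0$. Therefore $d\Theta_i^*\to\Theta_\infty^*A_{0,\infty}$ in $L^2$, and since this $L^2$-limit must equal the distributional derivative of the $L^2$-limit $\Theta_\infty^*$, we get $d\Theta_\infty^*=\Theta_\infty^*A_{0,\infty}$ and $d\Theta_i^*\to d\Theta_\infty^*$ strongly in $L^2$. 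Combined with $\Theta_i^*\to\Theta_\infty^*$ in $L^2$, this is exactly \eqref{keco}.

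Finally we identify $\Theta_\infty^*$ as a stable stationary harmonic map on $B_\rho(p,g_{\mathrm{Euc}})$. Since $A_\infty^*=0$ is flat, $A_{0,\infty}=(\Theta_\infty^*)^{-1}d\Theta_\infty^*$ is the pulled-back Maurer--Cartan form, so $d^*A_{0,\infty}=0$ is precisely the weak harmonic-map equation for $\Theta_\infty^*$ into $G$ (equivalently, pass condition (2) of Lemma~\ref{coulomb}, i.e., equation~\eqref{weaklap} with $A^*=A_i^*\to 0$, to the limit using the strong $W^{1,2}$ convergence just obtained). For stationarity and stability, fix a test field $X$ for \eqref{divfree}, or a test section $\xi\in W^{1,2}_0(B_\rho(p),\mathfrak{g})$ for \eqref{stab1}, and pass those identities — written for $\Theta_i^*$ with connection $A_i^*$ and metric $g_i$ — to the limit: every term is continuous along the strong $W^{1,2}$ convergence of $\Theta_i^*$, the bound $\|A_i^*\|_{L^4}\to 0$ forces $\nabla_{A_i^*}\Theta_i^*\to d\Theta_\infty^*$ in $L^2$, the curvature error terms are controlled by $\|F_{A_i}\|_{L^2}\cdot\|\nabla_{A_i^*}\Theta_i^*\|_{L^2}\to 0$, and the metric coefficients converge in $C^{1,\alpha}$; the limiting relations are the classical stationarity and stability of a harmonic map $\Theta_\infty^*:B_\rho(p,g_{\mathrm{Euc}})\to G$ (and, identifying $G=\mathrm{SU}(2)$ with the round $\mathbb{S}^3$, this is the stable stationary harmonic map into $\mathbb{S}^3$ needed later for \cite{LW06}). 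I expect the main obstacle to be exactly the strong $W^{1,2}$ upgrade of the second paragraph: Theorem~\ref{cpt} by itself yields only weak $W^{1,2}$ convergence and a possible energy defect, which would be fatal for passing the nonlinear Euler--Lagrange, stationarity and stability equations to the limit; it is the identity $d\Theta_i^*=\Theta_i^*A_{0,i}-A_i^*\Theta_i^*$ together with $A_i^*\to 0$, available only because of the $L^2$-smallness of the curvature, that removes this obstacle.
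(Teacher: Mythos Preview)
Your proof is correct, and for the stability and stationarity parts it matches the paper's argument essentially verbatim: both pass \eqref{divfree} and \eqref{long}/\eqref{nonneg} to the limit using the strong $W^{1,2}$ convergence of $\Theta_i^*$, the $L^2$-convergence $A_i^*\to 0$, the $C^{1,\alpha}$ metric convergence, and $\|F_{A_i}\|_{L^2}\to 0$.

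The interesting difference is in how the strong $W^{1,2}$ convergence \eqref{keco} is obtained. The paper does not argue via the black-box conclusion of Theorem~\ref{cpt}; it instead rewinds into the \emph{proof} of Theorem~\ref{cpt}, where the strong $W^{1,2}$ convergence of $\Theta_{a,i}^*$ is established by combining the stability estimate \eqref{statheta}, the $\epsilon$-regularity bound \eqref{140} (giving weak $W^{2,2}_{\mathrm{loc}}$ away from the defect set $\Sigma$), and the $\text{Cap}_2(\Sigma)=0$ capacity argument to kill concentration on $\Sigma$. Your route is more elementary: you take only the \emph{statement} of Theorem~\ref{cpt}, namely $A_{0,i}\to A_{0,\infty}$ in $L^2$, and then read off $d\Theta_i^*\to d\Theta_\infty^*$ directly from the gauge identity $d\Theta_i^*=\Theta_i^*A_{0,i}-A_i^*\Theta_i^*$ together with $A_i^*\to 0$ and dominated convergence. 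This is a genuine shortcut—no need to rerun $\epsilon$-regularity or capacity—available precisely because in the corollary's setting the Uhlenbeck connection form goes to zero. One small correction to your closing remark: Theorem~\ref{cpt} is not a source of ``only weak $W^{1,2}$'' convergence; its proof already contains the strong $W^{1,2}$ convergence of $\Theta^*_{a,i}$, and the paper simply cites that. Your argument shows one can instead bootstrap this from the connection-form convergence alone.
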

The proof Theorem \ref{cpt} of relies on the following local estimate, where the simpleness of $G$ plays a crucial role:
\begin{theorem}
\label{stathm}
Let $P\to M$ be a smooth principal bundle with simply-connected simple compact Lie group $G$-fiber over a compact $4$-manifold and $A$ a smooth connection on $P$. Let $\Theta_0$ be a Coulomb minimizer obtained in Subsection \ref{prep}, with associated connection form $A_0$. Then there exists $r(M)$, $\epsilon_0(G)$, and $C(G)$, such that for any ball $B_{\rho}(p)\subseteq M$ satisfying $10\rho<r(M)$ and $\int_{B_{\rho}(p)}|F_A|^2<\epsilon_0$ (with $\epsilon_0$ so small that we could fix an Uhlenbeck-gauge $A^*$ on $B_{\rho}(p)$ satisfying \eqref{uhgauge}), and any $\zeta\in C^{\infty}_c(B_{\rho}(p))$, the following holds:
\begin{equation}
\label{sta1}
\int_{B_{\rho}(p)}|A_0|^2\zeta^2\le C(G)\big{(}\int_{B_{\rho}(p)}|A^*|^2\zeta^2+\int_{B_{\rho}(p)}|d\zeta|^2\big{)}.
\end{equation}
\end{theorem}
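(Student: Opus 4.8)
The plan is to run, in the Uhlenbeck gauge, the Lie--group analogue of the Schoen--Uhlenbeck/Hong--Wang second--variation argument for sphere--valued maps. On $B_{\rho}(p)$ I would fix the Uhlenbeck gauge $A^{*}$ from \eqref{uhgauge} and let $\Theta^{*}$ be the trivialization of $\Theta_{0}$ under it. Pointwise one has $|A_{0}|=|\nabla_{A^{*}}\Theta^{*}|=|d\Theta^{*}+A^{*}\Theta^{*}|$, hence $|A_{0}|\le |d\Theta^{*}|+|A^{*}|$ and $|d\Theta^{*}|\le |A_{0}|+|A^{*}|$; so \eqref{sta1} is equivalent to
\[
\int_{B_{\rho}(p)}|d\Theta^{*}|^{2}\zeta^{2}\ \le\ C(G)\Big(\int_{B_{\rho}(p)}|A^{*}|^{2}\zeta^{2}+\int_{B_{\rho}(p)}|d\zeta|^{2}\Big).
\]

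Next I would put the stability \eqref{stab1} into workable form. Conjugating by $\Theta^{*}$, every variation $\Theta^{*}\mapsto\Theta^{*}\exp(t\psi)$ with $\psi\in W^{1,2}_{0}(B_{\rho}(p),\mathfrak g)$ comes from an admissible test section $\xi=\mathrm{Ad}_{\Theta^{*}}\psi$ of $\mathrm{ad}P$ (the singular set of $\Theta_{0}$ has zero $W^{1,2}$-capacity, so such $\psi$ suffice). Writing $\omega:=(\Theta^{*})^{-1}\nabla_{A^{*}}\Theta^{*}\in\Lambda^{1}(B_{\rho}(p),\mathfrak g)$ (so $|\omega|=|d\Theta^{*}+A^{*}\Theta^{*}|$) and expanding to second order, using that $\omega$ and $\psi$ are skew ($\mathfrak g\subseteq\mathfrak{so}(k)$) so that $\langle\nabla_{A^{*}}\Theta^{*},\Theta^{*}d(\psi^{2})\rangle=0$ and $\langle\nabla_{A^{*}}\Theta^{*},(\nabla_{A^{*}}\Theta^{*})\psi^{2}\rangle=-|\omega\psi|^{2}$,
\[
0\ \le\ \frac{d^{2}}{dt^{2}}\Big\vert_{t=0}\int_{B_{\rho}(p)}\big|\nabla_{A^{*}}(\Theta^{*}\exp(t\psi))\big|^{2}\ =\ 2\int_{B_{\rho}(p)}\big(|d\psi+\omega\psi|^{2}-|\omega\psi|^{2}\big),
\]
which is precisely the intrinsic inequality $\int|\nabla V|^{2}\ge\int\langle R^{G}(V,d\Theta^{*})d\Theta^{*},V\rangle$ for $V=\Theta^{*}\psi$, written in the embedded picture.

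The crucial step is the choice of $\psi$. For $\mathcal M$ running over an orthonormal basis of $\mathrm{Mat}_{\mathbb R}(k\times k)$ I would take $\psi_{\mathcal M}:=P_{\mathfrak g}\big((\Theta^{*})^{-1}\mathcal M\big)$ — the orthogonal projection onto $\mathfrak g$ of the constant matrix $\mathcal M$ carried along by $\Theta^{*}$ — with cutoff $\zeta$; then $\Theta^{*}\exp(t\,\zeta\,\psi_{\mathcal M})$ agrees to first order with ``push $\Theta^{*}$ toward $\mathcal M$ and project back onto $G$'', the exact counterpart of $e_{j}-\langle e_{j},u\rangle u$. Summing the second--variation inequality over $\mathcal M$, orthogonal invariance of the Frobenius norm collapses several terms ($\sum_{\mathcal M}|\psi_{\mathcal M}|^{2}\equiv\dim\mathfrak g$, $\sum_{\mathcal M}\langle\psi_{\mathcal M},\omega_{l}\psi_{\mathcal M}\rangle\equiv0$, etc.), and the surviving quadratic form in $d\Theta^{*}$ is governed by two invariants of $G$ on $\mathbb R^{k}$: the Casimir $-\sum_{\alpha}e_{\alpha}^{2}=c_{0}\,\mathrm{Id}$, appearing through $\sum_{\mathcal M}|d\psi_{\mathcal M}|^{2}=c_{0}|d\Theta^{*}|^{2}$, and the negative Killing form, appearing through the cross terms $\sum_{l}\mathrm{tr}\big((P_{\mathfrak g}L_{\mu_{l}}P_{\mathfrak g})^{2}\big)=-\tfrac14\sum_{l}(-\mathrm{Killing})(\mu_{l},\mu_{l})$, where $\mu:=(\Theta^{*})^{-1}d\Theta^{*}$. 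The net coefficient of $\int\zeta^{2}|d\Theta^{*}|^{2}$ is thus $c_{0}-\tfrac12\lambda$ with $(-\mathrm{Killing})=\lambda|\cdot|^{2}$ on $\mathfrak g$, and this is strictly negative exactly because $G$ is simple (nondegenerate Killing form, i.e. positive Ricci of $G$): for an appropriate faithful representation — e.g. $\mathbb C^{2}=\mathbb R^{4}$ for $G=\mathrm{SU}(2)$, giving $\lambda=2>2c_{0}=\tfrac32$ — one has $\lambda>2c_{0}$. This yields
\[
\int_{B_{\rho}(p)}|d\Theta^{*}|^{2}\zeta^{2}\ \le\ C(G)\int_{B_{\rho}(p)}|d\zeta|^{2}+C(G)\!\int_{B_{\rho}(p)}\!\big(|A^{*}|\,|d\Theta^{*}|+|A^{*}|^{2}\big)\zeta^{2},
\]
and the $|A^{*}|\,|d\Theta^{*}|$ term is hidden in the left side by Young's inequality together with $\|A^{*}\|_{L^{4}(B_{\rho}(p))}\le C_{Uh}\|F_{A}\|_{L^{2}}\le C_{Uh}\sqrt{\epsilon_{0}}$ from \eqref{uhgauge}; since $10\rho<r(M)$ the deviation of the metric from Euclidean contributes only lower--order terms, absorbed the same way. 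This proves \eqref{sta1}.

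I expect the main obstacle to be the summed second--variation bookkeeping of the third step: one must organize all the terms produced by inserting $\psi=\zeta\psi_{\mathcal M}$ and summing over $\mathcal M$, and verify that the coefficient of $\int\zeta^{2}|d\Theta^{*}|^{2}$ comes out negative — this is exactly where the simplicity of $G$ is indispensable (nondegeneracy/positivity of the Killing form, hence positive Ricci of $G$), in direct analogy with the fact that $\dim S^{3}=3$ is what makes the sphere argument work; for a general simple $G$ one must also check (or arrange by the choice of the representation $\mathbb R^{k}$) that the embedding carries no trivial $\mathfrak g$-summand and that $\lambda>2c_{0}$. A secondary difficulty, absent in the purely harmonic case, is the twist: the competitors stay on $G$ but the energy is measured with $\nabla_{A^{*}}$, so $A^{*}$-cross terms pollute every line and must be tracked — their control is what produces the $\int|A^{*}|^{2}\zeta^{2}$ on the right of \eqref{sta1}.
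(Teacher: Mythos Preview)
Your strategy is the same as the paper's: exploit the stability inequality \eqref{stab1} with tangential projections of constant ambient vectors as test fields, sum over an orthonormal family, and use the simplicity of $G$ to force the coefficient of $\int\zeta^{2}|d\Theta^{*}|^{2}$ to be negative, while collecting all $A^{*}$--contaminated terms on the right. Your second--variation formula $2\int(|d\psi+\omega\psi|^{2}-|\omega\psi|^{2})$ is correct (the missing cross term $\langle\omega,d(\psi^{2})\rangle$ vanishes pointwise since $\omega$ is skew and $\psi^{2}$ symmetric), and your handling of the $A^{*}$ terms via Young is exactly what the paper does in \eqref{e} and \eqref{u3}.

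The one substantive difference is the choice of embedding. You take $G\subseteq SO(k)\subseteq\mathrm{Mat}_{\mathbb{R}}(k\times k)$ via the given representation and project constant matrices; the resulting sign condition is your $\lambda>2c_{0}$, which you verify only for $SU(2)$ and otherwise leave as ``one must check (or arrange by the choice of the representation)''. The paper instead fixes the \emph{first eigenfunction embedding} $\iota:G\hookrightarrow\mathbb{R}^{p}$ (matrix coefficients of the representation realizing $\lambda_{1}$), takes $v_{i}=\zeta\alpha_{i}^{T}$ with $\alpha_{i}$ the affine coordinate fields of $\mathbb{R}^{p}$, and after summing obtains the coefficient $(2\tau-m\lambda_{1})/m$ in \eqref{summ}, whose strict positivity for every simply--connected simple compact $G$ is a theorem of Nagano \cite{N82}. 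So the paper's choice of embedding is not incidental: it is precisely what converts your representation--dependent inequality $\lambda>2c_{0}$ into the known spectral fact $2\tau>m\lambda_{1}$, closing the argument uniformly in $G$. For the $SU(2)$ case that the rest of the paper actually uses, your verification suffices; for the general statement of Theorem~\ref{stathm} you should either specialize to the first--eigenfunction embedding or cite the equivalent inequality.
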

\begin{proof}
Let $\mathfrak{g}$ denote the Lie algebra associated to $G$. Choose any smooth mapping $\xi: [0,t_0]\to C_c^{\infty}(B_{\rho}(y),\mathfrak{g})$ such that $\xi(0)\equiv 0$. Plugging $\xi$ into the stability inequality \eqref{stab1}, we have
\begin{eqnarray}
\begin{split}
\label{nonneg}
\frac{d^2}{dt^2}\bigg{\rvert}_{t=0}\int_{B_{\rho}(p)}|\nabla_A\big{(}\exp(\xi(t))\cdot\Theta^*\big{)}|^2\ge 0.
\end{split}
\end{eqnarray}
On the other hand, by expanding the integrand of the left hand side above, we obtain
\begin{eqnarray}
\begin{split}
\label{long}
&\frac{d^2}{dt^2}\bigg{\rvert}_{t=0}|\nabla_A\big{(}\exp(\xi(t))\cdot\Theta^*\big{)}|^2\\
=&\bigg{(}2 \langle \big{(}[\nabla_A\xi^{\prime}(0),\xi^{\prime}(0) ]+\nabla_A\xi^{\prime\prime}(0)\big{)}(\Theta^*),A^*(\Theta^*) \rangle+|[A^*,\xi](\Theta^*)|^2+2\langle [A^*,\xi^{\prime}(0)](\Theta^*),d\xi^{\prime}(0)\rangle\\
&+2\langle [[A^*,\xi^{\prime}(0)](\Theta^*),\xi^{\prime}(0)](\Theta^*), d\Theta^* \rangle+2\langle [A^*,\xi^{\prime\prime}(0)](\Theta^*),d\Theta^* \rangle\bigg{)}\\
&+\bigg{(}2|d\xi^{\prime}(0)|^2+2\langle \big{(}[d\xi^{\prime}(0),\xi^{\prime}(0)]+d\xi^{\prime\prime}(0)\big{)}(\Theta^*),d\Theta^* \rangle\bigg{)}\coloneqq e_{\xi}+m_{\xi},
\end{split}
\end{eqnarray}
where $e_{\xi}$ denotes the first big parenthesis while $m_{\xi}$ denotes the second one. Then set
\begin{eqnarray}
\begin{split}
\mathcal{E}_\xi\eqqcolon\int_{B_\rho(p)}e_\xi,\ \mathcal{M}_\xi\eqqcolon \int_{B_\rho(p)}m_\xi.
\end{split}
\end{eqnarray}
It is easily seen by an elementary computation that
\begin{eqnarray}
\begin{split}
\label{m}
\mathcal{M}_{\xi}=&\frac{d^2}{dt^2}\bigg{\rvert}_{t=0}\int_{B_{\rho}(p)}|d\big{(}\exp(\xi(t))\cdot\Theta^*\big{)}|^2.
\end{split}
\end{eqnarray}
Next, we need to construct proper variational vector fields $\xi$. For starters, thanks to the irreducibility of $G$, $G$ admits an isometric embedding into an Euclidean space, whose coordinate functions are given by the first eigenspace $\{\phi_i\}_{i=1}^p$ of the Laplacian on $G$ with respect to the bi-invariant metric. This follows from the basics of representation theory. For details, we refer the readers to \cite{LB80}. Now consider such an embedding $\iota$. Let $\{\alpha_{i}\}_{i=1}^p$ denote the affine vector fields along the Euclidean coordinates, then let $\alpha_i^T$ denote the projection of $\alpha_i$ onto the tangent space of $\iota(G)$. Next, for any $\zeta\in C^{\infty}_c(B_{\rho}(p))$ define $v_i\eqqcolon \zeta\alpha_i^T$. For each $x\in B_{\rho}(y)$, we then have an integral curve $\phi_{i,x,t}$ starting from $x$ and generated by $v_i$ (which is defined on some $[0,\epsilon]$ and takes value in $\iota(G)$). Via the isometric identification $\iota$, we could find a smooth map $\xi_i:[0,\epsilon]\to C_c^{\infty}(B_{\rho}(y),\mathfrak{g})$ for each $v_i$ such that $\iota(\exp(\xi_i)\cdot\Theta^*)=\phi_{i,x,t}\circ \iota(\Theta^*)$. Using elementary computations one could check that for each $i=1,\cdots,p$ and some constant $C\equiv C(G)$ the following estimates hold:
\begin{eqnarray}
\begin{split}
\label{iden}
|\xi_i^{\prime}(0)|\le C\zeta,\ |d\xi_i^{\prime}(0)|\le C|d\zeta|,\ |\xi_i^{\prime\prime}(0)|\le C\zeta^2,\ |d\xi_i^{\prime\prime}(0)|\le C\zeta|d\zeta|.
\end{split}
\end{eqnarray}
Inserting the variational field $\xi_i$ into \eqref{m}, then using \eqref{weaklap} and integration by parts, and finally summing $\mathcal{M}_{\xi_i}$ over $i$ from $1$ to $p$, we have
\begin{eqnarray}
\begin{split}
\label{summ}
\sum_{i=1}^p\mathcal{M}_{\xi_i}=&m\int_{B_{\rho}(p)}|d\zeta|^2-\frac{2\tau-m\lambda_1}{m}\int_{B_{\rho}(p)}|d\Theta^*|^2\zeta^2+\int_{B_{\rho}(p)}Q(\Theta^*,d\Theta^*,A^*)\zeta^2\\
=&U_1+U_2+U_3,
\end{split}
\end{eqnarray}
where $m=\text{dim}(G)$, $\lambda_1$ denotes the first eigenvalue of $G$, and $\tau$ stands for the scalar curvature of $G$; moreover, the term $Q(\Theta^*,d\Theta^*,A^*)$ is multilinear in $\Theta^*$, $d\Theta^*$, and $A^*$, satisfying 
\begin{eqnarray}
\begin{split}
\label{mulli}
|Q(\Theta^*,d\Theta^*,A^*)|\le C|d\Theta^*||A^*|.
\end{split}
\end{eqnarray}
In deriving \eqref{summ} we closely followed \cite{HW86}. However, the computation is lengthy and irrelevant to the main goal of this paper, therefore we choose to omit it, and refer the reader to \cite{HW86} for its proof. On the one hand, by \eqref{mulli} as well as Young's Inequality with $\epsilon$ (which will be determined later), we could estimate $U_3$ as follows:
\begin{eqnarray}
\begin{split}
\label{u3}
|U_3|\le \epsilon \int_{B_{\rho}(p)}|d\Theta^*|^2\zeta^2+C(\epsilon)  \int_{B_{\rho}(p)}|A^*|^2\zeta^2.
\end{split}
\end{eqnarray}
According to \cite{N82}, if $G$ is simply-connected irreducible and compact, then 
\begin{eqnarray}
\begin{split}
\label{simple}
2\tau-m\lambda_1>0. 
\end{split}
\end{eqnarray}
See also \cite{HW86} for the proof of this fact. Lastly, to estimate $\sum_{i=1}^p\mathcal{E}_{\xi_i}$ we use \eqref{iden} and again Young's Inequality with $\epsilon$ to obtain
\begin{eqnarray}
\begin{split}
\label{e}
|\sum_{i=1}^p\mathcal{E}_{\xi_i}|\le  CC(\epsilon)\int_{B_{\rho}(p)}|A^*|^2\zeta^2+C\int_{B_{\rho}(p)}|d\zeta|^2+C\epsilon\int_{B_{\rho}(p)}|d\Theta^*|^2\zeta^2.
\end{split}
\end{eqnarray}
Now combine \eqref{nonneg}, \eqref{long}, \eqref{summ}, \eqref{simple}, \eqref{u3}, \eqref{e}, and then choose $\epsilon$ small, we obtain:
\begin{eqnarray}
\begin{split}
\label{statheta}
\int_{B_{\rho}(p)}|d\Theta^*|^2\zeta^2\le C_1(G)\big{(}\int_{B_{\rho}(p)}|A^*|^2\zeta^2+\int_{B_{\rho}(p)}|d \zeta|^2\big{)}.
\end{split}
\end{eqnarray}
From this and the fact that $|A_0|\le |A^*|+|d\Theta^*|$, we immediately obtain \eqref{sta1}.
\end{proof}
\begin{proof}[Proof of Theorem \ref{cpt}]
Consider a cover $B_{5\rho}(p)$ given by $\{B_{r_a}(y_a)\}_{a=1}^{S}$ (where $S$ is a finite number) such that $\int_{B_{10r_a}(y_a)}|F_{A_{\infty}}|^2<\epsilon_0/2$, where $\epsilon_0$ is so chosen that fulfills the curvature smallness hypothesis in Theorem \ref{eps} and Theorem \ref{stathm}. By \eqref{meconn}, we see that $\int_{B_{10r_a}(y_a)}|F_{A_{i}}|^2<2\epsilon_0/3$ for all $a$, when $i$ is large enough. Let $\{A^*_{a,i}\}_{a,i}$ be the connection forms of $\{A_i\}_i$ under the Uhlenbeck gauges $\{\sigma_{a,i}\}_i$ defined on $\{B_{10r_a}(y_a)\}_a$ that satisfies \eqref{uhgauge}. More importantly, thanks to the strong $W^{1,2}$ convergence of $A_i$ as in \eqref{meconn}, one might assume (up to passing to a subsequence) that
\begin{equation}
\label{uhcon}
A_{i,a}^*\to A^*_{\infty,a}\ \text{in $L^2(B_{10r_a}(y_a))$}
\end{equation}
for all $a$, where $A^*_{a,\infty}$ is the connection form gauge of $A_\infty$ under some Uhlenbeck gauge $\sigma_{a,\infty}$. For the proof of \eqref{uhcon}, see Chapter 6 of \cite{W04}. See also Section \ref{asc}. Next, let us note that $\int_{B_{5\rho}(p,g_i)}|A_{0,i}|^2$ is uniformly bounded in $i$; indeed, use the covering property of $\{B_{r_a}(y_a)\}_a$ and apply Theorem \ref{stathm} by choosing a proper cutoff function $\zeta$, we have:
\begin{equation}
\label{kbd}
\sup_{i}\int_{B_{5\rho}(p,g_i)}|A_{0,i}|^2\le C \sum_{a=1}^S r_a^2\coloneqq K. 
\end{equation}
Consider the set:
\begin{equation}
\label{singsigma}
\Sigma\eqqcolon \bigcup_{1\le a\le S}\bigcap_{0<r<r_a}\bigg{\{}x\in B_{5\rho}(p)\cap B_{r_a}(y_a): \liminf_{i\to \infty}r^{-2}\int_{B_{r}(x,g_i)}|A_{0,i}|^2 >\eta/2\bigg{\}}
\end{equation}
where $\eta$ is chosen as in Theorem \ref{eps}. For any point $x\notin\Sigma$, we could find some ball $B_{r}(x)\subseteq B_{2r_a}(y_a)$ for some $a$ such that: 
\begin{equation}
\label{small1}
r^{-2}\int_{B_{r}(x,g_i)}|A_{0,i}|^2 \le \eta/2
\end{equation}
for all sufficiently large $i$. We could then cover $B_{5\rho}(p)\backslash \Sigma$ by balls $\{B_{s_b}(z_b)\}_b$ such that for each $b$, \eqref{small1} is satisfied with $B_{r}(x,g_i)$ replaced by $B_{20s_b}(y_b,g_i)$, for all large $i$. Now apply Theorem \ref{eps} to each of them for all large $i$, we see that:
\begin{equation}
\label{140}
\int_{B_{s_b}(y_b)}|\nabla A_{0,i}|^2\le C\bigg{(}{s_b}^{-2}\int_{B_{20s_b}(y_b)}|A_{0,i}|^2+\int_{B_{20s_b}(y_b)}|F_{A_{i}}|^2\bigg{)}.
\end{equation}
Thus, by Sobolev embedding, upon passing to a diagonal sequence there exists $A_{0,\infty}$ and a subsequence $A_{0,i_l}$ such that $A_{0, i_l}\to A_{0,\infty}$ strongly in $L^{2}_{\text{loc}}(B_{5\rho}(p)\backslash \Sigma)$. In order to prove $A_{0, i_l}\to A_{0,\infty}$ is in fact strong in $L^{2}(B_{3\rho}(p))$, we simply need to prove it for each $B_{r_a}(y_a)$. For this purpose, let us fix arbitrary $a$. We will find for each $\tau$ a cutoff function $\zeta_{\tau}\in C^{\infty}_c(B_{3r_a}(y_a))$ such that $\zeta_{\tau}\equiv 1$ in a neighborhood of $\Sigma\cap B_{2r_a}(y_a)$, and moreover, the following holds
\begin{equation}
\label{tau}
\sup_{l}\int_{B_{3r_a}(y_a)}|A_{0,i_l}|^2\zeta_{\tau}^2<\tau.
\end{equation}
Let us admit \eqref{tau} for now. Then by the arbitrariness of $\tau$ together with the fact that $A_{0, i_l}\to A_{0,\infty}$ strongly in $L^{2}_{\text{loc}}(B_{5\rho}(p)\backslash \Sigma)$, we see that $A_{0, i_l}\to A_{0,\infty}$ strongly in $L^2(B_{r_a}(y_a))$ for every $a$. Now using with a diagonal argument, we proved the strong $L^2$-subconvergence \eqref{connconv}. Hence let us focus on proving \eqref{tau}. Fix arbitrary $\tau>0$. Using the uniform upper bound \eqref{kbd}, it follows from a Vitali covering argument that the Hausdorff measure $H^2(\Sigma\cap B_{3r_a}(y_a))<\infty$ for each $a$. Therefore, the $2$-capacity $\text{Cap}_2(\Sigma\cap B_{3r_a}(y_a))=0$, and hence for an arbitrarily small neighborhood of $\Sigma\cap B_{3r_a}(y_a)$ denoted by $\mathcal{N}$, one could find a function $\zeta_{\tau}$ such that $\zeta_{\tau} \in C^{\infty}_c(B_{3r_a}(y_a)\cap \mathcal{N})$ such that $\zeta_{\tau}\equiv 1$ in a neighborhood of $\Sigma\cap B_{2r_a}(y_a)$; moreover, the following holds:
\begin{equation}
\int_{B_{3r_a}(y_a)}|d\zeta_{\tau}|^2<\tau/2C(G).
\end{equation}
for the same $C(G)$ as in Theorem \ref{stathm}. Now apply \eqref{sta1}, we have:
\begin{equation}
\label{tau1}
\int_{B_{3r_a}(y_a)}|A_{0,i}|^2\zeta_{\tau}^2<C(G)\big{(}\int_{B_{3r_a}(y_a)}|A^*_{a,i}|^2\zeta_{\tau}^2+\frac{\tau}{2C(G)}\big{)}.
\end{equation}
Due to \eqref{uhcon}, the first term uniformly go to zero as the support $\mathcal{N}$ shrinks. Hence, by further shrinking $\mathcal{N}$ beforehand (if necessary) we obtain
\begin{equation}
\label{tauover}
\int_{B_{3r_a}(y_a)}|A^*_{a,i}|^2\zeta_{\tau}^2\le \frac{\tau}{2C(G)}.
\end{equation}
Then we see that \eqref{tau} follows from \eqref{tau1} and \eqref{tauover}.
\medskip

Let $\Theta^*_i$ be the trivialization of $\Theta_i$ under $\sigma_{a,i}$. Note an almost identical argument as showing \eqref{connconv} proves $\Theta^*_{a,i}\to \Theta^*_{a,\infty}$ strongly in $W^{1,2}$ and weakly in $W^{2,2}_{\text{loc}}(B_{5\rho}(p)\backslash \Sigma)$ up to a subsequence; the only difference is that in the places where we used \eqref{sta1} and \eqref{140}, this time one should use \eqref{statheta} and 
\begin{equation}
\label{140}
\int_{B_{s_b}(y_b)}|\nabla^2 \Theta^*_i|^2\le C\bigg{(}{s_b}^{-2}\int_{B_{20s_b}(y_b)}|A_{0,i}|^2+\int_{B_{20s_b}(y_b)}|F_{A_{i}}|^2\bigg{)}.
\end{equation}
respectively. Note \eqref{140} is a easy corollary of Theorem \ref{eps}. To find $\Theta_\infty$, it suffices to find its associated $P$-section $s_\infty$ (see Proposition \ref{ggauge}). Let $s_i$ be the $P$-sections such that $s_i^*A_i=A_{0,i}$. Choose a (finite) collection of smooth local $P$-section $\{\mathcal{S}_l\}_l$ such that $\mathcal{S}_l$ is defined over a ball $B_l$ and that $\bigcup_l B_l=M$. For each $l$ we estimate as follows:
\begin{eqnarray}
\begin{split}
\label{patchupnow}
\|ds_i\|_{L^2(B_l)}&=\|s_i^{-1}ds_i\|_{L^2(B_l)}\le \|s_i^*A_i\|_{L^2(B_l)}+\|u_i^{-1}A_iu_i\|_{L^2(B_l)}\\
&=\|A_{0,i}\|_{L^2(B_l)}+\|A_i\|_{L^2(B_l)}\le C_0,
\end{split}
\end{eqnarray}
where $\|ds_i\|_{L^2(B_l)}$ and $\|A_i\|_{L^2(B_l)}$ are taken with respect to the local trivialization $\mathcal{S}_l$, and $C_0$ is a constant independent of $i$. In the last inequality above we have used \eqref{connconv} and the assumption \eqref{meconn}. Hence, by the arbitrariness of $l$ and up to passing to a subsequence, $s_i$ has a weak limit $s_\infty\in W^{1,2}$. The strong convergence $s_i\to s_\infty$ also holds in $L^2$ topology due to the compact Sobolev embedding $W^{1,2}\hookrightarrow L^2$. This implies 
$$s_i^*A_i=s_i^{-1}A_is_i+s_i^{-1}ds_i \to s_\infty^{-1}A_{\infty}s_\infty+s_{\infty}^{-1} ds_\infty=s_\infty^*A_\infty\ \text{a.e.}.$$
On the other hand, $s_i^*A_i=A_{0,i}\to A_{0,\infty}$ in $L^2$, from which we conclude that $s_{\infty}^*A_{\infty}=A_{0,\infty}$. Finally, set $\Theta_\infty$ as the frame associated to $s_\infty$ (see Proposition \ref{ggauge}). In other words, $A_{0,\infty}$ is the connection form of $A_\infty$ under the frame $\Theta_\infty$, and one easily verifies that $\Theta^*_{a,\infty}$ is the trivialization of $\Theta_\infty$ with respect to $\sigma_{a,\infty}$.
Now applying a standard partition of unity argument combining with H\"older inequalities, one can easily show by using the $L^2$-convergence of $A_{0,i}$, $A_{a,i}^*$ together with the $W^{1,2}$-convergence of $\Theta^*_{a,i}$ that the weakly Coulomb equation $d^*A_{0,i}=0$ is preserved in the limit as we send $i$ to infinity. Since the proof is elementary we omit the details. Therefore, $d^*A_{0,\infty}=0$ weakly. This completes the proof of Theorem \ref{cpt}.
\end{proof}
\begin{proof}[Proof of Corollary \ref{keycoro}]
Upon choosing an Uhlenbeck gauge of $A_i$ for all sufficiently large $i$, the proof of \eqref{keco} is the same as the argument used in the above proof to show that $\Theta^*_{a,i}\to \Theta^*_{a,\infty}$ strongly in $W^{1,2}$ for each $a$. By \eqref{uhgauge} and \eqref{keco}, the limit bundle is a trivial bundle $B_\rho(p,g_{\text{Euc}})$ with flat connection $A_{\text{flat}}$. To see that $\Theta_\infty^*$ satisfies the stability \eqref{stab1} with $\nabla_A$ replaced by $d$, notice that \eqref{long} holds upon replacing $\Theta^*$ and $A^*$ by $\Theta_i$ and $A_i^*$. In \eqref{nonneg} and \eqref{long} replace $\Theta^*$ with $\Theta^*_i$, and then send $i$ to infinity in both inequalities. Now use the facts that $A_{i}^*\to A^*_{\infty,i}$ in $L^2(B_\rho(p))$ and $\Theta^*_{i}\to \Theta^*_{\infty}$ strongly in $W^{1,2}(B_\rho(p))$, we obtain
\begin{eqnarray}
\begin{split}
\label{nonneg1}
\frac{d^2}{dt^2}\bigg{\rvert}_{t=0}\int_{B_{\rho}(p)}|d\big{(}\exp(\xi(t))\cdot\Theta_\infty^*\big{)}|^2\ge 0.
\end{split}
\end{eqnarray}
Similarly, one sees that the $L^2$-convergence of $A_{i}^*$ together with the $W^{1,2}$-convergence of $\Theta^*_{i}$ shows that the stationarity equation \eqref{divfree} applied to $\Theta_i$ is preserved in the limit as we send $i$ to infinity, an becomes the stationarity equation for the map $\Theta_\infty^*$. This proves Corollary \ref{keycoro}.
\end{proof}
\bigskip

\section{Tangent cones and local singular structure}\label{tconestru}
From this section on, we concentrate on $\text{SU}(2)$-fibered bundles. In addition, the smoothness assumption on $A$ will be of crucial importance in studying the tangent cone structure of the Coulomb minimizer $\Theta_0$. In this section, we show under the smoothness assumption on $A$ that admits at most isolated singularities with unique tangent cones up to orthogonal transformations. See Theorem \ref{singtangent}. Then we give an upper bound on the number of singularities on a ball where the curvature in small in $L^2$. See Theorem \ref{numbersing}. These theorems significantly employs the theory regarding the stable-stationary harmonic maps into $\mathbb{S}^3$ achieved in \cite{LW06} and \cite{naka06}. We begin by the introducing the notion of singularity:
\begin{definition}
The singular set of $\Theta_0$ is defined to be the following set
\begin{equation}
\label{sing}
S(\Theta_0)\eqqcolon \{x\in M: \liminf_{r\to0}r^{-2}\int_{B_r(x)}|\nabla_A \Theta_0|^2\ge \eta/2 \}.
\end{equation}
where $\eta=\eta(\text{SU}(2))$ as in Theorem \ref{eps}. Furthermore, a point in $S(\Theta_0)$ is called a singularity.
\end{definition}
Now let us present the first main result in this section:
\begin{theorem}
\label{singtangent}
Let $P\to M$ be a smooth $\text{SU}(2)$-principal bundle over a compact $4$-manifold and $A$ be a smooth connection on $P$. Let $\Theta_0$ be a Coulomb minimizer (see Definition \ref{coumindef0}), with associated connection form $A_0$. Then, $S(\Theta_0)$ consists of isolated singularities. Further, at each singularity, a tangent cone is given by $U:\mathbb{R}^4\backslash\{0\}\to \mathbb{S}^3$, $x\mapsto T(\frac{x}{|x|})$ for some $T\in O(3)$.
\end{theorem}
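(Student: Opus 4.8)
The plan is to treat $\Theta_0$ as a stable-stationary harmonic map away from its singular set, transfer to this setting via the compactness results of Section \ref{sta}, and then invoke the strong dimension-reduction and classification theory for stable-stationary harmonic maps into $\mathbb{S}^3$ from \cite{LW06}. The first step is a local reduction: fix $x_0\in S(\Theta_0)$ and a small ball $B_{\rho}(x_0)$ on which $\int_{B_{\rho}(x_0)}|F_A|^2<\epsilon_0$ (possible by smoothness of $A$ and absolute continuity of the integral). Since $A$ is smooth, one can rescale: set $A_i = \lambda_i^* A$ (pullback under dilation by $\lambda_i\to 0$ centered at $x_0$) on a fixed ball, so that $\|F_{A_i}\|_{L^2}\to 0$ and the rescaled metrics $g_i\to g_{\text{Euc}}$ in $C^{1,\alpha}$, and the rescaled minimizers $\Theta_{0,i}$ remain stable-stationary critical points of $\int|\nabla_{A_i}\Theta|^2$. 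This is exactly the hypothesis \eqref{addas}, so Theorem \ref{cpt} and Corollary \ref{keycoro} apply: after passing to a subsequence, the trivializations $\Theta_i^*$ converge strongly in $W^{1,2}_{\text{loc}}$ to a limit $\Theta_\infty^*$ which is a \emph{stable stationary harmonic map} into $\text{SU}(2)\cong\mathbb{S}^3$ on a Euclidean ball. The $W^{1,2}$-strong convergence is what makes the energy density pass to the limit, so $\Theta_\infty^*$ has a genuine singularity at the origin (the density $\ge\eta/2$ survives).

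The second step is to apply the classification theory for stable stationary harmonic maps into $\mathbb{S}^3$ developed in \cite{LW06} (and \cite{naka06}): a stable stationary harmonic map $u:B^4\to\mathbb{S}^3$ has singular set of Hausdorff dimension zero, in fact an isolated set of points, and the tangent map at each singular point is, up to composition with an element of $O(4)$ on the domain, the standard equivariant map $x\mapsto T(x/|x|)$ for some $T\in O(3)\subset\text{Isom}(\mathbb{S}^3)$ — this uses both the stationarity (for the monotonicity formula \eqref{mono} with vanishing curvature, which gives homogeneous tangent maps) and crucially the stability inequality to rule out all non-constant homogeneous degree-zero harmonic maps $\mathbb{S}^3\to\mathbb{S}^3$ except the identity-type ones. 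One then transfers this back: via the compactness of Theorem \ref{cpt}, any tangent cone of $\Theta_0$ at $x_0$ arises as a limit of rescalings, hence coincides (after the blow-up) with a tangent map of the stable stationary limit $\Theta_\infty^*$, which is of the asserted form $x\mapsto T(x/|x|)$, $T\in O(3)$. The identification of $\text{SU}(2)$ with $\mathbb{S}^3$ and of its isometry group action with $O(3)$ (acting on the imaginary quaternions / via the adjoint representation composed with the round metric) is the routine algebraic bookkeeping here.

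The third step — isolatedness of $S(\Theta_0)$ — follows once we know the tangent cone is the standard $x\mapsto T(x/|x|)$ and that this tangent cone is \emph{unique} up to $O(3)$ with an effective rate (the $\epsilon$-regularity Theorem \ref{eps}, together with a decay estimate near such a cone, gives a gap: once the normalized energy on $B_r(x_0)$ is close to that of the standard cone, it stays close on all smaller balls, and the cone-center cannot have any other singularity nearby because the standard cone is smooth away from $0$). Concretely, I would argue: if $x_0\in S(\Theta_0)$ were an accumulation point of singularities, then blowing up would produce a stable stationary harmonic tangent map into $\mathbb{S}^3$ with a non-isolated singular point, contradicting the dimension-zero/isolatedness statement of \cite{LW06}. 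Hence each singularity is isolated; by compactness of $M$, $S(\Theta_0)$ is finite.

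The main obstacle I expect is the transfer between the twisted problem (critical points of $\int|\nabla_A\Theta|^2$) and the genuine harmonic map problem: one must be certain that the blow-up limit is not merely $W^{1,2}$-weakly but \emph{strongly} convergent, so that (i) no energy escapes into the neighborhood of the limiting singular set and (ii) the stability inequality passes to the limit with the correct second-variation operator (the curvature cross-terms in \eqref{long} must genuinely vanish in the limit, which needs $\|F_{A_i}\|_{L^2}\to 0$ together with the $W^{1,2}$-control on $\Theta_i^*$). This is precisely what Corollary \ref{keycoro} is designed to deliver, so modulo carefully setting up the rescaling (and checking that the $L^2$-norm of $F_A$ on shrinking balls tends to zero, which is immediate from $F_A\in L^2$) the argument should go through; the remaining work is bookkeeping with the $\text{SU}(2)\cong\mathbb{S}^3$ identification and citing the precise statements of \cite{LW06} and \cite{naka06}.
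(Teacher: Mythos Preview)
Your overall strategy matches the paper's, but there is a genuine gap in the second step. You write that ``any tangent cone of $\Theta_0$ at $x_0$ arises as a limit of rescalings, hence coincides (after the blow-up) with a tangent map of the stable stationary limit $\Theta_\infty^*$.'' This conflates two different objects: the blow-up limit $\Theta_\infty^*$ of $\Theta_0$ along scales $r_i\to 0$ \emph{is} the candidate tangent cone of $\Theta_0$; a tangent map of $\Theta_\infty^*$ at $0$ would require a further blow-up (equivalently, a faster sequence of scales for $\Theta_0$). So citing the classification of tangent maps of the harmonic limit tells you about the tangent-of-the-tangent, not about $\Theta_\infty^*$ itself. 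What you need, and do not supply, is that $\Theta_\infty^*$ is already homogeneous of degree zero.

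For a genuine stationary harmonic map this would follow from monotonicity of $r^{2-n}\int_{B_r}|du|^2$ and existence of $\lim_{r\to 0}$. But here the blow-up is taken for the \emph{twisted} functional, and \eqref{mono} carries the curvature error $2\rho^{-3}\int_{B_\rho}\langle F_i{}^a,\nabla_{A,a}\Theta_0\rangle(x^i-y^i)$, which with only $F_A\in L^2$ is not integrable in $\rho$ and does not force $\lim_{r\to 0}\rho^{-2}\int_{B_\rho(x_0)}|\nabla_A\Theta_0|^2$ to exist. The paper fills exactly this gap (Claims \ref{uni} and \ref{rigid}): it uses the \emph{pointwise} smoothness of $A$ to bound the error by $C\|F_A\|_{L^\infty(B_\rho)}\rho^2$, which is integrable, so the density limit exists; then for every blow-up $\Theta_\infty^*$ one gets $R^{-2}\int_{B_R}|d\Theta_\infty^*|^2\equiv\text{const}$, hence $\Theta_\infty^*$ is a cone, and only then does the rigidity of \cite{LW06}, \cite{naka06} apply. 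Your remark that smoothness of $A$ gives $\|F_A\|_{L^2(B_r)}\to 0$ is too weak for this step; you need the $L^\infty$ control.

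Your isolatedness argument by contradiction (accumulation of singularities forces a non-isolated singular point in the limit) is not quite right as stated: blowing up at an accumulation point along generic scales can send the nearby singularities to infinity, and even if two survive, \cite{LW06} only says the limiting singular set is discrete, not a single point. The paper instead argues directly: once the tangent cone is known to be $x\mapsto T(x/|x|)$, which is smooth on $\mathbb{R}^4\setminus\{0\}$, strong $W^{1,2}$ convergence plus Theorem \ref{eps} gives small normalized energy at every $x\neq x_0$ in a neighborhood, so no other singularity can be present.
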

\begin{proof}
Choose any point $x_0\in S(\Theta_0)$, and any sequence of $r_i\to 0$. Up to rescaling we could assume $\int_{B_{10}(x_0)}|F_A|^2<\epsilon_0$. Now let us consider the blow up of $\Theta_0$ at the sequence $\{r_i\}_{i}$ denoted by $\Theta_{0,x_0,i}$ with connection forms $\{A_{0,x_0,i}\}$. Fix arbitrary $R>0$. In Corollary \ref{keycoro}, take $\rho=R$, $g_i=r_i^{-2}g_M$ and $A_i(x)=r_iA(p+r_i(x-p))$. Due to the smoothness of $A$, we have $F_{A_i}\to 0$ in $L^2_{\text{loc}}(\mathbb{R}^4)$. Now we follow almost verbatim the paragraph preceding Corollary \ref{keycoro} to choose the Uhlenbeck gauges for each $A_i$ defined on $B_{10r_i^{-1/2}}(x_0,r_i^{-2}g_M)$ (in order to exhaust $\mathbb{R}^4$) with connection form $A_i^*$. In addition, due to classical $C^{1,\alpha}$-compactness result we see that $g_i\to g_{\text{Euc}}$ in $C_{\text{loc}}^{1,\alpha}(\mathbb{R}^4)$. Then we could apply Corollary \ref{keycoro} upon replacing $B_\rho(p)$ by any $B_R(x_0,g_{\text{Euc}})$. Using a diagonal argument we find a mapping $\Theta_{\infty}^*:\mathbb{R}^4\to \text{SU}(2)$ such that for every $R>0$ the following holds:
\begin{equation}
\lim_{i\to \infty}\|\Theta_{i}^*-\Theta^*_{\infty}\|_{W^{1,2}(B_{R}(x_0,g_{\text{Euc}}))}=0.
\end{equation}
Moreover, $\Theta^*_\infty$ is indeed a stable-stationary harmonic map from $\mathbb{R}^4$ into $\text{SU}(2)\equiv\mathbb{S}^3$. Let us now prove the following claim:
\begin{claim}
\label{uni}
$\lim_{r\to 0}r^{-2}\int_{B_{r}(x_0)}|\nabla_{A}\Theta_0|^2$ exists.
\end{claim}
\begin{remark}
Proof of this claim uses smoothness of $A$. 
\end{remark}
\begin{proof}[proof of Claim \ref{uni}]
Firstly, by \eqref{sta1} (where we choose a proper cutoff function $\zeta$) there exists $C$ such that
\begin{equation}
\label{scalebd}
\sup_{r<10}r^{-2}\int_{B_{r}(x_0)}|\nabla_{A}\Theta_0|^2\le C.
\end{equation}
Apply H\"older's inequality to the last term of \eqref{mono} and then use \eqref{scalebd}, the following is trivially achieved 
\begin{equation}
\label{roughbd}
\rho^{-2}\int_{B_{\rho}(x_0)}|\nabla_A \Theta_0|^2-\sigma^{-2}\int_{B_{\sigma}(x_0)}|\nabla_A \Theta_0|^2\ge -C\|F_A\|_{L^{\infty}(B_{\rho}(x_0))}\rho^2.
\end{equation}
for all $\rho>\sigma$. Suppose $R_1$ and $S_1$ are two possible limits achieved by
$$\{r_i^{-2}\int_{B_{r_i}(x_0)}|\nabla_{A}\Theta_0|^2\}_i\ \text{and}\ \{s_i^{-2}\int_{B_{s_i}(x_0)}|\nabla_{A}\Theta_0|^2\}_i$$ 
respectively. Thanks to smoothness of $A$, for any $\epsilon$ there exists $\rho(\epsilon)$ small enough such that
\begin{equation}
\sup_{\rho\le \rho(\epsilon)}\rho^2\|F_A\|_{L^{\infty}(B_{\rho}(x_0))}<\epsilon.
\end{equation}
Inserting this into \eqref{roughbd}, we have for all $\sigma<\rho<\rho(\epsilon)$ that
\begin{equation}
\label{diff}
\rho^{-2}\int_{B_{\rho}(x_0)}|\nabla_A \Theta_0|^2-\sigma^{-2}\int_{B_{\sigma}(x_0)}|\nabla_A \Theta_0|^2\ge -C\epsilon.
\end{equation}
Choose any $\delta$, we shall prove $|R_1-S_1|<3\delta$; this would yield Claim \ref{uni}. Consider sufficiently large $i_0$ in both sequences such that for $i\ge i_0$ the following hold
\begin{equation}
r_{i},s_{i}<\rho(\epsilon)\ \text{and}\ |R_1-r_i^{-2}\int_{B_{r_i}(x_0)}|\nabla_{A}\Theta_0|^2|<\delta,\ |S_1-s_i^{-2}\int_{B_{s_i}(x_0)}|\nabla_{A}\Theta_0|^2|<\delta.
\end{equation}
First choose some $r_i$ (with $i\ge i_0$), then choose $s_j$ with $j>i_0$ and $s_j<r_i$, and thirdly choose $r_{i+K}$ such that $r_{i+K}<s_j$. Denote $\rho^{-2}\int_{B_{\rho}(x_0)}|\nabla_A \Theta_0|^2$ by $\zeta_{x_0}(\rho)$. Now, using \eqref{diff} we have
\begin{eqnarray}
\begin{split}
&-\delta-C\epsilon\rho-\delta\le (S_1-\zeta_{x_0}(s_j))+(\zeta_{x_0}(s_j)-\zeta_{x_0}(r_{i+K})+(\zeta_{x_0}(r_{i+K})-R_1)=S_1-R_1\\
&=(S_1-\zeta_{x_0}(s_i))+(\zeta_{x_0}(s_i)-\zeta_{x_0}(r_{i})+(\zeta_{x_0}(r_{i})-R_1)\le \delta+C\epsilon\rho+\delta.
\end{split}
\end{eqnarray}
By choosing $\epsilon$ sufficiently small at the beginning, the above implies $|S_1-R_1|<3\delta$. Hence Claim \ref{uni} is proved.
\end{proof}
Now we continue to finish the proof by showing that $\Theta_{\infty}^*:\mathbb{R}^4\backslash\{0\}\to \mathbb{S}^3$ such that $x\mapsto T(\frac{x}{|x|})$ for some $T\in O(3)$. Let us first observe that $\Theta_{\infty}^*$ is a cone map; more precisely,
\begin{claim}
\label{rigid}
$R^{-2}\int_{B_{R}(x_0)}|d\Theta^*_{\infty}|^2$ is a positive constant in $R$.
\end{claim}
\begin{proof}
By Claim \ref{uni}, we could set $Q=\lim_{r\to 0}r^{-2}\int_{B_{r}(x_0)}|\nabla_{A}\Theta_0|^2$. Then for any $\epsilon$ there is $\rho(\epsilon)$ such that $|Q-r^{-2}\int_{B_{r}(x_0)}|\nabla_{A}\Theta_0|^2|<\epsilon$ for all $r\le \rho(\epsilon)$. For any $R_1$ and $R_2$, we have
\begin{eqnarray}
\begin{split}
&|R_1^{-2}\int_{B_{R_1}(x_0)}|d\Theta^*_{\infty}|^2-R_2^{-2}\int_{B_{R_2}(x_0)}|d\Theta^*_{\infty}|^2|\\
\le& |R_1^{-2}\int_{B_{R_1}(x_0)}|d\Theta^*_{\infty}|^2-R_1^{-2}\int_{B_{R_1}(x_0,g_i)}|\nabla_{A_{i}}\Theta_{i}^*|^2|\\
+& |(r_iR_1)^{-2}\int_{B_{r_iR_1}(x_0)}|\nabla_{A}\Theta_0|^2-(r_iR_2)^{-2}\int_{B_{r_iR_2}(x_0)}|\nabla_{A}\Theta_0|^2|\\
+&|R_2^{-2}\int_{B_{R_2}(x_0)}|d\Theta^*_{\infty}|^2-R_2^{-2}\int_{B_{R_2}(x_0,g_i)}|\nabla_{A_{i}}\Theta_{i}^*|^2|\\
\eqqcolon& K_1+K_2+K_3.
\end{split}
\end{eqnarray}
By the strong convergence in Corollary \ref{keycoro}, $K_1$ and $K_3$ can be made less than $\epsilon$ if $i$ is chosen large; in addition, $K_2$ can be made less than $2\epsilon$, if $i$ is further increased such that $r_iR_1,r_iR_2<\rho(\epsilon)$. Since $\epsilon$ is arbitrary, we conclude that $R^{-2}\int_{B_{R}(x_0)}|d\Theta^*_{\infty}|^2$ is constant and equals to $Q$. Moreover, from this and the fact that $x_0\in S(\Theta_0)$, we see that $Q>0$.
\end{proof}
The rigidity of stable-stationary cone map from $\mathbb{R}^4\to \mathbb{S}^3$ (see \cite{LW06} and \cite{naka06}) then says that the only such map is given by $T(\frac{x}{|x|})$ for some $T\in O(3)$. Combining this with Claim \ref{rigid} we have proved the tangent cone part of Theorem \ref{singtangent}.
\bigskip

The second statement to be proved is that the singularities are isolated. However, this is an easy consequence of Theorem \ref{eps}. Indeed, on the one hand, by the strong $L^2$-convergence of $A_{0,x_0,i}$ for every ball $B_{r}(p)\subseteq \mathbb{R}^4$ we have
\begin{eqnarray}
\begin{split}
\label{2con}
\int_{B_{r}(p)}|\nabla_{A_{i}}\Theta^*_{i}|^2\to \int_{B_{r}(p)}|d\Theta^*_{\infty}|^2.
\end{split}
\end{eqnarray}
On the other hand, since $x_0$ is only singularity of $\Theta_{\infty}$, for every $x\in B_{1}(x_0)$ different from $x_0$ we could find a ball $B_{r_x}(x)$ such that 
\begin{eqnarray}
\begin{split}
r_x^{-2}\int_{B_{r_x}(x)}|d\Theta^*_{\infty}|^2<\eta/3.
\end{split}
\end{eqnarray}
By \eqref{2con}, for $i$ large enough, the following holds
\begin{eqnarray}
\begin{split}
r_x^{-2}\int_{B_{r_x}(x,g_i)}|\nabla_{A_i}\Theta_{i}^*|^2<\eta/2.
\end{split}
\end{eqnarray}
After rescaling, this becomes
\begin{eqnarray}
\begin{split}
(r_ir_x)^{-2}\int_{B_{r_ir_x}(x,g_M)}|\nabla_{A}\Theta_{0}|^2<\eta/2.
\end{split}
\end{eqnarray}
By Definition \eqref{sing}, $x\notin S(\Theta_0)$. Thus $S(\Theta_0)$ consists of isolated singularities only.
\end{proof}
In the next theorem we manage to control the number of isolated singularities on ball with small curvature:
\begin{theorem}
\label{numbersing}
Let $P\to M$ be a smooth $\text{SU}(2)$-principal bundle over a compact $4$-manifold $M$ and $A$ be a smooth connection on $P$. Let $\Theta_0$ be a Coulomb minimizer (see Definition \ref{coumindef0}). Then there exists $\epsilon_0$, $r_M$ and $N_0\ge 10$, if $10\rho_0<r_M$ and $\int_{B_{2\rho_0}(y_0)}|F_A|^2<\epsilon_0$, then $\#\{S(\Theta_0)\cap B_{3\rho_0/2}(y_0)\}\le N_0$.
\end{theorem}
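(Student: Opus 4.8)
The plan is to argue by compactness and blow-up, reducing everything to the structure theory of stable-stationary harmonic maps into $\mathbb{S}^3$. By rescaling we may assume $\rho_0=1$, so that $\int_{B_2(y_0)}|F_A|^2<\epsilon_0$ and $A$ is still smooth (only its $C^k$-norms, not the $L^2$ bound on $F_A$, deteriorate). First I would record an a priori energy bound: since $\epsilon_0<\epsilon_{Uh}$ we may fix an Uhlenbeck gauge $A^*$ on $B_2(y_0)$ as in \eqref{uhgauge}, so that $\|A^*\|_{L^2(B_2(y_0))}\le C_{Uh}\epsilon_0^{1/2}$, and applying the stability estimate \eqref{sta1} of Theorem \ref{stathm} with a cutoff $\zeta\equiv 1$ on $B_{3/2}(y_0)$ supported in $B_{7/4}(y_0)$ gives
\[
\int_{B_{3/2}(y_0)}|A_0|^2\le C(G)\Big(\int_{B_2(y_0)}|A^*|^2\zeta^2+\int|d\zeta|^2\Big)\eqqcolon\Lambda_0,
\]
a universal constant. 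Combined with Corollary \ref{a0bound} and Theorem \ref{eps} applied contrapositively (whose curvature hypothesis is automatic, since $B_{10\rho}(x)\subseteq B_2(y_0)$ when $x\in B_{3/2}(y_0)$ and $\rho<1/20$), this shows every $x\in S(\Theta_0)\cap B_{3/2}(y_0)$ satisfies $\rho^{-2}\int_{B_{10\rho}(x)}|A_0|^2\ge\eta_0$ for all such $\rho$, while the total energy is $\le\Lambda_0$. This bounds the number of singularities that are mutually $\delta$-separated at a fixed scale, but not the total number, and the whole difficulty is to rule out clustering.

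To get the uniform bound I would suppose the contrary: smooth $\mathrm{SU}(2)$-connections $A_j$, balls $B_{2\rho_j}(y_j)$ with $\int_{B_{2\rho_j}(y_j)}|F_{A_j}|^2<\epsilon_0$ and $10\rho_j<r_{M_j}$, and Coulomb minimizers $\Theta_0^{(j)}$ with $\#\big(S(\Theta_0^{(j)})\cap B_{3\rho_j/2}(y_j)\big)\ge j$. By pigeonholing the singular points among dyadic sub-balls and using that the curvature budget is only $\epsilon_0$, I would choose centers $z_j$ and scales $s_j\to0$ so that after rescaling to unit size the rescaled connections have $L^2$-curvature tending to $0$ on a fixed ball while still retaining arbitrarily many singularities of the rescaled minimizer in a ball of definite radius; when $A_j$ is a fixed smooth connection examined at scales $\rho_j\to0$ this is automatic because $\int_{B_{2\rho_j}(y_j)}|F_{A_j}|^2\le C\|F_{A_j}\|_{L^\infty}^2\rho_j^4\to0$, and the general case is reduced to it by an auxiliary blow-up at a singularity of the first limit, where $\int_{B_r}|F|^2\to0$ by absolute continuity. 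Then Corollary \ref{keycoro}, applied on an exhaustion of $\mathbb{R}^4$ with a diagonal extraction, produces a limit $\Theta_\infty\colon\mathbb{R}^4\to\mathrm{SU}(2)\equiv\mathbb{S}^3$ which is a stable-stationary harmonic map with $\sup_R R^{-2}\int_{B_R}|\nabla\Theta_\infty|^2\le\Lambda_0$; and the strong $L^2$-convergence of the connection forms from Theorem \ref{cpt}, together with the rescaled almost-monotonicity \eqref{mono} whose error is $\le C s_j^2\|F_{A_j}\|_{L^\infty}\to0$ on fixed balls, forces every accumulation point of the sets $S(\Theta_0^{(j)})$ to lie in $S(\Theta_\infty)$ (if not, the scale-invariant energy of $\Theta_\infty$ near that point would be bounded below uniformly in scale, contradicting smoothness there).

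For the genuine harmonic map $\Theta_\infty$ the monotonicity formula holds with no error, so $\rho^{-2}\int_{B_\rho(x)}|\nabla\Theta_\infty|^2$ is monotone and bounded below by the density at $x$; by Theorem \ref{singtangent} and the classification of stable-stationary tangent maps into $\mathbb{S}^3$ of \cite{LW06} and \cite{naka06}, the singular set is discrete and the density at every singularity equals the universal constant $E_*$ (the energy of the standard cone $x\mapsto x/|x|$ on $B_1\subset\mathbb{R}^4$), which exceeds $\eta/2$. With the energy bound $\Lambda_0$ a disjoint-balls argument then gives $\#\big(S(\Theta_\infty)\cap B_1\big)\le N_*(\Lambda_0)$ \emph{provided the singularities do not cluster}. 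The main obstacle, and where the real work is, is to rule out this clustering in the transfer back to $\Theta_0^{(j)}$: if infinitely many singularities of $\Theta_0^{(j)}$ pile up near one point, I would rescale again at the cluster scale and repeat, and the process must terminate because a harmonic map obtained this way would, by tangent-cone rigidity ($T(x/|x|)$ is smooth on $\mathbb{R}^4\backslash\{0\}$) together with $\epsilon$-regularity, be forced to coincide with that single cone and hence have \emph{no} singularity away from the origin, contradicting that the rescaling was arranged to place singularities at definite distance from the center. Equivalently, each genuine bubble carries the fixed energy $E_*$, so the scale-invariant bound $\Lambda_0$ caps the number of bubbles; this quantization is the step that closes the contradiction and yields the desired universal $N_0\ge10$. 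Everything else is a bookkeeping combination of Theorem \ref{eps}, Theorem \ref{stathm}, and Theorem \ref{cpt}.
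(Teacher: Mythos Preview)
Your overall framework---contradict, blow up, and pass via Corollary \ref{keycoro} to a stable-stationary harmonic map into $\mathbb{S}^3$---matches the paper's. The gap is in how you resolve clustering. The energy-quantization claim in your last paragraph (``each genuine bubble carries the fixed energy $E_*$, so the scale-invariant bound $\Lambda_0$ caps the number of bubbles'') does not work as stated: the available bound is on the \emph{scale-invariant} energy $r^{-2}\int_{B_r}|A_0|^2$, and such a bound does not sum across scales. A nest of singularities at radii $2^{-k}$, $k=1,\dots,j$, each contributing density $E_*$ at its own scale, is not forbidden by $r^{-2}\int_{B_r}|A_0|^2\le\Lambda_0$ alone. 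So the iterated-rescaling process you describe has no a priori reason to terminate, and the reduction to ``a single cone with no singularity away from the origin'' is not justified without first explaining \emph{how} the rescaling is ``arranged to place singularities at definite distance from the center.''

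The paper closes this gap by proving instead a universal \emph{minimum distance} between singularities (Claim \ref{key}): there is $\delta_*>0$ so that any two points of $S(\Theta_0)\cap B_{3\rho_0/2}(y_0)$ are at least $\delta_*\rho_0$ apart, whence $N_0\le C\delta_*^{-4}$. The proof is by contradiction, and the key technical device---which is exactly the step missing from your sketch---is a discrete point-selection: from any offending pair one extracts singularities $x_i,y_i$ that are \emph{nearly minimal} in the sense
\[
\dist(x_i,y_i)<100\inf\big\{\dist(x,y):\ x,y\in S(\Theta_{0,i})\cap B_{\sqrt{\dist(x_i,y_i)}}(x_i)\big\}.
\]
This condition guarantees that when one rescales so that $\dist(x_i,y_i)$ becomes a fixed small number, no \emph{other} singularities collapse onto $x_i$ or $y_i$ faster than the pair separates; hence in the $W^{1,2}$ limit (via Corollary \ref{keycoro}) both endpoints survive as genuine singularities of $\Theta_\infty$. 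One then invokes the result of Lin--Wang (Theorem 3.4.12 of \cite{LW08}) that two singularities of a stable-stationary harmonic map $B_1\to\mathbb{S}^3$ in $B_{1/2}$ are at distance $\ge\delta_0$, and obtains a contradiction once $\delta_*<\delta_0/200$. Your phrase ``the rescaling was arranged to place singularities at definite distance from the center'' is precisely this idea, but the near-minimal-pair selection is the mechanism that makes it work, and it replaces the energy-quantization step entirely.
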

\begin{proof}
For all small $\delta$ define $r_{M,\delta}(x)=\sup\{r:r^2\sup_{y\in B_r(x)}\big{(}|\text{Sec}|+\text{inj}^{-2}\big{)}<\delta\}$. From Theorem \ref{singtangent}, there are finitely many singularities inside $B_{19\rho_0/10}(y_0)$ when both $\epsilon_0$ and $r_M$ are chosen small enough. 
\begin{claim}
\label{key}
There exists a small universal constant $\delta_*<10^{-6}$, upon further decreasing $\epsilon_0$ and $r_M$, if $10\rho_0<r_M$ and $\int_{B_{2\rho_0}(y_0)}|F_A|^2<\epsilon_0$, then for any two singularities $x_*, y_*\in B_{3\rho_0/2}(y_0)$ we have $\text{dist}(x_*,y_*)>\delta_*\rho_0$.
\end{claim}
\begin{proof}
Let $\delta_*<10^{-6}$ be some small number to be specified. The proof of Claim \ref{key} goes by contradiction. Were the claim not true for $\delta_*$, there would exist sequences $\epsilon_i\to 0$, $\delta_i\to 0$, $\{w_i\}_i\subseteq M$, $\{A_i\}_i$, the associated Coulomb minimizers $\{\Theta_{0,i}\}_i$, and a sequence of pairs of singularities of $\{\Theta_{0,i}\}_i$ denoted by $\{x_{*,i},y_{*,i}\}$, such that $\int_{B_{2\rho_i}(w_i)}|F_{A_i}|^2<\epsilon_i$, $10\rho_i<r_{M,\delta_i}(w_i)$, $x_{*,i},y_{*,i}\in B_{\rho_i}(w_i)$ while $\text{dist}(x_{*,i},y_{*,i})\le \delta_*\rho_i$. Due to the finiteness of singularities inside $B_{19\rho_i/10}(w_i)$, we might find for each $i$ a pair of singularities $\{x_i,y_i\}\subseteq B_{17\rho_i/10}(w_i)$ such that:
\begin{equation}
\label{noncollision}
\text{dist}(x_i,y_i)<100\inf_{x,y\in S(\Theta_{0,i})\cap B_{\sqrt{\text{dist}(x_i,y_i)}}(x_i)}\text{dist}(x,y).
\end{equation}
To find such a pair we applied a discrete version of the same method being used in the proof of Theorem \ref{apr}. Hence we choose to omit here, and present its proof in details in Chapter \ref{gle}. However, it needs to be pointed out that $\delta_*<10^{-6}$ is necessary for this fact. Next, let us rescale each $B_{2\rho_i}(w_i)$ such that $\text{dist}(\hat{x}_{i},\hat{y}_{i})=100\delta_*$. For convenience, we could identify all $x_i$ as $\hat{x}$; also, upon passing to a subsequence we might assume that $y_i\to \hat{y}$. Clearly,
\begin{equation}
\label{collision}
\text{dist}(\hat{x},\hat{y})=100\delta_*(<1/2).
\end{equation}
Under this identification, $\{\Theta_{0,i}\}_i$ become $\{\hat{\Theta}_{0,i}\}_i$ and $\{A_i\}_i$ become $\{\hat{A}_i\}_i$. Let $\sigma_i$ be an Uhlenbeck gauge of $\hat{A}_i$ defined on $B_{2}(\hat{x},g_i)$ with connection form $\hat{A}_i^*$ that satisfies \eqref{uhgauge}. Then denote by $\hat{\Theta}_{0,i}^*$ the trivialization of $\{\hat{\Theta}_{0,i}\}_i$ under $\sigma_i$. Clearly, $g_i$ converges to $g_{\text{Euc}}$ in $C^{1,\alpha}$ and $\hat{A}_i\to A_{\text{flat}}$ in $W^{1,2}(B_{3/2}(\hat{x},g_{\text{Euc}}))$. Moreover, by Corollary \ref{keycoro}, we have $\hat{\Theta}_{0,i}^*$ converges strongly in $W^{1,2}(B_1(\hat{x},g_{\text{Euc}}))$ to a stable-stationary harmonic map $\Theta_{\infty}: B_1(\hat{x},g_{\text{Euc}})\to\mathbb{S}^3$. Now according to the theory of the stable-stationary harmonic maps from $B^{(4)}_1(\hat{x},g_{\text{Euc}})$ into $\mathbb{S}^3$ (see, for instance, Theorem 3.4.12 of \cite{LW08}), there exists universal $\delta_0$, such that if $\Theta_{\infty}$ is a stable-stationary harmonic map on $B_{1}(\hat{x},g_{\text{Euc}})$, then the distance between two singularities in $B_{1/2}(\hat{x})$ are bounded below by $\delta_0$. On the other hand, due to \eqref{noncollision} and the definition of $S(\Theta_{\infty})$, both $\hat{x}$ and $\hat{y}$ are singularities of $\Theta_{\infty}$ inside $B_{1/2}(\hat{x})$. Hence $d(\hat{x},\hat{y})\ge \delta_0$. However, by choosing $\delta_*<\delta_0/200$ from the beginning of the proof, then \eqref{collision} gives a contradiction. This suffices to conclude Claim \ref{key} by simply taking $r_M=\inf\{r_{M,\delta_0}(x):x\in M\}$.
\end{proof}
Evidently, Claim \ref{key} will be enough to conclude Theorem \ref{numbersing}. Indeed, by Claim \ref{key} there exist $\epsilon_0$ and $\delta_0$, such that under the condition $10\rho_0<r_{M,\delta_0}(y_0)$ together with $\int_{B_{2\rho_0}(y_0)}|F_A|^2<\epsilon_0$, one has $\#\{S(\Theta_0)\cap B_{3\rho_0/2}(y_0)\}\le N_0\equiv 1000^4\delta_0^{-4}$. This is the end of the proof of Theorem \ref{numbersing}.
\end{proof}
\bigskip

\section{$\mathcal{L}^{4,\infty}$-estimate under small curvature assumption}
\label{gle}
In this section, we achieve the $\mathcal{L}^{4,\infty}$ estimate of $A_0$ under the curvature smallness hypothesis. More precisely, the main result in this section is the following theorem:
\begin{theorem}
\label{4localeps}
Under the same assumptions as in Theorem \ref{main}, there exists $\epsilon_0$, $r(M)$, and $C_0$ such that for all $B_{4r}(x_0)\subseteq M$ with $10r<r(M)$ and $\int_{B_{4r}(x_0)}|F_A|^2<\epsilon_0$, it holds $\|A_0\|_{\mathcal{L}^{4,\infty}(B_{r}(x_0))}\le C_0$.
\end{theorem}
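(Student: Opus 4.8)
The plan is to deduce Theorem \ref{4localeps} by assembling the two main ingredients singled out in the outline: the effective lower bound on the $L^4$-integrability radius away from the singular set (Theorem \ref{radbelow}; see Theorem \ref{apr} for the full statement), and the effective bound on the number of singularities in a ball of small curvature (Theorem \ref{numbersing}). Following the outline, I would first establish the estimate assuming $A$ smooth and recover the general $W^{1,2}$ case from the approximation argument of Section \ref{asc}; I would also use that by the $\epsilon$-regularity Theorem \ref{eps} the singular set $S(\Theta_0)$ of \eqref{sing} coincides with $\{x:s_{|A_0|}(x)=0\}$, so that ``singularity'' is unambiguous.

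First I would apply Theorem \ref{numbersing} with $\rho_0=r$, $y_0=x_0$ — the hypotheses hold since $10r<r(M)$ and $\int_{B_{2r}(x_0)}|F_A|^2\le\int_{B_{4r}(x_0)}|F_A|^2<\epsilon_0$ — to write $S(\Theta_0)\cap B_{3r/2}(x_0)=\{p_1,\dots,p_N\}$ with $N\le N_0$ universal. Next, for each $x\in B_r(x_0)$ with $s_{|A_0|}(x)>0$ I would produce a pointwise lower bound $s_{|A_0|}(x)\ge c\,\min\!\bigl(r,\dist(x,S(\Theta_0))\bigr)$. Indeed, any singularity within distance $r/2$ of such an $x$ automatically lies in $B_{3r/2}(x_0)$, so $\dist(x,S(\Theta_0))\ge\min\bigl(r/2,\min_i\dist(x,p_i)\bigr)$; setting $\rho_x=\tfrac14\min(r,\dist(x,S(\Theta_0)))$, the ball $B_{2\rho_x}(x)$ lies in $B_{3r/2}(x_0)\subseteq B_{4r}(x_0)$, hence has curvature energy $<\epsilon_0$, and it misses $S(\Theta_0)$ since $2\rho_x\le\tfrac12\dist(x,S(\Theta_0))$. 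Theorem \ref{radbelow} applied on $B_{2\rho_x}(x)$ then gives $s_{|A_0|}(x)\ge c_0\rho_x$, so that $1/s_{|A_0|}(x)\le C\bigl(r^{-1}+\sum_{i=1}^N\dist(x,p_i)^{-1}\bigr)$ for a.e.\ $x\in B_r(x_0)$.

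Finally I would pass to the $\mathcal{L}^{4,\infty}$ estimate. Since $4>1$, the quasi-norm of $L^{4,\infty}$ is equivalent to a genuine norm, so by the triangle inequality $\|A_0\|_{\mathcal{L}^{4,\infty}(B_r(x_0))}=\|1/s_{|A_0|}\|_{L^{4,\infty}(B_r(x_0))}$ is at most $C\bigl(\|r^{-1}\|_{L^{4,\infty}(B_r(x_0))}+\sum_{i=1}^N\|\dist(\cdot,p_i)^{-1}\|_{L^{4,\infty}(B_r(x_0))}\bigr)$. The first term is bounded by $\|r^{-1}\|_{L^4(B_r(x_0))}=r^{-1}\Vol(B_r(x_0))^{1/4}\le C(M)$, and each of the remaining $N\le N_0$ terms is bounded by $\|\dist(\cdot,p_i)^{-1}\|_{L^{4,\infty}(M)}$, which is finite and uniform in the base point because on a compact $4$-manifold small geodesic balls are uniformly bi-Lipschitz to Euclidean balls and $|\{|y|^{-1}>\lambda\}\cap\RR^4|=\omega_4\lambda^{-4}$. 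Summing the $N\le N_0$ contributions gives $\|A_0\|_{\mathcal{L}^{4,\infty}(B_r(x_0))}\le C_0(M)$, as desired.

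The substantive mathematics is entirely in the two inputs; the present argument is essentially bookkeeping. The step I expect to require the most care is the construction of the pointwise lower bound for $s_{|A_0|}$ in the second paragraph: the ball on which Theorem \ref{radbelow} is invoked must simultaneously carry small curvature, avoid $S(\Theta_0)$, and have radius comparable to $\dist(x,S(\Theta_0))$, so that the resulting bound is strong enough to place $1/s_{|A_0|}$ in weak $L^4$ rather than in a strictly larger Lorentz-type space. The truncation at $r$ — the ``$1/r$'' term — is precisely what handles the part of $B_r(x_0)$ far from every singularity, where $s_{|A_0|}$ is merely comparable to $r$.
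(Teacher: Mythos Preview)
Your proposal is correct and follows essentially the same approach as the paper: the paper's proof of Theorem \ref{4localeps} is a one-line deduction from Theorem \ref{numbersing} and Theorem \ref{apr} via the relation $s_{A_0}(x)\ge c_*\,r_{\Theta_0,\epsilon_0}(x)$, and you have simply spelled out the bookkeeping (the pointwise bound $1/s_{|A_0|}\le C(r^{-1}+\sum_i\dist(\cdot,p_i)^{-1})$ and the weak-$L^4$ estimate on each summand) that the paper leaves implicit.
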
 
We begin by introducing the following definition:
\begin{definition}
\label{locnewnorm}
Let us define $s_{A_0,\epsilon_0}(x)\eqqcolon\sup\{0\le r\le r_{F_A,\epsilon_0}(x):\sup_{y\in B_{r}(x)}\int_{B_r(y)}|A_0|^4\le 1\}.$ 
\end{definition}
Next, we define the notions of ``curvature-scale'' $r_{F_A,\epsilon_0}(x)$ and ``regularity scale" $r_{\Theta_0,\epsilon_0}(x)$ as follows:
\begin{definition}
\label{rads}
\begin{eqnarray}
\begin{split}
&r_{F_A,\epsilon_0}(x)=\sup\{r>0:\sup_{y\in B_{r}(x)}\int_{B_r(y)}|F_A|^2<\epsilon_0\}.\\
&r_{\Theta_0,\epsilon_0}(x)=\sup\{0\le r\le r_{F_A,\epsilon_0}(x):\sup_{y\in B_{r}(x)}r^{-2}\int_{B_r(y)}|A_0|^2\le \theta_1\eta\}.
\end{split}
\end{eqnarray}
where $\eta$ is the same as in \eqref{sing}, and $\theta_1$ is a universal small number such that $s_{A_0,\epsilon_0}\ge 100^{-1}r_{\Theta_0,\epsilon_0}$. 
\end{definition}
\begin{proposition}
\label{posrad}
If $S(\Theta_0)\cap B_r(x)=\emptyset$, then $\inf_{w\in B_{r/2}(x)}r_{\Theta_0,\epsilon_0}(w)>0$. Here $S(\Theta_0)$ is the singular set of $\Theta_0$ defined in \eqref{sing}. 
\end{proposition}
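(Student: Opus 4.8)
The plan is to argue by contradiction, combining the strong compactness of rescaled Coulomb minimizers (Corollary~\ref{keycoro}) with the tangent cone analysis of Section~\ref{tconestru}. Throughout I use the standing smoothness of $A$ in this part of the paper (the general $W^{1,2}$ case being reduced to it in Section~\ref{asc}) and the identity $|A_0|=|\nabla_A\Theta_0|$, which lets me pass between the energy density in \eqref{sing} and the one in Definition~\ref{rads}. First I would observe that, $F_A$ being smooth, $\int_{B_\rho(y)}|F_A|^2\le\|F_A\|_{L^\infty(M)}^2\,|B_\rho(y)|\le C\rho^4$, so there is $\rho_\ast>0$ with $r_{F_A,\epsilon_0}(w)\ge\rho_\ast$ for every $w\in M$; hence $r_{\Theta_0,\epsilon_0}(w)$ can only degenerate through failure of the scale-invariant bound $\sup_{y\in B_r(w)}r^{-2}\int_{B_r(y)}|A_0|^2\le\theta_1\eta$. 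Supposing the conclusion false, I take $w_i\in B_{r/2}(x)$ with $r_{\Theta_0,\epsilon_0}(w_i)\to0$ and, after passing to a subsequence, $w_i\to w_\infty\in\overline{B_{r/2}(x)}\subseteq B_r(x)$, so that $w_\infty\notin S(\Theta_0)$, i.e. $\liminf_{\rho\to0}\rho^{-2}\int_{B_\rho(w_\infty)}|\nabla_A\Theta_0|^2<\eta/2$.

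The heart of the matter — and the step I expect to be the main obstacle — is to upgrade this to vanishing of the density at $w_\infty$:
\[
\Theta_{w_\infty}:=\lim_{\rho\to0}\rho^{-2}\int_{B_\rho(w_\infty)}|\nabla_A\Theta_0|^2=0 ,
\]
where the limit exists by the argument of Claim~\ref{uni} (which applies at any point of $M$). To prove this I would repeat the blow-up construction from the proof of Theorem~\ref{singtangent} at $w_\infty$: choosing Uhlenbeck gauges for the rescaled connections $A_i(\cdot)=r_iA(w_\infty+r_i(\cdot-w_\infty))$ and invoking Corollary~\ref{keycoro} (whose hypothesis $\|F_{A_i}\|_{L^2}\to0$ holds by smoothness of $A$) produces a subsequential $W^{1,2}_{\mathrm{loc}}$-limit $\Theta^*_\infty\colon\mathbb{R}^4\to\mathbb{S}^3$ that is a stable–stationary harmonic map, and the computation of Claim~\ref{rigid} shows $R^{-2}\int_{B_R(0)}|d\Theta^*_\infty|^2\equiv\Theta_{w_\infty}$ for all $R>0$, i.e. $\Theta^*_\infty$ is a cone. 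By the rigidity of stable–stationary cones $\mathbb{R}^4\to\mathbb{S}^3$ (\cite{LW06},\,\cite{naka06}) a non-constant such cone equals $T(x/|x|)$ for some $T\in O(3)$, whose energy density is a fixed positive constant $Q_0$; since $Q_0\ge\eta/2$ (otherwise a point with such a tangent map would escape detection by $S(\Theta_0)$) and $\Theta_{w_\infty}<\eta/2$, the cone $\Theta^*_\infty$ must be constant, forcing $\Theta_{w_\infty}=0$. Equivalently, $0\notin S(\Theta^*_\infty)$, so by the $\varepsilon$-regularity for stable–stationary maps into $\mathbb{S}^3$ the map $\Theta^*_\infty$ is smooth near the origin, hence constant.

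With $\Theta_{w_\infty}=0$ the remainder is routine. I would apply Corollary~\ref{a0bound} with $\delta_0:=\theta_1\eta$ to obtain constants $\epsilon_0,\eta_0$, and then — using $\rho^{-2}\int_{B_\rho(w_\infty)}|A_0|^2\to0$ and $\int_{B_\rho(w_\infty)}|F_A|^2\le C\rho^4\to0$ — fix $\rho_0>0$ small (with $\rho_0\le\rho_\ast$ and $10\rho_0$ below the threshold $r(M)$ of that corollary) so that $\int_{B_{10\rho_0}(w_\infty)}|F_A|^2<\epsilon_0$ and $\rho_0^{-2}\int_{B_{10\rho_0}(w_\infty)}|A_0|^2<\eta_0$. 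Corollary~\ref{a0bound} then gives $\sigma^{-2}\int_{B_\sigma(z)}|A_0|^2<\theta_1\eta$ for every $B_\sigma(z)\subseteq B_{4\rho_0}(w_\infty)$. Since for $w\in B_{\rho_0/2}(w_\infty)$ and $y\in B_{\rho_0/4}(w)$ one has $B_{\rho_0/4}(y)\subseteq B_{\rho_0}(w_\infty)\subseteq B_{4\rho_0}(w_\infty)$, this yields $\sup_{y\in B_{\rho_0/4}(w)}(\rho_0/4)^{-2}\int_{B_{\rho_0/4}(y)}|A_0|^2<\theta_1\eta$, and together with $\rho_0/4\le\rho_\ast\le r_{F_A,\epsilon_0}(w)$ it shows $r_{\Theta_0,\epsilon_0}(w)\ge\rho_0/4$ for all $w\in B_{\rho_0/2}(w_\infty)$ — contradicting $w_i\to w_\infty$ and $r_{\Theta_0,\epsilon_0}(w_i)\to0$. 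Apart from the density-vanishing step, the only point needing care is the elementary inequality $Q_0\ge\eta/2$ comparing the fixed energy of the cone $T(x/|x|)$ with the threshold in \eqref{sing}.
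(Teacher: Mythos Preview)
Your argument is correct, but it is considerably more elaborate than the paper's. The paper gives a three-line direct proof: since $S(\Theta_0)\cap B_r(x)=\emptyset$, every point of $\overline{B_{2r/3}(x)}$ admits a small ball on which the hypotheses of the $\epsilon$-regularity Theorem~\ref{eps} are met (curvature smallness coming from smoothness of $A$), and a finite cover yields $\int_{B_{2r/3}(x)}|A_0|^4<\infty$. Absolute continuity of the integral then gives $r_0>0$ with $\int_{B_s(w)}|A_0|^4<\theta_1\eta$ for all $w\in B_{r/2}(x)$ and $s\le r_0$, and H\"older's inequality converts this into the scale-invariant $L^2$ bound defining $r_{\Theta_0,\epsilon_0}$. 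No contradiction, no blow-up, no limiting map.

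By contrast, to establish $\Theta_{w_\infty}=0$ at your accumulation point you invoke the full blow-up machinery of Section~\ref{tconestru}: Corollary~\ref{keycoro}, Claims~\ref{uni} and~\ref{rigid}, and the classification of stable--stationary cones into $\mathbb{S}^3$ from \cite{LW06},~\cite{naka06}. This works (the inequality $Q_0\ge\eta/2$ you flag holds simply because $Q_0=3\pi^2$ is a fixed number while $\eta$ is chosen small in Theorem~\ref{eps}; your ``escape detection'' phrasing is a roundabout way of saying this), but it is unnecessary: the very same conclusion $\Theta_{w_\infty}=0$ already follows from Theorem~\ref{eps} plus H\"older applied at $w_\infty$, since local $L^4$-integrability of $A_0$ forces $\rho^{-2}\int_{B_\rho(w_\infty)}|A_0|^2\le C\big(\int_{B_\rho(w_\infty)}|A_0|^4\big)^{1/2}\to0$. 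The paper's route avoids the rigidity input entirely and in principle works for any simply-connected simple compact $G$; your route ties an elementary proposition to the deepest external ingredient in the paper. It is also worth noting that you are essentially rehearsing the contradiction/blow-up scheme of Theorem~\ref{apr} one proposition early --- Proposition~\ref{posrad} is precisely the soft input that makes the point-picking step \eqref{ptmin} in that later proof terminate.
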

\begin{proof}
By the fact $S(\Theta_0)\cap B_r(x)=\emptyset$ and Theorem \ref{eps}, we have
\begin{equation}
\int_{B_{2r/3}(x)}|A_0|^4<\infty,
\end{equation}
Hence, there exists some $r_0>0$ such that $\int_{B_s(w)}|A_0|^4<\theta_1\eta$ for all $w\in B_{r/2}(x)$. Further, by H\"older's inequality we have
\begin{equation}
s^{-2}\int_{B_s(w)}|A_0|^2\le \int_{B_s(w)}|A_0|^4,\ \forall s\le 1/2.
\end{equation}
From this we immediately conclude that $\inf_{w\in B_{r/2}(x)}r_{\Theta_0,\epsilon_0}(w)\ge r_0/2>0$.
\end{proof}
The next theorem improves the above proposition by giving an effective (positive) lower bound on the regularity scale $r_{\Theta_0,\epsilon_0}(x)$: 
\begin{theorem}
\label{apr}
Let $P\to M$ be a smooth $\text{SU}(2)$-principal bundle over a compact $4$-manifold and $A$ be a smooth connection on $P$. Let $\Theta_0$ be a Coulomb minimizer obtained in Subsection \ref{prep}. Then there exist positive constants $\epsilon_0$ and $c_0\le10^{-4}$. Given $\int_{B_{2\rho_0}(y_0)}|F_A|^2<\epsilon_0$ and $S(\Theta_0)\cap B_{2\rho_0}(y_0)=\emptyset$. Then for each $x\in B_{\rho_0}(y_0)$ it holds $r_{\Theta_0,\epsilon_0}(x)>c_0\rho_0$.
\end{theorem}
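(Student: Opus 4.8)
The plan is a blow-up/compactness argument by contradiction. Suppose no admissible pair $(\epsilon_0,c_0)$ exists; then we may choose $\epsilon_i\downarrow 0$ and $c_i\downarrow 0$ and find smooth $\mathrm{SU}(2)$-connections $A_i$, Coulomb minimizers $\Theta_{0,i}$ with connection forms $A_{0,i}$, balls $B_{2\rho_i}(y_{0,i})$ with $\int_{B_{2\rho_i}(y_{0,i})}|F_{A_i}|^2<\epsilon_i$ and $S(\Theta_{0,i})\cap B_{2\rho_i}(y_{0,i})=\emptyset$, and points $x_i\in B_{\rho_i}(y_{0,i})$ with $r_{\Theta_{0,i},\epsilon_i}(x_i)\le c_i\rho_i$. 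Rescaling, I take $\rho_i=1$. Since the total curvature on $B_2(y_{0,i})$ is $<\epsilon_i$, the curvature scale $r_{F_{A_i},\epsilon_i}$ is bounded below by a universal positive constant on $B_{3/2}(y_{0,i})$, so at the tiny scales that matter the constraint $r\le r_{F_{A_i},\epsilon_i}$ in the definition of $r_{\Theta_{0,i},\epsilon_i}$ is never active; by Proposition \ref{posrad}, $r_{\Theta_{0,i},\epsilon_i}>0$ on $B_2(y_{0,i})$. I would then run the standard point-picking iteration starting at $x_i$: whenever some $y$ with $|y-x|<\sqrt{r_{\Theta_{0,i},\epsilon_i}(x)}$ has $r_{\Theta_{0,i},\epsilon_i}(y)\le\tfrac12 r_{\Theta_{0,i},\epsilon_i}(x)$, pass to $y$ and repeat. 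The total displacement is $\lesssim\sqrt{c_i}\to0$, so all iterates stay in $B_{3/2}(y_{0,i})$, and the iteration must terminate, since otherwise the iterates converge to a point with vanishing $r_{\Theta_{0,i},\epsilon_i}$, contradicting Proposition \ref{posrad}. This yields $x_i^*$ with $m_i:=r_{\Theta_{0,i},\epsilon_i}(x_i^*)\le c_i\to0$, with $r_{\Theta_{0,i},\epsilon_i}\ge\tfrac12 m_i$ on $B_{\sqrt{m_i}}(x_i^*)$, and, $m_i$ being the critical energy scale, $\sup_{y\in B_{m_i}(x_i^*)}m_i^{-2}\int_{B_{m_i}(y)}|A_{0,i}|^2=\theta_1\eta$.

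Next I would blow up at $x_i^*$ by the factor $m_i$, obtaining connections $\tilde A_i$ and Coulomb minimizers $\tilde\Theta_{0,i}$ on $B_{R_i}(0)$ with $R_i:=m_i^{-1/2}\to\infty$. Scaling-invariance in dimension $4$ gives $\int_{B_R(0)}|F_{\tilde A_i}|^2\le\int_{B_2(y_{0,i})}|F_{A_i}|^2<\epsilon_i\to0$ on every fixed ball; the rescaled metrics converge to the Euclidean one in $C^{1,\alpha}_{\mathrm{loc}}$; and by construction $r_{\tilde\Theta_{0,i},\epsilon_i}\ge\tfrac12$ on $B_{R_i}(0)$ while $\sup_{y\in B_1(0)}\int_{B_1(y)}|\tilde A_{0,i}|^2=\theta_1\eta$. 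Applying Corollary \ref{keycoro} on each fixed ball and diagonalizing produces a subsequential limit $\Theta_\infty:\mathbb{R}^4\to\mathbb{S}^3$, a stable-stationary harmonic map, with $\tilde\Theta_i^*\to\Theta_\infty$ strongly in $W^{1,2}_{\mathrm{loc}}$. Passing the bounds to the limit: $\int_{B_{1/2}(z)}|\nabla\Theta_\infty|^2\le\tfrac14\theta_1\eta$ for every $z$, so (since $\theta_1\eta<\eta/2$) the singular set of $\Theta_\infty$ is empty and $\Theta_\infty$ is smooth; and $\sup_{y\in B_1(0)}\int_{B_1(y)}|\nabla\Theta_\infty|^2=\theta_1\eta>0$, so $\Theta_\infty$ is non-constant. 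Finally, applying the stability estimate \eqref{statheta} to $\tilde\Theta_{0,i}$ with standard cutoffs (its Uhlenbeck form satisfies $\|\tilde A_i^*\|_{L^2}\le C_{Uh}\sqrt{\epsilon_i}\to0$) and letting $i\to\infty$ yields the quadratic bound $\int_{B_\rho(p)}|\nabla\Theta_\infty|^2\le C\rho^2$ for all $p\in\mathbb{R}^4$ and $\rho>0$; in particular $\Phi_p(\rho):=\rho^{-2}\int_{B_\rho(p)}|\nabla\Theta_\infty|^2$ is bounded uniformly.

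To close the loop I would analyze $\Theta_\infty$ at infinity. Since it is harmonic for the Euclidean metric, $\Phi_0$ is monotone non-decreasing and bounded, hence has a finite limit, and a blow-down subsequence of $\Theta_\infty$ converges to a tangent map at infinity $\Theta_\infty^\infty$, which is a stable-stationary harmonic cone map $\mathbb{R}^4\to\mathbb{S}^3$. By the rigidity for such maps (\cite{LW06}, \cite{naka06}; compare Theorem \ref{singtangent}), $\Theta_\infty^\infty$ is either constant or, up to an isometry of $\mathbb{S}^3$, the map $x\mapsto x/|x|$. In the first case $\Phi_0(\infty)=0$, so monotonicity forces $\Phi_0\equiv0$ and $\Theta_\infty$ constant, contradicting non-constancy. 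The second case is excluded topologically: as $\Theta_\infty$ is smooth on all of $\mathbb{R}^4$, the Brouwer degree of $\Theta_\infty|_{\partial B_\rho(0)}:\mathbb{S}^3\to\mathbb{S}^3$ is independent of $\rho$; letting $\rho\to0$ it equals $0$, since the restriction converges uniformly to a constant map, whereas if $\Theta_\infty^\infty\cong x/|x|$ it would equal $\pm1$ for $\rho$ large (using that the quadratic bound together with Theorem \ref{eps} upgrades the $W^{1,2}$-convergence to $C^0$ on spheres away from the cone point). Either way we obtain a contradiction, proving the theorem.

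The step I expect to be most delicate is exactly this interplay between the point-selection and the limiting analysis: one must guarantee that the rescaled limit is a genuinely entire stable-stationary harmonic map that is simultaneously smooth everywhere and non-constant, and then eliminate all possibilities for its asymptotics. Stability, via \eqref{statheta}, only furnishes quadratic energy growth, which by itself does not force constancy (the map $x/|x|$ satisfies it); the essential extra inputs are the smoothness of $\Theta_\infty$ coming from the point-selection threshold $\theta_1\eta<\eta/2$, the degree obstruction, and the cone-map rigidity of \cite{LW06}. Checking that $\Phi_0$ admits a bona fide tangent map at infinity and that the degrees really match — i.e. upgrading the convergence to enough regularity on the spheres $\partial B_\rho$ — uses the interior estimates of Theorem \ref{eps} on annular regions, and is where the smoothness hypothesis on $A$ re-enters.
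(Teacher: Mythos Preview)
Your proposal is correct and follows essentially the same approach as the paper: contradiction plus point-picking to get an almost-minimal regularity scale, blow-up via Corollary \ref{keycoro} to a smooth nontrivial stable-stationary harmonic map $\Theta_\infty:\mathbb{R}^4\to\mathbb{S}^3$ with quadratic energy growth from \eqref{statheta}, then blowdown combined with the cone-map rigidity of \cite{LW06} and the degree obstruction. The only cosmetic differences are your point-picking factor ($1/2$ versus the paper's $1/100$) and that you treat the constant blowdown case explicitly via monotonicity, whereas the paper absorbs it into ``nontriviality of $\Theta_*$''.
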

\begin{remark}
Roughly speaking, Theorem \ref{apr} allows us to obtain an effective $L^4$-regularity estimate of $A_0$ on a ball by simply knowing its $L^4$ norm is finite on a twice larger ball. The local $\mathcal{L}^{4,\infty}$-estimate will then follow as a consequence of Theorem \ref{numbersing} and Theorem \ref{apr}.
\end{remark}
\begin{proof}
Up to scaling and translation, we assume $\rho_0=1$, $y_0=0$, and $r_M\ge 100$ (i.e. $|\text{Sec}|<10^{-4}$ and $\text{inj}>100$) to hold on all the balls involved in the proof later. Were Theorem \ref{apr} not true, then there would exist sequences $\epsilon_i\to 0$, $10^{-4}>c_i\to 0$, $\{A_i\}_i$, the associated Coulomb minimizers $\{\Theta_{0,i}\}$, and $\{y_i\}\subseteq B_{1}(y_0)$ such that (upon further scaling) $\int_{B_{2}(y_0)}|F_{A_i}|^2<\epsilon_i$, $r_{\Theta_{0,i},\epsilon_i}(y_i)<c_i$; furthermore,
\begin{equation}
\label{finiteness}
S(\Theta_{0,i})\cap B_2(y_0)=\emptyset.
\end{equation}
Next, we proceed as follows to find $x_i\in B(y_0,\frac{3}{2})$ for each $i$ such that
\begin{equation}
\label{ptmin}
0\leftarrow r_{\Theta_{0,i},\epsilon_i}(x_i)<100\inf_{y\in B(x_i,\sqrt{ r_{\Theta_{0,i},\epsilon_i}(x_i)})}r_{\Theta_{0,i},\epsilon_i}(y).
\end{equation}
For brevity, let us denote $r_{\Theta_{0,i},\epsilon_i}(z)$ by $r_i(z)$ from now on. Starting with $y_i$, we are facing two possible alternatives. Either
\begin{equation}
\label{minpt}
r_{i}(y_i)<100\inf_{y\in B(y_i,\sqrt{r_i(y_i)})}r_{i}(y);
\end{equation}
or, there exists $y_i^{(1)}\in B(y_i,\sqrt{r_i(y_i)})$ such that
\begin{equation}
\label{nminpt}
r_{i}(y_i)\ge100 r_i(y_i^{(1)}).
\end{equation} 
In the first case scenario, we immediately obtain \eqref{ptmin} simply by setting $y_i\coloneqq x_i$. In the second case, we need to start over with $y_i^{(1)}$ and repeat the dichotomy strategy. The process terminates in finite steps. Indeed, suppose the process has been iterated for $K$ times, for some large $K$ to be determined. Due to the relations that $r_i(y_i^{(\alpha+1)})<1/100r_{i}(y_i^{\alpha})$ and $r_i(y_i^{(0)})<c_i<10^{-4}$, we have
$$\text{dist}(w_i^{K},0)\le \sum_{\alpha=0}^K\sqrt{r_i(y_i^{(\alpha)})}+1<\frac{3}{2}.$$
In other words, $y_i^{K}$ lies in $B_\frac{3}{2}(0)$. Here we use the convenient notation that $y_i^{(0)}=y_i$. Meanwhile, $r_i(w_i^{(K)})$ decays to $0$ as $K$ goes to infinity. On the other hand, let us apply Proposition \ref{posrad} to $\Theta_i$ and $B_2(0)$ to see that $r_i(\cdot)$ has a positive lower bound in $B_\frac{3}{2}(0)$. Hence for sufficiently large $K$ we arrive at a contradiction. Therefore, one arrives at a desired point $x_i$ which satisfies \eqref{ptmin} by repeating above process finite steps. Again, for the sake of brevity let us denote $r_i(x_i)$ by $r_i$. 
\bigskip

Next, for convenience let us rescale and then translate $B_{r_i}(x_i)$ to $B_{1}(0)$ for all $i$, where $0$ denotes the orgin of $\mathbb{R}^4$. Under this change, $\{\Theta_{0,i}\}_i$ become $\{\hat{\Theta}_{0,i}\}_i$ and $\{A_i\}_i$ become $\{\hat{A}_i\}_i$. Clearly, $\hat{A}_i\to A_{\text{flat}}$ strongly in $W^{1,2}_{\text{loc}}(\mathbb{R}^4)$ and the underlying metric $r_i^{-2}g_i$ converges to $g_{\text{Euc}}$ in $C^{1,\alpha}$. Note the latter convergence is a consequence of our assumption at the very beginning that $r_{(M,g_i)}\ge 100$. Let $\sigma_i$ be an Uhlenbeck-gauge of $\hat{A}_i$ on $B_{r_i^{-1}}(x_i)$ with connection form $\hat{A}_i^*$ that satisfies \eqref{uhgauge}. Under the Uhlenbeck gauge $\sigma_i$, the frame $\hat{\Theta}_{0,i}$ admits a trivialization denoted by $\hat{\Theta}^*_{0,i}$. By Corollary \ref{keycoro}, $\hat{\Theta}^*_{0,i}$ converges strongly in $W^{1,2}_{\text{loc}}(\mathbb{R}^4)$ to a stable-stationary harmonic map $\Theta_{\infty}$ from $\mathbb{R}^4\to \text{SU}(2)$. Due to the strong $W^{1,2}_{\text{loc}}(\mathbb{R}^4)$ convergence, we have $r_{\Theta_{\infty}}\ge 1$ on $\mathbb{R}^4$. Upon further decreasing $\eta$ if necessary, we apply the standard $\epsilon$-regularity theorem of stationary harmonic map to conclude that $\Theta_{\infty}$ is smooth.  
\medskip

Next, we show that $\Theta_{\infty}$ is nontrivial (i.e. a non-constant map). Indeed, according to the definition of $r_{\Theta_{0,i}}$, the following holds:
\begin{eqnarray}
\begin{split}
\theta_1\eta=r_{\Theta_{0,i}}(x_i)^{-2}\int_{B_{2r_{\Theta_{0,i}}(x_i)}(x_i)}|\nabla_{A_i}\Theta_{0,i}|^2=\int_{B_2(0)}|\nabla_{\hat{A}_i}\hat{\Theta}^*_{0,i}|^2.
\end{split}
\end{eqnarray}
Again by the strong $W^{1,2}_{\text{loc}}(\mathbb{R}^4)$ convergence, we have
\begin{eqnarray}
\begin{split}
\int_{B_2(0)}|d \Theta_{\infty}|^2\ge \theta_1\eta.
\end{split}
\end{eqnarray}
Therefore $\Theta_{\infty}$ is nontrivial. Further, we need the following claim:
\begin{claim}\label{blowdown}
\begin{equation}
\label{blowdown}
\sup_{R>0}R^{-2}\int_{B_{R}(0)}|d\Theta_{\infty}|^2<\infty.
\end{equation}
\end{claim}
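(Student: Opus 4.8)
The plan is to obtain the bound on $R^{-2}\int_{B_R(0)}|d\Theta_\infty|^2$ by pushing the monotonicity-type information from Theorem \ref{apr}'s scaling setup through the strong $W^{1,2}_{\text{loc}}$ convergence, exactly as in the proof of Claim \ref{rigid} above. The key point is that $\Theta_\infty$ is obtained as a blow-up limit of the rescalings $\hat\Theta^*_{0,i}$ at the scale $r_i = r_{\Theta_{0,i}}(x_i)$, and the selection property \eqref{ptmin} controls the regularity scale of $\hat\Theta^*_{0,i}$ from below on a fixed-size ball. So the main task is to convert ``regularity scale bounded below uniformly'' into ``scale-invariant energy bounded above uniformly.''

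First I would fix arbitrary $R > 0$ and use the stationarity equation \eqref{divfree} (equivalently the almost-monotonicity formula \eqref{mono}) for $\hat\Theta_{0,i}$ on $B_{R}(0)$. Concretely, \eqref{mono} integrated from a small scale to $R$ gives
\begin{eqnarray}
\begin{split}
R^{-2}\int_{B_R(0)}|\nabla_{\hat A_i}\hat\Theta_{0,i}|^2 \le \sigma^{-2}\int_{B_\sigma(0)}|\nabla_{\hat A_i}\hat\Theta_{0,i}|^2 + C\|F_{\hat A_i}\|_{L^2(B_R(0))}^{2}\cdot(\text{energy bound}) + (\text{lower order}),
\end{split}
\end{eqnarray}
so it suffices to bound $\sigma^{-2}\int_{B_\sigma(0)}|\nabla_{\hat A_i}\hat\Theta_{0,i}|^2$ for a single convenient small scale $\sigma$ (say $\sigma = 1$, i.e. the scale at which the regularity radius of $\hat\Theta_{0,i}$ at $x_i$ is normalized to $1$). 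By the definition of $r_{\Theta_0,\epsilon_0}$ in Definition \ref{rads} and the normalization $r_i(x_i)=1$ after rescaling, one has $\int_{B_1(0)}|\nabla_{\hat A_i}\hat\Theta_{0,i}|^2 \le \theta_1\eta$, which is a uniform bound independent of $i$ and independent of $R$. Then $\|F_{\hat A_i}\|_{L^2}\to 0$ and the smooth convergence of the metrics kill the error terms as $i\to\infty$, and the strong $W^{1,2}_{\text{loc}}(\mathbb{R}^4)$ convergence $\hat\Theta^*_{0,i}\to\Theta_\infty$ from Corollary \ref{keycoro} lets me pass to the limit to get $R^{-2}\int_{B_R(0)}|d\Theta_\infty|^2 \le C$ for a constant independent of $R$. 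Taking $\sup_{R>0}$ gives \eqref{blowdown}.

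The main obstacle is making the error term from the curvature in \eqref{mono} uniformly controlled as $R\to\infty$ simultaneously with $i\to\infty$. The curvature error term $2\rho^{-3}\int_{B_\rho(0)}\langle F_i^a,\nabla_{A,a}\Theta_0\rangle(x^i-0^i)$ when integrated from scale $1$ to scale $R$ accumulates like $\sqrt{\epsilon_i}\cdot\sqrt{\text{(energy)}}\cdot|\log R|$ — which is not uniform in $R$ unless one is careful. The fix, as in the proof of Proposition \ref{smallnessprop}, is to work with $\sqrt{\zeta_y(\cdot)}$ and use $(\sqrt{\zeta_y(r)})' \ge -\tfrac{1}{r}\sqrt{\epsilon_i}$; this yields $\sqrt{\zeta_0(R)} \le \sqrt{\zeta_0(1)} + |\log R|\sqrt{\epsilon_i}$, and since I may first send $i\to\infty$ for each fixed $R$ (the selection and energy bound being $i$-uniform for all $R$ at once), the $\sqrt{\epsilon_i}|\log R|$ term vanishes in the limit and $\zeta_0(R)=R^{-2}\int_{B_R(0)}|d\Theta_\infty|^2 \le \zeta_0(1) \le \theta_1\eta$ for every $R>0$. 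Hence $\Theta_\infty$ in fact has monotone scale-invariant energy and \eqref{blowdown} holds with the explicit bound $\theta_1\eta$.
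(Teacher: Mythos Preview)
Your approach has a genuine gap: the almost-monotonicity inequality points in the wrong direction for what you need. The formula \eqref{mono} says that, up to a curvature error, $\rho\mapsto\zeta_y(\rho)=\rho^{-2}\int_{B_\rho(y)}|\nabla_A\Theta_0|^2$ is \emph{nondecreasing}. In particular, from $(\sqrt{\zeta_0(r)})'\ge -r^{-1}\sqrt{\epsilon_i}$ one obtains, after integrating from $1$ to $R>1$,
\[
\sqrt{\zeta_0(R)}\ \ge\ \sqrt{\zeta_0(1)}-\sqrt{\epsilon_i}\,\log R,
\]
which is a \emph{lower} bound on $\zeta_0(R)$, not the upper bound $\sqrt{\zeta_0(R)}\le\sqrt{\zeta_0(1)}+\sqrt{\epsilon_i}\,|\log R|$ that you wrote. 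In the limit $\Theta_\infty$ is a genuine stationary harmonic map, so its scale-invariant energy is exactly monotone nondecreasing in $R$; hence knowing $\zeta_\infty(1)\le\theta_1\eta$ cannot by itself yield $\zeta_\infty(R)\le\theta_1\eta$ for large $R$. Covering $B_R(0)$ by unit balls only produces $R^{-2}\int_{B_R}|d\Theta_\infty|^2\le CR^2$, which is useless.

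The paper does not use monotonicity here at all. Instead it invokes the \emph{stability} estimate, Theorem~\ref{stathm}: unwinding the rescaling, $R^{-2}\int_{B_R(0)}|\nabla_{\hat A_i}\hat\Theta_{0,i}|^2=(r_iR)^{-2}\int_{B_{r_iR}(x_i)}|\nabla_{A_i}\Theta_{0,i}|^2$, and \eqref{sta1} with a cutoff at scale $r_iR$ bounds this by $C\big((r_iR)^{-2}\int_{B_{2r_iR}(x_i)}|A_i^*|^2+1\big)$. The Uhlenbeck-gauge bound \eqref{uhgauge} together with H\"older's inequality then controls $(r_iR)^{-2}\int|A_i^*|^2$ by $\|A_i^*\|_{L^4}^2\le C\epsilon_0$, uniformly in $R$ and $i$. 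Passing to the limit via the strong $W^{1,2}_{\mathrm{loc}}$ convergence gives \eqref{blowdown}. So the missing ingredient in your argument is precisely the second-variation input from Theorem~\ref{stathm}; stationarity alone cannot close the estimate.
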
 
\begin{proof}
Choose any $R$. Then for sufficiently large $i$ we have
\begin{eqnarray}
\begin{split}
\label{blowdown0}
&R^{-2}\int_{B_{R}(0)}|d\Theta_{\infty}|^2=\lim_{i\to \infty}R^{-2}\int_{B_{R}(0,g_i)}|\nabla_{\hat{A}_i}\hat{\Theta}_{0,i}|^2=(r_iR)^{-2}\int_{B_{r_iR}(x_i)}|\nabla_{A_i}\Theta_{0,i}|^2\\
\le&C(r_iR)^{-2}\int_{B_{2r_iR}(x_i)}|A^*_i|^2+C\le C_1
\end{split}
\end{eqnarray}
for some finite constant $C_1$ independent of $R$. Note in the first inequality we have used Theorem \ref{stathm}, while in the second inequality we used H\"older inequality and \eqref{uhgauge}.
\end{proof}
Consider now the blowdown of $\Theta_{\infty}$ at $0$, that is, $\Theta_\infty(R_i^{-1}x)$ for a sequence of $R_i\to \infty$. By \eqref{blowdown} and the theory of stable-stationary harmonic map from $\mathbb{R}^4$ into $\mathbb{S}^3$ (see Section 3.4 of \cite{LW08}), we know that the blowdown sequence converges strongly in $W_{\text{loc}}^{1,2}(\mathbb{R}^4)$ to some cone-stable-stationary map $\Theta_*$. Moreover, due to the non-triviality of $\Theta_{\infty}$, we have $\Theta_*$ is non-trivial. By the rigidity theorem (see Theorem 3.4.10 of \cite{LW08}), $\Theta_*$ is a degree $\pm1$ map. We conclude that the blowdown sequence converge smoothly outside $0$. Indeed, by the strong $W_{\text{loc}}^{1,2}(\mathbb{R}^4)$ convergence of $\Theta_\infty(R_i^{-1}x)$ as well as the smoothness of $\Theta_*$ outside $0$, for each $\epsilon$ and $x\neq 0$ there exists a ball $B_{r_x}(x)$ such that $r_x^{-2}\int_{B_{r_x}(x)}|d\Theta_\infty(R_i^{-1}x)|^2<\epsilon$ for all large $i$. Now by the $\epsilon$-regularity theorem of the stationary harmonic maps, we conclude that the convergence $\Theta_\infty(R_i^{-1}x)\to \Theta_*$ is indeed smooth in $B_{r_x/2}(x)$. Due to the arbitrariness of $x$ we see that the blowdown sequence converge smoothly outside $0$. Hence the degree of $\Theta_\infty(R_i^{-1}x)$ on the unit sphere must be preserved in the limit as $0$. This contradicts the fact that $\Theta_*$ is of degree $\pm1$. Therefore, the conclusion of Theorem \ref{apr} holds for some $0<c_0\le 10^{-4}$. This finishes the proof of Theorem \ref{apr}.
\end{proof}
\begin{proof}[Proof of Theorem \ref{4localeps}]
By the Sobolev embedding $W^{1,2}\hookrightarrow L^4$ in dimension $4$, Definition \ref{4rad}, and Definition \ref{locnewnorm}, we have: 
\begin{eqnarray}
\begin{split}
\label{soborad}
s_{A_0}(x)\ge s_{A_0,\epsilon_0}(x)\ge c_*r_{\Theta_0,\epsilon_0}(x) 
\end{split}
\end{eqnarray}
for some $c_*>0$ depending on the Sobolev constant. In view of \eqref{soborad}, Theorem \ref{4localeps} is an immediate consequence of Theorem \ref{numbersing} and Theorem \ref{apr}. This completes the proof of Theorem \ref{4localeps}.
\end{proof}
\bigskip

\section{Annular-bubble decomposition}
\label{annbub}
The last key ingredient we need for the global $\mathcal{L}^{4,\infty}$-estimate is the annular-bubble decomposition for a smooth connection. This idea of using the annular-bubble region also appeared in \cite{JN16}, \cite{NV16}, and \cite{CJN18}. Firstly let us introduce the following notations:
\begin{eqnarray}
\begin{split}
\zeta_x(r)\eqqcolon\int_{B_{r}(x)}|F_A|^2,\ \overline{\zeta}_x(r)\eqqcolon\sup_{z\in B_{r}(x)}\int_{B_r(z)}|F_A|^2.
\end{split}
\end{eqnarray}
Let $\gamma$ be a small universal constant to be determined later, and $K_0$ be any large integer for now, which will also be fixed by the end of the next section. Now we introduce the following notions of weakly flat regions, annular regions, and bubble regions. We note the following notion is scaling invariant, and hence could be efined
\begin{definition}
\label{weaklyflat}
We say $\mathcal{W}$ is a weakly flat region of scale $r$ if $\mathcal{W}=B_{2r}(p)$, and it satisfies $|\overline{\zeta}_{2r}(p)-\overline{\zeta}_{\gamma^{K_0} r}(p)|<\gamma^{K_0}$. 
\end{definition}
\begin{definition}
We say $\mathcal{A}$ is an annular region $\mathcal{A}$ of scale $r$ if $\mathcal{A}=A_{s,r}(p)$ for some $s\le 5\gamma^{K_0} r$, such that the following hold:\\
(1) $|\overline{\zeta}_{2r}(p)-\overline{\zeta}_{s}(p)|<\gamma^{K_0}$;\\
(2) $\text{while }|\overline{\zeta}_{2r}(x)-\overline{\zeta}_{\gamma^{-1}s}(x)|\ge\gamma^{K_0}\ \text{for all }x\in B_{\gamma^{K_0} r}(p)$.
\end{definition}
\begin{definition}
\label{bublow}
We say $\mathcal{B}$ is a bubble region $\mathcal{B}$ of scale $r$ if $\mathcal{B}=B_r(p)\backslash \bigcup_{i=1}^N B_{r_i}(x_i)$, such that for some constants $N(\Lambda), c(\Lambda,K_0)$ (both will be specified later) the following hold:\\
(1) $N\le N(\Lambda)$;\\
(2) $r_i\ge c(\Lambda, K_0)r$;\\
(3) $\overline\zeta_{x_i}(r_i)<\overline\zeta_p(r)-\gamma^{K_0}$;\\
(4) For all $x\in \mathcal{B}$, there is $r_x\ge c(\Lambda,K_0) r$ such that $\int_{B_{r_x}(x)}|F_A|^2<\epsilon_0/2$. Here $\epsilon_0$ is the small constant determined in Theorem \ref{eps}, Theorem \ref{numbersing}, and Theorem \ref{apr}.
\end{definition}
Given a weakly flat region, the following proposition allows us to decompose it into controllable many annular and bubble regions:
\begin{proposition}
\label{k0unfixed}
Fix $K_0$. There exists $N(\Lambda,K_0)$. Given a weakly flat region $B_2(p)$, there exists a collection of annular and bubble regions $\{\mathcal{A}_i\}_{i=1}^{N_a}\cup \{\mathcal{B}_j\}_{j=1}^{N_b}$  such that $B_1(p)\subseteq \bigcup_{i=1}^{N_a}\mathcal{A}_i\cup\bigcup_{j=1}^{N_b}\mathcal{B}_j$ and $N_a+N_b\le N(\Lambda,K_0)$.
\end{proposition}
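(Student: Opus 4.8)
\textbf{Proof proposal for Proposition \ref{k0unfixed}.}

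The plan is to run a stopping-time / quantitative-stratification argument on dyadic-type scales governed by the ratio $\gamma$, with $K_0$ fixed throughout. Starting from the weakly flat region $B_2(p)$, I would build a tree of balls: the root is $B_2(p)$, and at each node $B_{2r}(q)$ that has been produced, I look at how the quantity $\overline{\zeta}_q(\cdot)$ behaves on the interval $[\gamma^{K_0}r, 2r]$. Since $\overline{\zeta}$ is monotone in the radius and bounded above by $\Lambda$, the total ``energy drop'' $\overline{\zeta}_q(2r)-\overline{\zeta}_q(\gamma^{K_0}r)$ available along any descending chain of balls is at most $\Lambda$. The key dichotomy at each node is: either the node is (up to the cutoff scale) a weakly flat region, in which case I can read off an annular region by letting $s$ be the largest scale $\le 5\gamma^{K_0}r$ where condition (1) of the annular definition still holds while condition (2) already fails at all nearby points (this is exactly the stopping scale); or else there are points in $B_{\gamma^{K_0}r}(q)$ where a definite amount of energy, at least $\gamma^{K_0}$, concentrates below scale $\gamma^{K_0}r$, and I pass to finitely many sub-balls centered at such points covering $B_{\gamma^{K_0}r}(q)$ (a Vitali cover of bounded cardinality $\le C\gamma^{-4K_0}$), each of which inherits strictly smaller top-energy. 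The bubble regions are produced precisely as the ``collars'' $B_r(q)\setminus\bigcup_i B_{r_i}(x_i)$ left behind when one passes from a ball to its children: conditions (1)--(4) of Definition \ref{bublow} are then matched by construction, with $N(\Lambda)$ coming from the bounded number of children and $c(\Lambda,K_0)$ from the fact that each child has radius $\ge c\gamma^{K_0}$ times the parent radius, and the energy-drop condition (3) being exactly what triggered the descent.

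The termination of the tree is where the main quantitative content lives. Along any root-to-leaf chain, the quantity $\overline{\zeta}_{q}(2r)$ at the nodes is non-increasing and must drop by at least $\gamma^{K_0}$ every time we recurse through the ``concentration'' branch (conditions (2),(3)), so the depth of any such chain is at most $\Lambda\gamma^{-K_0}+1$. At each node the branching is by a factor at most $C\gamma^{-4K_0}$, hence the total number of nodes, and therefore the total number of annular and bubble regions produced, is bounded by $(C\gamma^{-4K_0})^{\Lambda\gamma^{-K_0}+1} =: N(\Lambda,K_0)$. I would then check that every point of $B_1(p)$ is covered: a given $x\in B_1(p)$ is contained in the root; following the tree downward, at each node either $x$ falls into the annular region extracted there (if it lies in the annulus $A_{s,r}$ rather than the inner ball), or into one of the collar bubble regions, or it lies in the inner ball $B_{\gamma^{K_0}r}(q)$ and hence in one of the children, and since the process terminates, $x$ is eventually captured by an annular or bubble region. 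The condition $s\le 5\gamma^{K_0}r$ and the various factors of $5$ and $\gamma^{-1}$ in the definitions are arranged exactly so that the annular region extracted at a node genuinely covers the part of the ball not passed on to children.

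The step I expect to be the main obstacle is making the ``stopping scale'' $s$ in the annular case well-defined and simultaneously satisfying both (1) and (2) in the definition of an annular region, because (1) is a statement at the center $p$ while (2) must hold uniformly over all $x\in B_{\gamma^{K_0}r}(p)$; reconciling these requires a careful choice of $\gamma$ (small, universal) so that a Vitali-covering/triangle-inequality comparison of $\overline{\zeta}_x$ at nearby centers and nearby scales loses no more than $\tfrac12\gamma^{K_0}$ of energy. Concretely I would first fix $\gamma$ so small that for $x\in B_{\gamma^{K_0}r}(p)$ and $t\in[\gamma^{K_0}r,2r]$ one has $|\overline{\zeta}_x(t)-\overline{\zeta}_p(t)|<\tfrac13\gamma^{K_0}$ whenever the center shift is controlled, and then define $s$ as the infimum of scales $t\in(0,5\gamma^{K_0}r]$ for which condition (2) already holds on all of $B_{\gamma^{K_0}r}(p)$; the weak flatness of $B_2(p)$ combined with monotonicity guarantees $s>0$ is forced to be small, i.e.\ $s\le 5\gamma^{K_0}r$, and (1) follows because the total variation of $\overline{\zeta}_p$ on $[s,2r]$ is then $<\gamma^{K_0}$. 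Once $\gamma$ is pinned down (it is the same universal constant referenced in Definition \ref{weaklyflat} and the annular/bubble definitions), everything else is bookkeeping of the tree.
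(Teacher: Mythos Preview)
Your tree-with-energy-drop outline is the same strategy the paper uses, and your depth bound $\Lambda\gamma^{-K_0}+1$ is exactly right. Two points, however, are handled differently in the paper and deserve comment, because your version of each has a genuine gap.

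\medskip

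\textbf{The annular-region center.} The reconciliation of conditions (1) and (2) that you flag as the main obstacle is resolved in the paper not by comparing $\overline{\zeta}_x$ at nearby centers, but by a maximality trick: set $K_x=\sup\{K:|\overline{\zeta}_1(x)-\overline{\zeta}_{\gamma^K}(x)|<\gamma^{K_0}\}$ and choose $x_0=\text{argmax}_{x\in B_1(p)}K_x$. Since $B_2(p)$ is weakly flat, $x_0$ lies in $B_{10\gamma^{K_0}}(p)$, and condition (2) of the annular definition is then automatic from maximality of $K_{x_0}$: every $x\in B_{\gamma^{K_0}}(x_0)$ has $K_x\le K_{x_0}$, which is precisely the statement that the drop from scale $2$ down to scale $\gamma^{K_{x_0}-1}$ at $x$ already exceeds $\gamma^{K_0}$. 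Your proposed center-comparison estimate $|\overline{\zeta}_x(t)-\overline{\zeta}_p(t)|<\tfrac13\gamma^{K_0}$ is not obviously available without extra hypotheses, whereas the argmax device costs nothing.

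\medskip

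\textbf{The bubble region and its children.} Your construction of the bubble collar and the children balls via a plain Vitali cover at scale $\gamma^{K_0}r$ does not by itself yield conditions (3) and (4) of Definition \ref{bublow}, nor does it ensure the children are again weakly flat so that the recursion can continue. The paper fixes this with a pigeonhole step: for each $y$ in the inner ball one finds (since $\overline{\zeta}\le\Lambda$) some $j_y\le\Lambda\gamma^{-K_0-1}$ with $|\overline{\zeta}_{(2\gamma^{K_0})^{11+j_y}}(y)-\overline{\zeta}_{(2\gamma^{K_0})^{10+j_y}}(y)|<\gamma^{K_0}$, and takes the Vitali cover at the scales $r_y=(2\gamma^{K_0})^{10+j_y}$. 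Then: (i) the balls with $\int_{B_{r_y}(y)}|F_A|^2\ge\epsilon_0/2$ are the children, and disjointness gives their number $\le 10\Lambda/\epsilon_0=N(\Lambda)$ (this is sharper than your $C\gamma^{-4K_0}$ and is what makes (1) of the bubble definition hold); (ii) each child, being built at a scale where the one-step drop is $<\gamma^{K_0}$, is itself weakly flat, so the induction continues; (iii) the remaining balls have small curvature at a scale $\ge c(\Lambda,K_0)r$, which is exactly condition (4). Your dichotomy ``weakly flat vs.\ concentration'' misses this: after peeling off the annulus you always need to build a bubble collar, and the children must be weakly flat, which requires the pigeonhole scale selection rather than a single fixed-scale Vitali cover.
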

\begin{proof}
Let us begin by the following claim:
\begin{claim}
\label{4ab}
For each positive integer $l\le \Lambda\gamma^{-K_0}+1$, there exists a constant $C_l(\Lambda,K_0)$ and a cover of $B_1(p)$ given by
\begin{eqnarray}
\begin{split}
\{\mathcal{A}_{l,i}\}_i^{N_{l,a}}\cup \{\mathcal{B}_{l,j}\}_j^{N_{l,b}}\cup\{\mathcal{W}_{l,k}\}_k^{N_{l,w}};
\end{split}
\end{eqnarray}
where $\mathcal{A}_{l,i},\ \mathcal{B}_{l,j}$, and $\mathcal{W}_{l,k}\equiv B_{s_{l,k}}(z_{l,k})$ are annular, bubble, and weakly flat regions respectively for all $i,l$. Moreover, the following holds:
\begin{eqnarray}
\begin{split}
\overline{\zeta}_{z_{l,k}}(s_{l,k})<\Lambda-l\cdot\gamma^{K_0},\ N_{l,a}+N_{l,b}+N_{l,w}\le C_l.
\end{split}
\end{eqnarray}
\end{claim}
\begin{proof}
We prove the claim by induction. To begin with, consider the beginning stage $l=1$. Firstly, for each $x\in B_{1}(p)$, let us define
\begin{eqnarray}
\begin{split}
K_x=\sup\{K:|\overline{\zeta}_{1}(x)-\overline{\zeta}_{\gamma^{K}}(x)|<\gamma^{K_0}\}.
\end{split}
\end{eqnarray}
It is obvious from Definition \ref{weaklyflat} that there exists $x_0\in B_{10\gamma^{K_0}}(p)$ such that
\begin{eqnarray}
\begin{split}
x_0=\text{argmax}_{x\in B_{1}(p)}K_x.
\end{split}
\end{eqnarray}
Clearly, $\mathcal{A}\eqqcolon A_{5\gamma^{K_{x_0}},1}(x_0)$ is an annular region attached to the weakly flat region $B_2(p)$. Next, we need to construct a bubble region on $B_2(p)$. Consider $B_{10\gamma^{K_{x_0}}}(x_0)\subseteq B_2(p)$. For convenience, let us rescale $B_{10\gamma^{K_{x_0}}}(x_0)$ to $B_1(x_0)$. 
By the pigeonhole principle, for each $y\in B_{1/2}(x_0)$, there exists integer $j_y\in [1,\Lambda\gamma^{-K_0-1}]$ such that
\begin{eqnarray}
\begin{split}
|\overline{\zeta}_{(2\gamma^{K_0})^{11+j}}(y)-\overline{\zeta}_{(2\gamma^{K_0})^{10+j}}(y)|<\gamma^{K_0}.
\end{split}
\end{eqnarray}
Define
\begin{eqnarray}
\begin{split}
c(\Lambda,K_0)=(2\gamma^{K_0})^{10+\Lambda\gamma^{-K_0-1}}.
\end{split}
\end{eqnarray}
Construct a Vitali cover of $B_{1}(x_0)$ given by $\{B_{r_s/2}(y_s)\}_s$ where $r_s=(2\gamma^{K_0})^{(10+j_{y_s})}$ and $\{B_{r_s/6}(y_s)\}_s$ are disjoint. Clearly, $r_s\ge c(\Lambda, K_0)$. Apparently there exists a sub-collection of balls $\{B_{r_{s_i}}(y_{s_i})\}_{i=1}^{N}\subseteq \{B_{r_{s}}(y_s)\}_s$ with $N\le10\Lambda/\epsilon_0\eqqcolon N(\Lambda)$ such that
\begin{eqnarray}
\begin{split}
\label{dropk0}
&\int_{B_{r_t}(y_{t})}|F_A|^2<\epsilon_0/2,\ \forall B_{r_t}(y_t)\in  \{B_{r_{s}}(y_s)\}_s\backslash \{B_{r_{s_i}}(y_{s_i})\}_{i=1}^{N},\\
&\overline{\zeta}_{y_{s_i}}(r_{s_i})<\overline{\zeta}_{1}(p)-\gamma^{K_0},\ \forall B_{r_{s_i}}(y_{s_i}).
\end{split}
\end{eqnarray}
The second inequality is a consequence of the facts that $x_0=\text{argmax}_{x\in B_{1}(p)}K_x$, $r_s<\gamma^{K_0}$, and further decreasing $\gamma$ if necessary. Now scaling $B_{1}(x_0)$ back to $B_{10\gamma^{K_{x_0}}}(x_0)$, we have that 
$$\mathcal{B}\eqqcolon B_{10\gamma^{K_{x_0}}}(x_0)\backslash \bigcup_{i}B_{5r_{s_i}\gamma^{K_{x_0}}}(y_{s_i})$$ 
is a bubble region.  
Let us observe that $B_{20r_{s_i}\gamma^{K_{x_0}}}(y_{s_i})$ is once again a weakly flat region (see Definition \ref{weaklyflat}) for each $i$. Thus upon setting $\mathcal{A}_{1,1}\eqqcolon\mathcal{A}$, $\mathcal{B}_{1,1}\eqqcolon\mathcal{B}$, $\mathcal{W}_{1,i}\eqqcolon B_{20r_{s_i}\gamma^{K_{x_0}}}(y_{s_i})$, $C_1=N(\Lambda)+3$, we finished the first stage. Now assume for the purpose of induction that the claim is true for some $l_0\ge 1$. Then by repeating the previous argument to each $B_{s_{l_0,k}}(z_{l_0,k})=\mathcal{W}_{l_0,k}$ we obtain
\begin{eqnarray}
\begin{split}
B_{s_{l_0,k}/2}(z_{l_0,k})\subseteq \mathcal{A}_{l_0,k}\cup \mathcal{B}_{l_0,k}\cup \bigcup_{k^{\prime}}\mathcal{W}_{k,k^{\prime}},
\end{split}
\end{eqnarray}
where $\mathcal{W}_{k,k^{\prime}}=B_{s_{k,k^{\prime}}}(z_{k,k^{\prime}})$; hence by \eqref{dropk0} and the inductive assumption we have
\begin{eqnarray}
\begin{split}
\overline{\zeta}_{z_{k,k^{\prime}}}(s_{k,k^{\prime}})<\zeta(z_{l_0,k})(s_{l_0,k})-\gamma^{K_0}<\Lambda-(l_0+1)\cdot\gamma^{K_0}.
\end{split}
\end{eqnarray}
By setting
\begin{eqnarray}
\begin{split}
&\{\mathcal{A}_{l_0+1,i}\}_i^{N_{l_0+1,a}}=\{\mathcal{A}_{l_0,i}\}_i\cup\{\mathcal{A}_{l_0,k}\}_k,\ \{\mathcal{B}_{l_0+1,j}\}_j^{N_{l_0+1,b}}=\{\mathcal{B}_{l_0,j}\}_j\cup\{\mathcal{B}_{l_0,k}\}_k,\\
&\{\mathcal{W}_{l_0+1,k}\}_k^{N_{l_0+1,w}}=\bigcup_{k}\{B_{s_{k,k^{\prime}}}(z_{k,k^{\prime}})\}_{k^{\prime}},\ C_{l_0+1}=10 N(\Lambda)C_{l_0},
\end{split}
\end{eqnarray}
we complete the proof of Claim \ref{4ab}.
\end{proof}
Taking $l=\Lambda\gamma^{-K_0}+1$ in Claim \ref{4ab}, we immediately prove Proposition \ref{k0unfixed}.
\end{proof}
Using Proposition \ref{k0unfixed}, we shall prove an annular-bubble decomposition result stated as follows:
\begin{theorem}
\label{abde}
Fix $K_0$. Given $\int_{B_{10r}(p)}|F_A|^2<\Lambda$ with $10r\le r(M)$. There exists a large number $P(\Lambda,K_0)$ and a collection of annular and bubble regions $\{\mathcal{A}_{i}\}_{i=1}^{P_a}\cup\{\mathcal{B}_j\}_{j=1}^{P_b}$ that form a cover of $B_r(p)$, such that $P_a+P_b\le P(\Lambda,K_0)$.
\end{theorem}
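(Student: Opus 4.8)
\textbf{Proof plan for Theorem \ref{abde}.} The strategy is a covering-plus-recursion argument that repeatedly invokes Proposition \ref{k0unfixed}, which is already stated exactly for the purpose of decomposing a \emph{single} weakly flat region. The obstruction to applying it directly is that $B_{10r}(p)$ need not itself be weakly flat: the total curvature there is only bounded by $\Lambda$, but the scaling-invariant defect $|\overline\zeta_{10r}(p)-\overline\zeta_{\gamma^{K_0}\cdot 10r}(p)|$ could be as large as $\Lambda$, which is $\gg\gamma^{K_0}$ in general. So the first task is to manufacture a controlled family of weakly flat sub-balls whose union covers $B_r(p)$, and then feed each of them into Proposition \ref{k0unfixed}.

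\textbf{Step 1 (initial reduction to weakly flat pieces).} For each $x\in B_r(p)$ define the ``first weakly-flat scale'' $K_x=\sup\{K: |\overline\zeta_{2r}(x)-\overline\zeta_{\gamma^K r}(x)|<\gamma^{K_0}\}$, exactly as in the proof of Claim \ref{4ab}. Since $\gamma\in(0,1)$ and $\overline\zeta$ is monotone and bounded by $\Lambda$, the quantity $K_x$ is finite. Pick $x_0$ realizing the minimum (equivalently, the analogue of $\mathrm{argmax}_{x}K_x$ up to translating to $B_{10\gamma^{K_0}r}$ as in the excerpt); then $A_{5\gamma^{K_{x_0}}r,\,2r}(x_0)$ is an annular region, and the ``bubble-and-pigeonhole'' construction in the proof of Claim \ref{4ab} produces a bubble region together with at most $N(\Lambda)+O(1)$ new balls, each of which is weakly flat, and on each of which the scaling-invariant curvature has dropped by a definite amount $\gamma^{K_0}$. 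This is precisely the content that was proved inside Claim \ref{4ab} of Proposition \ref{k0unfixed}; I would simply re-run that same recursion starting from $B_{10r}(p)$ rather than from a weakly flat $B_2(p)$. Because the curvature drops by $\ge\gamma^{K_0}$ at every stage of the recursion and is nonnegative, the recursion terminates after at most $\Lambda\gamma^{-K_0}+1$ stages, and one obtains a cover of $B_r(p)$ by annular regions, bubble regions, and weakly flat regions, whose total number is bounded by some $N_0(\Lambda,K_0)<\infty$.

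\textbf{Step 2 (resolving the residual weakly flat regions).} Now apply Proposition \ref{k0unfixed} to each of the $\le N_0(\Lambda,K_0)$ weakly flat regions produced in Step 1 (after rescaling each to the normalized form $B_2(p')$): each such region gets covered by at most $N(\Lambda,K_0)$ annular and bubble regions. Collecting everything — the annular and bubble regions from Step 1, together with the annular and bubble regions from all the Proposition \ref{k0unfixed} applications — gives a cover of $B_r(p)$ by annular and bubble regions whose total count is at most $P(\Lambda,K_0)\eqqcolon N_0(\Lambda,K_0)\cdot\big(N(\Lambda,K_0)+1\big)$, which depends only on $\Lambda$ and $K_0$. (Scaling invariance of all three notions — weakly flat, annular, bubble — as noted right before Definition \ref{weaklyflat}, is what makes the normalization in each step harmless; the only place the injectivity-radius/curvature hypothesis $10r\le r(M)$ enters is to guarantee the balls are geodesically convex so the Euclidean-type covering arguments apply.)

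\textbf{Main obstacle.} The genuinely nontrivial point is not the bookkeeping but ensuring that Step 1's recursion really does terminate with a count depending only on $(\Lambda,K_0)$ and not on the geometry of $A$ near $p$: one must check that at \emph{every} stage, and not just the first, the pigeonhole argument produces a definite curvature drop $\gamma^{K_0}$ on each child weakly flat ball, so that the depth of the recursion is capped by $\Lambda\gamma^{-K_0}$. This is exactly the monotonicity-plus-pigeonhole mechanism already verified inside the proof of Claim \ref{4ab}; the subtlety is that the ``extremal scale'' ball $x_0$ must be chosen \emph{anew} at each stage to guarantee condition (3) of Definition \ref{bublow} ($\overline\zeta_{x_i}(r_i)<\overline\zeta_p(r)-\gamma^{K_0}$), and one needs $\gamma$ small enough (absorbing the metric distortion $\le r(M)^{-?}$ on convex balls) for this to hold uniformly. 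Once that uniform drop is in hand, the $\le\Lambda\gamma^{-K_0}+1$ bound on recursion depth follows immediately, and the rest is the finite combinatorics recorded above.
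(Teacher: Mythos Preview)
Your overall strategy — recurse with a definite $\gamma^{K_0}$ curvature drop at each generation and invoke Proposition~\ref{k0unfixed} on the weakly flat pieces — is the same as the paper's. But Step~1 as written has a genuine gap.

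You assert that $A_{5\gamma^{K_{x_0}}r,\,2r}(x_0)$ is an annular region at the top level. By definition an annular region requires inner radius $\le 5\gamma^{K_0}$ times the outer radius, i.e.\ $K_{x_0}\ge K_0$. That inequality is exactly what the \emph{weakly flat} hypothesis on the starting ball buys in the proof of Claim~\ref{4ab}: it forces the argmax point $x_0$ to lie in $B_{10\gamma^{K_0}}(p)$ and to satisfy $K_{x_0}\ge K_0$. Without weak flatness on $B_{10r}(p)$ it is perfectly possible that \emph{every} $x\in B_r(p)$ has $K_x<K_0$ (indeed $K_x$ could be $0$), in which case your top-level ``annular region'' violates the scale-ratio requirement and the recursion never gets started. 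The paper handles precisely this obstruction via the dichotomy of Claim~\ref{trivial}: either some $x\in B_2(p)$ has $K_x\ge K_0$ (case~(1), and then one applies Proposition~\ref{k0unfixed} to $B_8(x)$), or no such $x$ exists (case~(2)), in which case one skips the annular construction entirely and Vitali-covers $B_2(p)$ by balls of radius $\gamma^{K_0}$, each carrying a $\gamma^{K_0}$ curvature drop. Lemma~\ref{trivial1} then runs this dichotomy inductively. Your Step~1 is missing the case~(2) branch.

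A secondary issue: if your Step~1 recursion really were the Claim~\ref{4ab} recursion started from a weakly flat ball, its output after $\Lambda\gamma^{-K_0}+1$ stages would already consist only of annular and bubble regions (the weakly flat children have been exhausted by curvature drop), so Step~2 would be vacuous. The presence of residual weakly flat regions in your Step~1 output signals that you have in mind some different, truncated recursion, but you have not said what it is or why its count is controlled.
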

\begin{remark}
\label{gammaannbub}
We shall refer to $B_1(p)\subseteq \bigcup_i \mathcal{A}_i\cup\bigcup_j \mathcal{B}_j$ as a $\gamma^{K_0}$-annular-bubble decomposition of $B_1(p)$. The integer $K_0$ will be determined in the next Section (i.e. Section \ref{glsec}).
\end{remark}
\begin{corollary}
\label{decompm}
Fix $K_0$. There exists a large number $P(\Lambda,K_0,M)$ and a collection of annular and bubble regions $\{\mathcal{A}_{i}\}_{i=1}^{P_a}\cup\{\mathcal{B}_j\}_{j=1}^{P_b}$ that form a cover of $M$, such that $P_a+P_b\le P(\Lambda,K_0,M)$.
\end{corollary}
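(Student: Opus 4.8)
The plan is to deduce Corollary \ref{decompm} from Theorem \ref{abde} by a routine compactness-and-covering argument. First I would fix a radius $r_M^\ast>0$ with $10\,r_M^\ast\le r(M)$, so that $r_M^\ast$ is small enough for Theorem \ref{abde} to apply on balls of this radius. Since $M$ is compact, there is a finite cover of $M$ by balls $\{B_{r_M^\ast}(p_a)\}_{a=1}^{N_M}$, where the number $N_M=N_M(M)$ depends only on the geometry of $M$: by a Vitali/packing argument it is bounded in terms of $\Vol(M)$ and $r_M^\ast$, hence ultimately in terms of $\mathrm{sec}_M$, $\mathrm{inj}_M$, and $\Vol(M)$. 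For each $a$, since $B_{10\,r_M^\ast}(p_a)\subseteq M$ we have $\int_{B_{10\,r_M^\ast}(p_a)}|F_A|^2\le\int_M|F_A|^2<\Lambda$, so the hypotheses of Theorem \ref{abde} are satisfied on $B_{r_M^\ast}(p_a)$ with $r=r_M^\ast$ and the given $K_0$.

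Next I would apply Theorem \ref{abde} to each ball $B_{r_M^\ast}(p_a)$, obtaining for every $a$ a family of annular and bubble regions $\{\mathcal{A}_i^{(a)}\}_{i=1}^{P_a^{(a)}}\cup\{\mathcal{B}_j^{(a)}\}_{j=1}^{P_b^{(a)}}$ that covers $B_{r_M^\ast}(p_a)$, with $P_a^{(a)}+P_b^{(a)}\le P(\Lambda,K_0)$, where $P(\Lambda,K_0)$ is the universal bound produced by Theorem \ref{abde} (uniform in $a$ because the total curvature bound $\int_M|F_A|^2<\Lambda$ passes to every sub-ball). Since the notions of annular region and bubble region (see the definitions in Section \ref{annbub}) are formulated purely in terms of the local behaviour of $\overline{\zeta}_x(\cdot)$ on balls of $M$ and are scaling-invariant, each $\mathcal{A}_i^{(a)}$ and $\mathcal{B}_j^{(a)}$ is again a genuine annular, resp.\ bubble, region of $M$. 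Taking the union over $a$ of all these families gives a collection of annular and bubble regions covering $\bigcup_{a=1}^{N_M}B_{r_M^\ast}(p_a)=M$, whose total cardinality is at most $N_M\cdot P(\Lambda,K_0)$. Setting $P(\Lambda,K_0,M)\coloneqq N_M(M)\cdot P(\Lambda,K_0)$ yields the claimed bound, and the proof is complete.

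There is essentially no obstacle here; the only points needing minor care are: (i) choosing $r_M^\ast$ so that $10\,r_M^\ast\le r(M)$, which is exactly the hypothesis under which Theorem \ref{abde} was established; (ii) checking that $N_M$ is controlled by $M$ alone, which is immediate from compactness (and quantitatively from a packing estimate in terms of $\mathrm{sec}_M$, $\mathrm{inj}_M$, $\Vol(M)$); and (iii) observing that the curvature bound is hereditary for sub-balls, so the constant $P(\Lambda,K_0)$ from Theorem \ref{abde} serves uniformly for all $a$. Combining these three elementary observations with Theorem \ref{abde} gives Corollary \ref{decompm}.
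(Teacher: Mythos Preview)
Your proposal is correct and matches the paper's approach: the paper states Corollary \ref{decompm} as an immediate consequence of Theorem \ref{abde} without giving an explicit proof, and the natural deduction is exactly the covering argument you wrote (cover $M$ by finitely many balls of radius $r_M^\ast$ with $10r_M^\ast\le r(M)$, apply Theorem \ref{abde} on each, and sum). Indeed, in the proof of Theorem \ref{main} in Section \ref{glsec} the paper begins with ``Consider a cover of $M$ by $N(M)$-many balls $\{B_{r_i}(x_i)\}_i$ such that $20r_i<r(M)$'' and then invokes the decomposition, which is precisely your argument.
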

To prove Theorem \ref{abde}, we need the following claim.
\begin{claim}
\label{trivial}
Fix $K_0$. Given $\int_{B_{10}(p)}|F_A|^2<\Lambda$. One of the following holds:\\
(1) There exists $x\in B_2(p)$ such that $|\overline{\zeta}_{\gamma^{K_0}}(x)-\overline{\zeta}_{4}(x)|<\gamma^{K_0}$.\\
(2) There exists a Vitali cover of $B_{2}(p)$ given by $\{B_{\gamma^{K_0}}(x_i)\}_{i=1}^{N_1}$ such that $\overline{\zeta}_{\gamma^{K_0}}(x_i)<\overline{\zeta}_{4}(x_i)-\gamma^{K_0}$, where $N_1\le N_1(K_0)$.
\end{claim}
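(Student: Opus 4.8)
\textbf{Proof proposal for Claim \ref{trivial}.}
The plan is to run this as a simple dichotomy together with a volume‑counting (Vitali) argument; there is essentially nothing deep here, the claim being a bookkeeping device feeding into Proposition \ref{k0unfixed}. First I would record the basic monotonicity property: for $s\le r$ and any $z\in B_s(x)\subseteq B_r(x)$ one has $\int_{B_s(z)}|F_A|^2\le\int_{B_r(z)}|F_A|^2\le \overline{\zeta}_r(x)$, hence $\overline{\zeta}_s(x)\le\overline{\zeta}_r(x)$. Since $\gamma<1$ and $K_0\ge 1$ we have $\gamma^{K_0}<4$, so $B_{\gamma^{K_0}}(x)\subseteq B_4(x)$ and the absolute value in alternative (1) is genuinely a nonnegative ``defect'' $\overline{\zeta}_4(x)-\overline{\zeta}_{\gamma^{K_0}}(x)$. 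All quantities in play are finite: for $x\in B_2(p)$ and $z\in B_4(x)$ we have $B_4(z)\subseteq B_{8}(x)\subseteq B_{10}(p)$, so $\overline{\zeta}_4(x)\le \int_{B_{10}(p)}|F_A|^2<\Lambda$.

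Next, suppose alternative (1) fails. Then for every $x\in B_2(p)$ we have $|\overline{\zeta}_{\gamma^{K_0}}(x)-\overline{\zeta}_4(x)|\ge\gamma^{K_0}$, which by the monotonicity above sharpens to
\[
\overline{\zeta}_{\gamma^{K_0}}(x)<\overline{\zeta}_4(x)-\gamma^{K_0}\qquad\text{for all }x\in B_2(p).
\]
Now I would apply the standard Vitali covering lemma to the family $\{B_{\gamma^{K_0}/5}(x)\}_{x\in B_2(p)}$: there is a finite subcollection $\{B_{\gamma^{K_0}/5}(x_i)\}_{i=1}^{N_1}$ with $x_i\in B_2(p)$, pairwise disjoint, and such that $\{B_{\gamma^{K_0}}(x_i)\}_{i=1}^{N_1}$ covers $B_2(p)$; this is precisely a Vitali cover in the sense used later in Claim \ref{4ab}. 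Since the disjoint balls $B_{\gamma^{K_0}/5}(x_i)$ all lie in $B_3(p)\subseteq B_{10}(p)$, and on these scales the metric is uniformly non‑collapsed (after the rescaling implicit in the hypothesis $\int_{B_{10}(p)}|F_A|^2<\Lambda$, the metric on $B_{10}(p)$ is close to Euclidean), volume comparison gives $\Vol(B_{\gamma^{K_0}/5}(x_i))\ge c\,(\gamma^{K_0}/5)^4$ and hence $N_1\le C\gamma^{-4K_0}=:N_1(K_0)$. For each $i$ the displayed inequality above, evaluated at $x=x_i$, is exactly the estimate required in alternative (2). This establishes the claim.

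The only point requiring a little care — the ``main obstacle'', such as it is — is that the cardinality bound $N_1(K_0)$ must be independent of $A$, of the base point $p$, and of $\Lambda$. This is guaranteed by the standing normalization that $B_{10}(p)$ carries a metric with uniformly bounded geometry (so Bishop--Gromov volume comparison applies with constants depending only on the dimension $4$), together with the observation that the radius $\gamma^{K_0}/5$ is a fixed universal scale once $K_0$ is fixed. Everything else is routine: monotonicity of $\overline{\zeta}$, the Vitali lemma, and volume comparison.
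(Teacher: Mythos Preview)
Your proof is correct and is precisely the straightforward dichotomy-plus-Vitali argument the paper has in mind; the paper itself simply writes ``This is straightforward, hence we omit the details.'' Your filling-in of the monotonicity of $\overline{\zeta}$, the negation of (1), and the volume-counting bound $N_1\le C\gamma^{-4K_0}$ is exactly what is being suppressed.
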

\begin{proof}
This is straightforward, hence we omit the details.
\end{proof}
The key to the proof of Theorem \ref{abde} lies in the following inductive lemma.
\begin{lemma}
\label{trivial1}
Fix $K_0$. For each $l\le 10\Lambda\gamma^{-K_0}+1$, there exists $M_l(\Lambda,K_0)$, and a cover of $B_{r}(p)$ by $\{\mathcal{A}_{l,i}\}_{i=1}^{P_{l,a}}\cup\{\mathcal{B}_{l,j}\}_{j}^{P_{l,b}}\cup \{B_{r_{l,k}}(z_{l,k})\}_k^{P_{l,d}}$ such that\\
(1) $P_{l,a}+P_{l,b}+P_{l,d}\le P_l(\Lambda,K_0)$.\\
(2) $\overline{\zeta}_{z_{l,k}}(r_{l,k})<\Lambda-l\cdot10^{-1} \gamma^{K_0}$.
\end{lemma}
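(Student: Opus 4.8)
The plan is to prove Lemma \ref{trivial1} by induction on $l$, using Claim \ref{trivial} to produce the base case and Proposition \ref{k0unfixed} as the engine of the inductive step. The point of the auxiliary balls $B_{r_{l,k}}(z_{l,k})$ is that each of them, after rescaling, is either a weakly flat region (to which Proposition \ref{k0unfixed} applies) or already has curvature so concentrated that the dichotomy of Claim \ref{trivial} forces a strict energy drop on its sub-balls. The quantity being decreased by a definite amount $10^{-1}\gamma^{K_0}$ at each stage is the curvature content $\overline{\zeta}_{z_{l,k}}(r_{l,k})$ on the leftover balls, so after at most $10\Lambda\gamma^{-K_0}+1$ steps there are no leftover balls left and the cover consists purely of annular and bubble regions, which is exactly what Theorem \ref{abde} needs.

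First I would set up the base case $l=1$. Rescale $B_{10r}(p)$ to $B_{10}(p)$ and apply Claim \ref{trivial}. In case (1), the ball $B_4(x)$ (or rather $B_2(p)\subseteq B_4(x)$, after a harmless enlargement to a ball of the form $B_{2\rho}(q)$) is a weakly flat region up to the required scale comparison, so Proposition \ref{k0unfixed} decomposes $B_1(p)$ into $\le N(\Lambda,K_0)$ annular and bubble regions plus finitely many weakly flat regions $B_{s_{1,k}}(z_{1,k})$ — but weakly flat regions of Proposition \ref{k0unfixed} carry no automatic energy drop, so to make the induction close I instead track the output of the proposition: it yields annular regions, bubble regions, and, via the bubble-region construction (see \eqref{dropk0}), weakly flat sub-balls $B_{20r_{s_i}\gamma^{K_{x_0}}}(y_{s_i})$ on which $\overline{\zeta}$ has genuinely dropped by $\gamma^{K_0}$. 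In case (2) of Claim \ref{trivial}, I directly get a Vitali cover $\{B_{\gamma^{K_0}}(x_i)\}_{i=1}^{N_1}$ with $\overline{\zeta}_{\gamma^{K_0}}(x_i)<\overline{\zeta}_4(x_i)-\gamma^{K_0}\le \Lambda-\gamma^{K_0}$, and these become the leftover balls $B_{r_{1,k}}(z_{1,k})$. Either way I obtain the conclusion for $l=1$ with $P_1=\max\{N(\Lambda,K_0)+N_1(K_0)+3, N_1(K_0)\}$ (being slightly generous with constants), and with the energy bound $\Lambda - 10^{-1}\gamma^{K_0}$ on the leftover balls, which is weaker than $\Lambda-\gamma^{K_0}$ and hence certainly satisfied.

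For the inductive step, assume the cover exists at stage $l_0$ with leftover balls $\{B_{r_{l_0,k}}(z_{l_0,k})\}_k$, each satisfying $\overline{\zeta}_{z_{l_0,k}}(r_{l_0,k})<\Lambda-l_0\cdot 10^{-1}\gamma^{K_0}$. Rescale each such ball to unit size and again invoke Claim \ref{trivial}: in case (1) it is weakly flat, so Proposition \ref{k0unfixed} splits its half-ball into $\le N(\Lambda,K_0)$ annular and bubble regions and finitely many new weakly flat sub-balls, each of which — coming from a bubble-region construction — has $\overline{\zeta}$ smaller by at least $\gamma^{K_0}\ge 10^{-1}\gamma^{K_0}$; in case (2) the Vitali sub-cover gives new leftover balls with the same definite drop. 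Collect all new annular and bubble regions into the stage-$(l_0+1)$ families, and all new sub-balls into $\{B_{r_{l_0+1,k}}(z_{l_0+1,k})\}_k$; the counting constant multiplies by at most $10N(\Lambda,K_0)N_1(K_0)$ or so, giving a finite $P_{l_0+1}(\Lambda,K_0)$, and the energy on every leftover ball is now below $\Lambda-(l_0+1)\cdot 10^{-1}\gamma^{K_0}$. Since $\overline{\zeta}\ge 0$ always, once $l>10\Lambda\gamma^{-K_0}$ (so $l\cdot 10^{-1}\gamma^{K_0}>\Lambda$) there can be no leftover balls, which proves the lemma and, taking $l=10\Lambda\gamma^{-K_0}+1$, Theorem \ref{abde} with $P(\Lambda,K_0)=P_{10\Lambda\gamma^{-K_0}+1}(\Lambda,K_0)$. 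I expect the main obstacle to be purely bookkeeping: verifying that the rescaled leftover balls genuinely satisfy the hypotheses of Proposition \ref{k0unfixed} and Claim \ref{trivial} (in particular the scale-comparison conditions in the definitions of weakly flat/annular/bubble regions, and that the weakly flat sub-balls emitted by the bubble construction really are weakly flat at the rescaled unit scale), and that the constants $c(\Lambda,K_0)$ controlling the inner radii of bubble regions do not degenerate through the finitely many iterations — this is where the uniform lower bound $r_i\ge c(\Lambda,K_0)r$ in Definition \ref{bublow} must be carried carefully, but since the number of stages is bounded by $10\Lambda\gamma^{-K_0}+1$, a finite product of such constants remains a legitimate constant depending only on $\Lambda$ and $K_0$.
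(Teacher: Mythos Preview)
Your overall inductive scheme via Claim \ref{trivial} is exactly the paper's approach, but you have misread the output of Proposition \ref{k0unfixed}. That proposition, as stated, decomposes a weakly flat region \emph{entirely} into annular and bubble regions --- it produces no leftover weakly flat sub-balls. The intermediate weakly flat regions you are tracking (from Claim \ref{4ab} and \eqref{dropk0}) live inside the \emph{proof} of Proposition \ref{k0unfixed}, not in its conclusion; that internal induction already runs to termination.

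Consequently the paper's proof is simpler than what you wrote: in case (1) of Claim \ref{trivial}, Proposition \ref{k0unfixed} is applied as a black box and that ball contributes only annular and bubble regions to the stage-$(l_0+1)$ cover, with \emph{no} leftover balls. Only case (2) of Claim \ref{trivial} generates leftover balls $\{B_{\gamma^{K_0}}(x_i)\}$, and these carry the genuine drop $\overline{\zeta}_{\gamma^{K_0}}(x_i) < \overline{\zeta}_4(x_i) - \gamma^{K_0}$. Your version --- effectively re-deriving Proposition \ref{k0unfixed} inside the induction on $l$ --- would still work, but it duplicates bookkeeping already absorbed into that proposition. Once you treat case (1) as terminal, the energy-drop verification in case (2) and the counting bound $P_{l_0+1} \le 2P_{l_0}(1+N(\Lambda,K_0)+N_1(K_0))$ go through exactly as you outlined.
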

\begin{proof}
Upon rescaling we assume that $r=1$. Let us prove the claim by induction. For $l=1$, either (1) or (2) of Claim \ref{trivial} occurs. For convenience, we refer to them as case (1) and case (2) respectively. In case (1), let us apply Proposition \ref{k0unfixed} to $B_{8}(x)$ to obtain a cover of $B_4(x)(\supseteq B_2(p)\supseteq B_1(p))$ given by a collection of annular and bubble regions. By taking $P_1(\Lambda,K_0)=N(\Lambda,K_0)$ as in Proposition \ref{k0unfixed}, we finish the discussion of this case. In case (2) we simply take $\{B_{r_{1,k}}(z_{1,k})\}_k^{P_{1,d}}$ to be $\{B_{\gamma^{K_0}}(x_i)\}_i$, and $P_1(\Lambda,K_0)$ to be $N_1(K_0)$ as in Claim \ref{trivial}. This finishes the proof of the beginning stage $l=1$. Now assume for the purpose of induction that for some $l_0\ge 1$ the conclusion of the claim holds. Then, upon replacing $B_1(p)$ by $B_{r_{l_0,k}}(z_{l_0,k})$ we repeat the same process as in stage $l=1$. Let $I$ denote the index set of $k$ such that (1) of Claim \ref{trivial} is satisfied upon replacing $B_2(p)$ by $B_{2r_{l_0,k}}(z_{l_0,k})$, and similarly denote by $II$ the index set where (2) of Claim \ref{trivial} is satisfied. By Claim \ref{trivial}, $I\cup II=\{1,\cdots,M_{l_0,d}\}$. For each $k\in I$, we have $B_{2r_{l_0,k}}(z_{l_0,k})\subseteq \bigcup_{i=1}^{N_{l_0,k,a}}\mathcal{A}_{l_0,k,i}\cup\bigcup_{j=1}^{N_{l_0,k,b}}\mathcal{B}_{l_0,k,j}$, where $N_{l_0,k,a}+N_{l_0,k,b}\le N(\Lambda,K_0)$, for the same $N(\Lambda,K_0)$ as in Proposition \ref{k0unfixed}. For each $k\in II$, $B_{2r_{l_0,k}}(z_{l_0,k})\subseteq \bigcup_{k^{\prime}=1}^{N_{l_0,k,d}}B_{r_{l_0,k,k^{\prime}}}(z_{l_0,k,k^{\prime}})$, where $N_{l_0,k,d}\le N_1(K_0)$. Therefore we find the following cover of $B_1(p)$:
\begin{eqnarray}
\begin{split}
B_1(p)\subseteq \bigcup_{i}\mathcal{A}_{l_0,i}\cup\bigcup_{j}\mathcal{B}_{l_0,j}\cup\bigcup_{k\in I} \bigg{(}\bigcup_{i=1}^{N_{l_0,k,a}}\mathcal{A}_{l_0,k,i}\cup\bigcup_{j=1}^{N_{l_0,k,b}}\mathcal{B}_{l_0,k,j}\bigg{)}\cup\bigcup_{k\in II}\bigcup_{k^{\prime}=1}^{N_{l_0,k,d}}B_{r_{l_0,k,k^{\prime}}}(z_{l_0,k,k^{\prime}}).
\end{split}
\end{eqnarray}
Note that for some $B_{r_{l_0-1,K_*}}(z_{l_0-1,k_*})$ which gives birth to $B_{r_{l_0,k}}(z_{l_0,k})$, the following holds by our inductive assumption on stage $l=l_0$:
\begin{eqnarray}
\begin{split}
&\overline{\theta}_{z_{l_0,k,k^{\prime}}}(r_{l_0,k,k^{\prime}})<\overline{\theta}_{z_{l_0,k,k^{\prime}}}(4^{-1}\gamma^{-K_0}r_{l_0,k,k^{\prime}})-\gamma^{K_0}<\overline{\theta}_{z_{l_0-1,k_*}}(r_{l_0-1,k_*})-\gamma^{K_0}\\
<& \Lambda-(l_0-1)10^{-1} \gamma^{K_0}-\gamma^{K_0}<\Lambda-(l_0+1)10^{-1} \gamma^{K_0}.
\end{split}
\end{eqnarray}
Finally, set $P_{l_0+1}(\Lambda,K_0)=2P_{l_0}(\Lambda,K_0)(1+N(\Lambda,K_0)+N_1(K_0))$. This will suffice to finish the induction, and hence proves Lemma \ref{trivial1}.
\end{proof}
\begin{proof}[Proof of Theorem \ref{abde}]
Upon taking $L=10\Lambda\gamma^{-K_0}+1$ in Lemma \ref{trivial1} and then setting $P(\Lambda,K_0)=P_{L}(\Lambda,K_0)$, we proved Theorem \ref{abde}.
\end{proof}
\begin{remark}
\label{refine}
As one could check by tracing the above proof, the annular and bubble cover obtained in Theorem \ref{abde} might be shrunk (at our convenience) while still forming a cover. More precisely, upon rescaling, each annular region that appears in the above cover looks like $A_{\gamma^{K_a},1}(x_a)$, while each bubble region looks like $B_{1}(x_b)\backslash \bigcup_{i}B_{r_{i,b}}(x_{i,b})$. Then after scaling back, $\{A_{2\gamma^{K_a},3/4}(x_a)\}_a\cup\{B_{3/4}(x_b)\backslash \bigcup_{i}B_{2r_{i,b}}(x_{i,b})\}_b$ once again forms a cover. This observation will be needed in obtaining the global $\mathcal{L}^{4,\infty}$-estimate in the next section. To distinguish this refined cover from the original cover, let us denote the refined one by $\{\mathcal{A}^{\prime}_a\}_a\cup\{\mathcal{B}_b^{\prime}\}_b$.
\end{remark}
\bigskip

\section{Global $\mathcal{L}^{4,\infty}$-estimate}
\label{glsec}
In this section, we shall complete the proof of Theorem \ref{main} for smooth $A$ via obtaining the $\mathcal{L}^{4,\infty}$-estimates on annular and bubble regions, as stated in following theorem:  
\begin{theorem}
\label{annbubsing}
Under the same setting as Theorem \ref{main}, let $\Theta_0$ be the Coulomb minimizer with associated connection form $A_0$. Let $B_{r_0}(x_0)\subseteq M$ be such that $10r<r(M)$. Fix a $\gamma^{K_0}$-annular-bubble decomposition (see Remark \ref{gammaannbub}) on $B_{r_0}(x_0)$ given by $\{\mathcal{A}_a\}_a\cup\{\mathcal{B}_b\}_b$ (obtained in Theorem \ref{abde}). Then there exists $C(K_0,\Lambda)$ such that $\|A_0\|_{\mathcal{L}^{4,\infty}(\mathcal{B}_b^{\prime})}\le C(K_0,\Lambda)$. Moreover, there exists an universal small constant $\gamma$, constants $K^*\equiv K^*(\Lambda)$ and $C_1(\Lambda)$, such that for all $K_0\ge K^*$ we have $\|A_0\|_{\mathcal{L}^{4,\infty}(\mathcal{A}_a^{\prime})}\le C_1(\Lambda)$, where $\mathcal{A}$ is a $\gamma^{K^*}$-annular region. Here $\mathcal{A}^{\prime}$ and $\mathcal{B}$ are the refined cover as in Remark \ref{refine}.
\end{theorem}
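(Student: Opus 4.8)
The plan is to treat the bubble and annular regions by two rather different mechanisms, both already built in earlier sections. For a refined bubble region $\mathcal{B}_b'=B_{3r/4}(x_b)\backslash\bigcup_i B_{2r_{i,b}}(x_{i,b})$ I would use that, by property (4) in Definition \ref{bublow} (preserved under the mild shrinking of Remark \ref{refine}), every point $x\in\mathcal{B}_b'$ has a curvature scale $r_{F_A,\epsilon_0}(x)\ge c(\Lambda,K_0)r$. Hence $\mathcal{B}_b'$ can be covered by a bounded number (at most $Cc(\Lambda,K_0)^{-4}$, by a standard Vitali argument in dimension $4$) of balls $B_{r_{F_A,\epsilon_0}(x_l)/4}(x_l)$ on each of which the curvature hypothesis $\int_{B_{4r_l}(x_l)}|F_A|^2<\epsilon_0$ of Theorem \ref{4localeps} is met; applying that theorem gives $\|A_0\|_{\mathcal{L}^{4,\infty}(B_{r_l/4}(x_l))}\le C_0$ on each, and subadditivity of the $\mathcal{L}^{4,\infty}$-quasinorm over a controlled number of pieces (which follows from the very definition via $s_{A_0}$ and the weak-$L^4$ structure) yields $\|A_0\|_{\mathcal{L}^{4,\infty}(\mathcal{B}_b')}\le C(K_0,\Lambda)$. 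This part is essentially bookkeeping once Theorem \ref{4localeps} is in hand.

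The annular region is where the real work lies. For a refined annular region $\mathcal{A}_a'=A_{2\gamma^{K^*}r,3r/4}(x_a)$ the ratio of inner to outer radius is tiny, so one cannot expect a uniform lower bound on $r_{F_A,\epsilon_0}$ throughout $\mathcal{A}_a'$, and Theorem \ref{4localeps} does not apply directly at all scales. The strategy I would follow is: first, identify $\mathrm{SU}(2)\cong\mathbb{S}^3$ and show by a limiting/compactness argument — using Corollary \ref{keycoro} together with the definition of an annular region (condition (2), which forces energy drop at \emph{every} center in the core) — that the number of singularities of $\Theta_0$ inside $\mathcal{A}_a'$ is bounded by a universal constant $N_0(\Lambda)$; this is exactly Lemma \ref{singintro} from the outline, and it rests on the rigidity theory of stable-stationary harmonic maps into $\mathbb{S}^3$ from \cite{LW06,LW08} plus Theorem \ref{numbersing}. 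Second, away from those finitely many singularities, apply Theorem \ref{apr} to convert the (ineffective) local $L^4$-integrability into an effective lower bound $r_{\Theta_0,\epsilon_0}(x)\ge c_0\cdot(\text{local scale})$, hence by \eqref{soborad} an effective lower bound on $s_{A_0}(x)$; third, cover $\mathcal{A}_a'$ by a controlled number of dyadic sub-annuli together with small balls around each singularity, estimate $\|1/s_{A_0}\|_{L^{4,\infty}}$ on each piece, and sum. The bound on $\|1/s_{A_0}\|_{L^{4,\infty}}$ near an isolated singularity uses that near such a point $s_{A_0}(x)\gtrsim \mathrm{dist}(x,\text{sing})$ (a consequence of the conical tangent-cone structure from Theorem \ref{singtangent} and scaling), so $1/s_{A_0}$ behaves like $1/\mathrm{dist}$, which is precisely borderline in $L^{4,\infty}$ in dimension $4$ — this is the structural reason the estimate lands in $\mathcal{L}^{4,\infty}$ and not $L^4$.

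Concretely, here is the order of steps I would carry out. (i) Reduce to the rescaled model $\mathcal{A}=A_{\gamma^{K^*},1}(0)$ by scaling invariance, noting (as in Remark \ref{refine}) that rescaling does not change any $\mathcal{L}^{4,\infty}$-quantity. (ii) Prove the singularity count $\#(\mathrm{Sing}(\Theta_0)\cap\mathcal{A})\le N_0(\Lambda)$: argue by contradiction; if singularities accumulated in number along a sequence of annular regions, a blow-up using Corollary \ref{keycoro} would produce a stable-stationary harmonic map on a punctured region of $\mathbb{R}^4$ into $\mathbb{S}^3$ whose energy is almost scale-invariant on the annulus (by the annular-region condition) yet with too many singularities, contradicting the rigidity/finiteness results of \cite{LW06,LW08}; the choice of $\gamma$ small and $K^*=K^*(\Lambda)$ large enters here to make the energy drop condition force near-conical behaviour. (iii) On the complement in $\mathcal{A}$ of $\delta$-balls around these $\le N_0$ singularities, every point satisfies $\mathrm{Sing}(\Theta_0)\cap B_{cd(x)}(x)=\emptyset$ where $d(x)\sim$ distance to the nearer of the two boundary spheres or to a singularity; apply Theorem \ref{apr} on such balls to get $r_{\Theta_0,\epsilon_0}(x)\ge c_0 d(x)$, hence $s_{A_0}(x)\ge c_* c_0 d(x)$ by \eqref{soborad}. (iv) Compute: $\int_{\mathcal{A}}$-type weak-$L^4$ norm of $1/d(x)$ over an annulus $A_{\gamma^{K^*},1}$ in $\mathbb{R}^4$ is bounded by a constant depending only on the number of dyadic scales times a universal constant — but crucially the \emph{weak}-$L^4$ norm of $1/|x|$ on each dyadic shell is bounded independently of the shell, and the $L^{4,\infty}$-quasinorm of a sum of $O(\log(1/\gamma^{K^*}))\cdot$(controlled) pieces is still controlled because $1/s_{A_0}$ is, on each shell, comparable to a fixed profile; assembling these with the finitely many singular contributions (each again a $1/\mathrm{dist}$ profile, weak-$L^4$-bounded by a universal constant) gives $\|A_0\|_{\mathcal{L}^{4,\infty}(\mathcal{A}')}\le C_1(\Lambda)$. (v) Finally combine with the bubble estimate and Theorem \ref{abde}/Corollary \ref{decompm}: summing $\|A_0\|_{\mathcal{L}^{4,\infty}}$ over the $P(\Lambda,K_0,M)$ pieces of the cover yields $\|A_0\|_{\mathcal{L}^{4,\infty}(M)}\le C_0(M,\Lambda)$, proving Theorem \ref{main} for smooth $A$ (the general $W^{1,2}$ case then follows by the approximation argument promised in Section \ref{asc}).

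The main obstacle, as I see it, is step (ii) — the uniform bound $N_0(\Lambda)$ on the number of singularities inside an annular region. Unlike Theorem \ref{numbersing}, which controls singularities on a ball of \emph{fixed} curvature scale, here the annular region has arbitrarily small modulus, so one must extract, via a delicate blow-up, a limiting stable-stationary map on $\mathbb{R}^4\setminus\{0\}$ (or a cylinder) into $\mathbb{S}^3$ and invoke both the classification of such maps and a quantitative no-accumulation statement; coordinating the two small parameters $\gamma$ and the large $K^*$ with $\Lambda$, and ensuring Corollary \ref{keycoro} can be applied despite the metric only being $C^{1,\alpha}$-close to Euclidean at the relevant scales, is the technical heart of the argument. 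A secondary, more routine but nontrivial, point is verifying that the $\mathcal{L}^{4,\infty}$-quasinorm genuinely enjoys a quasi-triangle inequality over a bounded number of measurable pieces — this needs a short lemma about how $s_{A_0}$ on a union relates to $s_{A_0}$ on the pieces, which is where the second ``$\sup$'' in Definition \ref{curlyl4infty} pays off.
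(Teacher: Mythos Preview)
Your bubble-region argument matches the paper's exactly, and your overall architecture for the annular region (reduce to a singularity count, then invoke Theorem~\ref{apr} and the weak-$L^4$ bound on $1/\mathrm{dist}$) is the right scaffolding. The genuine gap is in your step~(ii).

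The contradiction mechanism you sketch --- blow up along a sequence of annular regions with many singularities and obtain a stable-stationary limit map with ``too many singularities'' --- does not work as stated, for two reasons. First, an annular region $A_{\gamma^{K_{x_0}},1}(x_0)$ has $K_{x_0}$ dyadic shells with $K_{x_0}$ arbitrarily large (not $K^*$; your writeup conflates the two), so the putative many singularities are spread across many scales. Any single blow-up sees only one scale: singularities at much smaller scales collapse to the center, those at much larger scales escape to infinity, and you are left with at most the $O(1)$ singularities that Theorem~\ref{numbersing} already allows on a single shell --- no contradiction. Second, even at a fixed scale, the convergence furnished by Corollary~\ref{keycoro} is only strong $W^{1,2}$, and there is no general principle that limits of singular points remain singular under such convergence; without additional input, a pair of nearby singularities of opposite ``orientation'' can annihilate in the limit.

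The paper's proof supplies precisely this missing topological input. It does \emph{not} bound the total number of singularities directly; instead it bounds the number of shells $\mathrm{Ann}_i$ that contain a singularity (Lemma~\ref{singannbdd}), and the key device is a notion of \emph{degree} of the frame $\Theta_l$ restricted to a $3$-sphere $\partial B_{r_l}(y_l)$ (Definition~\ref{trsp}), constructed by patching Uhlenbeck gauges on the two hemispheres. One then chooses, for each $l$, a sphere satisfying three conditions simultaneously: it is quantitatively far from $\mathrm{Sing}(\Theta_l)$, there is a singularity just outside it, and its degree is nonzero. The first condition upgrades the convergence on the sphere to $W^{2,2}$ (hence $W^{1,3}$ and $C^0$), so the degree passes to the limit; nonzero degree then \emph{forces} a singularity of $\Theta^*_\infty$ inside $B_{\gamma^2}(y_*)$, while the second condition produces another singularity of $\Theta^*_\infty$ just outside. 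These two limit singularities are $O(\gamma)$ apart, contradicting the uniform gap from \cite{LW06} once $\gamma$ is small. Your proposal has no analogue of this degree argument, and without it the compactness step in (ii) cannot be closed.
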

Admitting Theorem \ref{annbubsing} for now, we could complete the proof of Theorem \ref{main} under the smoothness hypothesis of $A$. 
\begin{proof}[Proof of Theorem \ref{main} under the smoothness assumption on $A$]
Consider a cover of $M$ by $N(M)$-many balls $\{B_{r_i}(x_i)\}_i$ such that $20r_i<r(M)$. Choose $K_0$ to be the universal integer $K^*$ as in Theorem \ref{annbubsing}. By applying Theorem \ref{decompm}, we arrive at a $\gamma^{K^*}$-annular-bubble decomposition of $M$ given by $\{\mathcal{A}_a\}_{a=1}^{N_A}\cup \{\mathcal{B}_b\}_{b=1}^{N_B}$, with $N_A+N_B\le N(\Lambda,K_0,M)<\infty$. As mentioned in Remark \ref{refine}, the union of sets $\{\mathcal{A}^{\prime}_a\}_{a=1}^{N_A}\cup\{\mathcal{B}_b^{\prime}\}_{b=1}^{N_B}$ has once again formed a refined cover. Let us now apply Theorem \ref{annbubsing} to each $\mathcal{A}_a^{\prime}$ and $\mathcal{B}_b^{\prime}$ to estimate as follows:
\begin{eqnarray}
\begin{split}
\label{noeps}
\|A_0\|_{\mathcal{L}^{4,\infty}(M)}&\le \sum_a\|A_0\|_{\mathcal{L}^{4,\infty}(\mathcal{A}_a^{\prime})}^4+\sum_b\|A_0\|_{\mathcal{L}^{4,\infty}(\mathcal{B}_b^{\prime})}^4\le N(M,\Lambda,K_0)(C_1(\Lambda)+C(K_0,\Lambda))\\
&\coloneqq C_0(M,\Lambda).
\end{split}
\end{eqnarray}
Thus we complete the proof of Theorem \ref{main} when $A$ is smooth.
\end{proof}
The rest of this section is devoted into proving Theorem \ref{annbubsing}.
\begin{proof}[Proof of Theorem \ref{annbubsing}]
The first claim that
\begin{equation}
\label{bubl4}
\|A_0\|_{\mathcal{L}^{4,\infty}(\mathcal{B}_b^{\prime})}\le C(K,\Lambda)
\end{equation}
is an immediate consequence of Theorem \ref{4localeps} and item (2) of Definition \ref{bublow}. Hence let us focus on proving the second claim; namely, to prove an effective $\mathcal{L}^{4,\infty}$-estimate on a $\gamma^K$-annular region provided $K$ large enough. Firstly, we begin with achieving a weaker result, namely the effective $L^{4-\epsilon}$-estimate. This almost trivially follows from Theorem \ref{4localeps}:
\begin{lemma}
\label{4epsilon0}
There exists $K_0$ large and $C(\epsilon)$ for each $\epsilon$, such that for each rescaled $\gamma^K$-annular region $\mathcal{A}=A_{\gamma^{K_x},1}(x)$, we have
\begin{eqnarray}
\begin{split}
\|A_0\|_{L^{4-\epsilon}(\mathcal{A}_a^{\prime})}<C(\epsilon).
\end{split}
\end{eqnarray}
Here $\mathcal{A}_a^{\prime}$ denotes the refined annular region as in Remark \ref{refine}.
\end{lemma}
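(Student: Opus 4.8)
The plan is to cover the refined annular region $\mathcal{A}_a^{\prime}$ by balls on which Theorem~\ref{4localeps} applies, and then sum up the local $\mathcal{L}^{4,\infty}$-estimates using the interpolation inequality of Remark~\ref{clanew}. Concretely, a rescaled $\gamma^K$-annular region has the form $\mathcal{A}=A_{\gamma^{K_x},1}(x)$, and the refined version $\mathcal{A}_a^{\prime}$ sits inside something like $A_{2\gamma^{K_x},3/4}(x)$. First I would recall that the defining property of an annular region gives $|\overline{\zeta}_{2}(x)-\overline{\zeta}_{\gamma^{K_x}}(x)|<\gamma^{K_0}$, i.e. the curvature energy on every small ball inside the annulus is controlled. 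More precisely, for each dyadic scale $\rho$ with $\gamma^{K_x}\le \rho\le 1$, and each $y$ in the annulus at distance comparable to $\rho$ from $x$, one has $\int_{B_{c\rho}(y)}|F_A|^2<\epsilon_0$ provided $\gamma^{K_0}$ is chosen sufficiently small (depending only on $\Lambda$ and the universal $\epsilon_0$ of Theorem~\ref{4localeps}); here $c$ is a fixed dimensional constant. This fixes the choice of $K_0$ large.

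Next I would produce, at each dyadic scale $\rho_j=2^{-j}$ with $\gamma^{K_x}\lesssim \rho_j\lesssim 1$, a Vitali-type cover of the dyadic sub-annulus $A_{\rho_j/2,\rho_j}(x)\cap \mathcal{A}_a^{\prime}$ by a bounded number (dimensional constant, say $\le N_{\dim}$) of balls $B_{4r_k}(y_k)$ with $r_k\sim \rho_j$, such that $\int_{B_{4r_k}(y_k)}|F_A|^2<\epsilon_0$. Theorem~\ref{4localeps} then yields $\|A_0\|_{\mathcal{L}^{4,\infty}(B_{r_k}(y_k))}\le C_0$ for a universal $C_0$. Now apply the second inequality of Remark~\ref{clanew} (which is $\int_M |u|^{4-\epsilon}\le C(\epsilon,M)\|u\|^4_{\mathcal{L}^{4,\infty}}$, but whose proof is scale-respecting on a fixed ball — one should state this in a localized, scale-invariant form, or simply invoke that on a unit-normalized ball $B_{r_k}(y_k)$ one has $r_k^{-(4-\epsilon)}\int_{B_{r_k}(y_k)}|A_0|^{4-\epsilon}\,dV_g \le C(\epsilon) C_0^4$ by rescaling). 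Summing over the at most $N_{\dim}$ balls at scale $\rho_j$:
\begin{eqnarray}
\begin{split}
\int_{A_{\rho_j/2,\rho_j}(x)\cap \mathcal{A}_a^{\prime}}|A_0|^{4-\epsilon}\,dV_g \le N_{\dim}\, C(\epsilon)\, C_0^4\, \rho_j^{4-\epsilon}.
\end{split}
\end{eqnarray}
Finally, summing the geometric series over all dyadic scales $j$ from $0$ down to $\log_2(\gamma^{-K_x})$, the factor $\rho_j^{4-\epsilon}=2^{-j(4-\epsilon)}$ is summable (exponent $4-\epsilon>0$), so
\begin{eqnarray}
\begin{split}
\int_{\mathcal{A}_a^{\prime}}|A_0|^{4-\epsilon}\,dV_g \le \sum_{j\ge 0} N_{\dim}\, C(\epsilon)\, C_0^4\, 2^{-j(4-\epsilon)} \le C(\epsilon),
\end{split}
\end{eqnarray}
with $C(\epsilon)$ depending only on $\epsilon$, $\Lambda$, and universal constants (crucially \emph{not} on $K_x$, hence not on how thin the annulus is). This gives $\|A_0\|_{L^{4-\epsilon}(\mathcal{A}_a^{\prime})}\le C(\epsilon)$.

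The main obstacle I anticipate is making the local-to-global passage genuinely scale-uniform: one must be careful that the constant $C(\epsilon)$ coming from Remark~\ref{clanew}/Lemma~\ref{interpo} behaves correctly under rescaling (the $\mathcal{L}^{4,\infty}$-norm and the $L^{4-\epsilon}$-norm scale the same way, so this is fine, but it should be checked rather than asserted), and that the number of balls needed at each dyadic scale is bounded independently of the scale — which is exactly where the annular structure (bounded curvature energy at every sub-scale) is essential, since without it Theorem~\ref{4localeps} could fail on some sub-balls. The summability over scales is then automatic precisely because we only ask for $L^{4-\epsilon}$ rather than $L^4$: the borderline exponent $4$ would give a divergent (logarithmic) sum, which is exactly the reason the sharp global estimate must be in $\mathcal{L}^{4,\infty}$ rather than $L^4$ and is deferred to the subsequent, harder part of Theorem~\ref{annbubsing}.
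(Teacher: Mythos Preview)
Your approach is essentially the same as the paper's: cover each dyadic sub-annulus by boundedly many balls with small curvature, apply Theorem~\ref{4localeps} and Lemma~\ref{interpo} on each, then sum over scales.

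There is one scaling slip worth fixing. You assert $r_k^{-(4-\epsilon)}\int_{B_{r_k}(y_k)}|A_0|^{4-\epsilon}\le C(\epsilon)C_0^4$, i.e.\ a bound $\int_{B_{r_k}}|A_0|^{4-\epsilon}\lesssim r_k^{4-\epsilon}$. This is too strong and is not what rescaling Lemma~\ref{interpo} actually gives. The $\mathcal{L}^{4,\infty}$-norm is scale invariant (like $L^4$), while the $L^{4-\epsilon}$-norm is not; carrying out the rescaling correctly (or just using H\"older, $\int_{B_r}|A_0|^{4-\epsilon}\le (\int_{B_r}|A_0|^4)^{(4-\epsilon)/4}|B_r|^{\epsilon/4}$) yields
\[
\int_{B_{r_k}(y_k)}|A_0|^{4-\epsilon}\le C(\epsilon)\,r_k^{\theta\epsilon}
\]
for some $\theta>0$, which is exactly what the paper records as $\int_{A_{2^{-l-1},2^{-l}}(x_0)}|A_0|^{4-\epsilon}<C\,2^{-\theta\epsilon l}$. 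Your parenthetical remark that ``the $\mathcal{L}^{4,\infty}$-norm and the $L^{4-\epsilon}$-norm scale the same way'' is therefore also incorrect. Fortunately this error is harmless for the conclusion: the series $\sum_{j\ge 0}2^{-j\theta\epsilon}$ is still geometric and summable, so the final bound $\|A_0\|_{L^{4-\epsilon}(\mathcal{A}_a')}\le C(\epsilon)$ stands once the exponent is corrected.
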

\begin{proof}
It is easy to build a cover of $\mathcal{A}^{\prime}$ given by $\mathscr{B}=\{B_{r_i}(x_i)\}_i$, such that $\int_{B_{2r_i}(x_i)}|F_A|^2<\epsilon_0$ (provided $K$ being large), and moreover that:
\begin{eqnarray}
\begin{split}
\label{4epsilon}
\#\{B\in \mathscr{B}:B\cap A_{2^{-l-1},2^{-l}}(x_0)\neq\emptyset\}\le C,\ \text{for all } l\ge 0. 
\end{split}
\end{eqnarray}
From \eqref{4epsilon}, Theorem \ref{4localeps}, Lemma \ref{interpo}, and then H\"older inequality, we see that $\int_{A_{2^{-l-1},2^{-l}}(x_0)}|A_0|^{4-\epsilon}< C2^{-\theta\epsilon l}$ for each $l\ge 0$ and some $\theta>0$. Therefore, $\int_{\mathcal{A}^{\prime}}|A_0|^{4-\epsilon}<C^{\prime}(\epsilon)$. Thus finishes Lemma \ref{4epsilon0}.
\end{proof}
\begin{corollary}
\label{4epsilon1}
For any $\epsilon$, there exists $C(\epsilon,\Lambda)$, such that $\|A_0\|_{L^{4-\epsilon}(M)}<C(\epsilon,\Lambda)$.
\end{corollary}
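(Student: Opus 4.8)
The plan is to obtain Corollary \ref{4epsilon1} as a bookkeeping consequence of three ingredients already available: the local $\mathcal{L}^{4,\infty}$-estimate of Theorem \ref{4localeps}, the $L^{4-\epsilon}$-estimate on annular regions of Lemma \ref{4epsilon0}, and the annular-bubble decomposition of $M$ with controllable cardinality from Corollary \ref{decompm}. The idea is simply to run the annular argument of Lemma \ref{4epsilon0} on the annular pieces and an analogous (in fact easier) argument on the bubble pieces, then add up the finitely many contributions.

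First I would fix the universal integer $K_0$ large enough for Lemma \ref{4epsilon0} to apply, and then invoke Corollary \ref{decompm} to cover $M$ by annular and bubble regions $\{\mathcal{A}_a\}_{a=1}^{P_a}\cup\{\mathcal{B}_b\}_{b=1}^{P_b}$ with $P_a+P_b\le P(\Lambda,K_0,M)$; by Remark \ref{refine} the refined regions $\{\mathcal{A}_a^{\prime}\}_a\cup\{\mathcal{B}_b^{\prime}\}_b$ still cover $M$. On each refined annular region Lemma \ref{4epsilon0} directly gives $\int_{\mathcal{A}_a^{\prime}}|A_0|^{4-\epsilon}\le C(\epsilon)$, so that part requires no further work.

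Next I would handle a refined bubble region $\mathcal{B}_b^{\prime}\subseteq B_r(p)$. By item (4) of Definition \ref{bublow}, every point of $\mathcal{B}_b^{\prime}$ is the center of a ball of radius at least $c(\Lambda,K_0)r$ on which $\int|F_A|^2<\epsilon_0/2$; taking the common radius $\rho\eqqcolon c(\Lambda,K_0)r/4$ we get $\int_{B_{4\rho}(x)}|F_A|^2<\epsilon_0$ for all $x\in\mathcal{B}_b^{\prime}$, and since $\mathcal{B}_b^{\prime}$ has diameter $\lesssim r$ it is covered by at most $Cc(\Lambda,K_0)^{-4}$ balls $B_\rho(x_l)$. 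Applying Theorem \ref{4localeps} to each of them gives $\|A_0\|_{\mathcal{L}^{4,\infty}(B_\rho(x_l))}\le C_0$, and the inclusion $\mathcal{L}^{4,\infty}\subset L^{4-\epsilon}$ recorded in the second line of \eqref{clanew1} (Lemma \ref{interpo}) upgrades this to $\int_{B_\rho(x_l)}|A_0|^{4-\epsilon}\le C(\epsilon,M)C_0^4$; summing over the controllably many $l$ yields $\int_{\mathcal{B}_b^{\prime}}|A_0|^{4-\epsilon}\le C(\epsilon,\Lambda,K_0)$. Finally, summing over the at most $P(\Lambda,K_0,M)$ members of the refined cover gives $\int_M|A_0|^{4-\epsilon}\le\sum_a\int_{\mathcal{A}_a^{\prime}}|A_0|^{4-\epsilon}+\sum_b\int_{\mathcal{B}_b^{\prime}}|A_0|^{4-\epsilon}\le C(\epsilon,\Lambda)$, with $K_0$ now fixed at a universal value so that it disappears from the constant.

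I do not expect any genuine obstacle here: this corollary is pure bookkeeping once Lemma \ref{4epsilon0}, Theorem \ref{4localeps} and the decomposition Corollary \ref{decompm} are in place. The only point deserving a line of care is the passage on a bubble region from the pointwise-local $\mathcal{L}^{4,\infty}$-bound on small-curvature balls to an $L^{4-\epsilon}$-bound on the whole region, and this is exactly the conversion (Theorem \ref{4localeps}, then Lemma \ref{interpo}, then H\"older) already performed inside the proof of Lemma \ref{4epsilon0}.
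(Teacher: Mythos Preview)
Your proposal is correct and follows essentially the same route as the paper: cover $M$ by the refined annular-bubble decomposition (Corollary \ref{decompm} and Remark \ref{refine}), apply Lemma \ref{4epsilon0} on the annular pieces, obtain an $L^{4-\epsilon}$-bound on the bubble pieces via Theorem \ref{4localeps} and Lemma \ref{interpo}, and sum over the controllably many regions. The only cosmetic difference is that on a bubble region the paper quotes the already-established bound \eqref{bubl4} (i.e.\ $\|A_0\|_{\mathcal{L}^{4,\infty}(\mathcal{B}_b')}\le C(K_0,\Lambda)$) and then invokes Lemma \ref{interpo} once, whereas you re-derive this by covering $\mathcal{B}_b'$ with small-curvature balls and applying Theorem \ref{4localeps} and Lemma \ref{interpo} ball-by-ball; this is precisely how \eqref{bubl4} itself is obtained, so the two arguments coincide.
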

\begin{proof}
This follows immediately from \eqref{bubl4}, Lemma \ref{4epsilon0}, and the same argument used in obtaining \eqref{noeps}. We omit the details.
\end{proof}
As an immediate consequence we have the following scaling invariant estimate:
\begin{corollary}
\label{scalinginvariant}
There exists $C(\Lambda)$ such that $\sup_{x\in M,10r<r(M)}r^{-2}\int_{B_{r}(x)}|\nabla_A\Theta_0|^2<C(\Lambda)$.
\end{corollary}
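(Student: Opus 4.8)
The plan is to deduce Corollary \ref{scalinginvariant} directly from the global $L^{4-\epsilon}$-estimate of Corollary \ref{4epsilon1} by a straightforward H\"older argument on small balls, using that in dimension $4$ the exponent $2$ sits strictly below $4$. Fix $x\in M$ and $r$ with $10r<r(M)$. Since $|\nabla_A\Theta_0|\equiv|A_0|$, we need only bound $r^{-2}\int_{B_r(x)}|A_0|^2$. Choose, once and for all, some $\epsilon\in(0,2)$, say $\epsilon=1$, so that $4-\epsilon=3>2$; let $p=4-\epsilon$ and let $q$ be its conjugate exponent with respect to the pair $(2,p)$, i.e. apply H\"older with exponents $p/2$ and $(p/2)'=p/(p-2)$ to the product $|A_0|^2\cdot 1$ on $B_r(x)$. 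This yields
\begin{eqnarray}
\begin{split}
\int_{B_r(x)}|A_0|^2\le \bigg(\int_{B_r(x)}|A_0|^{p}\bigg)^{2/p}\cdot\big(\Vol(B_r(x))\big)^{1-2/p}.
\end{split}
\end{eqnarray}

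Next I would use the volume bound on small balls in the compact manifold $M$, namely $\Vol(B_r(x))\le C(M)r^4$ for $10r<r(M)$, which follows from the uniform geometry of $M$ (bounded sectional curvature and positive injectivity radius, exactly the quantities $\text{sec}_M,\inj_M$ already invoked in the remarks after Theorem \ref{main}). Substituting this in gives
\begin{eqnarray}
\begin{split}
\int_{B_r(x)}|A_0|^2\le C(M)^{1-2/p}\,\|A_0\|_{L^{p}(M)}^{2}\, r^{4(1-2/p)}.
\end{split}
\end{eqnarray}
Since $p=3$ we have $4(1-2/p)=4/3>2$, so after dividing by $r^2$ we obtain $r^{-2}\int_{B_r(x)}|A_0|^2\le C(M)\,\|A_0\|_{L^{3}(M)}^{2}\,r^{4/3-2}$; but $4/3-2=-2/3<0$, so this blows up as $r\to 0$ and the crude choice $\epsilon=1$ is not by itself enough — one must instead let $\epsilon\to 0$ to push the exponent of $r$ up to $0$. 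Concretely, for $4-\epsilon$ close to $4$ the exponent is $4(1-2/(4-\epsilon))=(8-4\epsilon)/(4-\epsilon)$, which tends to $2$ as $\epsilon\to0$; but it never reaches $2$, so a single application of H\"older will always leave a negative power of $r$. The fix, and the only genuinely substantive point, is to combine the H\"older estimate on $B_r(x)$ with the result already proved that $A_0$ is effectively $L^4$ (not merely $L^{4-\epsilon}$) on balls where the curvature is small in $L^2$: by Theorem \ref{4localeps}, on any $B_r(x)$ with $\int_{B_{4r}(x)}|F_A|^2<\epsilon_0$ one has $\|A_0\|_{\mathcal{L}^{4,\infty}(B_r(x))}\le C_0$, hence by the first inequality of Remark \ref{clanew} applied after rescaling, $r^{-2}\int_{B_r(x)}|A_0|^2\le C$ directly, with no loss.

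So the actual argument I would write has two regimes. First, using the annular–bubble cover from Theorem \ref{abde} (with $K_0=K^*$ as in the proof of Theorem \ref{main}), every point of $M$ lies in one of the controllably-many annular or bubble regions $\mathcal{A}_a',\mathcal{B}_b'$; on bubble regions the scale-invariant energy is controlled by Theorem \ref{4localeps} and item (2) of Definition \ref{bublow} exactly as in \eqref{bubl4}, and the same holds on the refined annular regions by the effective $\mathcal{L}^{4,\infty}$-bound of Theorem \ref{annbubsing} together with the second half of Remark \ref{clanew}, after rescaling the annular region to unit size. For balls $B_r(x)$ small enough to be contained in a single member of the cover (resp. in one of the small balls $B_{r_{F_A,\epsilon_0}(z)}(z)$ covering a bubble region), this gives $r^{-2}\int_{B_r(x)}|A_0|^2\le C(\Lambda)$ outright. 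For the remaining balls — those whose radius $r$ is bounded below by a definite fraction $c(\Lambda)r(M)$ of the macroscopic scale, i.e. not small — one simply uses $r^{-2}\int_{B_r(x)}|A_0|^2\le (c(\Lambda)r(M))^{-2}\int_M|A_0|^2$, and $\int_M|A_0|^2<\infty$ follows from Corollary \ref{4epsilon1} with any fixed $\epsilon\le 2$ and the finiteness of $\Vol(M)$. Taking the maximum of the two bounds gives the uniform constant $C(\Lambda)$. The main obstacle is purely bookkeeping: one must verify that ``small enough to lie in a single cover element'' can be arranged with the threshold radius depending only on $\Lambda$ (and $M$), which is exactly what the effective lower bounds $r_i\ge c(\Lambda,K_0)r$ in Definitions \ref{bublow} and the scale controls in the annular regions provide, so there is no circularity and no hidden dependence on $A$.
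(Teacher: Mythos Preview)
There are two genuine gaps in your proposal. First, invoking Theorem \ref{annbubsing} for the annular regions is circular: the $\mathcal{L}^{4,\infty}$-bound on annular regions (the second claim of Theorem \ref{annbubsing}) is proved via Lemma \ref{singannbdd}, and that lemma uses Corollary \ref{scalinginvariant} itself at \eqref{unity} and \eqref{refine1}. At this point in the logical order you only have \eqref{bubl4} for bubble regions and Lemma \ref{4epsilon0} for annular regions. Second, and more seriously, your two-regime dichotomy with a \emph{fixed} cover of $M$ does not close. The inner radius $s_a$ of an annular region $A_{s_a,\rho_a}(p_a)$ is not bounded below in terms of $\Lambda$ --- it depends on where the curvature of the particular connection $A$ concentrates, and can be arbitrarily small. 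A ball $B_r(x)$ with $x$ near $p_a$ and $r\sim s_a$ straddles the inner boundary of $\mathcal{A}_a$ and lies in no single cover element, yet its radius can be as small as you like; so it falls into neither of your regimes. Your assertion that ``the scale controls in the annular regions'' provide a threshold depending only on $\Lambda$ is simply false. (And even for balls contained in a single annular region, going via $\mathcal{L}^{4,\infty}$ or $L^{4-\epsilon}$ still leaves a negative power of $r$ under H\"older --- the very obstacle you identified in your first paragraph.)

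The fix is to apply Theorem \ref{abde} to \emph{each} ball $B_r(x)$, not once to $M$. After rescaling $B_r(x)$ to unit size you get at most $P(\Lambda,K_0)$ annular and bubble pieces of scale $\rho_j\le 1$. On a bubble piece, item (4) of Definition \ref{bublow} lets you cover it by $C(\Lambda,K_0)$ small-curvature balls; on an annular piece, cover by dyadic sub-annuli, each by boundedly many small-curvature balls as in the proof of Lemma \ref{4epsilon0}. On every such small-curvature ball, Theorem \ref{stathm} (with the standard cutoff) gives $\int|A_0|^2\le C\,(\text{radius})^2$ directly, so summing a geometric series on each annular piece and then over all pieces yields $r^{-2}\int_{B_r(x)}|A_0|^2\le C(\Lambda)$ with no H\"older loss. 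This is presumably what the paper means by ``immediate consequence'': the decomposition is scale-invariant, and Theorem \ref{stathm} already gives the right power of $r$ on each building block.
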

Next, we need the following key lemma which gives a control on the number of singularities of $\Theta_0$ on a annular region. Given a annular region $\mathcal{A}=A_{\gamma^{K_{x_0}},1}(x_0)$, it will be useful to introduce the notation $\text{Ann}_i(x_0)\coloneqq B_{\gamma^{i}}(x_0)\backslash B_{\gamma^{i+1}}(x_0)$. In the presence of no ambiguity we shall omit the notation for the center $x_0$ and simply write $\text{Ann}_i$. Using these notations, we present the following lemma:
\begin{lemma}
\label{singannbdd}
There exists an universal small constant $\gamma$, $K_0(\Lambda)$, and $N_0(\Lambda)$. Let $\mathcal{A}=A_{\gamma^{K_{x_0}},1}(x_0)$ be an arbitrary rescaled $\gamma^{K_0}$-annular region associated to connection $A$, and let $\Theta_0$ be a Coulomb minimizer with associated connection form $A_0$. Then $\#\{i: \text{Ann}_i\cap \text{Sing}(\Theta_0)\neq \emptyset, 0\le i\le K_{x_0}\}\le N_0$.
\end{lemma}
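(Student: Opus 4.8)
The plan is to prove Lemma~\ref{singannbdd} by a contradiction/compactness argument, taking advantage of the fact that on a $\gamma^{K_0}$-annular region the scaling-invariant curvature energy $\overline\zeta_\cdot(\cdot)$ is essentially conserved from scale $\gamma^{K_{x_0}}$ all the way up to scale $1$, so that the blow-ups at all intermediate scales behave like stable-stationary harmonic maps into $\mathbb{S}^3$ with \emph{small} energy deficit. Concretely, suppose the lemma fails: there are sequences $\gamma_j\to 0$ (or just $\gamma$ fixed small, plus $K_{0,j}\to\infty$), connections $A_j$ with $\int|F_{A_j}|^2\le\Lambda$, Coulomb minimizers $\Theta_{0,j}$, annular regions $\mathcal{A}_j=A_{\gamma^{K_{x_j}},1}(x_j)$, and indices $0\le i^{(1)}_j<i^{(2)}_j<\cdots<i^{(N_j)}_j\le K_{x_j}$ with $N_j\to\infty$ and $\text{Ann}_{i^{(m)}_j}\cap\text{Sing}(\Theta_{0,j})\ne\emptyset$ for each $m$. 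First I would reduce to a single ``bad'' annulus carrying many singularities in a bounded range of scales, or — more in the spirit of the proof of Theorem~\ref{numbersing} and Theorem~\ref{apr} — extract from the long chain of bad annuli a pair of distinct singularities $p_j,q_j$ lying in a common annulus $\text{Ann}_{i_j}$ with $\operatorname{dist}(p_j,q_j)\le C\gamma^{i_j}$, say after a discrete ``point selection'' as in \eqref{noncollision}–\eqref{ptmin}, so that after rescaling $B_{\gamma^{i_j}}(x_j)$ to unit size the two singularities stay at a fixed distance $\asymp 1$ while $N_j\to\infty$ forces infinitely many more singularities to accumulate in the unit annulus.

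The analytic core is then exactly the rescaling/compactness machinery already built: rescale so the annular region occupies (a fixed portion of) the unit ball, pick Uhlenbeck gauges $\sigma_j$ for the rescaled $A_j$ on the appropriate balls (legitimate since $\int|F_{A_j}|^2\to 0$ at these scales), and apply Corollary~\ref{keycoro} to get $W^{1,2}_{\mathrm{loc}}$-convergence $\hat\Theta^*_{0,j}\to\Theta_\infty$, where $\Theta_\infty$ is a stable-stationary harmonic map from (an annular subdomain of) $\mathbb{R}^4$ into $\mathbb{S}^3$, with $g_j\to g_{\mathrm{Euc}}$ in $C^{1,\alpha}$. The uniform energy bound needed to run this is supplied by Corollary~\ref{scalinginvariant} (or Theorem~\ref{stathm}), and the annular-region hypothesis (item (1) vs.\ item (2) in the definition of an annular region) guarantees the total curvature energy in the annular scales is as small as we wish by choosing $K_0$ large, which is what makes the limit genuinely harmonic. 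Then I would invoke the structure theory for stable-stationary harmonic maps into $\mathbb{S}^3$ from \cite{LW06}, \cite{naka06}, \cite{LW08} — in particular the quantitative bound on the number (and on the mutual separation) of singular points of such a map on $B_{1/2}$ in terms of its energy on $B_1$ (cf.\ Theorem~3.4.12 of \cite{LW08}) — to conclude $\Theta_\infty$ has only boundedly many singularities in the unit annulus, while the Hausdorff-continuity of the singular set under this strong convergence (i.e.\ singularities of $\hat\Theta_{0,j}$ with definite energy concentration $\ge\eta/2$ pass to singularities of $\Theta_\infty$, using \eqref{2con}-type lower-semicontinuity and the $\epsilon$-regularity Theorem~\ref{eps}) produces at least $N_j\to\infty$ singularities of $\Theta_\infty$ there — a contradiction. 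This forces a uniform $N_0=N_0(\Lambda)$ and also pins down the admissible $\gamma$ and $K_0(\Lambda)$.

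The main obstacle I anticipate is the bookkeeping that turns ``$N_j$ bad annuli spread over $[0,K_{x_j}]$'' into a genuine energy-concentration contradiction at a \emph{single} bounded window of scales: unlike the isolated-singularity statement of Theorem~\ref{numbersing} (which works at one fixed scale), here singularities may be distributed across logarithmically many dyadic shells, so one cannot naively rescale to a single ball and see all of them at once. The right fix is either (i) a pigeonhole on the energy: each singular shell carries at least a fixed amount $\ge c(\eta)$ of $\int|\nabla_A\Theta_0|^2$ in scale-invariant form, and since on an annular region (for $K_0$ large) the total scale-invariant energy is controlled by a constant $C(\Lambda)$ via Corollary~\ref{scalinginvariant}, only $N_0\le C(\Lambda)/c(\eta)$ shells can be singular — which would actually give the lemma almost directly, modulo checking that the annular-region flatness hypothesis indeed yields such a uniform per-shell energy lower bound not swamped by curvature; or (ii) if that direct energy count is too lossy, fall back on the point-selection-plus-blowup contradiction above, localized to the finest window where singularities cluster. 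I would present approach (i) as the main line since it is cleanest, and keep the compactness argument of Theorem~\ref{apr}/\ref{numbersing} in reserve for the step that rules out two singularities collapsing within one shell. Either way, the delicate point is ensuring the curvature term in the monotonicity formula \eqref{mono}, integrated over the annular range, stays $\le\gamma^{K_0}\cdot(\text{small})$, which is precisely guaranteed by conditions (1)–(2) in the definition of an annular region together with choosing $K_0\ge K^*(\Lambda)$ large.
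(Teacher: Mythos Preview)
Your approach (i) does not work, and approach (ii) as sketched is missing the crucial ingredient.

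For (i): there is no summable quantity that each singular shell contributes a definite amount to. If $x_i\in\text{Ann}_i$ is a singularity, almost-monotonicity gives $\int_{B_{c\gamma^i}(x_i)}|\nabla_A\Theta_0|^2\ge c'\gamma^{2i}$, but summing this over bad $i$ yields a geometric series $\sum_i\gamma^{2i}$ that converges no matter how many bad shells there are, so no contradiction with Corollary~\ref{scalinginvariant}. The $L^4$-energy is scale-invariant but is \emph{infinite} at each singularity (if $\int_{B_r(x_i)}|A_0|^4<\infty$ then H\"older gives $s^{-2}\int_{B_s(x_i)}|A_0|^2\to 0$, contradicting $x_i\in S(\Theta_0)$), and the $L^{4-\epsilon}$-bound from Lemma~\ref{4epsilon0} is too weak to count shells.

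For (ii): your point-selection cannot produce two singularities in a common shell, since the hypothesis only says many shells each contain \emph{some} singularity; for instance the bad shells could be at indices $1,L,L^2,\ldots$ for huge $L$, and any single rescaling sees at most one of them at bounded distance from the center---the rest collapse to the center or escape to infinity. You are right that a singularity at the rescaled scale passes to a singularity of $\Theta_\infty$ via \eqref{2con}-type convergence, but the collapsed ones need not: the limit map may well be smooth at the center, so you cannot accumulate $N_j\to\infty$ singularities of $\Theta_\infty$ this way.

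The missing idea is a \emph{degree} argument. The paper defines $\deg(\Theta_l)\rvert_{\partial B_r(y_l)}$ by patching Uhlenbeck gauges on the two hemispheres of the $3$-sphere (legitimate since curvature is tiny on the annulus) and reading off the degree of the resulting $W^{2,2}\hookrightarrow C^0$ map into $\text{SU}(2)\cong\mathbb{S}^3$. Because each singularity has tangent cone $T(x/|x|)$ (Theorem~\ref{singtangent}), this degree jumps by $\pm1$ as $r$ crosses a singular radius. With $N_l\to\infty$ bad shells one can then (by pigeonhole) find a sphere $\partial B_{r_l}(y_l)$ that (a) sits at distance $\ge\gamma r_l$ from $S(\Theta_l)$, (b) has a singularity $z_l$ just outside, at distance $\le\gamma^{-1}r_l$, and (c) has \emph{nonzero} degree. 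After rescaling and passing to the limit (which requires a delicate partition-of-unity construction of $\Theta_\infty$ and a separate verification of stationarity and stability, since no global Uhlenbeck gauge exists on $B_{\gamma^{-1}}\setminus B_\eta$; this is Proposition~\ref{shs} and its proof), the degree is preserved under the $W^{1,3}(\partial B)$-strong convergence coming from (a), forcing $\Theta_\infty$ to have a singularity \emph{inside} $\partial B_{\gamma^2}(y_*)$. Condition (b) supplies a second singularity $z_*$ just outside. These two singularities of the stable-stationary harmonic map $\Theta_\infty$ are then within $O(\gamma)$ of each other, contradicting the separation theorem from \cite{LW06} once $\gamma$ is fixed small. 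The nonzero degree is exactly what replaces your failed ``Hausdorff-continuity of the singular set'' for the inner singularity.
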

Since the proof of this Lemma is somewhat lengthy, a sketch of the ideas behind the argument will be helpful. Recall that Claim \ref{key} says that the singularities are $c_0 2^{-i}$ sparse on each annulus $\text{Ann}_i(x_0)\subseteq \mathcal{A}$. So our task becomes to bound the number of annuli $\text{Ann}_i$ that contains at least one singularity of $\Theta_0$. The proof goes by contradiction. Suppose this number could not be bounded no matter how large one chooses $K_0$ to be, then one gets a sequence of tuple $(A_l,\Theta_l,\mathcal{A}_l)$, so that $\mathcal{A}_l$ is a $\gamma^{K_l}$-annular region centered at $x_l$ with $K_l\to \infty$, and the number
\begin{eqnarray}
\begin{split}
\label{nlinfty00}
\#\{\text{Ann}_i\subseteq \mathcal{A}_l: \text{Ann}_i\cap S(\Theta_l)\neq \emptyset\}\to \infty.
\end{split}
\end{eqnarray}
Certainly, one should not expect this sequence itself to give a contradiction since all the fuzzy behavior of the singularities might only occur in a region that shrinks towards the centers in $l$ and eventually disappear in the limit, in which case we will not obtain a contradiction. Instead, we find a properly rescaled sequence of $\Theta_i$ that eventually leads to a contradiction. To be a little more precise, we show that upon shifting the center $x_l$ slightly to some $y_l$ nearby one can find a sphere $\partial B_{r_l}(y_l)$ for each $l$ satisfying these properties: (1), $\partial B_{r_l}(y_l)$ is at least $c_0 r_l$ away from $S(\Theta_l)$ (i.e. not too close to a singularity); (2), There is a singularity of $\Theta_l$ lying out side and locating less than $\gamma^{-1} r_l$ away from $\partial B_{r_l}(y_l)$ (i.e. nor too far from a singularity outside); (3), The ``degree'' of $\Theta_l$ restricted to $\partial B_{r_l}(y_l)$ is nonzero (i.e. admits a singularity inside).
\medskip

Notice that the first two conditions are not hard to satisfy due to our assumption \eqref{nlinfty00}. In the third condition, we will introduce the notion of a degree of a frame, which will be defined towards the end of this section. Roughly speaking, the degree is defined to be the trivialization of $\Theta_l$ under a special gauge on the $3$-sphere $\partial B_{r_l}(y_l)$, a gauge that is obtained by patching up the Coulomb gauges defined on two hemisphere of $\partial B_{r_l}(y_l)$; note the curvature $L^2$ smallness on the annular region and Uhlenbeck's Coulomb gauge Theorem (see \cite{U82a}, Theorem 1.3) guarantees the existence of such a gauge. Furthermore, the degree defined above jumps by $\pm1$ as the sphere passes through a singularity of $\Theta_l$, therefore the third condition described above is now easily achieved due to the assumption \eqref{nlinfty00}.
\medskip

Next, we rescale and recenter $\partial B_{r_l}(y_l)$ to $\partial B_{\gamma^2}(y_*)$. Using a somewhat tedious gauge transform argument and, most importantly, the preceding estimates on $\Theta_l$ including the $\epsilon$-regularity theorem \ref{eps}, Theorem \ref{apr} etc., we successfully find a limit $\Theta_\infty$, which heuristically speaking arises as a strong $W^{1,2}$ limit of $\Theta_l$ (this heuristic is obviously non-rigorous because $\Theta_l$ are all sections living on different bundles! It will be made rigorous in the proof given later on) and that $\Theta_\infty$ is equivalent to a stable stationary harmonic map from a $4$ dimensional domain $B_{c\gamma^{-1}}(y_*)$ into $\text{SU}(2)\cong \mathbb{S}^3$. 
\medskip

By the choice of the sphere (before scaling) $\partial B_{r_l}(y_l)$, such sphere is away from singularities, and hence (by Theorem \ref{eps} and \ref{apr}) $\Theta_l$ converges weakly in $W^{2,2}(\partial B_{r_l}(y_l))$ and strongly in $W^{1,3}(\partial B_{r_l}(y_l))$ by the $3$ dimensional compact Sobolev embedding $W^{2,2}\hookrightarrow W^{1,3}$. We then show from this fact that the degree of $\Theta_l$ will be preserved on $\partial B_{\gamma^2}(y_*)$ upon rescaling, and its limit is exactly $\text{deg}(\Theta_\infty)\rvert_{\partial B_{\gamma^2}(y_*)}$. We therefore conclude that there must be a singularity inside $\partial B_{\gamma^2}(y_*)$. On the other hand, the second condition of the sphere $\partial B_{r_l}(y_l)$ implies there is a different singularity of $\Theta_{\infty}$ locating outside while being $C\gamma$ close to $\partial B_{\gamma^2}(y_*)$. That is to say, we find two singularities of $\Theta_\infty$, which is a stable stationary harmonic map into $\mathbb{S}^3$, that locate $C\gamma$ close to each other. From the singularity theory in \cite{LW06}, this cannot happen at all if we further decrease $\gamma$ if necessary, which gives a contradiction. This finishes the outline of the proof of Lemma \ref{singannbdd}. Now let us start the proof:

\begin{proof}[Proof of Lemma \ref{singannbdd}]
Choose $\gamma$ small enough to be specified later. The proof goes by contradiction. Suppose there does not exist such constants $K_0$ and $N_0$. In other words, there exists $K_l\to \infty$, $N_l\to\infty$, a sequence of smooth connection $\{A_l\}_l$, the associated $\gamma^{K_l}$-annular region sequence $\{\mathcal{A}_l\}_l\eqqcolon \{A_{\gamma^{K_{x_l}},1}(x_l)\}_l$, and the associated Coulomb Minimizer sequence $\{\Theta_l\}_l$; moreover,
\begin{eqnarray}
\begin{split}
\label{nlinfty}
\#\{i: \text{Ann}_{i}(x_l)\cap \text{Sing}(\Theta_l)\neq \emptyset, 0\le i\le K_{x_l}\}\ge N_l.
\end{split}
\end{eqnarray}
First notice we have the following trivial relation: $N_l\le K_l\le K_{x_l}$. Now, using \eqref{nlinfty} and the pigeonhole principle, for all sufficiently large $l$ one can find $J_l\ge 10$ satisfying $K_{x_l}-J_l\to \infty$, such that there exists $y_l\in B_{\gamma^{J_l}/10}(x_l)$ and some $r_l\in [\gamma^{J_l}/2,2\gamma^{J_l}/3]$, with the following properties hold:
\begin{eqnarray}
\begin{split}
\label{trickysphere}
\exists z_l\in \text{Sing}(\Theta_l)\cap B_{\gamma^{J_l-1}}(x_l)\backslash B_{r_l}(y_l),\ \text{dist}(\partial B_{r_l}(y_l),\text{Sing}(\Theta_l))>\gamma r_l,\ \text{deg}(\Theta_l)\rvert_{\partial B_{r_l}(y_l)}\neq 0.
\end{split}
\end{eqnarray}
A detailed definition of the degree of the section $\text{deg}(\Theta_l)$ will be postponed to the end of this section, since to introduce it requires the techniques to be used later. Also the notion of the degree will not be needed until we introduce it in Definition \ref{trsp}. Most importantly, the existence of the sphere described in \eqref{trickysphere} will be clear once we have Definition \ref{trsp}. Let us note that the first condition in \eqref{trickysphere} is easily satisfied due to our assumption \eqref{nlinfty}; the second inequality in \eqref{trickysphere} can be achieved in view of Claim \ref{key} applied to $B_{r_l/5}(y)$ for all $y\notin B_{r_l/2}(x_l)$, provided $\gamma$ chosen to be sufficiently small. Now let us rescale $B_{r_l}(y_l)$ to $B_{\gamma^2}(y_l)$. For simplicity we shall still denote the connections and the Coulomb minimizers by $\{A_l\}_l$ and $\{\Theta_l\}_l$ respectively. As a matter of fact, we have the following upon rescaling:
\begin{eqnarray}
\begin{split}
&B_{\gamma^{J_l-1}}(x_l)\to B_{t_l}(x_l)\ \text{where }t_l\in [3\gamma/2,2\gamma],\ B_{\gamma^{-3}r_l}(y_l)\to B_{\gamma^{-1}}(y_l),\\
&B_{\gamma^{K_{x_l}}}(x_l)\to B_{s_l}(x_l)\ \text{where }s_l\eqqcolon\gamma^{K_{x_l}+2}/r_l.
\end{split}
\end{eqnarray}
Observe the following straightforward implications: $K_{x_l}-J_l\to \infty$ implies $s_l\to 0$; $y_l\in B_{\gamma^{J_l}/10}(x_l)$ implies $x_l\in B_{\gamma^2/5}(y_0)$; $\text{dist}(\partial B_{r_l}(y_l),\text{Sing}(\Theta_l))>\gamma r_l$ implies $z_l\in B_{3\gamma}(y_l)\backslash B_{\gamma^2+\gamma^3/2}(y_l)$. Let us now consider $\Theta_l\rvert_{B_1(y_l)\backslash B_{s_l}(x_l)}$. Firstly, let us identify all $\{y_l\}_l$ to be $y_*$. Then up to passing to subsequence we could assume
\begin{eqnarray}
\begin{split}
\label{zstar}
x_l\to x_*\in B_{\gamma^2/5}(y_*),\ z_l\to z_*\in B_{3\gamma}(y_*)\backslash B_{\gamma^2+\gamma^3/2}(y_*).
\end{split}
\end{eqnarray}
Moreover, from \eqref{trickysphere} we see that
\begin{eqnarray}
\begin{split}
\label{trickysphere2}
\text{dist}(\partial B_{\gamma^2}(y_*),\text{Sing}(\Theta_l))\ge\gamma^3/2,\ \text{deg}(\Theta_l)\rvert_{\partial B_{\gamma^2}(y_*)}\neq0.
\end{split}
\end{eqnarray}
In addition, due to $J_l\ge 10$, we have $B_{\gamma^{-3}r_l}(y_l)\subseteq B_{1/2}(x_l)$; thus, upon rescaling and using above identification, $\{\Theta_l\}_l$ are defined on $B_{\gamma^{-1}}(y_*)$. By the construction of a $\gamma^{K_{l}}$-annular region, we have
\begin{eqnarray}
\begin{split}
\label{curvkl}
\int_{B_{\gamma^{-1}}(y_*)\backslash B_{2s_l/3}(x_l)}|F_{A_l}|^2<\gamma^{K_l}.
\end{split}
\end{eqnarray}
Now let us fix an arbitrary sequence of positive constants $\eta(\alpha)\to 0$ as $\alpha\to \infty$. For each fixed $\alpha$ large enough, let us consider $\Theta_l\rvert_{B_{\gamma^{-1}}(y_*)\backslash B_{\eta(\alpha)}(x_*)}$. One subtlety that lies in the current situation, and makes it different from the previous compactness argument such as in the proof of Theorem \ref{numbersing} or Theorem \ref{apr}, is that the existence of an Uhlenbeck gauge is not guaranteed on the annulus $B_{\gamma^{-1}}(y_*)\backslash B_{\eta(\alpha)}(x_*)$ in spite of the curvature smallness \eqref{curvkl}. To get round this issue, we directly study the limit section $\Theta_\infty$ of the limit flat bundle, by using a partition of unity argument to prove its Coulomb property, stationarity, as well as the stability. Lastly, we show that the trivialization of $\Theta_\infty$ under the canonical flat section of the limit flat bundle is a stationary stable harmonic map into $\text{SU}(2)\cong \mathbb{S}^3$. To begin, we need to obtain for each $\alpha$ a frame $\Theta_{\alpha,\infty}$ on the limit bundle which is isomorphic to $E_{\infty,\alpha}=\bigg{(}B_{\gamma^{-1}}(y_*)\backslash B_{\eta(\alpha)}(x_*)\bigg{)}\times \text{Mat}_{\mathbb{R}}(k\times k).$ Fix $\alpha$, and denote $\eta\equiv \eta(\alpha)$ for simplicity. Cover the annulus $B_{7\gamma^{-1}/8}(y_*)\backslash B_\eta(x_*)$ by a finite collection of balls $\{B_{\eta,a}\}_{a=1}^{N_\eta}$, such that $\{B_{\eta,a}/2\}_{a=1}^{N_\eta}$ still forms a cover, and that on every ball $B_{\eta,a}$ and all sufficiently large $l$, there exists a Coulomb gauge $\sigma_{\eta,a,l}$ of $A_l$ defined on $B_{\eta,a}$ with connection form $A^*_{\eta,a,l}$ that satisfies \eqref{uhgauge}. Denote by $\Theta_{\eta,a,l}^*$ the trivialization of $\Theta_l$ under the Uhlenbeck gauge $\sigma_{\eta,a,l}$. By the property of the Uhlenbeck gauge \eqref{uhgauge}
\begin{eqnarray}
\begin{split}
\label{lastcon00}
A^*_{\eta,a,l}\to A^*_{\eta,a,\infty}\equiv 0\ \text{in $W^{1,2}(B_{\eta,a})$}.
\end{split}
\end{eqnarray}
Applying Corollary \ref{keycoro} we obtain $\Theta^*_{\eta,a,\infty}$ for each $a$ such that
\begin{eqnarray}
\begin{split}
\label{lastcon11}
\Theta^*_{\eta,a,l}\to \Theta^*_{\eta,a,\infty}\ \text{in $W^{1,2}(B_{\eta,a})$.}
\end{split}
\end{eqnarray}
Let $g_{\eta,ab,l}$ be the transition function such that $g_{\eta,ab,l}^*A^*_{\eta,b,l}=A^*_{\eta,a,l}$. By the assumption \eqref{curvkl}, there exists an element $q_{\eta,ab}\in \text{SU}(2)$ such that
\begin{eqnarray}
\begin{split}
\label{lastcon1}
g_{\eta,ab,l}\to q_{\eta,ab}\ \text{in $W^{2,2}(B_{\eta,a,l}\cap B_{\eta,b,l})$}.
\end{split}
\end{eqnarray}
Therefore by \eqref{lastcon11} and \eqref{lastcon1} we have
\begin{eqnarray}
\begin{split}
\label{lastcon2}
g_{\eta,ab,l}\cdot \Theta^*_{\eta,b,l}\to q_{\eta,ab}\cdot & \Theta^*_{\eta,b,\infty}\ \text{in $W^{1,2}(B_{\eta,a,l}\cap B_{\eta,b,l})$},\\ 
q_{\eta,ab}\cdot q_{\eta,bc}&=q_{\eta,ac}.
\end{split}
\end{eqnarray}
On the other hand, by the formula $g_{\eta,ab,l}\cdot \Theta^*_{\eta,b,l}=\Theta^*_{\eta,a,l}$ and \eqref{lastcon} we have
\begin{eqnarray}
\begin{split}
g_{\eta,ab,l}\cdot \Theta^*_{\eta,b,l}\to \Theta^*_{\eta,a,\infty}\ \text{in $W^{1,2}(B_{\eta,a,l}\cap B_{\eta,b,l})$}.
\end{split}
\end{eqnarray}
Thus 
\begin{eqnarray}
\begin{split}
\label{secpat}
q_{\eta,ab}\cdot  \Theta^*_{\eta,b,\infty}=\Theta^*_{\eta,a,\infty}.
\end{split}
\end{eqnarray}
The second identity in \eqref{lastcon2} together with \eqref{lastcon00}, determines a flat bundle $E_{\infty,\alpha}$ in the limit defined over $B_{\gamma^{-1}}(y_*)\backslash B_{\eta(\alpha)}(x_*)$. Because $B_{\gamma^{-1}}(y_*)\backslash B_{\eta(\alpha)}(x_*)$ is homeomorphic to a $4$ dimensional annulus which is simply connected, $E_{\infty,\alpha}$ is indeed a trivial bundle:
$$E_{\infty,\alpha}=\bigg{(}B_{\gamma^{-1}}(y_*)\backslash B_{\eta(\alpha)}(x_*)\bigg{)}\times \text{Mat}_{\mathbb{R}}(k\times k).$$
Finally, \eqref{secpat} together with the second identity in \eqref{lastcon2} implies that $\{\Theta^*_{\eta,a,\infty}\}_a$ determine a frame $\Theta_{\alpha,\infty}$ on $E_\infty$. By the arbitrariness of $\alpha$ and a diagonal argument, we find a frame $\Theta_\infty$ on the flat bundle $E_{\infty}=B_{\gamma^{-1}}(y_*)\times \text{Mat}_{\mathbb{R}}(k\times k)$ with associated principal $\text{SU}(2)$-bundle $P_{\infty}$ and flat connection $A_\infty$. For convenience, let us identify $(P_\infty,A_{\infty})$ with $(B_{\gamma^{-1}}(y_*)\times\text{SU}(2),A_{\infty})$ such that for the canonical section
\begin{eqnarray}
\begin{split}
\label{cansec}
s_0: B_{\gamma^{-1}}(y_*)\to B_{\gamma^{-1}}(y_*)\times\text{SU}(2),\ x\mapsto (x,1),
\end{split}
\end{eqnarray}
the following holds:
\begin{eqnarray}
\begin{split}
\label{cansec1}
s_0^*A_{\infty}\equiv0.
\end{split}
\end{eqnarray}
Define $\Theta^*_\infty=\Theta_\infty\circ s_0$. We first show that $\Theta^*_\infty\in W^{1,2}(B_{5\gamma^{-1}/6}(y_*))$. Let $\eta$ be an arbitrary small but fixed positive number, and we will be using the same notations as before for the constructions such as the cover $\{B_{\eta,a}\}_a$, the Uhlenbeck gauges $A^*_{\eta,a,l}$, and the trivializations of $\Theta_l$, $A_l$ under these gauges. Recall we have the following convergences hold:
\begin{eqnarray}
\begin{split}
\label{lastcon}
A^*_{\eta,a,l}\to A^*_{\eta,a,\infty}\ \text{in $L^2(B_{\eta,a})$},\ \Theta^*_{\eta,a,l}\to \Theta^*_{\eta,a,\infty}\ \text{in $W^{1,2}(B_{\eta,a})$ and weakly in $W^{2,2}_{\text{loc}}(B_{\eta,a}\backslash \Sigma)$}.
\end{split}
\end{eqnarray}
where $\Sigma$ is defined in the same way as \eqref{singsigma}. Choose a partition of unity $\{\phi_{\eta,a}\}_a$ subordinate to the cover $\{B_{\eta,a}\}_a$, such that $\text{supp}\phi_{\eta,a}\subseteq B_{\eta,a}$ and $\sum_a\phi_{\eta,a}\equiv 1$ on $B_{5\gamma^{-1}/6}(y_*)\backslash B_{2\eta}(x_*)$. Using Corollary \ref{scalinginvariant}, \eqref{lastcon}, and the partition of unity, we have the following estimates:
\begin{eqnarray}
\begin{split}
\label{unity}
C(\gamma,\Lambda)&\ge  \limsup_{l\to\infty}\int_{B_{7\gamma^{-1}/8}(y_*)}|\nabla_{A_l}\Theta_l|^2\ge \limsup_{l\to\infty}\int_{B_{7\gamma^{-1}/8}(y_*)\backslash B_{\eta}(x_*)}|\nabla_{A_l}\Theta_l|^2\\
&\ge \sum_a \lim_{l\to\infty}\int_{B_{7\gamma^{-1}/8}(y_*)\backslash B_{\eta}(x_*)}|\nabla_{A_l}\Theta_l|^2\phi_{\eta,a}= \sum_a \lim_{l\to\infty}\int_{B_{\eta,a}}|\nabla_{A_l}\Theta_l|^2\phi_{\eta,a}\\
&= \sum_a \int_{B_{\eta,a}}|d\Theta^*_{\eta,a,\infty}|^2\phi_{\eta,a}.
\end{split}
\end{eqnarray}
Let $\sigma_{\eta,a,\infty}$ be the local $P_\infty$-section such that $\Theta_\infty\circ \sigma_{\eta,a,\infty}=\Theta^*_{\eta,a,\infty}$. Then due to the fact that both $s_0^*A_\infty$ and $\sigma_{\eta,a,\infty}^*A_\infty$ are $0$ (see \eqref{lastcon00}), the gauge transformation formula then implies $\sigma_{\eta,a,\infty}\cdot g_a=s_0$ for some $\text{SU}(2)$ element $g_a$. Now that $\Theta^*_{\eta,a,\infty}$ and $\Theta^*_\infty$ are trivialization under $\sigma_{\eta,a,\infty}$ and $s_0$ on the ball $B_{\eta,a}$ respectively, we have
\begin{eqnarray}
\begin{split}
\label{modulus0}
g_a\cdot \Theta^*_{\eta,a,\infty}=\Theta^*_\infty.
\end{split}
\end{eqnarray}
Since $g_a$ is constant, the following is obvious:
\begin{eqnarray}
\begin{split}
\label{modulus}
|d\Theta^*_{\eta,a,\infty}|=|d\Theta^*_\infty|\ \text{on $B_{\eta,a}$.}
\end{split}
\end{eqnarray}
Insert \eqref{modulus} into the right hand side of \eqref{unity}, we obtain
\begin{eqnarray}
\begin{split}
C(\gamma,\Lambda)&\ge \sum_a \int_{B_{\eta,a}}|d\Theta^*_{\infty}|^2\phi_{\eta,a}=\int_{B_{7\gamma^{-1}/8}(y_*)\backslash B_{\eta}(x_*)}|d\Theta^*_{\infty}|^2\sum_a\phi_{\eta,a}\\
&\ge \int_{B_{5\gamma^{-1}/6}(y_*)\backslash B_{2\eta}(x_*)}|d\Theta^*_{\infty}|^2.
\end{split}
\end{eqnarray}
By the arbitrariness of $\eta$, we proved that $\Theta^*_\infty\in W^{1,2}(B_{5\gamma^{-1}/6}(y_*))$. Indeed, without much more work one can produce refine the preceding argument to show that 
\begin{eqnarray}
\begin{split}
\label{refine0}
\lim_{l\to \infty} \int_{B_r(x)}|\nabla_{A_l}\Theta_l|^2=\int_{B_r(x)}|d\Theta_\infty|^2,\ \text{for all $B_r(x)\subseteq B_{5\gamma^{-1}/6}(y_*)$},
\end{split}
\end{eqnarray}
and thus by Corollary \ref{scalinginvariant} we have
\begin{eqnarray}
\begin{split}
\label{refine1}
r^{-2}\int_{B_r(x)}|d\Theta^*_\infty|^2\le C(\Lambda),\ \text{for all $B_r(x)\subseteq B_{5\gamma^{-1}/6}(y_*)$}.
\end{split}
\end{eqnarray}
\begin{proposition}
\label{shs}
$\Theta^*_\infty$ is a stable stationary harmonic map in $B_{5\gamma^{-1}/6}(y_*)$.
\end{proposition}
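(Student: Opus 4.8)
The plan is to pass to the limit, \emph{locally} on each ball $B_{\eta,a}$ of the cover, in the three defining features of the Coulomb minimizers $\Theta_l$ -- the weak harmonic map equation \eqref{weaklap}, the stationarity equation \eqref{divfree}, and the stability inequality \eqref{stab1} -- and then to glue the resulting local information and remove the isolated point $x_*$. The harmonic and stationarity equations are the easy part, because they are \emph{local}: on each $B_{\eta,a}$, Corollary \ref{keycoro} already gives that $\Theta^*_{\eta,a,\infty}$ is a stationary harmonic map into $\text{SU}(2)$, and by \eqref{modulus0} one has $\Theta^*_\infty=g_a\cdot\Theta^*_{\eta,a,\infty}$ with $g_a\in\text{SU}(2)$ \emph{constant}; since left translation by a fixed element of $\text{SU}(2)$ is an isometry of $\mathbb{S}^3$, and both equations are invariant under post-composition with a fixed target isometry, $\Theta^*_\infty$ is weakly harmonic and stationary on each $B_{\eta,a}$, hence (letting $\eta\downarrow 0$, these being local statements) on $B_{5\gamma^{-1}/6}(y_*)\setminus\{x_*\}$. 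As a point has vanishing $2$-capacity in dimension $4$ and $\Theta^*_\infty\in W^{1,2}$ with the scale-invariant bound \eqref{refine1}, a routine cutoff argument then extends both equations across $x_*$.

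The stability is the main obstacle, since stability is \emph{not} local and cannot be obtained by gluing the stability of the $\Theta^*_{\eta,a,\infty}$. Instead I would transport a given variation back to the sequence. Fix $\xi\in C^\infty_c\big(B_{5\gamma^{-1}/6}(y_*)\setminus B_{2\eta}(x_*),\mathfrak{g}\big)$ and a partition of unity $\{\phi_{\eta,a}\}_a$ subordinate to $\{B_{\eta,a}\}_a$. Declaring $\xi$ to be represented by the fixed field $\text{Ad}(g_a^{-1})\xi$ in the gauge $\sigma_{\eta,a,l}$, the section $\xi_l\coloneqq\sum_a\phi_{\eta,a}\,\text{Ad}(g_a^{-1})\xi$ (each summand extended by zero) is a well-defined smooth section of $\text{ad}\,P_l$, and -- using $g_{\eta,ab,l}\to q_{\eta,ab}$ in $W^{2,2}$ from \eqref{lastcon1}, the cocycle relation in \eqref{lastcon2}, and the compatibility \eqref{secpat}, \eqref{modulus0} of the limiting constants -- one checks that, read in the gauge $\sigma_{\eta,a,l}$, $\xi_l$ converges in $W^{2,2}$ to the representative of $\xi$ for $\Theta^*_\infty$. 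Applying \eqref{stab1} to $\Theta_l$ gives $\frac{d^2}{dt^2}\big|_{t=0}\int|\nabla_{A_l}(\exp(t\xi_l)\Theta_l)|^2\ge 0$; expanding the integrand as in \eqref{long} into $\mathcal{E}_{\xi_l}+\mathcal{M}_{\xi_l}$ and sending $l\to\infty$, every term of $\mathcal{E}_{\xi_l}$ carries a factor $A^*_{\eta,a,l}\to 0$ in $L^2$ and so vanishes, while the terms of $\mathcal{M}_{\xi_l}$ converge by the strong $W^{1,2}$ convergence of $\Theta^*_{\eta,a,l}$ together with \eqref{refine0}. By \eqref{m} the limit equals $\frac{d^2}{dt^2}\big|_{t=0}\int_{B_{5\gamma^{-1}/6}(y_*)}|d(\exp(t\xi)\Theta^*_\infty)|^2$, so this second variation is $\ge 0$ for every $\xi$ supported away from $x_*$.

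To remove the support restriction, for arbitrary $\xi\in C^\infty_c(B_{5\gamma^{-1}/6}(y_*),\mathfrak{g})$ take cutoffs $\chi_\epsilon$ with $\chi_\epsilon\equiv 0$ on $B_\epsilon(x_*)$, $\chi_\epsilon\equiv 1$ off $B_{2\epsilon}(x_*)$, $|\nabla\chi_\epsilon|\le C\epsilon^{-1}$, and apply the previous step to $\chi_\epsilon\xi$. Since $\int|\nabla\chi_\epsilon|^2\le C\epsilon^2$ and $\int_{B_{2\epsilon}(x_*)}|d\Theta^*_\infty|^2\to 0$ (as $\Theta^*_\infty\in W^{1,2}$), every error term produced by the cutoff -- those containing $|\xi|^2|\nabla\chi_\epsilon|^2$, or $|\xi|^2|\nabla\chi_\epsilon|\,|d\Theta^*_\infty|$, etc. -- tends to $0$, while the remaining terms converge to the corresponding terms for $\xi$ by dominated convergence. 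Hence the second variation is $\ge 0$ for all $\xi$, i.e. $\Theta^*_\infty$ is stable, and together with the previous paragraph this shows $\Theta^*_\infty$ is a stable stationary harmonic map in $B_{5\gamma^{-1}/6}(y_*)$. The point I expect to require the most care is the stability passage: making the transport $\xi\mapsto\xi_l$ consistent over the overlaps so that $\xi_l$ is genuinely a section of $\text{ad}\,P_l$ converging to $\xi$, and verifying that \emph{every} term in the expansion \eqref{long} -- including those coming from $\xi''(0)$ and the mixed $A^*$--$d\Theta^*$ terms -- passes correctly to the limit; the removal of $x_*$ is routine given the energy bound.
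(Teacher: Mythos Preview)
Your proposal is correct and follows the same overall architecture as the paper: localize via the Uhlenbeck-gauge cover, remove the isolated point $x_*$ by a cutoff and the scale-invariant energy bound \eqref{refine1}, and transport a given variation back to the sequence $\Theta_l$ to exploit its stability. For the harmonic and stationarity equations your packaging via Corollary~\ref{keycoro} plus target-isometry invariance is essentially the same argument the paper gives, just phrased more succinctly.

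The one genuine difference is in how the test section $\xi_l\in\Gamma(\text{ad}\,P_l)$ is built for the stability step. You glue the \emph{constant} rotations $\text{Ad}(g_a^{-1})\xi$ in each Uhlenbeck gauge with the partition of unity; since $g_{\eta,ab,l}\to q_{\eta,ab}=g_a^{-1}g_b$ in $W^{2,2}$ (from \eqref{lastcon1}, \eqref{secpat}, \eqref{modulus0}) and $\xi$ is smooth, the resulting $\xi_l$ converges in $W^{2,2}$ on each chart, and every term in \eqref{long} passes to the limit by elementary product estimates. The paper instead transports $\xi$ through the Coulomb minimizers themselves, setting (in effect) $\xi^*_{\eta,a,l}=\text{Ad}_{\Theta^*_{\eta,a,l}\cdot(\Theta^*_\infty)^{-1}}\xi$; this introduces the low-regularity factor $\Theta^*_{\eta,a,l}$, so the paper must invoke the weak $W^{2,2}_{\text{loc}}$ convergence away from the concentration set $\Sigma$ and a separate $2$-capacity argument (Proposition~\ref{xicon}) to get even $W^{1,2}$ convergence of $\xi_l$. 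Your route avoids that detour entirely and is cleaner; the paper's route is more intrinsic (no partition of unity enters the definition of $\xi_l$) but pays for it in regularity bookkeeping. Either way, once harmonicity of $\Theta^*_\infty$ is established, restricting to linear variations $\exp(t\xi)$ (so $\xi''(0)=0$) loses nothing, since the $\xi''(0)$-terms in \eqref{long} reduce in the flat limit to the first variation, which vanishes.
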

\begin{proof}
The weakly harmonic map equation of $\Theta^*_\infty$ follows easily by using a partition of unity and a limiting argument; since the same process will be repeatedly used later in showing the stationarity and the stability in a much more complicated fashion, we omit the details for now. Let us focus on showing the stationarity of $\Theta^*_\infty$. For this purpose choose a vector field $X$ compactly supported in $B_{3\gamma^{-1}/4}(y_*)$.  Again, let us arbitrarily fix a small positive number $\eta$ and consider the same cover of $B_{7\gamma^{-1}/8}(y_*)\backslash B_\eta(x_*)$ by $\{B_{\eta,a}\}_{a=1}^{N_\eta}$ as in the preceding proof of the $W^{1,2}$ boundedness. For convenience, we will use the same notations as before to denote the Uhlenbeck gauges and the partition of unity subordinate to the cover. In addition, choose $\chi_{\eta}$ to be a proper cutoff function supported on $B_{3\eta}(x_*)$ with $\chi_{\eta}\equiv 1$ on $B_{2\eta}(x_*)$ and $\sup_{x\in B_{2\eta}(x_*)}|d\chi_{\eta}(x)|<C\eta^{-1}$ for some universal constant $C$. Firstly, let us write
\begin{eqnarray}
\begin{split}
\label{holesta}
\int_{B_{3\gamma^{-1}/4}(y_*)} \bigg{(}|\nabla_{A_\infty} \Theta_{\infty}|^2g_{ik}-2\langle \nabla_{A_\infty}\Theta_{\infty}(\partial_i),\nabla_{A_\infty}\Theta_{\infty}(\partial_k)\rangle \bigg{)}g^{ij}{X^k}_{;j}=E_1+E_2,
\end{split}
\end{eqnarray}
where
\begin{eqnarray}
\begin{split}
E_1=&\int_{B_{3\gamma^{-1}/4}(y_*)}  \bigg{(}|\nabla_{A_\infty} \Theta_{\infty}|^2g_{ik}-2\langle \nabla_{A_\infty}\Theta_{\infty}(\partial_i),\nabla_{A_\infty}\Theta_{\infty}(\partial_k)\rangle \bigg{)}g^{ij}{[(1-\chi_{\eta} )X^k]}_{;j},\\
E_2=&\int_{B_{3\gamma^{-1}/4}(y_*)}  \bigg{(}|\nabla_{A_\infty} \Theta_{\infty}|^2g_{ik}-2\langle \nabla_{A_\infty}\Theta_{\infty}(\partial_i),\nabla_{A_\infty}\Theta_{\infty}(\partial_k)\rangle \bigg{)}{[\chi_{\eta} X^k]}_{;j}.
\end{split}
\end{eqnarray}
Recall that $A_\infty$ is the flat connection on the limit flat bundle $P_\infty$. Using the partition of unity and writing $\Theta_\infty$ with respect to the Uhlenbeck gauge on each $B_{\eta,a}$, and lastly use \eqref{modulus0} and \eqref{modulus}, one could express $E_1$ as follows:
\begin{eqnarray}
\begin{split}
E_1=&\sum_a\int_{B_{\eta,a}} \bigg{(}|d \Theta^*_{\infty}|^2g_{ik}-2\langle \partial_i\Theta^*_{\infty},\partial_k\Theta^*_{\infty}\rangle \bigg{)}g^{ij}{[\phi_{\eta,a}(1-\chi_{\eta})X^k]}_{;j}\\
=&\sum_a\int_{B_{\eta,a}} \bigg{(}|d \Theta^*_{\eta,a,\infty}|^2g_{ik}-2\langle \partial_i\Theta^*_{\eta,a,\infty},\partial_k\Theta^*_{\eta,a,\infty}\rangle \bigg{)}g^{ij}{[\phi_{\eta,a}(1-\chi_{\eta})X^k]}_{;j}.
\end{split}
\end{eqnarray} 
Combining the convergences $\Theta_{\eta,a,l}^*\to \Theta_{\eta,a,\infty}$ in $W^{1,2}(B_{\eta,a})$, $A^*_{\eta,a,l}\to A^*_{\eta,a,\infty}$ in $L^2(B_{\eta,a})$, as well as $F_{A_l}\to 0$ in $L^2(B_{\eta,a})$ we have
\begin{eqnarray}
\begin{split}
\label{stalim}
&\sum_a\lim_{l\to\infty}\int_{B_{\eta,a}} \bigg{(}|\nabla_{A_{l}} \Theta^*_{\eta,a,l}|^2g_{ik}-2\langle \nabla_{A_{l}}\Theta^*_{\eta,a,l}(\partial_i),\nabla_{A_{l}}\Theta^*_{\eta,a,l}(\partial_k)\rangle \bigg{)}g^{ij}{[\phi_{\eta,a}(1-\chi_{\eta})X^k]}_{;j}\\
&+\sum_a2\int_{}\langle {F_{A_{l},i}}^s(\Theta^*_{\eta,a,l}),\nabla_{A_{l},s}\Theta^*_{\eta,a,l}\rangle \phi_{\eta,a}(1-\chi_{\eta})X^i\\
=&\sum_a\int_{B_{\eta,a}} \bigg{(}|\nabla_{A_{\infty}} \Theta^*_{\eta,a,\infty}|^2g_{ik}-2\langle \nabla_{A_{\infty}}\Theta^*_{\eta,a,\infty}(\partial_i),\nabla_{A_{\infty}}\Theta^*_{\eta,a,\infty}(\partial_k)\rangle \bigg{)}g^{ij}{[\phi_{\eta,a}(1-\chi_{\eta})X^k]}_{;j}\\
=&E_1.
\end{split}
\end{eqnarray}
On the other hand, since the stationarity equation \eqref{mono} holds for $\Theta_{l}$, the left hand side of \eqref{stalim} is identically zero. Therefore $E_1=0$. Next we estimate $E_2$. Without loss of generality, we assume $2\eta=2^{-P_{0}}$. We we apply \eqref{refine1} together with the fact that $\sup|\nabla \phi_{\eta}|\le C\eta^{-1}$ to obtain
\begin{eqnarray}
\begin{split}
\label{e20}
|E_2|&\le C(X)\eta^{-1}\sum_{\alpha=P_0}^{\infty}\int_{B_{3\eta}(x_*)}|d \Theta^*_{\infty}|^2\le C(X)\eta^{-1}\cdot C\eta^2\le C^{\prime}\eta.
\end{split}
\end{eqnarray}
Due to the arbitrariness of $\eta$, we see that the left hand side of \eqref{holesta} vanishes. Therefore, $\Theta_{\infty}$ is stationary on $B_{3\gamma^{-1}/4}(y_*)$. 
\medskip

Next, we prove the stability of $\Theta_\infty$. Choose arbitrary $\xi(t): [0,\epsilon]\to C^\infty_c(B_{5\gamma^{-1}/6}(y_*),\mathfrak{g})$, by \eqref{long} we have
\begin{eqnarray}
\begin{split}
\label{flathess}
&\frac{d^2}{dt^2}\bigg{\rvert}_{t=0}\int_{B_{5\gamma^{-1}/6}(y_*)}|d\big{(}\exp(\xi(t))\Theta^*_\infty\big{)}|^2\\
=&2\int_{B_{5\gamma^{-1}/6}(y_*)}|d\xi^{\prime}(0)|^2+2\int_{B_{5\gamma^{-1}/6}(y_*)}\langle \big{(}[d\xi^{\prime}(0),\xi^{\prime}(0)]+d\xi^{\prime\prime}(0)\big{)}(\Theta^*_\infty),d\Theta^*_\infty \rangle.
\end{split}
\end{eqnarray}
For arbitrary $\eta>0$, consider a standard cutoff function $\phi_\eta$ such that $\phi_\eta\equiv 1$ in $B_{2\eta}(x_*)$ and vanishes outside $B_{3\eta}(x_*)$; moreover $\sup|\nabla\phi_\eta|\le C\eta^{-1}$. Now let us write the right hand side of \eqref{flathess} as
\begin{eqnarray}
\begin{split}
\label{part2}
&2\int_{B_{5\gamma^{-1}/6}(y_*)}|d\big{(}\phi_\eta\xi^{\prime}(0)\big{)}|^2+2\int_{B_{5\gamma^{-1}/6}(y_*)}\bigg{\langle} \big{(}[d\big{(}\phi_\eta\xi^{\prime}(0)\big{)},\phi_\eta\xi^{\prime}(0)]+d\big{(}\phi_\eta\xi^{\prime\prime}(0)\big{)}(\Theta^*_\infty),d\Theta^*_\infty \bigg{\rangle}\\
+&\text{Err}_{\eta,\xi},
\end{split}
\end{eqnarray}
where the error term $\text{Err}_{\eta,\xi}$ could be estimated as follows:
\begin{eqnarray}
\begin{split}
\label{erreta}
|\text{Err}_{\eta,\xi}|\le C(\xi)\int_{B_{5\gamma^{-1}/6}(y_*)}\bigg{(}|\nabla\phi_\eta|^2+|\nabla\phi_\eta|\big{(}1+|d\Theta^*_\infty|\big{)}\bigg{)}\le C(\xi,\Lambda)\eta^2.
\end{split}
\end{eqnarray}
Here we have used the fact that $\sup|\nabla \phi_\eta|\le C\eta^{-1}$ together with the estimate \eqref{refine1}. Now let us estimate the first term in \eqref{part2}. By defining
\begin{eqnarray}
\begin{split}
\label{xi0}
\xi_0\eqqcolon \phi_\eta\xi:[0,\epsilon]\to C^{\infty}_c(B_{5\gamma^{-1}/6}(y_*)\backslash B_{2\eta}(x_*),\mathfrak{g}),
\end{split}
\end{eqnarray}
the first term in \eqref{part2} becomes
\begin{eqnarray}
\begin{split}
\label{xi00}
&2\int_{B_{5\gamma^{-1}/6}(y_*)}|d\big{(}\xi_0^{\prime}(0)\big{)}|^2+2\int_{B_{5\gamma^{-1}/6}(y_*)}\bigg{\langle} \big{(}[d\big{(}\xi_0^{\prime}(0)\big{)},\xi_0^{\prime}(0)]+d\big{(}\xi_0^{\prime\prime}(0)\big{)}(\Theta^*_\infty),d\Theta^*_\infty \bigg{\rangle}\\
=&\frac{d^2}{dt^2}\bigg{\rvert}_{t=0}\int_{B_{5\gamma^{-1}/6}(y_*)}|d\big{(}\exp(\xi_0(t))\Theta^*_\infty\big{)}|^2.
\end{split}
\end{eqnarray}
Unlike verifying the stationarity where we simply use the same test vector field $X$ for all $\Theta_l$ in the limiting sequence, this time the situation becomes more complicated. Indeed, we have to find for all $\Theta_l$ the test functions $\xi_l\in W^{1,2}(B_{5\gamma^{-1}/6}(y_*)\backslash B_{2\eta}(x_*),\mathfrak{g}_{P_l})$ (which are sections all living on different bundles), such that when $l$ goes to infinity $\xi_l$ ``converges'' to $\xi_0$ in a certain sense to be made precise later. To rigidify the process, we need to prove the following proposition; in the statement, we follow the same notations of the cover $\{B_{\eta,a}\}_a$ of $B_{5\gamma^{-1}/6}(y_*)\backslash B_{2\eta}(x_*)$ and the Uhlenbeck gauges on each $B_{\eta,a}$ as in the preceding paragraphs.
\begin{proposition}
\label{xicon}
Fix $\eta>0$. For any smooth mapping $\xi(t): [0,\epsilon]\to C^{\infty}_c(B_{5\gamma^{-1}/6}(y_*)\backslash B_{2\eta}(x_*),\mathfrak{g})$ with $\xi(0)\equiv 0$, up to a subsequence for all $l$ there exists a smooth mapping 
$$\xi_l(t): [0,\epsilon]\to W^{1,2}(B_{5\gamma^{-1}/6}(y_*)\backslash B_{2\eta}(x_*),\mathfrak{g}_{P_l})$$ 
with $\xi_l(0)\equiv 0$ satisfying the following: for every $a$, there exists an element $g_a\in \text{SU}(2)$, such that all following convergences hold in $W^{1,2}(2B_{\eta,a}/3)$:
\begin{eqnarray}
\begin{split}
\label{3w12}
&\xi^*_{\eta,a,l}(t)\to \text{Ad}_{{g_a}^{-1}}\xi(t)\ \text{for all $t\in [0,\epsilon]$},\ {\xi^*_{\eta,a,l}}^{\prime}(0)\to \text{Ad}_{{g_a}^{-1}}\xi^{\prime}(0),\\
&{\xi^*_{\eta,a,l}}^{\prime\prime}(0)(\Theta^*_{\eta,a,l})\to g_a^{-1}\cdot\xi^{\prime\prime}(0)(\Theta^*_{\infty}),\\
&[d{\xi^*_{\eta,a,l}}^{\prime}(0),{\xi^*_{\eta,a,l}}^{\prime}(0)](\Theta^*_{\eta,a,l})\to g_a^{-1}\cdot[d{\xi^*_{\infty}}^{\prime}(0),{\xi^*_{\infty}}^{\prime}(0)](\Theta^*_{\infty}).
\end{split}
\end{eqnarray}
Here ``$\text{Ad}$" stands for the adjoint action of $\text{SU}(2)$ on $\mathfrak{g}\equiv\mathfrak{s}\mathfrak{u}(2)$, and $\xi^*_{\eta,a,l}$ denotes the trivialization of $\xi_l$ under the Uhlenbeck gauge $\sigma_{\eta,a,l}$.
\end{proposition}
\begin{proof}
Without ambiguity we drop the symbol ``$t$''. For any $\xi: [0,\epsilon]\to C^{\infty}_c(B_{5\gamma^{-1}/6}(y_*)\backslash B_{2\eta}(x_*),\mathfrak{g})$, define $\xi_0: [0,\epsilon] \to W_0^{1,2}(B_{5\gamma^{-1}/6}(y_*)\backslash B_{2\eta}(x_*),\mathfrak{g}_{P_\infty})$ by $\xi_0\circ s_0=\xi$, where $s_0$ is the same as in \eqref{cansec}. Consider $\xi^{\prime}\eqqcolon \xi_0\circ \Theta_\infty$. Apparently, $\xi^{\prime}=\text{Ad}_{{\Theta_\infty^*}^{-1}}\xi$. Define $\xi_l\in  W^{1,2}_0(B_{5\gamma^{-1}/6}(y_*)\backslash B_{2\eta}(x_*),\mathfrak{g}_{P_l})$ by $\xi_l\circ \sigma_l\eqqcolon\xi^{\prime}$, where $\Theta_l\circ\sigma_l\equiv 1$. We will prove that, upon subsequence, $\{\xi_l\}_l$ is as desired. By the following identities:
\begin{eqnarray}
\begin{split}
\sigma_{\eta,a,l}\cdot \Theta^*_{\eta,a,l}=\Theta_l,\ \xi_l\circ\sigma_l=\text{Ad}_{{\Theta_\infty^*}^{-1}}\xi,
\end{split}
\end{eqnarray}
it is straightforward to verify that
\begin{eqnarray}
\begin{split}
\label{hessl}
&\xi_{\eta,a,l}^*=\text{Ad}_{\Theta^*_{\eta,a,l}}\circ \text{Ad}_{{\Theta_\infty^*}^{-1}}\xi=\text{Ad}_{\Theta^*_{\eta,a,l}\cdot {\Theta_\infty^*}^{-1}}\xi,\\
&{\xi^*_{\eta,a,l}}^{\prime\prime}(0)(\Theta^*_{\eta,a,l})=\Theta^*_{\eta,a,l}\cdot {\Theta_\infty^*}^{-1}\cdot\xi^{\prime\prime}(0)\cdot\Theta^*_\infty,\\
&[d{\xi^*_{\eta,a,l}}^{\prime}(0),{\xi^*_{\eta,a,l}}^{\prime}(0)](\Theta^*_{\eta,a,l})=\Theta^*_{\eta,a,l}\cdot {\Theta_\infty^*}^{-1}\cdot [d\xi^{\prime}(0),\xi^{\prime}(0)]\cdot\Theta^*_\infty.
\end{split}
\end{eqnarray}
To prove the convergences in \eqref{3w12}, the key ingredient is the weak convergence of $\Theta^*_{\eta,a,l}$ in $W^{2,2}_{\text{loc}}(B_{\eta,a}\backslash \Sigma)$, again a consequence of the $\epsilon$-regularity theorem \ref{eps}. Combining this with a capacity argument similar to that in the proof of Theorem \ref{cpt} will suffice to conclude Proposition \ref{xicon}. Since the proof of all the three convergences in \eqref{3w12} are almost identical, we only prove the first one in details.
\medskip

Using the first expression in \eqref{hessl}, we have
\begin{eqnarray}
\begin{split}
\label{hesss}
|\nabla^2\xi_{\eta,a,l}|\le& |\nabla^2 \Theta^*_{\eta,a,l}||\xi|+|\nabla^2 \Theta^*_{\infty}||\xi|+|\nabla^2\xi|+2|\nabla \xi||d\Theta_{\eta,a,l}^*|+2|\nabla \xi||d\Theta_{\infty}^*|\\
&+2|d\Theta_{\eta,a,l}^*||d\Theta_{\infty}^*||\xi|=W_1+W_2+W_3+W_4+W_5+W_6;
\end{split}
\end{eqnarray}
\begin{eqnarray}
\begin{split}
\label{hessss}
|d\xi_{\eta,a,l}|\le& |\nabla \xi|+|d\Theta_{\infty}^*||\xi|+|d\Theta_{\eta,a,l}^*||d\Theta_{\infty}^*||\xi|=V_1+V_2+V_3.
\end{split}
\end{eqnarray}
By \eqref{lastcon}, \eqref{modulus}, and \eqref{hesss}, there exists $\xi_a\in W^{2,2}_{\text{loc}}(B_{\eta,a}\backslash \Sigma)$ such that $\xi_{\eta,a,l}\to \xi_{a}$ weakly in $W^{2,2}_{\text{loc}}(B_{\eta,a}\backslash \Sigma)$ and strongly in $W^{1,2}_{\text{loc}}(B_{\eta,a}\backslash\Sigma)$. Next we show that for every $\tau>0$ one could find a cutoff function $\zeta_\tau\in C^\infty_c(B_{\eta,a})$ such that $\zeta_\tau\equiv 1$ in a neighborhood $\mathcal{N}$ of $\Sigma\cap 2B_{\eta,a}/3$, and that $\sup_l \int_{B_{\tau,a}}|d\xi^*_{\eta,a,l}|^2\zeta_\tau^2\le C(\xi)\tau$. Indeed, by the convergence $\Theta_{\eta,a,l}\to \Theta_{\eta,a,\infty}$ in $W^{1,2}(B_{\eta,a})$, $A^*_{\eta,a,l}\to A^*_{\eta,a,\infty}$ in $L^2(B_{\eta,a})$, as well as \eqref{sta1} applied to $\Theta^*_{\eta,a,l}$, it is not hard to see that for all $\zeta\in C^\infty_c(B_{\eta,a})$ the following holds:
\begin{eqnarray}
\begin{split}
\label{taua}
\int_{B_{\tau,a}}|d\Theta^*_{\eta,a,\infty}|^2\zeta^2\le C\bigg{(}\int_{B_{\tau,a}}|A^*_{\eta,a,\infty}|^2\zeta^2+|\nabla\zeta|^2\bigg{)}.
\end{split}
\end{eqnarray}
The fact that $H^2(\Sigma\cap B_{\eta,a})<\infty$ implies that the $2$-capacity $\text{Cap}_2(\Sigma\cap B_{\eta,a})=0$. Therefore, one could find a small neighborhood $\mathcal{N}$ of $\Sigma\cap 2B_{\eta,a}/3$ (which could be made arbitrarily small) and a cutoff function $\zeta_\tau\in C^{\infty}_{c}(B_{\eta,a})$ such that $\zeta_\tau\equiv 1$ on $\mathcal{N}$ and
\begin{eqnarray}
\begin{split}
\label{taub}
\int_{B_{\tau,a}}|\nabla \zeta_{\tau}|^2<\tau.
\end{split}
\end{eqnarray}
In addition, upon further shrinking $\mathcal{N}$, one can achieve
\begin{eqnarray}
\begin{split}
\label{tauc}
\sup_l\int_{B_{\tau,a}}|A^*_{\eta,a,l}|^2\zeta_\tau^2<\tau,\ \int_{B_{\tau,a}}|A^*_{\eta,a,\infty}|^2\zeta_\tau^2<\tau.
\end{split}
\end{eqnarray}
Now, combining \eqref{sta1} applied to $\Theta_{\eta,a,l}$, \eqref{taua}, \eqref{taub}, \eqref{tauc}, and \eqref{hessss}, we have
\begin{eqnarray}
\begin{split}
\label{taud}
\int_{B_{\tau,a}}|d\xi^*_{\eta,a,l}|^2\zeta_\tau^2< C(\xi)\tau.
\end{split}
\end{eqnarray}
Now from \eqref{taud} together with the strong $W^{1,2}_{\text{loc}}(B_{\eta,a}\backslash\Sigma)$ convergence, we conclude that $\xi_{\eta,a,l}\to \xi_a$ in $W^{1,2}(2B_{\eta,a}/3)$ for some $\xi_a$.
\medskip

Lastly, by \eqref{modulus0}, the convergence $\Theta_{\eta,a,l}\to \Theta_{\eta,a,\infty}$ in $W^{1,2}(B_{\eta,a})$, $\xi_{\eta,a,l}\to \xi_a$ in $W^{1,2}(2B_{\eta,a}/3)$, and the expression \eqref{hessl}, we see that $\xi_{a}=\text{Ad}_{{g_a}^{-1}}\xi$ a.e.. This finishes the proof of the first convergence in \eqref{3w12}. For the other two convergences we use the other two expressions in \eqref{hessl} to derive similar estimates as \eqref{hesss}, and the rest of proof are verbatim. We thus complete the proof of Proposition \ref{xicon}.
\end{proof}
Now we continue to estimate the first term in \eqref{part2}. Let us apply Proposition \ref{xicon} to $\xi_0$ defined in \eqref{xi0}. Then upon passing to a subsequence we obtain for each $l$ a mapping $\xi_l$ described in Proposition \ref{xicon}. Choosing the same partition of unity $\{\phi_{\eta,a}\}$ that is subordinate to the cover $\{B_{\eta,a}\}_a$, we estimate as follows:
\begin{eqnarray}
\begin{split}
\label{lstab}
&\frac{d^2}{dt^2}\bigg{\rvert}_{t=0}\int_{B_{5\gamma^{-1}/6}(y_*)}|\nabla_{A_l}\big{(}\exp(\xi_l(t))\Theta_l\big{)}|^2=\sum_a \frac{d^2}{dt^2}\bigg{\rvert}_{t=0}\int_{B_{5\gamma^{-1}/6}(y_*)}|\nabla_{A_l}\big{(}\exp(\xi_l(t))\Theta_l\big{)}|^2\phi_a\\
=&\sum_a \frac{d^2}{dt^2}\bigg{\rvert}_{t=0}\int_{B_{\eta,a}}|\nabla_{A_l}\big{(}\exp(\xi_l(t))\Theta_l\big{)}|^2\phi_a=\sum_a \frac{d^2}{dt^2}\bigg{\rvert}_{t=0}\int_{B_{\eta,a}}|\nabla_{A_l}\big{(}\exp(\xi^*_{\eta,a,l}(t))\Theta_{\eta,a,l}\big{)}|^2\phi_a\\
=&\int_{B_{\eta,a}}e_{\xi^*_{\eta,a,l}}\phi_a+\int_{B_{\eta,a}}m_{\xi^*_{\eta,a,l}}\phi_a,
\end{split}
\end{eqnarray}
where $e_{\xi^*_{\eta,a,l}}$ and $m_{\xi^*_{\eta,a,l}}$ correspond to the two terms in \eqref{long} respectively. Trivially we have
\begin{eqnarray}
\begin{split}
e_{\xi^*_{\eta,a,l}}&\le C_{\xi_0}|A^*_{\eta,a,l}|\big{(}|d\Theta^*_{\eta,a,l}|+|d\Theta^*_{\infty}|\big{)},\\
m_{\xi^*_{\eta,a,l}}&=2|d{\xi^*_{\eta,a,l}}^{\prime}(0)|^2+2\bigg{\langle} \big{(}[d{\xi^*_{\eta,a,l}}^{\prime}(0),{\xi^*_{\eta,a,l}}^{\prime}(0)]+d{\xi^*_{\eta,a,l}}^{\prime\prime}(0)\big{)}(\Theta^*_{\eta,a,l}),d\Theta^*_{\eta,a,l} \bigg{\rangle}.
\end{split}
\end{eqnarray}
By the convergence $A^*_{\eta,a,l}\to A^*_{\eta,a,\infty}=0$ in $L^2(B_{\eta,a})$ and the uniform boundedness of $\Theta^*_{\eta,a,l}$ in $W^{1,2}(B_{\eta,a})$, we see that 
\begin{eqnarray}
\begin{split}
\label{elim}
\int_{B_{\eta,a}}e_{\xi^*_{\eta,a,l}}\phi_a\to 0
\end{split}
\end{eqnarray}
as $l\to \infty$. Next, we apply the convergences \eqref{3w12} in Proposition \ref{xicon}, together with the convergence $\Theta^*_{\eta,a,l}\to \Theta^*_{\eta,a,\infty}$ in $W^{1,2}(B_{\eta,a})$ to obtain the following convergence:
\begin{eqnarray}
\begin{split}
\label{mlim}
&\lim_{l\to\infty}\int m_{\xi^*_{\eta,a,l}}\phi_a\\
=&\int_{B_{\eta,a}}\bigg{(}2|\text{Ad}_{g_a^{-1}}d{\xi_{0}}^{\prime}(0)|^2+2\bigg{\langle} g_a^{-1}\cdot\big{(}[d{\xi_{0}}^{\prime}(0),{\xi_{0}}^{\prime}(0)]+d{\xi_{0}}^{\prime\prime}(0)\big{)}(\Theta_{\infty}^*), d\Theta_{\eta,l,\infty}^* \bigg{\rangle}\bigg{)}\phi_a\\
=&\int_{B_{\eta,a}}\bigg{(}2|d{\xi_{0}}^{\prime}(0)|^2+2\bigg{\langle} g_a^{-1}\cdot\big{(}[d{\xi_{0}}^{\prime}(0),{\xi_{0}}^{\prime}(0)]+d{\xi_{0}}^{\prime\prime}(0)\big{)}(\Theta_{\infty}^*), g_a^{-1}\cdot d\Theta_{\infty}^* \bigg{\rangle}\bigg{)}\phi_a\\
=&\int_{B_{\eta,a}}\bigg{(}2|d{\xi_{0}}^{\prime}(0)|^2+2\bigg{\langle} \big{(}[d{\xi_{0}}^{\prime}(0),{\xi_{0}}^{\prime}(0)]+d{\xi_{0}}^{\prime\prime}(0)\big{)}(\Theta_{\infty}^*),  d\Theta_{\infty}^* \bigg{\rangle}\bigg{)}\phi_a.
\end{split}
\end{eqnarray}
Using \eqref{lstab}, and summing over $a$ in both \eqref{mlim} and \eqref{elim}, we achieve
\begin{eqnarray}
\begin{split}
\label{staleft}
&\lim_{l\to \infty}\frac{d^2}{dt^2}\bigg{\rvert}_{t=0}\int_{B_{5\gamma^{-1}/6}(y_*)}|\nabla_{A_l}\big{(}\exp(\xi_l(t))\Theta_l\big{)}|^2\\
=&\sum_a \int_{B_{\eta,a}}\bigg{(}2|d{\xi_{0}}^{\prime}(0)|^2+2\bigg{\langle} \big{(}[d{\xi_{0}}^{\prime}(0),{\xi_{0}}^{\prime}(0)]+d{\xi_{0}}^{\prime\prime}(0)\big{)}(\Theta_{\infty}^*),  d\Theta_{\infty}^* \bigg{\rangle}\bigg{)}\phi_a\\
=&2\int_{B_{5\gamma^{-1}/6}(y_*)}|d\big{(}\xi_0^{\prime}(0)\big{)}|^2+2\int_{B_{5\gamma^{-1}/6}(y_*)}\bigg{\langle} \big{(}[d\big{(}\xi_0^{\prime}(0)\big{)},\xi_0^{\prime}(0)]+d\big{(}\xi_0^{\prime\prime}(0)\big{)}(\Theta^*_\infty),d\Theta^*_\infty \bigg{\rangle}.
\end{split}
\end{eqnarray}
Note the right hand side of the above equality is exactly the first term on the right hand side of \eqref{part2}. Now applying \eqref{staleft}, the stability of $\Theta_l$, \eqref{erreta} as well as the arbitrariness of $\eta$ to \eqref{part2}, we conclude that
\begin{eqnarray}
\begin{split}
\frac{d^2}{dt^2}\bigg{\rvert}_{t=0}\int_{B_{5\gamma^{-1}/6}(y_*)}|d\big{(}\exp(\xi(t))\cdot\Theta^*_\infty\big{)}|^2\ge 0.
\end{split}
\end{eqnarray}
Thus finishes the proof of the fact that $\Theta^*_\infty$ is stable in $B_{5\gamma^{-1}/6}(y_*)$. We hence complete the proof of Proposition \ref{shs}.
\end{proof}
Now we shall give a detailed definition of $\text{deg}(\Theta)\rvert_{\partial B_{r_l}(y_l)}$ that appears in \eqref{trickysphere} as promised. First rescale $B_{\gamma^{J_l-1}/2}(y_l)$ to $B_{10\gamma}(y_l)$ and identify all $y_l$ with $y_*$. Choose a Vitali cover $\mathscr{B}$ of $B_{5\gamma}(y_*)\backslash\{x_*\}$ such that for every ball $B\in \mathscr{B}$ if $B\cap \partial B_r(x_*)\neq \emptyset$ then $\text{rad}(B)\ge r/10$, for all $r\in [0,\gamma]$. Let us a sub-collection of $\mathscr{B}$ that covers $A_{\gamma^2,2\gamma^2}(y_*)$ denoted by $\{B_a\}_{a=1}^N$ where $N$ is a universal constant. In addition, by Claim \ref{key} we could assume without loss of generality that for every sphere $\partial B_{r}(y_*)\subseteq A_{\gamma^2,2\gamma^2}(y_*)$ satisfies 
\begin{eqnarray}
\begin{split}
\label{awaysing}
\text{dist}(\partial B_{r}(y_*),\text{Sing}(\Theta_l))\ge\gamma^3/2
\end{split}
\end{eqnarray}
upon further decreasing $\gamma$ if necessary. Next, choose for each $a$ an Uhlenbeck gauge $\sigma_{a,l}$ of $A_l$ with connection form $A^*_{a,l}$. By the same argument as before, we have 
\begin{eqnarray}
\begin{split}
A^*_{a,l}\to A^*_{a,\infty}\equiv0\ \text{in $W^{1,2}(B_a)$},
\end{split}
\end{eqnarray}
Denote by $\Theta^*_{a,l}$ the trivialization of $\Theta_l$ under $\sigma_{a,l}$. Then by the same argument for obtaining \ref{modulus0}, one see that there exists a $g_a\in \text{SU}(2)$ such that 
\begin{eqnarray}
\begin{split}
\label{modu3}
\Theta^*_{a,l}\to g_a\cdot \Theta^*_{\infty}\ \text{in $W^{1,2}(B_a)$}.
\end{split}
\end{eqnarray}
For future purpose define
\begin{eqnarray}
\begin{split}
\label{taug}
\tau_{a,l}\eqqcolon \sigma_{a,l}\cdot g_a^{-1}.
\end{split}
\end{eqnarray}
Now let $\{\phi_a\}_a$ be a partition of unity subordinate to $\{B_a\}_a$, such that $\sum_a\phi_a\equiv 1$ on $A_{11\gamma^2/10,19\gamma^2/10}(y_*)$. By pigeonhole principle, there exists some $r_0\in [13\gamma^2/10,17\gamma^2/10]$ such that the following holds for all $l$:
\begin{eqnarray}
\begin{split}
\label{1119}
&\int_{\partial B_{r_0}(y_*)}|F_{A_l}|^2\le Cr_0^{-1}\int_{A_{11\gamma^2/10,19\gamma^2/10}(y_*)}|F_{A_l}|^2< C(\gamma)\gamma^{K_l},\\
&\int_{\partial B_{r_0}(y_*)}|\nabla^2_{A_l}\Theta_l|^2\le \sum_a\int_{\partial B_{r_0}(y_*)\cap B_a}\big{(}|A^*_{a,l}|^4+|d\Theta^*_{a,l}|^4\big{)}\phi_a\\
&\le Cr_0^{-1} \sum_a\int_{A_{11\gamma^2/10,19\gamma^2/10}(y_*)}\big{(}|A^*_{a,l}|^4+|d\Theta^*_{a,l}|^4\big{)}\phi_a\le C(\gamma),
\end{split}
\end{eqnarray}
where we have used \eqref{awaysing} as well as Theorem \ref{apr} in the last inequality. The first inequality above allows us to apply the Uhlenbeck's Coulomb gauge theorem (see \cite{U82a}, Theorem 1.3) to the bundle $P_l$ restricted on two hemispheres of $\partial B_{r_0}(y_*)$ (with definite overlap): $(P_l,A_l)\rvert_{\partial^{\pm} B_{r_0}(y_*)}$, to obtain the Coulomb gauges $\{U_l^{\pm}\}_{l}$ such that
\begin{eqnarray}
\begin{split}
\|{U_l^{\pm}}^*A_l\|_{W^{1,2}(\partial^{\pm} B_{r_0}(y_*))}\le C_\gamma\|F_{A_l}\|_{L^2(\partial^{\pm} B_{r_0}(y_*))}<C\gamma^{K_l}\to 0\ \text{as $l\to \infty$}.
\end{split}
\end{eqnarray}
Define $u_l$ by $U_l^+=U^-_l\cdot u_l$. The above then implies that $\|u_l\|_{W^{2,2}(\partial^{+} B_{r_0}(y_*)\cap \partial^{-} B_{r_0}(y_*))}\to 0$. Now by the compact Sobolev embedding $C^0\hookrightarrow W^{2,2}$ in dimension $3$, we see that $u_l\to e_l\in \text{SU}(2)$ uniformly on $\partial^{+} B_{r_0}(y_*)\cap \partial^{-} B_{r_0}(y_*)$. Hence for $l$ sufficiently large, by a standard cutoff argument one could patch up $U_l^+$ with $U^-_l\cdot e_l$ continuously in $W^{2,2}$ to obtain a section $U_l$ defined globally on $\partial B_{r_0}(y_*)$, satisfying
\begin{eqnarray}
\begin{split}
\label{circga}
U_l^*A_l\eqqcolon A_{U_l}\to 0\ \text{in $W^{1,2}(\partial B_{r_0}(y_*))$}.
\end{split}
\end{eqnarray}
Define $\Theta_{U_l}$ to be the trivialization of $\Theta_l$ under $U_l$. By the second inequality in \eqref{1119} and the Sobolev embedding $C^0\hookrightarrow W^{2,2}$ in dimension $3$, $\Theta_{U_l}$ is a $W^{2,2}$ (and hence continuous) mapping from $\partial B_{r_0}(y_*)\cong\mathbb{S}^3$ into $\text{SU}(2)\cong \mathbb{S}^3$, and therefore has a well-defined degree.
\begin{definition}
\label{trsp}
For all sufficiently large $l$ we define $\text{deg}(\Theta_l)\rvert_{\partial B_{r_0}(y_*)}$ to be $\text{deg}(\Theta_{U_l})$. 
\end{definition}
\begin{remark}
It is an immediate consequence of \eqref{circga} that the definition of $\text{deg}(\Theta_l)\rvert_{\partial B_{r_0}(y_*)}$ is independent of the choice of the gauges $\{U_l\}$ for sufficiently large $l$. In other words, given two sets of gauge $\{U_l\}_l$ and $\{U^{\prime}_l\}_l$ such that both satisfies \eqref{circga}, then for $l$ large enough, $\text{deg}\Theta_{U_l}=\text{deg}\Theta_{U_l^{\prime}}$.
\end{remark}
By the assumption we make at the beginning of the proof of Lemma \ref{singannbdd}, it is now straightforward to verify that a sphere fulfills the requirements in \eqref{trickysphere} exists, by adjusting the parameter $J_l$. Finally we rescale $\partial B_{r_0}(y_*)$ back to $\partial B_{r_l}(y_*)$ to obtain the desired sphere described in \eqref{trickysphere}. 
\medskip

Next, we continue to finish the proof of Lemma \ref{singannbdd}. By \eqref{modu3}, the second inequality in \eqref{1119}, and the expression introduced in \eqref{taug}, as well as the compact Sobolev embedding $W^{1,3}\hookrightarrow W^{2,2}$ in dimension $3$, we obtain the following convergence upon a subsequence:
\begin{eqnarray}
\begin{split}
\label{w22c}
\tau_{a,l}^*\Theta_l\to \Theta^*_\infty\ \text{weakly in $W^{2,2}(B_a\cap \partial B_{r_0}(y_*))$ and strongly in $W^{1,3}(B_a\cap\partial B_{r_0}(y_*))$}.
\end{split}
\end{eqnarray}
Now let us define the gauge transform $V_{a,l}$ by $\tau_{a,l}\cdot V_{a,l}=U_l$. Due to \eqref{circga} and the following convergence:
\begin{eqnarray}
\begin{split}
\tau_{a,l}^*A_l\to 0\ \text{in $W^{1,2}(B_a\cap\partial B_{r_0}(y_*))$},
\end{split}
\end{eqnarray}
it is easy to see that
\begin{eqnarray}
\begin{split}
V_{a,l}^{-1}dV_{a,l}\to 0\ \text{in $W^{1,2}(B_a \cap\partial B_{r_0}(y_*)))$},
\end{split}
\end{eqnarray}
which implies that for each $a$ there exists some $h_a\in\text{SU}(2)$, such that $V_{a,l}\to h_a$ strongly in $W^{2,2}(B_a\cap\partial B_{r_0}(y_*))$. But on the other hand, \eqref{w22c} together with the expression $\tau_{a,l}\cdot V_{a,l}=U_l$ implies that
\begin{eqnarray}
\begin{split}
U_l^*\Theta_l\to h_a^{-1}\cdot \Theta_\infty^*\ \text{weakly in $W^{2,2}(B_a\cap \partial B_{r_0}(y_*))$ and strongly in $W^{1,3}(B_a\cap\partial B_{r_0}(y_*))$}.
\end{split}
\end{eqnarray}
Thanks to the connectedness of $\mathbb{S}^3$, there exists some $h\in \text{SU}(2)$ such that $h_a\equiv h$ for all $a$. Applying the convergence $\Theta_{U_l}=U_l^*\Theta_l\to h^{-1}\cdot \Theta_\infty^*$ in $W^{1,3}(\partial B_{r_0}(y_*)\cap \partial B_{r_0}(y_*))$, and scaling $r_0$ back to $\gamma^2$ as in \eqref{trickysphere2}, we obtain
\begin{eqnarray}
\begin{split}
\label{degcon}
\text{deg}(\Theta_l)\rvert_{\partial B_{\gamma^2}(y_*)}\to \text{deg}(h^{-1}\cdot \Theta^*_\infty)=\text{deg}(\Theta^*_\infty).
\end{split}
\end{eqnarray}
Combining \eqref{degcon} with \eqref{trickysphere2}, we see that $\text{deg}(\Theta^*_{\infty})\rvert_{\partial B_{\gamma^2}(y_*)}\neq 0$. Hence there exists $w_*\in \text{Sing}(\Theta_{\infty})\cap B_{\gamma^2}(y_*)$. In addition, \eqref{zstar} implies that $z_*\in \text{Sing}(\Theta_{\infty})$. Therefore, $w_*$ and $z_*$ both are singularities of $\Theta_{\infty}$, a stable stationary harmonic map defined on $B_{5\gamma^{-1}/6}(y_*)$ (see Proposition \ref{shs}), and that $\gamma^3/2< \text{dist}(w_*,z_*)< 3\gamma$ according to \eqref{zstar}. Nevertheless, this is impossible due to a result by \cite{LW06}, saying that on $B_{1/2}$ two singularities of a stationary-stable harmonic map defined on $B_1$ has distance bounded below by some universal number $\delta_*$, provided $\gamma<\delta_*/10$ (see also Theorem 3.4.12 of \cite{LW08}). Thus we arrive at a contradiction. Note this also helps us fix $\gamma$. Now the proof of Lemma \ref{singannbdd} is completed.
\end{proof}
Now one sees that Theorem \ref{annbubsing} is an immediate consequence of a combination of applications of Theorem \ref{apr}, Theorem \ref{4localeps}, and Lemma \ref{singannbdd}. 
\end{proof}
\bigskip

\section{Approximation by smooth connections}
\label{asc}
In this section, we shall prove Theorem \ref{main} for general $W^{1,2}$ connections, by using an approximation by smooth connections. Let us fix an arbitrary $A\in W^{1,2}$. Firstly, by a density result (see, for example, \cite{kessel08}), smooth connections are dense in $W^{1,2}$-connection space on compact $4$-manifold. Hence there exists a sequence of smooth connections $\{A_i\}_i$ such that
\begin{eqnarray}
\begin{split}
\label{w12con}
A_i\to A\ \text{in $W^{1,2}$.}
\end{split}
\end{eqnarray}
Due to the smoothness of the bundle $P$, we might find a global reference frame on $P$ denoted by $\Theta_1$, such that $\Theta_1$ is smooth away from $k_0$ points in $M$, where $k_0$ equals the Chern number $c_2$ of $P$. For each $i$, let $\Theta_i$ be a Coulomb minimizer associated to $A_i$, with connection form $\hat{A}_i$. Let us fix a ball $B_*\subseteq M$ with $10\text{rad}(B_*)<r(M)$, such that on $2B_*$ (the concentric ball with two times radius) the following holds upon passing to a subsequence of $\{i\}$:
\begin{eqnarray}
\begin{split}
\label{bstar}
\int_{2B_*}|F_{A_i}|^2<\epsilon_0/2,\ \int_{2B_*}|d\Theta_i|^4+\int_{2B_*}|\nabla^2\Theta_i|^2<\infty,\ \forall i.
\end{split}
\end{eqnarray}
In other words, $2B_*$ is a ball where not only the curvature is small, but contains no singularity of $\Theta_i$ as well. We might find an Uhlenbeck gauge $A_{i}^*$ over $B_*$ satisfying \eqref{uhgauge} for each $i$. By Uhlenbeck's weak compactness result in \cite{U82b} (see also Theorem 6.1 and Remark 6.2, \cite{W04}), we see that the following convergences holds up to passing to subsequence:
\begin{eqnarray}
\begin{split}
\label{sl2}
A_{i}^*\to A^*\ \text{weakly in $W^{1,2}(\frac{3}{2}B_*)$ and strongly in $L^2(\frac{3}{2}B_*)$}.
\end{split}
\end{eqnarray}
Let $\Theta_{i}^*$ be the trivialization of $\Theta_i$ under $A_{i}^*$. By \eqref{sl2} and Theorem \ref{cpt}, there exists some $\Theta^*$ such that the following holds:
\begin{eqnarray}
\begin{split}
\label{sl22}
\Theta^*_{i}\to \Theta^*\ \text{in $W^{1,2}(B_*)$}.
\end{split}
\end{eqnarray}
In addtion, we let $V_{i}\in W^{1,2}(B_*,\text{SU}(2))$ be the gauge transform such that
\begin{eqnarray}
\begin{split}
V_{i}^*A^*_{i}=\hat{A}_i.
\end{split}
\end{eqnarray}
From Theorem \ref{apr}, we have
\begin{eqnarray}
\begin{split}
s_{\Theta_i}(x)>c_0\text{rad}(B_*),\ \forall x\in B_*.
\end{split}
\end{eqnarray}
for some universal constant $c_0$. 
\begin{claim}
\label{uniwl}
There exists $C$ such that it holds:
\begin{eqnarray}
\begin{split}
&\sup_i\|V_{i}\|_{W^{2,2}(B_*)}\le C,\\
&\sup_i(\|\hat{A}_i\|_{L^4(B_*)}+\|\hat{A}_i\|_{W^{1,2}(B_*)})\le C.
\end{split}
\end{eqnarray}
\end{claim}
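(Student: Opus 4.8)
The plan is to establish the two uniform bounds in Claim \ref{uniwl} by combining the elliptic estimates coming from the Coulomb condition with the already-established $L^2$-convergence \eqref{sl2} of the Uhlenbeck gauges and the $W^{1,2}$-convergence \eqref{sl22} of the trivializations. First I would recall that $\hat A_i = V_i^* A_i^* = V_i^{-1} A_i^* V_i + V_i^{-1} dV_i$, and that $\hat A_i$ is the connection form of a Coulomb minimizer, hence $d^* \hat A_i = 0$ weakly on $B_*$; moreover, since $2B_*$ contains no singularity of $\Theta_i$ and the curvature is small there by \eqref{bstar}, Theorem \ref{eps} (via Theorem \ref{apr}, and the lower bound $s_{\Theta_i}(x) > c_0\,\mathrm{rad}(B_*)$) gives the effective estimate $\int_{B_*} |\hat A_i|^4 + |\nabla \hat A_i|^2 \le C$ uniformly in $i$, for a constant depending only on $\mathrm{rad}(B_*)$, $\epsilon_0$, and the geometry of $M$. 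This is precisely the second displayed bound of Claim \ref{uniwl}: the point is that the curvature smallness $\int_{2B_*}|F_{A_i}|^2 < \epsilon_0/2$ together with the finiteness in \eqref{bstar} puts us exactly in the hypotheses of Theorem \ref{apr}, whose conclusion $r_{\Theta_{0,i},\epsilon_0} > c_0$ on $B_*$ upgrades to the uniform $L^4 + W^{1,2}$ bound on $\hat A_i$ by the definition of $r_{\Theta_0,\epsilon_0}$ in Definition \ref{rads} combined with Theorem \ref{eps}.

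Next I would derive the bound on $V_i$. From $V_i^{-1} dV_i = \hat A_i - V_i^{-1} A_i^* V_i$ and $|V_i| = 1$ pointwise (values in $\mathrm{SU}(2)$, so $V_i$ is automatically bounded in $L^\infty$), we get $\|dV_i\|_{L^4(B_*)} = \|V_i^{-1}dV_i\|_{L^4(B_*)} \le \|\hat A_i\|_{L^4(B_*)} + \|A_i^*\|_{L^4(B_*)} \le C$, where $\|A_i^*\|_{L^4}$ is controlled by $\|A_i^*\|_{W^{1,2}} \le C_{Uh}\|F_{A_i}\|_{L^2}$ via \eqref{uhgauge} and Sobolev embedding. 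Hence $\{V_i\}$ is uniformly bounded in $W^{1,4}(B_*) \hookrightarrow C^0$. To get the $W^{2,2}$ bound I would differentiate the relation. Writing $dV_i = V_i \hat A_i - A_i^* V_i$ and applying $\nabla$, one obtains $\nabla^2 V_i = \nabla V_i \wedge \hat A_i + V_i \nabla \hat A_i - \nabla A_i^* V_i - A_i^* \nabla V_i$ (schematically), so that pointwise $|\nabla^2 V_i| \le |\nabla V_i|(|\hat A_i| + |A_i^*|) + |\nabla \hat A_i| + |\nabla A_i^*|$. Integrating the square over $B_*$ and using Hölder together with $\|\nabla V_i\|_{L^4} \le C$, $\|\hat A_i\|_{L^4} + \|A_i^*\|_{L^4} \le C$, $\|\nabla \hat A_i\|_{L^2} + \|\nabla A_i^*\|_{L^2} \le C$, we conclude $\|\nabla^2 V_i\|_{L^2(B_*)} \le C$; combined with the $W^{1,4}$ bound this gives $\|V_i\|_{W^{2,2}(B_*)} \le C$ uniformly.

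The main obstacle I anticipate is making the differentiated identity for $V_i$ rigorous: a priori $V_i$ is only $W^{1,4}$, so $\nabla^2 V_i$ must be interpreted via a difference-quotient (or mollification) argument rather than naive differentiation, exactly as in the proof of Proposition \ref{localprop0}. One runs the difference-quotient scheme on the algebraic relation $dV_i = V_i\hat A_i - A_i^* V_i$, using that the right-hand side lies in $W^{1,2}$ (by the already-obtained bounds on $\hat A_i$, $A_i^*$ and their gradients) and that $V_i \in W^{1,4} \cap L^\infty$, to deduce $V_i \in W^{2,2}_{\mathrm{loc}}$ with the quantitative bound; a slightly smaller ball absorbs the cutoff losses, which is harmless since all of the above can be run on $\tfrac{3}{2}B_*$ and restricted to $B_*$. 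Once this is in place, the two displays of Claim \ref{uniwl} follow by collecting the estimates, and (looking ahead) this uniform control will let us extract $W^{2,2}$-weak and $W^{1,2}$-strong limits of $V_i$ on $B_*$, patch the local limits into a global frame $\Theta_0$ for $A$ with the desired $\mathcal{L}^{4,\infty}$ bound, and pass the weakly Coulomb equation to the limit as in Theorem \ref{cpt}.
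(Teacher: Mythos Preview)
Your proposal is correct and follows essentially the same route as the paper: the second bound comes from Theorem \ref{apr} (together with Theorem \ref{eps}) applied on $B_*$, and the first bound is obtained by using the gauge relation $dV_i = V_i\hat A_i - A_i^* V_i$ to control $\|dV_i\|_{L^4}$ and then its derivative to control $\|\nabla^2 V_i\|_{L^2}$ via H\"older and the already-established bounds on $\hat A_i$ and $A_i^*$. Your extra remark about justifying the differentiation via difference quotients is a welcome bit of care that the paper's proof leaves implicit.
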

\begin{proof}
Note that the second uniform bound follows immediately from the assumption \eqref{bstar} and Theorem \ref{apr}. So let us focus on proving the first uniform bound. Using the second uniform bound, \eqref{sl2}, \eqref{uhgauge}, the gauge transformation formula, and the Sobolev embedding, we could estimate as follows:
\begin{eqnarray}
\begin{split}
&\|dV_{i}\|_{L^4(B_*)}=\|V_{i}^{-1}dV_{i}\|_{L^4(B_*)}\le C_0\|\hat{A}_{i}\|_{W^{1,2}(B_*)}\\
+&\|V_{i}^{-1}A_{i}^*V_{i}\|_{L^4(B_*)}\le C_0C+C_{Uh}\|F_A\|_{L^2(B_*)}\le C_1.
\end{split}
\end{eqnarray}
Using this, we further estimate
\begin{eqnarray}
\begin{split}
&\|\nabla^2V_{i}\|_{L^2(B_*)}=\|\nabla(V_{i}^{-1}dV_{i})\|_{L^2(B_*)}+\|\nabla V_{i}\otimes V_{i}^{-1}dV_{i}\|_{L^2(B_*)}\\
\le& \|\hat{A}_i\|_{W^{1,2}(B_*)}+\|V_{i}^{-1}A_{i}^*V_{i}\|_{W^{1,2}(B_*)}+\|dV_{i}\|^2_{L^4(B_*)}\\
\le &C+\|A_{i}^*\|_{L^2(B_*)}+\|\nabla A_{i}^*\|_{L^2(B_*)}+2\|A_{i}^*\|_{L^4(B_*)}\|dV_{i}\|_{L^4(B_*)}+C_1^2\\
\le& C+C_3C_{Uh}\|F_A\|_{L^2(B_*)}+2C_1C_{Uh}\|F_A\|_{L^2(B_*)}+C_1^{2}\le C_4.
\end{split}
\end{eqnarray}
\end{proof}
On the one hand, by the first uniform bound in Claim \ref{uniwl}, the convergence \eqref{sl2}, and up to passing to a subsequence, we can assume 
\begin{eqnarray}
\begin{split}
V_{i}\rightharpoonup V\ \text{in $W^{2,2}(B_*)$.}
\end{split}
\end{eqnarray}
Therefore we have
\begin{eqnarray}
\begin{split}
\label{vae}
V_{i}^*A_{i}^*\to V^*A^*\ \text{a.e.}
\end{split}
\end{eqnarray}
On the other hand, by the second uniform bound in Claim \ref{uniwl}, and up to passing to subsequence, we can assume
\begin{eqnarray}
\begin{split}
\label{w4con}
V_i^*A_i^*=\hat{A}_i\rightharpoonup \hat{A}\ \text{in both $L^4(B_*)$ and $W^{1,2}(B_*)$}.
\end{split}
\end{eqnarray}
By \eqref{vae} and \eqref{w4con}, the following must hold:
\begin{eqnarray}
\begin{split}
\label{dtheta4}
\hat{A}=V^*A^*\in W^{1,2}(B_*)\cap L^4(B_*),\ d\Theta_{i}^*\rightharpoonup d\Theta^*\ \text{in $L^4(B_*)$}.
\end{split}
\end{eqnarray}
Then we could estimate $\|\hat{A}\|_{\mathcal{L}^{4,\infty}}$. Firstly, choose any $\alpha>0$ (this will be the height that appears in the $L^{4,\infty}$ quasi-norm). Note that there exists a finite collection of balls given by $\{B_{*,\beta}\}_{\beta}$ such that each $B_{*,\beta}$ satisfies \eqref{bstar}, that forms a cover of the sub-level set $\{s_{\hat{A}_i}<\alpha\}$:
\begin{eqnarray}
\begin{split}
\label{beta1}
 \{s_{\hat{A}_i}<\alpha\}\subseteq \bigcup_{\beta}B_{*,\beta}.
\end{split}
\end{eqnarray}
Using the $L^4$-weak convergence in \eqref{dtheta4} on each $B_{*,\beta}$, as well as the norm-semicontinuity under weak-convergence, we obtain for all large $i$ that
\begin{eqnarray}
\begin{split}
\label{beta2}
\{x\in B_{*,\beta}:\ s_{\hat{A}}<\alpha\}\subseteq  \{x\in B_{*,\beta}:\ s_{\hat{A}_i}<\alpha\}.
\end{split}
\end{eqnarray}
Combining \eqref{beta2} with the $\mathcal{L}^{4,\infty}$-estimate on each $\hat{A}_i$, we arrive at the following:
\begin{eqnarray}
\begin{split}
\label{desi}
\|\hat{A}\|^4_{\mathcal{L}^{4,\infty}(M)}\le C(M,\Lambda).
\end{split}
\end{eqnarray}
Finally, using the convergences in \eqref{w4con}, we see that $\hat{A}$ is weakly-Coulomb. Define $s_i:M\to P$ by $\Theta_i\circ s_i=1$. Now one could verify in the same way as in \eqref{patchupnow} that, upon a subsequence, $s_i\to s_{0}$ weakly in $W^{1,2}$ for some section $s_0: M\to P$, and that $s_{0}^*A=\hat{A}$. To sum up, we find a global section $s_{0}$ for $A$ such that $d^*(s_0^*A)=0$ weakly. Moreover, away from finitely (and indeed controllably) many singularities $s_0^*A$ is $L^4$ integrable. Most significantly, $s_0^*A$ admits the desired $\mathcal{L}^{4,\infty}$ estimate. This completes the proof of Theorem \ref{main}. 
\bigskip

\section*{Appendix}
\label{appendix}
We prove the fact mentioned in Remark \ref{clanew}; namely, the new space $\mathcal{L}^{4,\infty}$ possesses the property of being weaker than $L^4$ while stronger than $L^{4-\epsilon}$ for all $\epsilon>0$. The following lemma is sufficient to conclude the inequalities \eqref{clanew1}:
\begin{lemma}
\label{interpo}
Let $u$ be a measurable function defined on a bounded Riemannian domain $(U,g)$ with the diameter $\text{diam}(U)\le 1$, the sectional curvature $\text{sec}_g\le1/100$, and the injectivity radius $\text{inj}_g\ge 10$. Then there exist $C_0>0$, and $C(\epsilon)>0$ for every small $\epsilon>0$, such that:
\begin{eqnarray}
\begin{split}
\label{eleinterpo}
&\int_{U}|u|^{4-\epsilon}\le C(\epsilon)\|u\|^4_{\mathcal{L}^{4,\infty}(U)},\\
&\|u\|_{\mathcal{L}^{4,\infty}(U)}\le C_0\|u\|_{L^{4}(U)}.
\end{split}
\end{eqnarray}
\end{lemma}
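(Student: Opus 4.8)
\textbf{Proof proposal for Lemma \ref{interpo}.}

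The plan is to analyze the distribution function of $1/s_u$ and compare it with the distribution of $u$ via a Vitali-type covering argument. First I would record the elementary observation that if $s_u(x) = r$ then, by definition of the integrability radius \eqref{4rad}, there exists some $y \in B_r(x)$ with $\int_{B_r(y)}|u|^4\,d\mu \geq \tfrac{1}{2}$ (or any fixed fraction of $1$, by the continuity of the integral in the radius and the fact that the radius is a supremum); consequently every point $x$ at which $s_u(x)$ is small sits within a controlled distance of a ball carrying a definite amount of the $L^4$-mass of $u$. Fix a height $\alpha > 0$ and set $E_\alpha = \{x \in U : 1/s_u(x) > \alpha\} = \{x : s_u(x) < 1/\alpha\}$. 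For each $x \in E_\alpha$ choose a ball $B_{r_x}(y_x)$ with $r_x < 1/\alpha$, $y_x \in B_{r_x}(x)$, and $\int_{B_{r_x}(y_x)}|u|^4 \geq c$ for a universal $c>0$; note $B_{r_x}(y_x) \subseteq B_{2r_x}(x)$ and $x \in B_{2 r_x}(y_x)$. By the Vitali covering lemma (valid on $U$ because of the uniform bounds on sectional curvature and injectivity radius, which make small balls uniformly Ahlfors-regular) extract a disjoint subfamily $\{B_{r_i}(y_i)\}_i$ such that $\{B_{5r_i}(y_i)\}_i$ covers $E_\alpha$. Then
\begin{eqnarray}
\begin{split}
|E_\alpha| \leq \sum_i |B_{5 r_i}(y_i)| \leq C \sum_i r_i^4 \leq \frac{C}{c}\sum_i \int_{B_{r_i}(y_i)}|u|^4\,d\mu \leq \frac{C}{c}\int_U |u|^4\,d\mu,
\end{split}
\end{eqnarray}
using disjointness in the last step and $r_i^4 \leq C|B_{r_i}(y_i)|$ in the third. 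This gives $\sup_\alpha \alpha^4 |E_\alpha| \leq C_0^4 \|u\|_{L^4(U)}^4$ only after I also handle the trivial regime: when $\alpha \lesssim 1$ the bound $|E_\alpha| \leq |U| \leq C$ suffices since $\|u\|_{L^4}$ cannot be too small relative to $|U|$ unless $E_\alpha$ is empty — more carefully, if $s_u(x) \geq \mathrm{diam}(U)$ for all $x$ then $E_\alpha = \emptyset$ for $\alpha \leq 1/\mathrm{diam}(U)$, so only $\alpha \gtrsim 1$ matters and there the displayed estimate applies directly. Thus $\|1/s_u\|_{L^{4,\infty}(U)} \leq C_0 \|u\|_{L^4(U)}$, which is the second inequality in \eqref{eleinterpo}.

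For the first inequality I would run the comparison in the other direction: bound $|u|$ pointwise in terms of $1/s_u$ on average and then integrate. The key pointwise-ish fact is that for every $x$, the definition of $s_u$ forces $\int_{B_{s_u(x)}(x)}|u|^4\,d\mu \leq 1$, hence by the Ahlfors regularity $\fint_{B_{s_u(x)}(x)}|u|^4 \leq C s_u(x)^{-4}$; combined with a Lebesgue-differentiation / maximal-function argument this will let me dominate $|u(x)|^4$ by $C\,M\big((1/s_u)^4\big)(x)$ up to a negligible set, or more robustly, it lets me estimate the super-level sets of $|u|$ by those of a suitable average of $1/s_u$. Concretely, I would show $\{|u| > \lambda\}$ is, up to a set where the maximal function argument is exploited, contained in $\{1/s_u > c\lambda\}$ after a covering step analogous to the one above (run with the roles of $u$ and $1/s_u$ interchanged), giving a weak-type control $|\{|u|>\lambda\}| \leq C\lambda^{-4}\|1/s_u\|_{L^{4,\infty}}^4$ — i.e. $u \in L^{4,\infty}$ with $\|u\|_{L^{4,\infty}(U)} \leq C\|u\|_{\mathcal L^{4,\infty}(U)}$. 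Once $u \in L^{4,\infty}(U)$ and $|U| < \infty$, the inclusion $L^{4,\infty}(U) \subseteq L^{4-\epsilon}(U)$ is the standard fact that on a finite-measure space a weak-$L^p$ function lies in $L^q$ for all $q<p$, with $\int_U |u|^{4-\epsilon} = (4-\epsilon)\int_0^\infty \lambda^{3-\epsilon}|\{|u|>\lambda\}|\,d\lambda \leq C(\epsilon)\big(|U| + \|u\|_{L^{4,\infty}}^4\big) \leq C(\epsilon)\|u\|_{\mathcal L^{4,\infty}(U)}^4$, where the last step absorbs $|U|$ using that $|U|$ itself is controlled (after normalizing, or because $\|u\|_{\mathcal L^{4,\infty}}$ bounds $|\{s_u < \mathrm{diam}(U)\}|$ from below whenever $u\not\equiv 0$, and the $u\equiv 0$ case is trivial).

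The main obstacle I anticipate is the reverse comparison in the first inequality: passing from the averaged information $\fint_{B_{s_u(x)}(x)}|u|^4 \lesssim s_u(x)^{-4}$ to pointwise or weak-type control of $|u|$ itself. A naive maximal-function bound controls $|u(x)|$ only at Lebesgue points and introduces an extra uncontrolled maximal operator; the honest route is a stopping-time / Calderón–Zygmund-type decomposition at height $\lambda$ adapted to the (non-doubling-free but locally doubling) geometry of $(U,g)$, which is why the uniform sectional-curvature and injectivity-radius hypotheses are essential — they guarantee that balls of radius $\leq 1$ are uniformly doubling and satisfy a uniform Vitali property, so all the covering and maximal-function machinery used for Euclidean space transfers with constants depending only on those bounds. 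The second (easier) inequality is essentially just the Vitali covering argument sketched above and should present no real difficulty.
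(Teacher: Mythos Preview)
Your argument for the second inequality is correct and is essentially the paper's proof: cover the sublevel set $\{s_u<\alpha\}$ by a Vitali family of balls each carrying a definite amount of $L^4$-mass of $u$, use disjointness to bound the number of balls by $\int_U|u|^4$, and compare with the volume of the sublevel set.

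Your approach to the first inequality has a genuine gap. You try to pass through the intermediate estimate $\|u\|_{L^{4,\infty}(U)}\le C\|u\|_{\mathcal{L}^{4,\infty}(U)}$ and then invoke $L^{4,\infty}\hookrightarrow L^{4-\epsilon}$ on a bounded domain. But the paper explicitly states (just after Definition~\ref{curlyl4infty}) that $\mathcal{L}^{4,\infty}$ is \emph{not} stronger than $L^{4,\infty}$, so this intermediate inequality is false in general. Your proposed mechanism also fails on the simplest test: take $u=N\chi_{B_\rho(0)}$ with $N^4\rho^4\ll 1$. Then $\int_U|u|^4<1$, so $s_u\equiv\mathrm{diam}(U)$ and $\{1/s_u>cN\}=\emptyset$ for $N$ large, yet $\{|u|>N/2\}=B_\rho(0)\ne\emptyset$; the claimed containment $\{|u|>\lambda\}\subseteq\{1/s_u>c\lambda\}$ therefore cannot hold. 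The underlying issue is that knowing $\int_{B_r(x)}|u|^4\le 1$ for $r\le s_u(x)$ gives $\fint_{B_r(x)}|u|^4\le Cr^{-4}$, a bound that \emph{blows up} as $r\to 0$, so neither Lebesgue differentiation nor a Calder\'on--Zygmund stopping argument yields pointwise or weak-type control of $|u|$ from it.

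The paper's proof avoids $L^{4,\infty}$ entirely and works directly with the exponent $4-\epsilon$. One takes a Vitali cover $\{B_{r_i}(x_i)\}$ of $U$ with $r_i=s_u(x_i)$; on each ball H\"older gives
\[
\int_{B_{r_i}(x_i)}|u|^{4-\epsilon}\le\Big(\int_{B_{r_i}(x_i)}|u|^4\Big)^{(4-\epsilon)/4}|B_{r_i}(x_i)|^{\epsilon/4}\le C\,r_i^{\epsilon},
\]
using $\int_{B_{r_i}(x_i)}|u|^4\le 1$. Grouping the balls by dyadic scale $r_i\in[2^{-k-1},2^{-k})$ and bounding the number $N_k$ of such balls via $N_k\,2^{-4k}\lesssim|\{s_u<5\cdot 2^{-k}\}|\le C\,2^{-4k}\|u\|_{\mathcal{L}^{4,\infty}}^4$, one gets $N_k\le C\|u\|_{\mathcal{L}^{4,\infty}}^4$ uniformly in $k$, hence
\[
\int_U|u|^{4-\epsilon}\le C\sum_{k\ge 0}N_k\,2^{-k\epsilon}\le C(\epsilon)\|u\|_{\mathcal{L}^{4,\infty}}^4.
\]
The strictly-less-than-$4$ exponent is what makes the dyadic sum converge; this is exactly why one gets $L^{4-\epsilon}$ but not $L^{4,\infty}$.
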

\begin{proof}
Let us assume the right hand side of the first inequality in \eqref{eleinterpo} to be finite. Fix $\epsilon$. Consider a Vitali cover of $U$ given by $\{B_{r_i}(x_i)\}_i$, where $r_i=s_{u}(x_i)$ and $\{B_{r_i/5}(x_i)\}_i$ are disjoint. We have
\begin{eqnarray}
\begin{split}
\label{kepsilon}
&\int_U |u|^{4-\epsilon}\le \sum_i \int_{B_{r_i}(x_i)}|u|^{4-\epsilon}\le \sum_i\big{(} \int_{B_{r_i}(x_i)}|u|^4\big{)}^{\frac{4-\epsilon}{4}}\big{(}\int_{B_{r_i}(x_i)}1\big{)}^{\frac{\epsilon}{4}}\\
&\le  C\sum_i r_i^{\epsilon}=C\sum_{k\ge 0}\sum_{r_i\in [2^{-k-1},2^{-k})}r_i^{\epsilon}\le C\sum_{k\ge 0} N_{k,\epsilon}2^{-k\epsilon},
\end{split}
\end{eqnarray}
where $N_{k,\epsilon}$ is the number of $r_i$s that lie in $ [2^{-k-1},2^{-k})$. From Definition \ref{rads} it is easy to verify that
\begin{eqnarray}
\begin{split}
\label{comp}
\frac{r_{x_j}}{4}\le r_{x_i}\le 4r_{x_j},
\end{split}
\end{eqnarray}
whenever $B_{r_i}(x_i)\cap B_{r_j}(x_j)$ is non-empty. Due to \eqref{comp}, for each $r_i\in [2^{-k-1},2^{-k})$ it holds
\begin{eqnarray}
\begin{split}
B_{\frac{r_i}{5}}(x_i)\subseteq \{x: s_{u}(x)<5r_i\}\subseteq \{x: s_{u}(x)<5\cdot 2^{-k}\}.
\end{split}
\end{eqnarray}
Thus by the disjointness of $\{B_{r_i/5}(x_i)\}_i$ we have
\begin{eqnarray}
\begin{split}
N_{k,\epsilon}2^{-4(k+1)}5^{-4}\le \sum_{r_i\in [2^{-k-1},2^{-k})}(\frac{r_i}{5})^4\le |\{x:s_u(x)<5\cdot 2^{-k}\}|.
\end{split}
\end{eqnarray}
By definition, we have
\begin{eqnarray}
\begin{split}
(5\cdot 2^{-k})^{-4}|\{x:s_u(x)<5\cdot 2^{-k}\}|\le \|u\|^4_{\mathcal{L}^{4,\infty}(U)}.
\end{split}
\end{eqnarray}
Therefore the following holds:
\begin{eqnarray}
\begin{split}
N_{k,\epsilon}\le 10^8 \|u\|^4_{\mathcal{L}^{4,\infty}(U)}.
\end{split}
\end{eqnarray}
Insert this back to \eqref{kepsilon} we obtain
\begin{eqnarray}
\begin{split}
\int_U |u|^{4-\epsilon}\le C \|u\|^4_{\mathcal{L}^{4,\infty}(U)}\sum_{k\ge 0} 2^{-k\epsilon}\le C(\epsilon)\|u\|^4_{\mathcal{L}^{4,\infty}(U)}.
\end{split}
\end{eqnarray}
This concludes the first inequality in \eqref{eleinterpo}. Now we start to prove the second, whose right hand side is also assumed to be finite, for otherwise nothing needs to be proved. For each $\alpha>0$ cover the set $\{x:s_u(x)\le \alpha/6\}$ by a (finite) Vitali cover $\{B_{\alpha}(x_i)\}_{i=1}^M$ with $\{B_{\alpha/3}(x_i)\}_{i=1}^M$ being disjoint. Using the disjointness and Definition \ref{4rad}, we have
\begin{eqnarray}
\begin{split}
\label{21}
\int_U|u|^4\ge \sum_{i}\int_{B_{\frac{\alpha}{3}}(x_i)}|u|^4\ge M.
\end{split}
\end{eqnarray}
On the other hand by the covering property we have
\begin{eqnarray}
\begin{split}
M\cdot{\alpha}^{4}\ge |\{x:s_u(x)\le \alpha/6\}|.
\end{split}
\end{eqnarray}
Therefore, by Definition \ref{4rad} we obtain
\begin{eqnarray}
\begin{split}
\label{22}
M\ge {\alpha}^{-4}\cdot |\{x:s_u(x)\le \alpha/6\}|\ge 6^{-4} \|u\|_{\mathcal{L}^{4,\infty}(U)}^4.
\end{split}
\end{eqnarray}
Combining \eqref{22} with \eqref{21}, we see that
\begin{eqnarray}
\begin{split}
\|u\|^4_{\mathcal{L}^{4,\infty}(U)}\le6^4\int_U|u|^4,
\end{split}
\end{eqnarray}
which proves the second inequality in \eqref{eleinterpo}.
\end{proof}
\bigskip

\bibliographystyle{plain}
\bibliography{4dim}

\end{document}